\documentclass[12pt,reqno]{amsart}
\usepackage[colorlinks=true, pdfstartview=FitV, linkcolor=blue, citecolor=blue, urlcolor=blue]{hyperref}
\usepackage{amssymb,amsmath, amscd}
\usepackage{ragged2e}
\usepackage{times, verbatim}
\usepackage{mathdots}
\usepackage{array}
\usepackage{graphicx}
\usepackage[english]{babel}
 \usepackage[usenames, dvipsnames]{color}
\usepackage{amsmath,amssymb,amsfonts}
\usepackage{enumerate, enumitem}
\usepackage{MnSymbol}
\usepackage{anysize}
\usepackage{enumitem}
\usepackage{bigints}
\usepackage{braket}
\usepackage[english]{babel}
\usepackage{blindtext}
\usepackage{mathtools}
\usepackage{graphicx}
\usepackage{tikz-cd}
\usepackage[a4paper,verbose]{geometry}
\marginsize{2.5cm}{2.5cm}{2.5cm}{2.5cm}
\input xy
\xyoption{all}
\usepackage{pb-diagram}
\usepackage[all]{xy}
\input xy
\xyoption{all}

\DeclareFontFamily{OT1}{rsfs}{}
\DeclareFontShape{OT1}{rsfs}{n}{it}{<-> rsfs10}{}
\DeclareMathAlphabet{\mathscr}{OT1}{rsfs}{n}{it}

\numberwithin{equation}{section}
\numberwithin{equation}{subsection}
\renewcommand*{\theequation}{%
  \ifnum\value{subsection}=0 %
    \thesection
  \else
    \thesubsection
  \fi
  .\arabic{equation}%
}

\newtheorem{thm}[equation]{Theorem}
\newtheorem{prop}[equation]{Proposition}

\newtheorem{lemma}[equation]{Lemma}
\newtheorem{defn}[equation]{Definition}

\newtheorem{remark}[equation]{Remark}

\newtheorem{corollarySubsec}[equation]{Corollary}

\theoremstyle{definition}
\newtheorem*{funding}{Funding}

\theoremstyle{remark}
\newtheorem*{acknowledgements}{Acknowledgments}

\begin{document}
\theoremstyle{plain}

\newcommand{\bigboxplus}{
	\mathop{
		\vphantom{\bigoplus} 
		\mathchoice
		{\vcenter{\hbox{\Resizebox{\widthof{$\displaystyle\bigoplus$}}{!}{$\boxplus$}}}}
		{\vcenter{\hbox{\Resizebox{\widthof{$\bigoplus$}}{!}{$\boxplus$}}}}
		{\vcenter{\hbox{\Resizebox{\widthof{$\scriptstyle\oplus$}}{!}{$\boxplus$}}}}
		{\vcenter{\hbox{\Resizebox{\widthof{$\scriptscriptstyle\oplus$}}{!}{$\boxplus$}}}}
	}\displaylimits 
}

\newcommand{\Hecke}{\mathcal{H}}
\newcommand{\GL}{\operatorname{GL}}
\newcommand{\Liea}{\mathfrak{a}}
\newcommand{\Cmg}{C_{\mathrm{mg}}}
\newcommand{\Cinftyumg}{C^{\infty}_{\mathrm{umg}}}
\newcommand{\Cfd}{C_{\mathrm{fd}}}
\newcommand{\Cinftyfd}{C^{\infty}_{\mathrm{ufd}}}
\newcommand{\sspace}{\Gamma \backslash G}
\newcommand{\PP}{\mathcal{P}}
\newcommand{\bfP}{\mathbf{P}}
\newcommand{\bfQ}{\mathbf{Q}}
\newcommand{\Siegel}{\mathfrak{S}}
\newcommand{\g}{\mathfrak{g}}
\newcommand{\A}{\mathbb{A}}
\newcommand{\Q}{\mathbb{Q}}
\newcommand{\Gm}{\mathbb{G}_m}
\newcommand{\Nm}{\mathbb{N}m}
\newcommand{\ii}{\mathfrak{i}}
\newcommand{\II}{\mathfrak{I}}

\newcommand{\kk}{\mathfrak{k}}
\newcommand{\nn}{\mathfrak{n}}
\newcommand{\tF}{\widetilde{F}}
\newcommand{\p}{\mathfrak{p}}
\newcommand{\m}{\mathfrak{m}}
\newcommand{\bb}{\mathfrak{b}}
\newcommand{\Ad}{{\rm Ad}\,}
\newcommand{\ttt}{\mathfrak{t}}
\newcommand{\frakt}{\mathfrak{t}}
\newcommand{\U}{\mathcal{U}}
\newcommand{\Z}{\mathbb{Z}}
\newcommand{\bfG}{\mathbf{G}}
\newcommand{\bfT}{\mathbf{T}}
\newcommand{\R}{\mathbb{R}}
\newcommand{\ST}{\mathbb{S}}
\newcommand{\h}{\mathfrak{h}}
\newcommand{\bC}{\mathbb{C}}
\newcommand{\C}{\mathbb{C}}
\newcommand{\N}{\mathbb{N}}
\newcommand{\qH}{\mathbb {H}}
\newcommand{\temp}{{\rm temp}}
\newcommand{\Hom}{{\rm Hom}}
\newcommand{\Aut}{{\rm Aut}}
\newcommand{\rk}{{\rm rk}}
\newcommand{\Ext}{{\rm Ext}}
\newcommand{\End}{{\rm End}\,}
\newcommand{\Ind}{{\rm Ind}}
\newcommand{\ind}{{\rm ind}}
\newcommand{\Irr}{{\rm Irr}}
\def\circG{{\,^\circ G}}
\def\M{{\rm M}}
\def\diag{{\rm diag}}
\def\Ad{{\rm Ad}}
\def\As{{\rm As}}
\def\wG{{\widehat G}}
\def\G{{\rm G}}
\def\SL{{\rm SL}}
\def\PSL{{\rm PSL}}
\def\GSp{{\rm GSp}}
\def\PGSp{{\rm PGSp}}
\def\Sp{{\rm Sp}}
\def\St{{\rm St}}
\def\GU{{\rm GU}}
\def\SU{{\rm SU}}
\def\U{{\rm U}}
\def\GO{{\rm GO}}
\def\GL{{\rm GL}}
\def\PGL{{\rm PGL}}
\def\GSO{{\rm GSO}}
\def\GSpin{{\rm GSpin}}
\def\GSp{{\rm GSp}}

\def\Gal{{\rm Gal}}
\def\SO{{\rm SO}}
\def\O{{\rm  O}}
\def\Sym{{\rm Sym}}
\def\sym{{\rm sym}}
\def\St{{\rm St}}
\def\Sp{{\rm Sp}}
\def\tr{{\rm tr\,}}
\def\ad{{\rm ad\, }}
\def\Ad{{\rm Ad\, }}
\def\rank{{\rm rank\,}}

\def\Ext{{\rm Ext}}
\def\Hom{{\rm Hom}}
\def\Alg{{\rm Alg}}
\def\GL{{\rm GL}}
\def\SO{{\rm SO}}
\def\G{{\rm G}}
\def\U{{\rm U}}
\def\St{{\rm St}}
\def\Wh{{\rm Wh}}
\def\RS{{\rm RS}}
\def\ind{{\rm ind}}
\def\Ind{{\rm Ind}}
\def\csupp{{\rm csupp}}

\title{Finite-Sum Realization of Archimedean Asai and Exterior-Square $L$-Factors}
\author[Yeongseong Jo and Akash Yadav]{Yeongseong Jo$^{\dagger}$ and Akash Yadav$^{\dagger}$}
\thanks{$^{\dagger}$All authors made equal contributions and were listed alphabetically.}
\address{(Y. Jo) Department of Mathematics Education / Human-Centered Artificial Intelligence Research Institute, Ewha Womans University, Seoul 03760, Republic of Korea}
\email{yeongseong.jo@ewha.ac.kr, yeongseongjo@outlook.com}
\address{(A. Yadav) Human-Centered Artificial Intelligence Research Institute, Ewha Womans University, Seoul 03760, Republic of Korea}
\email{akaschampion@ewha.ac.kr, akaschampion@gmail.com}

\subjclass[2020]{Primary 11F70; Secondary 11F66,22E45}
\keywords{Archimedean Asai $L$-factor, Archimedean Exterior-square $L$-factor, Flicker integral, Jacquet--Shalika integral}

\begin{abstract}
We prove that the archimedean Asai \(L\)-factor attached to an
irreducible generic representation of \(\GL_n(\mathbb{C})\) can be
expressed as a finite sum of Flicker local zeta integrals. For an
archimedean local field \(F\), we also prove that the exterior-square
\(L\)-factor attached to an irreducible generic representation of
\(\GL_m(F)\) can be expressed as a finite sum of Jacquet--Shalika local
zeta integrals.
\end{abstract}
\maketitle

\begin{section}{Introduction}\label{s1}

The Rankin--Selberg method realizes several automorphic \(L\)-functions
as global zeta integrals involving automorphic forms, often together with
Eisenstein series. To study their global analytic properties, one must understand the associated local zeta integrals and their relation to the local $L$-factors at every place.

In the archimedean setting, Beuzart--Plessis
\cite[Theorem~1.1]{BP2021} established the basic analytic properties of
Flicker's integral representation \cite{Fli1993} of the Asai
\(L\)-function. Jiang--Liu--Sun--Tian
\cite[Theorem~2.2]{JLST2025} obtained analogous results for the
Jacquet--Shalika integral representation \cite{JS1990} of the
exterior-square \(L\)-function. More precisely, in both settings, the authors proved convergence in a right half-plane, meromorphic continuation, and a functional equation for the local zeta integrals. They also showed that the quotient of each integral by the corresponding local $L$-factor is holomorphic. Consequently, the order of any pole of a local zeta
integral is at most that of the corresponding \(L\)-factor.

These results do not show, however, that the archimedean \(L\)-factor itself
can be recovered from the local zeta integrals. In the Asai case,
Beuzart--Plessis wrote
\cite[p.~5]{BP2021}:
\begin{quote}
``Actually, we even expect something stronger: there should exist finite
families \(W_i\in\mathcal{W}(\pi,\psi)\) and
\(\phi_i\in\mathcal{S}(F^n)\) such that
\[
L(s,\pi,\As)=\sum_i Z(s,W_i,\phi_i).
\]
However, in the archimedean case this result seems harder to establish
and in any case unreachable by the method developed in this paper.''
\end{quote}

In view of the analogous analytic theory of Jacquet--Shalika integrals, it is
natural to expect a corresponding finite-sum identity for the exterior-square
\(L\)-factor.

The goal of this paper is to establish these finite-sum identities in
both settings. In
Theorems~\ref{main1} and~\ref{main2}, we prove that the archimedean Asai and
exterior-square \(L\)-factors attached to irreducible generic
representations can be expressed as finite sums of Flicker and
Jacquet--Shalika local zeta integrals, respectively. 
The corresponding finite-sum identity for standard Rankin--Selberg integrals was established by Jacquet \cite[Theorem~2.6]{Jac2009}, building on the foundational work of Jacquet and Shalika \cite{JS1989}.

Before stating our
main results, we introduce the necessary notation. Let \(\pi\) be an irreducible generic representation of
\(\mathrm{GL}_n(\mathbb{C})\). Fix a nontrivial additive character
\(\psi_\mathbb{C} \colon \mathbb{C} \to \mathbb{C}^{\times}\) that is trivial on \(\mathbb{R}\). Denote by \(\mathcal{W}(\pi,\psi_\mathbb{C})\) the Whittaker model of \(\pi\), and by
\(\mathcal{S}(\mathbb{R}^n)\) the space of Schwartz functions on \(\mathbb{R}^n\).
For \(W \in \mathcal{W}(\pi,\psi_\mathbb{C})\), \(\phi \in \mathcal{S}(\mathbb{R}^n)\), and \(s \in \mathbb{C}\), we define,
whenever convergent, the associated Flicker integral \cite{Fli1993,FZ1995} 
\[
  I(s,W,\phi)
  :=
  \int_{N_n(\mathbb{R})\backslash \mathrm{GL}_n(\mathbb{R})}
  W(g)\phi(e_ng)|\det g|_{\mathbb{R}}^s\,dg,
\]
where \(N_n\) is the standard upper triangular unipotent subgroup and
\(e_n=(0,\ldots,0,1)\in \mathbb{R}^n\). It converges for \(\mathrm{Re}(s) \gg 0\) and extends to a meromorphic function on $\mathbb{C}$ \cite{BP2021}. Our first main theorem asserts that the Asai \(L\)-factor
\(L(s,\pi,\As)\), defined via the local Langlands correspondence, can be
expressed as a finite sum of Flicker integrals.

\begin{thm}\label{main1}
Let \(\pi\) be an irreducible generic representation of
\(\mathrm{GL}_n(\mathbb{C})\). Then there exist an integer \(r\geq 1\),
\(W_i\in\mathcal{W}(\pi,\psi_{\mathbb{C}})\), and
\(\phi_i\in\mathcal{S}(\mathbb{R}^n)\), \(1\leq i\leq r\), such that
\[
L(s,\pi,\As)=\sum_{i=1}^r I(s,W_i,\phi_i).
\]
\end{thm}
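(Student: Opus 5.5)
We propose to follow the strategy Jacquet used for the Rankin--Selberg case \cite{Jac2009}: reduce to a statement about principal series, and then show that the space of Flicker integrals is an ideal that contains the $L$-factor.

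\textbf{Step 1: the ideal.} Let $\mathcal{I}(\pi)$ denote the $\mathbb{C}$-span of the meromorphic functions $I(s,W,\phi)$. We first check that $\mathcal{I}(\pi)$ is a module over the polynomial ring $\mathbb{C}[s]$.
\begin{itemize}
\item Stability under $\phi\mapsto\phi(\cdot\,t)$ and translation of $W$ by the center $\mathrm{GL}_1(\mathbb{R})$ gives stability under multiplication by a family of exponentials.
\item Differentiation in the center (the Euler operator applied to $\phi$, combined with the infinitesimal action of the center on $W$) gives stability under multiplication by $s$.
\end{itemize}
By Beuzart-Plessis \cite{BP2021}, every element of $\mathcal{I}(\pi)$ lies in $L(s,\pi,\As)\cdot\mathcal{O}$, where $\mathcal{O}$ is the space of entire functions of suitable growth in vertical strips. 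It therefore suffices to produce finitely many elements of $\mathcal{I}(\pi)$ whose quotients by $L(s,\pi,\As)$ generate the unit ideal. Concretely, we want one element equal to $L(s,\pi,\As)P(s)$ with $P$ a polynomial without zeros, or a family whose quotients have no common zero; a B\'ezout argument with polynomial coefficients then finishes.

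\textbf{Step 2: reduction to induced representations.} Write $\pi$ as the Langlands quotient (equivalently, by genericity, the full induced representation) $\Ind(\chi_1\otimes\cdots\otimes\chi_n)$ of characters of $\mathbb{C}^\times$. The local Langlands correspondence gives
\[
L(s,\pi,\As)=\prod_i L(s,\chi_i|_{\mathbb{R}^\times})\prod_{i<j}L(s,\chi_i\chi_j^{\sigma}\ \text{as a character of } \mathbb{C}^\times).
\]
Following Jacquet, we construct Whittaker functions via the Jacquet integral on sections of the induced representation, depending holomorphically on the inducing parameters. We then unfold $I(s,W,\phi)$ by induction on $n$ along the mirabolic, using the Bruhat cell $P_{n-1}\backslash\mathrm{GL}_n$ and the corresponding decomposition over $\mathrm{GL}_n(\mathbb{R})$. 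For suitable compactly supported data on the open cell, the integral factors as
\[
I(s,W_{n-1}',\phi')\times\prod_{j<n} L\text{-factors for the pairs } (\chi_j,\chi_n)\times \text{a Tate integral for }\chi_n|_{\mathbb{R}^\times},
\]
up to entire factors that we can make equal to $1$ (or to polynomial factors without common zeros) by choice of test data. The pair factors arise as $\mathrm{GL}_1(\mathbb{C})$ Tate integrals of $\chi_j\chi_n^{\sigma}$, which realize exactly the pair $L$-factors. The base case $n=1$ is Tate's thesis over $\mathbb{R}$: $L(s,\chi|_{\mathbb{R}^\times})$ is a single Tate integral.

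\textbf{Main obstacle.} The key difficulty is controlling the finite-sum nature through the analytic continuation. In the unfolding, the inner integrals are only convergent for special parameters, and the Jacquet integral defining $W$ is meromorphic in the inducing data. To handle this, we first work with generic (regular, suitably positive) parameters, where all integrals converge and the identity holds exactly. We then use holomorphic families of sections in the parameters together with the $\mathbb{C}[s]$-module structure to pass to the given $\pi$. At points where $\pi$ is a proper subquotient situation or where poles coincide, we plan to replace equality by the divisibility statement of Step 1. We would then use derivatives in the inducing parameters, which are finite linear combinations of Flicker integrals of $\pi$, to recover the missing orders of poles. This is exactly where Jacquet's argument needs care with multiplicities, and we expect it to be the hardest part here as well.
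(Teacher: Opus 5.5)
There is a genuine gap. The two steps that would carry your proof are, respectively, invalid in the archimedean setting and only asserted.

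In Step~1 the $\mathbb{C}[s]$-module structure is fine, but the reduction is not. The quotients $I(s,W,\phi)/L(s,\pi,\As)$ are transcendental entire functions, and entire functions without common zeros need not generate $1$ over $\mathbb{C}[s]$. For $1/\Gamma(s)$ and $1/\Gamma(1-s)$, evaluating $p_1(s)/\Gamma(s)+p_2(s)/\Gamma(1-s)$ at $s=-k-\tfrac12$, $k\to\infty$, shows it is never identically $1$ for polynomials $p_1,p_2$. A polynomial without zeros is a constant, so your first alternative is the theorem itself. The B\'ezout alternative presupposes that the quotients are polynomials. The ideal argument is a non-archimedean device (quotients in $\mathbb{C}[q^{s},q^{-s}]$), and it is not what Jacquet does archimedeanly.

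So everything rests on Step~2, and its key claim is stated without a mechanism. That claim is that, for suitable data, an integral over $N_n(\mathbb{R})\backslash\GL_n(\mathbb{R})$ factors \emph{exactly} as $I(s,W'_{n-1},\phi')\prod_{j<n}L(s,\chi_j\chi_n^c)$ times a Tate integral, with leftover entire factors equal to $1$ (or coprime polynomials). The summand $\Ind_{W_{\mathbb{C}}}^{W_{\mathbb{R}}}(\sigma^-\otimes\chi_n^c)$ of $\As(\sigma)$ is of Rankin--Selberg type. Producing exactly its $L$-factor from Flicker integrals amounts to a comparison between Flicker and Rankin--Selberg integrals, which is precisely the obstruction to a direct induction along $\sigma=\sigma^-\oplus\chi_n$.

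Your remedy for colliding poles also fails: $\lambda$-derivatives of a holomorphic family $W_\lambda$ do not lie in $\mathcal{W}(\pi_{\lambda_0},\psi_{\mathbb{C}})$. Already for $n=1$, the $\lambda$-derivative of a Tate integral has a double pole where $L(s,\As(\chi_{\lambda_0}))$ has only a simple one. By the Beuzart-Plessis divisibility you invoke, it therefore cannot be a Flicker integral of $\pi_{\lambda_0}$.

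The missing idea is never to realize the Rankin--Selberg-type factor by an integral at all. Order $\sigma=\chi_1\oplus\cdots\oplus\chi_n$ by exponents and work with the two overlapping subparameters $\sigma^-=\chi_1\oplus\cdots\oplus\chi_{n-1}$ and $\sigma^+=\chi_2\oplus\cdots\oplus\chi_n$.
\begin{enumerate}
\item Jacquet's Proposition~14.1, applied over $\mathbb{C}$ and restricted to real points, together with partial Flicker integrals and Dixmier--Malliavin, gives $\mathcal{I}_{\As}(\xi)\subseteq\mathcal{I}_{\As}(\xi\oplus\zeta)$ whenever $\xi\preceq\zeta$.
\item Applying this to $\zeta^\vee\oplus\xi^\vee$ and then the functional equation (Theorem~\ref{fe1}) gives $R_{\xi,\zeta}(s)\,\mathcal{I}_{\As}(\zeta)\subseteq\mathcal{I}_{\As}(\xi\oplus\zeta)$. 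So the cross term enters only through the ratio $L(s,\rho_1)/L(1-s,\rho_1^\vee)$ coming from a gamma factor, where $\rho_1=\As(\chi_1)\oplus\Ind_{W_{\mathbb{C}}}^{W_{\mathbb{R}}}(\chi_1\otimes(\sigma^+)^c)$.
\item The rest is function theory. Replace ideals by the $L$-spaces $\mathcal{L}(\rho)$. Jacquet's principal-part lemma and the pole separation of Lemma~\ref{lem:asai-pole-separation} then yield $\mathcal{L}(\As(\sigma))=\mathcal{L}(\As(\sigma^-))+\frac{L(s,\rho_1)}{L(1-s,\rho_1^\vee)}\mathcal{L}(\As(\sigma^+))$. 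This absorbs all pole multiplicities without any differentiation in parameters.
\end{enumerate}
Induction over all generic induced $\pi_\sigma$, starting from Tate's thesis for $n=1$, then gives $\mathcal{I}_{\As}(\sigma)\supseteq\mathcal{L}(\As(\sigma))\ni L(s,\pi,\As)$. The induction must include reducible $\pi_\sigma$, since $\pi_{\sigma^{\pm}}$ need not be irreducible.
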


Theorem~\ref{main1} is the archimedean analogue of Matringe's
non-archimedean result \cite[Theorem~5.3]{Mat2011}. For \(n=2\), Chen, Cheng, and Ishikawa
\cite[Theorem~3.1(3)]{CCI2020} established such a finite-sum identity
using families of \(\mathrm{U}(2)\)-finite vectors and standard Schwartz
functions. Theorem~\ref{main1} extends the existence of such a finite-sum
expression from \(n=2\) to arbitrary \(n\).

\par
We next consider the exterior-square \(L\)-factor. Let \(F\) be an
archimedean local field and let \(m\geq2\), written as \(m=2n\) or \(m=2n+1\) according to its parity. Let \(\pi\) be an irreducible generic representation of
\(\mathrm{GL}_m(F)\). Fix a nontrivial additive character
\(\psi_F\colon F\to\mathbb{C}^{\times}\). 
We write \(M_n(F)\) for the space of \(n\times n\) matrices and \(V_n(F)\) for its subspace of upper triangular matrices. Let \(\sigma_{2n}\) and \(\sigma_{2n+1}\) be the permutation matrices defined in Section~\ref{s2}.

For \(m=2n\), \(W\in\mathcal{W}(\pi,\psi_F)\),
\(\phi\in\mathcal{S}(F^n)\), and $s\in\mathbb{C}$, we define,
whenever convergent, the Jacquet--Shalika integral
\cite{JS1990} 
\[
\begin{aligned}
  J(s,W,\phi)
  &:=
  \int_{N_n(F)\backslash\mathrm{GL}_n(F)}
  \int_{V_n(F)\backslash M_n(F)}
  W\left(
    \sigma_{2n}
    \begin{pmatrix}
      I_n & X\\
      0   & I_n
    \end{pmatrix}
    \begin{pmatrix}
      g & 0\\
      0 & g
    \end{pmatrix}
  \right) \\
  &\qquad\qquad\times
  \psi_F(-\operatorname{tr}X)\phi(e_ng)
  |\det g|_F^s\,dX\,dg.
\end{aligned}
\]
If \(m=2n+1\), then, for \(W\in\mathcal{W}(\pi,\psi_F)\), the
corresponding integral is
\[
\begin{aligned}
  J(s,W)
  &:=
  \int_{N_n(F)\backslash\mathrm{GL}_n(F)}
  \int_{V_n(F)\backslash M_n(F)}
  W\left(
    \sigma_{2n+1}
    \begin{pmatrix}
      I_n & X   & 0\\
      0   & I_n & 0\\
      0   & 0   & 1
    \end{pmatrix}
    \begin{pmatrix}
      g & 0 & 0\\
      0 & g & 0\\
      0 & 0 & 1
    \end{pmatrix}
  \right) \\
  &\qquad\qquad\times
  \psi_F(-\operatorname{tr}X)|\det g|_F^{s-1}\,dX\,dg.
\end{aligned}
\]
These integrals converge for \(\mathrm{Re}(s) \gg 0\) and extend to meromorphic functions on $\mathbb{C}$ \cite{JS1990, Bel2011,JLST2025}.
Let $L(s,\pi,\wedge^2)$ denote the exterior-square $L$-factor of $\pi$ defined via the local Langlands correspondence. Our second main theorem is the following.

\begin{thm}\label{main2}
Let \(\pi\) be an irreducible generic representation of
\(\mathrm{GL}_m(F)\). There exist an integer \(r\geq 1\), a finite
family of Whittaker functions
\(\{W_i\}_{i=1}^r\subset\mathcal{W}(\pi,\psi_F)\), and, when
\(m=2n\), a finite family of Schwartz functions
\(\{\phi_i\}_{i=1}^r\subset\mathcal{S}(F^n)\), such that
\[
L(s,\pi,\wedge^2)=
\begin{cases}
\displaystyle\sum_{i=1}^r J(s,W_i,\phi_i)
  & \text{if } m=2n,\\[6pt]
\displaystyle\sum_{i=1}^r J(s,W_i)
  & \text{if } m=2n+1.
\end{cases}
\]
\end{thm}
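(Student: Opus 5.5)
We follow the strategy of Jacquet \cite[Theorem~2.6]{Jac2009} for Rankin--Selberg integrals, adapted to the Jacquet--Shalika setting. By the results of Jiang--Liu--Sun--Tian \cite{JLST2025}, the integrals $J(s,W,\phi)$ (resp.\ $J(s,W)$) converge for $\mathrm{Re}(s)\gg0$, continue meromorphically, and the quotients $J/L(s,\pi,\wedge^2)$ are entire. Let $\mathcal{I}$ denote the complex span of all $J(s,W,\phi)$ (resp.\ $J(s,W)$). Our first step is to show that $\mathcal{I}$ is a module over polynomials in $s$, or at least that it is stable under enough operations to produce all needed shifts. Since $L(s,\pi,\wedge^2)$ is a finite product of Gamma factors $\Gamma_{\R}$ or $\Gamma_{\C}$ of the exponents of the Langlands parameter, the goal becomes to show that the ideal $\{f/L(s,\pi,\wedge^2) : f\in\mathcal{I}\}$ contains the constant function $1$.

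The second step is to reduce to principal series. Write $\pi$ as the unique generic constituent of, or equal to, an induced representation $\Ind(\pi_1\otimes\cdots\otimes\pi_k)$ from essentially square-integrable (or $\GL_1,\GL_2$ over $\R$) data in Langlands position. Whittaker functions of the induced representation are realized as Jacquet integrals of sections $f_\Phi$ obtained by Godement-type sections built from Schwartz functions. We unfold $J$ for such Whittaker functions along the parabolic, in the manner of the Rankin--Selberg unfolding in \cite{JS1990}, so that $J$ becomes a combination of products of smaller integrals: Jacquet--Shalika integrals for the blocks $\pi_i$ and Rankin--Selberg integrals $\pi_i\times\pi_j$, $i<j$. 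This mirrors the factorization
\[
L(s,\pi,\wedge^2)=\prod_i L(s,\pi_i,\wedge^2)\prod_{i<j}L(s,\pi_i\times\pi_j).
\]
We then invoke Jacquet's finite-sum result for the Rankin--Selberg factors, together with the base cases $m=2,3$ and the $\GL_1$/$\GL_2$ blocks, where $J$ can be computed explicitly as Tate or Hecke-type integrals and the corresponding Gamma factors are obtained directly by choosing $K$-finite data.

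The main obstacle is making this unfolding rigorous at the level of \emph{individual} Whittaker functions. We need actual finite sums, not merely statements about the span being dense or about holomorphy of quotients. Two points require care. First, one must control convergence and the exchange of the Jacquet integral with the $X$- and $g$-integrations, which we would handle using the gauge estimates of \cite{JLST2025}. Second, one must ensure that the Schwartz data on the parabolic can be chosen to decouple the product; here we follow Jacquet's trick of using Dixmier--Malliavin to write smooth vectors as finite sums of convolutions.

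For the generic representation that is a proper quotient rather than the full induced representation, we argue that the Whittaker model of $\pi$ equals that of the standard module. This holds because $\pi$ generic forces the standard module to be irreducible, so no loss occurs.
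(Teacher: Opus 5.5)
There is a genuine gap at the central step of your plan. You assert that, once Whittaker functions of the induced representation are realized as Jacquet integrals of Godement-type sections, $J(s,W,\phi)$ (or $J(s,W)$) unfolds into a combination of products of Jacquet--Shalika integrals of the blocks $\pi_i$ and Rankin--Selberg integrals of the pairs $\pi_i\times\pi_j$. No such local identity is available, and your sketch does not produce one.

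\begin{itemize}
\item Locally there is no automorphy to exploit. Substituting a Jacquet integral into $J$ gives an iterated integral of the section. In it, the unipotent variables of the Jacquet integral are entangled with the $X$- and $\mathrm{diag}(g,g)$-variables, and nothing separates them into independent block and pair integrals.
\item Even granting a product formula for special data, you would need the data in each factor to vary independently, so that the product of the separate finite-sum identities is again a finite sum of $J$'s.
\item You would also need the cross factors to be honest Rankin--Selberg integrals, or Jacquet's theorem does not apply.
\end{itemize}

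Over $\mathbb{C}$ every block is a character, so $\wedge^2$ of each block is zero and all of $L(s,\pi,\wedge^2)$ comes from the cross terms $\delta_i\otimes\delta_j$. Your reduction is therefore exactly a comparison between Jacquet--Shalika and Rankin--Selberg integrals. That is the obstacle the paper identifies in its introduction. Dixmier--Malliavin and gauge estimates handle convergence and smoothness, not this structural issue.

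The paper never makes that comparison. It proves the stronger statement $\mathcal{J}_{\wedge^2}(\sigma)\supseteq\mathcal{L}(\wedge^2\sigma)$ for every generic induced $\pi_\sigma$, by induction on $\dim\sigma$. The base cases $m=2,3$ come from Tate's thesis and Jacquet's $\mathrm{GL}_3\times\mathrm{GL}_1$ result. The two inputs are those of Proposition~\ref{prop:exterior-square-integral-inclusions}:
\begin{itemize}
\item $\mathcal{J}_{\wedge^2}(\kappa\oplus\delta)\supseteq\mathcal{J}_{\wedge^2}(\kappa)$ for $\kappa\preceq\delta$, from Jacquet's Proposition~14.1 and Lemma~\ref{lem:partial-jacquet-shalika-integrals};
\item $\mathcal{J}_{\wedge^2}(\delta\oplus\kappa)\supseteq Q_{\delta,\kappa}(s)\,\mathcal{J}_{\wedge^2}(\kappa)$ for $\delta\preceq\kappa$, from the first inclusion for contragredients plus the functional equation.
\end{itemize}
These are applied to the two overlapping subparameters $\sigma^-$ and $\sigma^+$. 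They are glued by the $L$-space identity of Proposition~\ref{prop:exterior-square-L-space-decomposition}, which rests on the pole separation of Lemma~\ref{lem:exterior-square-pole-separation}.

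Your plan uses neither the functional equation nor $L$-spaces, and both are needed. The functional equation is what produces the ratio $Q_{\delta_1,\sigma^+}(s)$. Absorbing that ratio requires the induction hypothesis in the form $\mathcal{J}_{\wedge^2}\supseteq\mathcal{L}$, not merely that the $L$-factor lies in $\mathcal{J}_{\wedge^2}$, because arbitrary principal parts must be realized. Your first step, that the span is a module over polynomials in $s$, is left unjustified and is not what drives Jacquet's argument.
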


For the corresponding non-archimedean results, see \cite{KR2012,Jo2020}.

Theorems~\ref{main1} and~\ref{main2} have applications in both
local and global settings. Locally, Theorem~\ref{main1} yields a
characterization of distinction: an irreducible generic representation
$\pi$ of $\mathrm{GL}_n(\mathbb{C})$ is
$\mathrm{GL}_n(\mathbb{R})$-distinguished if and only if $s=0$ is an
exceptional pole of $L(s,\pi,\As)$ with level $0$; see
Theorem~\ref{thm:distinguished-exceptional}. Globally, combining Theorems~\ref{main1} and~\ref{main2} with their non-archimedean counterparts, we express the global Asai and exterior-square $L$-functions as finite sums of global Flicker and Jacquet--Shalika integrals, respectively; see Theorems~\ref{sum-global} and~\ref{sum-global-exterior-square}.

We sketch the proof of Theorem~\ref{main1}; the proof of
Theorem~\ref{main2} is similar. The main difficulty is that Jacquet's
inductive argument \cite{Jac2009} does not apply directly. Indeed, for
semisimple representations \(\tau_1\) and \(\tau_2\) of Weil group
\(W_{\mathbb C}\), we have
\[
\As(\tau_1\oplus\tau_2)
=
\As(\tau_1)\oplus\As(\tau_2)\oplus
\operatorname{Ind}_{W_{\mathbb C}}^{W_{\mathbb R}}
\bigl(\tau_1\otimes\tau_2^c\bigr),
\]
where \(\tau_2^c(z)=\tau_2(\overline z)\). The last summand is of
Rankin--Selberg type. Therefore, a direct induction would require a
comparison between local Rankin--Selberg and Flicker integrals. We avoid such a comparison by applying induction to two
overlapping subparameters.

Write the Langlands parameter of \(\pi\) as
\[
\sigma=\chi_1\oplus\cdots\oplus\chi_n,
\qquad
\sigma^+=\chi_2\oplus\cdots\oplus\chi_n,
\qquad
\sigma^-=\chi_1\oplus\cdots\oplus\chi_{n-1},
\]
where the \(\chi_i\) are characters of 
\(W_{\mathbb{C}}\), ordered so that their exponents \(e_i\) satisfy
$e_1\leq\cdots\leq e_n.$ 
Then
\[
\As(\sigma)
=
\rho_1\oplus\As(\sigma^+)
=
\As(\sigma^-)\oplus\eta_2,
\]
where
\[
\begin{aligned}
\rho_1&=\As(\chi_1)\oplus
\operatorname{Ind}_{W_{\mathbb{C}}}^{W_{\mathbb{R}}}
\bigl(\chi_1\otimes(\sigma^+)^c\bigr),\\
\eta_2&=\As(\chi_n)\oplus
\operatorname{Ind}_{W_{\mathbb{C}}}^{W_{\mathbb{R}}}
\bigl(\sigma^-\otimes\chi_n^c\bigr).
\end{aligned}
\]

In Section~\ref{s3}, we associate to each semisimple Weil-group representation \(\rho\) an \(L\)-space \(\mathcal{L}(\rho)\). It consists of functions of the form \(L(s,\rho)h(s)\), where \(h\) is entire and \(L(s,\rho)h(s)\) is rapidly decreasing in vertical strips. We prove that
\[
\mathcal{L}\bigl(\As(\sigma)\bigr)
=
\mathcal{L}\bigl(\As(\sigma^-)\bigr)
+
\frac{L(s,\rho_1)}{L(1-s,\rho_1^\vee)}
\mathcal{L}\bigl(\As(\sigma^+)\bigr),
\]
which reduces the proof to the two \((n-1)\)-dimensional subparameters \(\sigma^-\) and \(\sigma^+\).

The paper is organized as follows. In Section~\ref{s2}, we fix the
notation and recall the necessary definitions. In Section~\ref{s3}, we
establish the key decomposition identity for $L$-spaces used in the
proofs of Theorems~\ref{main1} and~\ref{main2}. In
Sections~\ref{s4} and~\ref{s5}, we prove Theorems~\ref{main1}
and~\ref{main2}, respectively. We present the local applications in
Section~\ref{s6} and the global applications in Section~\ref{s7}.
\end{section} 

\section{Preliminaries}\label{s2}

\subsection{Basic notation}

Let \(F\) be an archimedean local field and let \(n\geq1\) be an
integer. We write \(|\cdot|_F\) for the normalized absolute value on
\(F\). For the Jacquet--Shalika integrals, we take \(m=2n\) or \(m=2n+1\), according to the parity under consideration. Set \(G_n=\GL_n\), and let \(B_n\) and \(N_n\) denote its upper
triangular and upper triangular unipotent subgroups, respectively. Let \( P_n \) be the mirabolic subgroup of \( G_n \), consisting of matrices that stabilize the row vector \( e_n = (0, \dots, 0, 1) \) under the right multiplication action. Let $K_n=\operatorname{O}(n)$ be the maximal compact subgroup of $G_n(\mathbb{R})$. For any commutative ring $R$, we write $M_n(R)$ for the algebra of $n \times n$ matrices over $R$, and $V_n(R)$ for its submodule of upper triangular matrices. Let $\mathbb{S}^1$ denote the multiplicative group of all complex numbers with absolute value $1$. Let $w_n\in G_n(F)$ denote the long Weyl element, that is,
\[
w_n=
\begin{pmatrix}
&&1\\
&\iddots&\\
1&&
\end{pmatrix}.
\]
Let \(\sigma_{2n}\) and \(\sigma_{2n+1}\) be the permutation matrices
given by
\[
\sigma_{2n}
=
\left(
\begin{array}{cccc|cccc}
1 & 2 & \cdots & n
  & n+1 & n+2 & \cdots & 2n \\
1 & 3 & \cdots & 2n-1
  & 2 & 4 & \cdots & 2n
\end{array}
\right)
\]
and
\[
\sigma_{2n+1}
=
\left(
\begin{array}{cccc|ccccc}
1 & 2 & \cdots & n
  & n+1 & n+2 & \cdots & 2n & 2n+1 \\
1 & 3 & \cdots & 2n-1
  & 2 & 4 & \cdots & 2n & 2n+1
\end{array}
\right).
\]
We also set
\[
\tau_{2n}
=
\begin{pmatrix}
0&I_n\\
I_n&0
\end{pmatrix}
\qquad\text{and}\qquad
\tau_{2n+1}
=
\begin{pmatrix}
0&I_n&0\\
I_n&0&0\\
0&0&1
\end{pmatrix}.
\]

Fix a nontrivial unitary additive character
\(\psi_F\colon F\to\mathbb S^1\). In the Asai setting, the character
\(\psi_{\mathbb C}\) is additionally assumed to be trivial on
\(\mathbb R\). We use the same notation for the
generic character of $N_n(F)$ given by
\[
\psi_F(u)=\psi_F\left(\sum_{i=1}^{n-1}u_{i,i+1}\right), \qquad u\in N_n(F).
\]

Let \(\tau\in\mathbb C^\times\) be the unique element satisfying
\[
\psi_{\mathbb C}(z)
=\psi_{\mathbb R}(\operatorname{Tr}_{\mathbb C/\mathbb R}(\tau z)),
\qquad z\in\mathbb C,
\]
where $\operatorname{Tr}_{\mathbb{C}/\mathbb{R}}$ denotes the trace of the extension $\mathbb{C}/\mathbb{R}$.

We let $\mathcal{S}(F^n)$ be the space of Schwartz functions on $F^n$. We denote by $\phi \mapsto \widehat{\phi}$ the Fourier transform on $F^n$ defined as follows: for every $\phi \in \mathcal{S}(F^n)$ we have $$ \widehat{\phi}\left(x_1, \ldots, x_n\right)=\int_{F^n} \phi\left(y_1, \ldots, y_n\right) \psi_{F}\left(x_1 y_1+\ldots+x_n y_n\right) d y_1 \ldots d y_n $$ for all $\left(x_1, \ldots, x_n\right) \in F^n$, where the measure of integration is chosen so that $\widehat{\widehat{\phi}}(v)=\phi(-v)$. When we wish to emphasize the dependence on the additive character, we
write \(\mathcal{F}_{\psi_F}(\phi)\) for \(\widehat{\phi}\).

By a representation of $G_n(F)$, we mean a smooth admissible
Fr\'echet representation of moderate growth in the sense of
Casselman--Wallach. We denote the set of isomorphism classes of irreducible representations of \( G_n(F) \) by \( \operatorname{Irr}(G_n(F)) \). For $\pi\in\operatorname{Irr}(G_n(F))$, we denote its contragredient by $\pi^\vee$ and its central character by $\omega_\pi$.  A representation $\pi\in\operatorname{Irr}(G_n(F))$ is called \emph{generic} if
$$\operatorname{Hom}_{N_n(F)}(\pi,\psi_F)\neq 0.$$ We denote the subset of generic representations in \(\operatorname{Irr}(G_n(F))\) by \(\operatorname{Irr}_{\mathrm{gen}}(G_n(F))\). For such a
representation, we denote its Whittaker model by
$\mathcal{W}(\pi,\psi_F)$. For a smooth manifold \(X\), we denote by
\(C_c^\infty(X)\) the space of smooth compactly supported
complex-valued functions on \(X\). Let \(R\) denote the right translation action of
\(G_n(F)\) on \(\mathcal{W}(\pi,\psi_{F})\), given by
\[
(R(g)W)(x)=W(xg)\>\>\>\forall\>x \in G_n(F).
\] For \(f\in C_c^\infty(G_n(F))\), we also write
\[
R(f)W
=
\int_{G_n(F)} f(g)R(g)W\,dg.
\]
For $W\in\mathcal{W}(\pi,\psi_F)$, define
\[
\widetilde{W}(g)=W\left(w_n\,{}^tg^{-1}\right),
\qquad g\in G_n(F).
\]
Then $\widetilde{W}\in
\mathcal{W}(\pi^{\vee},\psi_F^{-1})$, and the map
$W\mapsto\widetilde{W}$ induces an isomorphism of topological vector
spaces
\[
\mathcal{W}(\pi,\psi_F)
\cong
\mathcal{W}(\pi^{\vee},\psi_F^{-1}).
\]

Let $W_F$ denote the Weil group of $F$. If $F=\mathbb{C}$, then
$W_F=\mathbb{C}^{\times}$. If $F=\mathbb{R}$, then
$$
W_F=\mathbb{C}^{\times}\sqcup j\mathbb{C}^{\times},
$$
where $j^2=-1$ and $jwj^{-1}=\overline{w}
$ for $w\in\mathbb{C}^{\times}.
$
An \emph{admissible representation} of \(W_F\) is a continuous
semisimple representation
\[
\sigma \colon W_F\longrightarrow \operatorname{GL}(V_\sigma),
\]
where \(V_\sigma\) is a finite-dimensional complex vector space. We
denote by \(L(s,\sigma)\) and \(\varepsilon(s,\sigma,\psi_F)\) the local
\(L\)- and \(\varepsilon\)-factors attached to \(\sigma\), as in
\cite{Tate1979}, and define
\[
\gamma(s,\sigma,\psi_F)
=
\varepsilon(s,\sigma,\psi_F)
\frac{L(1-s,\sigma^{\vee})}{L(s,\sigma)},
\]
where \(\sigma^{\vee}\) denotes the contragredient of \(\sigma\).
Let $\eta$ be the quadratic character of $W_\mathbb{R}$ associated to the extension $\mathbb{C}/\mathbb{R}$ and set
\[
\lambda_{\mathbb{C}/\mathbb{R}}\left(\psi_\R\right)=\varepsilon\left(\frac{1}{2}, \eta, \psi_\R\right) .\]

An irreducible admissible representation of \(W_F\) is said to be
\emph{normalized} if its restriction to
\(\mathbb{R}_{>0}^{\times}\subset W_F\) is trivial. We may write
\[
\sigma
=
\bigoplus_{i=1}^r
\sigma_i\otimes |\cdot|_F^{u_i},
\]
where the \(\sigma_i\) are normalized irreducible representations of
\(W_F\), and \(u_1,\ldots,u_r\in\mathbb{C}\) are ordered so that
\[
\operatorname{Re}(u_1)
\leq
\operatorname{Re}(u_2)
\leq
\cdots
\leq
\operatorname{Re}(u_r).
\]
Set
\[
D
=
\bigl((\sigma_1,u_1),\ldots,(\sigma_r,u_r)\bigr),
\]
and denote the corresponding induced representation by
\((\pi_D,I_D)\). Although the above decomposition of \(\sigma\) is not
unique, the equivalence class of \((\pi_D,I_D)\) depends only on
\(\sigma\). We denote this equivalence class by
\((\pi_\sigma,I_\sigma)\) and call it the generic induced
representation attached to \(\sigma\).

Fix a nonzero \(\psi_F\)-Whittaker functional
\(\lambda_\sigma\) on \(I_\sigma\). For \(v\in I_\sigma\), set
\[
W_v(g)=\lambda_\sigma\bigl(\pi_\sigma(g)v\bigr),
\qquad g\in G_n(F),
\]
and write
\[
\mathcal{W}(\pi_\sigma,\psi_F)
=
\{W_v:v\in I_\sigma\}.
\]
We call this the Whittaker realization of \((\pi_\sigma,I_\sigma)\).
When \(\pi_\sigma\) is irreducible, it is the usual Whittaker model. For this realization, the formula
\(\widetilde W(g)=W\left(w_n\,{}^tg^{-1}\right)\) gives a bijection
\[
\mathcal{W}(\pi_\sigma,\psi_F)
\longrightarrow
\mathcal{W}(\pi_{\sigma^\vee},\psi_F^{-1}).
\]

The numbers $\mathrm{Re}(u_1),\ldots,\mathrm{Re}(u_r)$ are called the \emph{exponents} of $\sigma$. The exponents of
$\sigma^{\vee}$ are the negatives of the exponents of $\sigma$.
For two admissible representations $\sigma$ and $\sigma'$ of $W_F$,
we write $\sigma\preceq\sigma'$ if the largest exponent of $\sigma$
is less than or equal to the smallest exponent of $\sigma'$.

For a meromorphic function \(f\) on \(\mathbb{C}\), we denote its set
of poles by \(\mathcal{P}(f)\). If $s_0\in\mathcal{P}(L(s,\sigma))$, there is an exponent $v$ of
$\sigma$ such that
$\mathrm{Re}(s_0)+v\leq 0.$

\subsection{The Asai representation}

Let \(\sigma\colon W_{\mathbb{C}}\to\GL(V)\) be an admissible
representation. The Asai representation
\[
\As(\sigma)\colon W_{\mathbb{R}}\to\GL(V\otimes V)
\]
is defined on pure tensors by
\[
\begin{aligned}
\As(\sigma)(z)(u\otimes v)
&=\sigma(z)u\otimes\sigma(jzj^{-1})v,
&& z\in W_{\mathbb{C}},\\
\As(\sigma)(j)(u\otimes v)
&=v\otimes\sigma(j^2)u,
\end{aligned}
\qquad u,v\in V.
\] Then \(\As(\sigma)\) is an admissible
representation of \(W_{\mathbb{R}}\). We write
\[
\begin{aligned}
L(s,\sigma,\As)
&=L(s,\As(\sigma)),\\
\varepsilon(s,\sigma,\As,\psi_{\mathbb{R}})
&=\varepsilon(s,\As(\sigma),\psi_{\mathbb{R}}).
\end{aligned}
\]
Suppose that
\[
\sigma=\sigma_1\oplus\cdots\oplus\sigma_k
\]
is a direct sum of admissible representations of
$W_{\mathbb{C}}$. Then
\[
\As(\sigma)
\simeq
\bigoplus_{i=1}^k\As(\sigma_i)
\oplus
\bigoplus_{1\leq i<j\leq k}
\operatorname{Ind}_{W_{\mathbb{C}}}^{W_{\mathbb{R}}}
\bigl(\sigma_i\otimes\sigma_j^c\bigr),
\]
where $\sigma_j^c$ is the representation of $W_{\mathbb{C}}$ defined by
$
\sigma_j^c(z)
=
\sigma_j(jzj^{-1}).
$

Let $\pi\in\operatorname{Irr}(G_n(\mathbb C))$. The local Langlands
correspondence for $G_n(\mathbb C)$ \cite[Theorem~5]{Kna1994} associates with $\pi$ an
$n$-dimensional admissible representation
\[
\sigma_\pi\colon W_{\mathbb C}\to\GL(V).
\]
We set
\[
L(s,\pi,\As)
:=
L\bigl(s,\As(\sigma_\pi)\bigr),
\]
\[
\varepsilon(s,\pi,\As,\psi_{\mathbb R})
:=
\varepsilon\bigl(s,\As(\sigma_\pi),\psi_{\mathbb R}\bigr)
\]
and call them the Asai $L$-function and $\varepsilon$-factor
of $\pi$, respectively.

\subsection{The exterior-square representation}

Let $\sigma\colon W_F\to\GL(V)$ be an admissible representation. We define
$\wedge^2(\sigma)\colon W_F\to\GL(\wedge^2V)$ by
\[
\wedge^2(\sigma)(w)(v_1\wedge v_2)
=
\sigma(w)v_1\wedge\sigma(w)v_2,
\qquad w\in W_F,\quad v_1,v_2\in V.
\]
Then $\wedge^2(\sigma)$ is an admissible representation of $W_F$. We write
\[
\begin{aligned}
L(s,\sigma,\wedge^2)
&=L\bigl(s,\wedge^2(\sigma)\bigr),\\
\varepsilon(s,\sigma,\wedge^2,\psi_F)
&=\varepsilon\bigl(s,\wedge^2(\sigma),\psi_F\bigr).
\end{aligned}
\]
Suppose that
\[
\sigma=\sigma_1\oplus\cdots\oplus\sigma_k
\]
is a direct sum of admissible representations of $W_F$. Then
\[
\wedge^2(\sigma)
\simeq
\bigoplus_{i=1}^k\wedge^2(\sigma_i)
\oplus
\bigoplus_{1\leq i<j\leq k}
\sigma_i\otimes\sigma_j.
\]

Let $\pi\in\operatorname{Irr}(G_n(F))$. The local Langlands
correspondence for $G_n(F)$ \cite[Theorems~2 and~5]{Kna1994} associates with $\pi$ an
$n$-dimensional admissible representation
\[
\sigma_\pi\colon W_F\to\GL(V).
\]
We define the exterior-square \(L\)-function and \(\varepsilon\)-factor of
\(\pi\), respectively, by
\[
\begin{aligned}
L(s,\pi,\wedge^2)
&=L\bigl(s,\wedge^2(\sigma_\pi)\bigr),\\
\varepsilon(s,\pi,\wedge^2,\psi_F)
&=\varepsilon\bigl(s,\wedge^2(\sigma_\pi),\psi_F\bigr).
\end{aligned}
\]
\subsection{Families of local integrals and their functional equations}

Let \(\sigma\) be an admissible \(n\)-dimensional representation of
\(W_{\mathbb{C}}\). We define
\[
\mathcal{I}_{\As}(\sigma)
=
\operatorname{span}_{\mathbb{C}}
\left\{
I(s,W,\phi):
W\in\mathcal{W}(\pi_\sigma,\psi_{\mathbb{C}}),\
\phi\in\mathcal{S}(\mathbb{R}^n)
\right\}.
\]

The proof of \cite[Theorem~1(i),(ii)]{BP2021}, in particular the
argument in \cite[Section~3.10]{BP2021}, gives the following functional
equation for the generic induced representation \(\pi_\sigma\).

\begin{thm}
\label{fe1}
For every
\(W\in\mathcal{W}(\pi_\sigma,\psi_{\mathbb{C}})\) and
\(\phi\in\mathcal{S}(\mathbb{R}^n)\), the integral \(I(s,W,\phi)\)
admits a meromorphic continuation to \(\mathbb{C}\) and satisfies
\[
\frac{I(1-s,\widetilde W,\widehat\phi)}
     {L\bigl(1-s,\As(\sigma)^\vee\bigr)}
=
\omega_{\pi_\sigma}(\tau)^{n-1}
|\tau|_{\mathbb{C}}^{\frac{n(n-1)}{2}(s-\frac12)}
\lambda_{\mathbb{C}/\mathbb{R}}(\psi_{\mathbb{R}})
^{-\frac{n(n-1)}{2}}
\varepsilon\bigl(s,\As(\sigma),\psi_{\mathbb{R}}\bigr)
\frac{I(s,W,\phi)}
     {L\bigl(s,\As(\sigma)\bigr)}.
\]
\end{thm}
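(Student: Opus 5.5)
The plan is to follow Beuzart-Plessis's argument in \cite[Section~3.10]{BP2021} and check that nothing in it uses irreducibility of the representation, only the structure of the generic induced representation \(\pi_\sigma\) and its Whittaker functional. Write \(\sigma=\bigoplus_i\sigma_i\otimes|\cdot|^{u_i}\), so that \(\pi_\sigma\) is induced from characters (or twisted characters) of the Borel subgroup of \(G_n(\mathbb{C})\). The Whittaker functional \(\lambda_\sigma\) is the Jacquet integral, holomorphic in the parameters \(u=(u_1,\dots,u_n)\). The strategy is to prove the identity first for \(u\) in a Zariski-dense open set where \(\pi_{\sigma_u}\) is irreducible and the exponents are in general position, and then extend it to all \(u\) by analytic continuation in \(u\).

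The first step is meromorphic continuation. For \(W=W_v\) with \(v\) in a flat section (a fixed \(K\)-restriction), the Jacquet integral gives asymptotic expansions of \(W\) on the torus, uniformly in \(u\), with exponents governed by the \(u_i\). Plugging these into the Iwasawa decomposition of \(N_n(\mathbb{R})\backslash\GL_n(\mathbb{R})\), as in \cite{BP2021}, gives convergence for \(\mathrm{Re}(s)\gg0\) locally uniformly in \(u\), and meromorphic continuation in \((s,u)\) jointly. Moreover \(I(s,W_{v_u},\phi)/L(s,\As(\sigma_u))\) is holomorphic.

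The second step is the functional equation for generic \(u\). There \(\pi_{\sigma_u}\) is irreducible, and \(\omega_{\pi_\sigma}\) and \(\varepsilon(s,\As(\sigma),\psi_{\mathbb{R}})\) are the usual invariants. The functional equation is the content of \cite[Theorem~1(ii)]{BP2021}, with the constant identified via Langlands' \(\gamma\)-factor. Beuzart-Plessis establishes it by globalization or by his multiplicativity argument; here we only need its statement for irreducible generic \(\pi\). In the irreducible case the Whittaker realization of \(\pi_\sigma\) coincides with \(\mathcal{W}(\pi,\psi_{\mathbb{C}})\). The only point to check is that the normalization of \(\widetilde W\) for \(\pi_{\sigma^\vee}\) agrees with the one used there.

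The third step, which is the main obstacle, is extending the functional equation to all \(u\). Both sides, for a flat family \(v_u\), are meromorphic in \((s,u)\). The map \(W_{v_u}\mapsto \widetilde{W_{v_u}}\) is realized by the flat family for \(\sigma_u^\vee\) via intertwining with the long Weyl element, and this must be shown to be compatible holomorphically in \(u\). The \(\varepsilon\)-factor and the central character depend holomorphically on \(u\). Hence the identity holds on an open dense set and extends by the identity theorem in several complex variables. Dividing by \(L\)-factors keeps everything holomorphic except for the \(\Gamma\)-factors. The delicate point is that \(L(s,\As(\sigma_u))\) jumps discontinuously in \(u\) when poles collide. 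To handle this, I would first prove the identity for the unnormalized ratio
\[
\frac{I(1-s,\widetilde W,\widehat\phi)}{I(s,W,\phi)}=\gamma\text{-factor}
\]
as meromorphic functions, which is continuous in \(u\). The stated normalized form then follows, since the \(L\)-factor quotient \(L(1-s,\cdot^\vee)/L(s,\cdot)\) is a product of Gamma ratios that is meromorphic jointly in \((s,u)\). A second approach is to use uniform estimates to pass to limits directly.
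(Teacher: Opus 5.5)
Your plan follows the paper's route. The paper gives no independent argument; it only observes that Beuzart-Plessis's proof of \cite[Theorem~1(i),(ii)]{BP2021}, in particular \cite[Section~3.10]{BP2021}, applies to the generic induced representation $\pi_\sigma$, and your continuation in the inducing parameters from the irreducible locus is a reasonable way to make that explicit. One correction: with the normalized constituents held fixed, $L(s,\As(\sigma_u))$ does not jump in $u$, since it is a finite product of $\Gamma_{\mathbb{R}}$- and $\Gamma_{\mathbb{C}}$-factors whose arguments are affine in $(s,u)$, hence jointly meromorphic with entire reciprocal (only the parameter of the generic constituent of a reducible $\pi_{\sigma_u}$ changes discontinuously, and that plays no role here), so your detour through the unnormalized ratio is unnecessary, and that ratio is in any case ill-defined when $I(s,W,\phi)\equiv 0$.
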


The factor preceding the epsilon factor in Theorem~\ref{fe1} is of the
form \(cA^s\), where \(c\in\mathbb{C}^{\times}\) and \(A>0\). Since
\(\mathcal I_{\As}(\sigma)\) is invariant under multiplication by
such functions, and the maps
$W\longmapsto\widetilde W
\>\text{and}\>
\phi\longmapsto\widehat\phi
$ are bijective, Theorem~\ref{fe1} gives
\begin{equation}\label{flickerfe1}
\mathcal{I}_{\As}(\sigma)
=
\gamma\bigl(s,\As(\sigma),\psi_{\mathbb{R}}\bigr)^{-1}
\left\{
f(1-s):
f\in\mathcal{I}_{\As}(\sigma^\vee)
\right\}.
\end{equation}
In \eqref{flickerfe1}, the space
\(\mathcal{I}_{\As}(\sigma^\vee)\) is defined using the Whittaker realization
\(\mathcal{W}(\pi_{\sigma^\vee},\psi_{\mathbb{C}}^{-1})\).

Let \(\sigma\) be an admissible \(m\)-dimensional representation of
\(W_F\). We define
\[
\mathcal{J}_{\wedge^2}(\sigma)
=
\begin{cases}
\displaystyle
\operatorname{span}_{\mathbb{C}}
\left\{
J(s,W,\phi):
W\in\mathcal{W}(\pi_\sigma,\psi_F),\
\phi\in\mathcal{S}(F^n)
\right\},
& m=2n,\\[4mm]
\displaystyle
\operatorname{span}_{\mathbb{C}}
\left\{
J(s,W):
W\in\mathcal{W}(\pi_\sigma,\psi_F)
\right\},
& m=2n+1,
\end{cases}
\]
where the Jacquet--Shalika integrals are those defined in
Section~\ref{s1}.

When \(m=2n+1\), for
\(W\in\mathcal{W}(\pi_\sigma,\psi_F)\) and
\(\phi\in\mathcal{S}(F^n)\), set
\[
R(\phi)W
=
\int_{F^n}
\phi(x)\,
R\left(
\begin{pmatrix}
I_n&0&0\\
0&I_n&0\\
0&x&1
\end{pmatrix}
\right)W\,dx.
\]
With this notation, the odd Jacquet--Shalika integral in
\cite[(2.9), (3.4)]{JLST2025} is \(J(s,R(\phi)W)\). By the
Dixmier--Malliavin lemma \cite[Lemma~6.1]{Jac2009},
\[
\mathcal{W}(\pi_\sigma,\psi_F)
=
\operatorname{span}_{\mathbb{C}}
\left\{
R(\phi)W:
W\in\mathcal{W}(\pi_\sigma,\psi_F),\
\phi\in\mathcal{S}(F^n)
\right\}.
\]
It follows that
\[
\mathcal{J}_{\wedge^2}(\sigma)
=
\operatorname{span}_{\mathbb{C}}
\left\{
J(s,R(\phi)W):
W\in\mathcal{W}(\pi_\sigma,\psi_F),\
\phi\in\mathcal{S}(F^n)
\right\}.
\]

Together with this observation,
\cite[Theorem~2.2(1),(2)]{JLST2025} gives the following.

\begin{thm}
\label{fe2}
All the Jacquet--Shalika integrals defining
\(\mathcal{J}_{\wedge^2}(\sigma)\) admit meromorphic continuation to
\(\mathbb{C}\).

If \(m=2n\), then, for every
\(W\in\mathcal{W}(\pi_\sigma,\psi_F)\) and
\(\phi\in\mathcal{S}(F^n)\),
\[
\frac{
J\bigl(
1-s,
R(\tau_{2n})\widetilde W,
\mathcal{F}_{\psi_F}(\phi)
\bigr)
}{
L\bigl(1-s,(\wedge^2\sigma)^\vee\bigr)
}
=
\varepsilon\bigl(s,\wedge^2\sigma,\psi_F\bigr)
\frac{J(s,W,\phi)}
     {L\bigl(s,\wedge^2\sigma\bigr)}.
\]

If \(m=2n+1\), then, for every
\(W\in\mathcal{W}(\pi_\sigma,\psi_F)\) and
\(\phi\in\mathcal{S}(F^n)\),
\[
\frac{
J\left(
1-s,
R\bigl(\mathcal{F}_{\psi_F^{-1}}(\phi)\bigr)
\bigl(R(\tau_{2n+1})\widetilde W\bigr)
\right)
}{
L\bigl(1-s,(\wedge^2\sigma)^\vee\bigr)
}
=
\varepsilon\bigl(s,\wedge^2\sigma,\psi_F\bigr)
\frac{J\bigl(s,R(\phi)W\bigr)}
     {L\bigl(s,\wedge^2\sigma\bigr)}.
\]
\end{thm}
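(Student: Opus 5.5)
The plan is to deduce Theorem~\ref{fe2} directly from \cite[Theorem~2.2(1),(2)]{JLST2025}. The work consists of matching conventions and removing the auxiliary Schwartz function in the odd case via the Dixmier--Malliavin identity recalled just before the statement.

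\emph{Even case \(m=2n\).} First I would check that the integral \(J(s,W,\phi)\) of Section~\ref{s1} coincides with the even Jacquet--Shalika integral of \cite{JLST2025}, with the same Haar measures and the same \(\psi_F\)-normalizations. I would then check that their dual integral is \(J(1-s,R(\tau_{2n})\widetilde W,\mathcal F_{\psi_F}(\phi))\); this means verifying that their contragredient Whittaker function and Weyl-element twist combine into \(R(\tau_{2n})\widetilde W\) with \(\widetilde W(g)=W(w_m\,{}^tg^{-1})\). Next, \cite[Theorem~2.2]{JLST2025} gives meromorphic continuation and a functional equation with a gamma factor. I would identify that gamma factor with \(\gamma(s,\wedge^2\sigma,\psi_F)\), where \(\sigma\) is the Langlands parameter. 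Dividing by \(L(s,\wedge^2\sigma)\) and \(L(1-s,(\wedge^2\sigma)^\vee)\) then gives the displayed form, because \(\gamma=\varepsilon\,L(1-s,\cdot^\vee)/L(s,\cdot)\).

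\emph{Odd case \(m=2n+1\).} The integral in \cite[(2.9),(3.4)]{JLST2025} is \(J(s,R(\phi)W)\). So their theorem yields meromorphic continuation of every \(J(s,R(\phi)W)\), and also the stated functional equation, once the dual side is matched with \(R(\mathcal F_{\psi_F^{-1}}(\phi))(R(\tau_{2n+1})\widetilde W)\). Here the Fourier transform is taken with respect to \(\psi_F^{-1}\) because the dual Whittaker model is \(\psi_F^{-1}\)-generic. For the meromorphic continuation of an arbitrary \(J(s,W)\), I would write \(W=\sum_i R(\phi_i)W_i\) using the Dixmier--Malliavin identity. Since \(J(s,\cdot)\) is linear in \(W\) wherever it converges, \(J(s,W)=\sum_i J(s,R(\phi_i)W_i)\) for \(\mathrm{Re}(s)\gg0\), and the right-hand side continues meromorphically.

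\emph{Main obstacle.} I expect the hardest point to be that \(\pi_\sigma\) is only the generic induced representation, possibly reducible, with its Whittaker realization. If \cite{JLST2025} is stated only for irreducible generic representations, I would proceed as follows.
\begin{enumerate}
\item Vary the unramified parameters \(u=(u_1,\ldots,u_r)\) of \(D\) in an open set, and use holomorphic sections of the induced representations together with the holomorphic Jacquet Whittaker functional.
\item For \(u\) in a dense open set, \(\pi_D\) is irreducible. There the identity holds, and its \(\varepsilon\)- and \(L\)-factors match those of \(\wedge^2\sigma_u\) by the local Langlands correspondence and the additivity of \(\wedge^2\) recorded in the preliminaries.
\item Both sides, multiplied by suitable products of \(\Gamma\)-factors, are meromorphic in \((s,u)\) jointly, so the identity extends to all \(u\) by analytic continuation.
\end{enumerate}
Verifying the joint meromorphy, in particular locally uniform convergence in \(u\) for \(\mathrm{Re}(s)\gg0\), is the step I would check most carefully.
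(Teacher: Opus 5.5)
Your proposal matches the paper's argument: the paper also obtains Theorem~\ref{fe2} by citing \cite[Theorem~2.2(1),(2)]{JLST2025}. It likewise uses the Dixmier--Malliavin identity $\mathcal{W}(\pi_\sigma,\psi_F)=\operatorname{span}_{\mathbb{C}}\{R(\phi)W\}$ to pass between $J(s,W)$ and the odd integral $J(s,R(\phi)W)$ of \cite{JLST2025}. The paper applies \cite{JLST2025} directly to the generic induced representation $\pi_\sigma$ without your deformation argument in the parameters $u$, so that part is an extra safeguard rather than a step the paper uses.
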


The Fourier transforms occurring in Theorem~\ref{fe2} are bijective,
as is the map
\(W\longmapsto R(\tau_m)\widetilde W\). Moreover, in odd rank, the
vectors \(R(\phi)W\) span the Whittaker model. Thus, in both parities,
Theorem~\ref{fe2} yields
\begin{equation}
\label{shalikafe2}
\mathcal{J}_{\wedge^2}(\sigma)
=
\gamma\bigl(s,\wedge^2\sigma,\psi_F\bigr)^{-1}
\left\{
f(1-s):
f\in\mathcal{J}_{\wedge^2}(\sigma^\vee)
\right\}.
\end{equation}
In \eqref{shalikafe2}, the space
\(\mathcal{J}_{\wedge^2}(\sigma^\vee)\) is defined using
\(\mathcal{W}(\pi_{\sigma^\vee},\psi_F^{-1})\) and the additive
character \(\psi_F^{-1}\) in the Jacquet--Shalika integrals.

\begin{section}{\texorpdfstring{$L$}{L}-Spaces}\label{s3}

We first recall the notions of rapid decrease and boundedness at infinity in vertical strips, and then define the \(L\)-space associated with a Weil-group representation.

\begin{defn}
A meromorphic function $f$ on $\mathbb{C}$ is said to be
\emph{rapidly decreasing in vertical strips} if, for every
$N\in\mathbb{N}\cup\{0\}$ and every vertical strip
\[
a\leq\operatorname{Re}(s)\leq b,
\]
the function $s^Nf(s)$ is bounded as
$|\operatorname{Im}(s)|\to\infty$, uniformly in the strip. If this
condition is required only for $N=0$, we say that $f$ is
\emph{bounded at infinity in vertical strips}.
\end{defn}

\begin{defn}
Let $\sigma$ be an admissible representation of $W_F$. The
\emph{$L$-space of $\sigma$}, denoted by $\mathcal{L}(\sigma)$, is the
space of meromorphic functions $f$ for which there exists an entire
function $h$ such that
\[
f(s)=L(s,\sigma)h(s)
\]
and $f$ is rapidly decreasing in vertical strips.
\end{defn}

In particular, $\mathcal{L}(0)$ denotes the space of entire functions rapidly decreasing in vertical strips. By Stirling's formula, we have $L(s,\sigma)\in\mathcal{L}(\sigma)$ for every admissible representation $\sigma$ of $W_F$.

In this section, we establish a decomposition identity for the
\(L\)-space attached to a representation with two direct-sum
decompositions. This identity will be used in the inductive proofs of
Theorems~\ref{main1} and~\ref{main2}. We begin by recalling three
results of Jacquet \cite{Jac2009}.

\begin{lemma}[{\cite[Lemma~12.1]{Jac2009}}]
\label{lem:Jac2009-12.1}
Let $\sigma_1$ be a subrepresentation of an admissible representation
$\sigma$ of $W_F$. Then $\mathcal{L}(\sigma_1)\subseteq\mathcal{L}(\sigma).$ 
In particular, $\mathcal{L}(0)\subseteq\mathcal{L}(\sigma).
$
\end{lemma}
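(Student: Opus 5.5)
The plan is to reduce the inclusion to a divisibility statement about Gamma factors. If $\sigma_1\subseteq\sigma$ is a subrepresentation, then by semisimplicity $\sigma=\sigma_1\oplus\sigma_2$ for a complementary admissible representation $\sigma_2$, so
\[
L(s,\sigma)=L(s,\sigma_1)L(s,\sigma_2).
\]
Take $f\in\mathcal{L}(\sigma_1)$, so that $f(s)=L(s,\sigma_1)h(s)$ with $h$ entire and $f$ rapidly decreasing in vertical strips. We then write
\[
f(s)=L(s,\sigma)\,\frac{h(s)}{L(s,\sigma_2)} .
\]
This gives $f\in\mathcal{L}(\sigma)$ provided $h(s)/L(s,\sigma_2)$ is entire. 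Rapid decrease is a property of $f$ itself and is unaffected by this rewriting.

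The key step is therefore the observation that $1/L(s,\sigma_2)$ is entire. Archimedean $L$-factors are finite products of functions of the form $\Gamma_{\mathbb R}(s+u)=\pi^{-(s+u)/2}\Gamma((s+u)/2)$ and $\Gamma_{\mathbb C}(s+u)=2(2\pi)^{-(s+u)}\Gamma(s+u)$, by Tate's description of the factors attached to irreducible representations of $W_F$, extended multiplicatively over direct sums. Since $\Gamma$ has no zeros and $1/\Gamma$ is entire, the reciprocal of each such factor is entire, and hence so is their product $1/L(s,\sigma_2)$. Consequently $h/L(\cdot,\sigma_2)$ is entire, and $f\in\mathcal{L}(\sigma)$.

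The "in particular" statement follows by taking $\sigma_1=0$. Then $L(s,0)=1$, so $\mathcal{L}(0)$ consists exactly of the entire functions that are rapidly decreasing in vertical strips. Since $0$ is a subrepresentation of every $\sigma$, the general case gives $\mathcal{L}(0)\subseteq\mathcal{L}(\sigma)$.

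I do not expect a real obstacle here. The only point needing care is that the relevant object is the reciprocal $1/L(s,\sigma_2)$, not $L(s,\sigma_2)$ itself: the $L$-factor has poles but no zeros. The argument also uses the existence of a complement $\sigma_2$, which follows from semisimplicity, and this is built into the paper's definition of an admissible representation.
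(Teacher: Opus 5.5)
Your proposal is correct and is essentially the standard argument behind the cited Lemma~12.1 of Jacquet; the paper gives no separate proof of it. You use semisimplicity to factor $L(s,\sigma)=L(s,\sigma_1)L(s,\sigma_2)$ and absorb the entire function $1/L(s,\sigma_2)$ into $h$, leaving the rapid-decrease condition on $f$ untouched, which is the same reasoning the paper uses for the first inclusion in the proof of Proposition~\ref{prop:two-decomposition}.
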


\begin{prop}[{\cite[Proposition~12.1]{Jac2009}}]
\label{prop:Jac2009-12.1}
For each $s_0\in\mathcal{P}(L(s,\sigma))$, let $n_{s_0}$ denote the
order of the pole at $s_0$. Suppose that, for every
$s_0\in\mathcal{P}(L(s,\sigma))$, a principal part
\[
\mathcal{Q}(s_0)
=
\frac{A_{n_{s_0}}}{(s-s_0)^{n_{s_0}}}
+
\frac{A_{n_{s_0}-1}}{(s-s_0)^{n_{s_0}-1}}
+\cdots+
\frac{A_1}{s-s_0}
\]
is prescribed. Then there exists $f\in\mathcal{L}(\sigma)$ whose
principal part at each $s_0\in\mathcal{P}(L(s,\sigma))$ is
$\mathcal{Q}(s_0)$.
\end{prop}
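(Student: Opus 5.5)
The plan is to reduce the statement to an interpolation problem for an entire function, and then solve that problem with explicit Gaussian control in vertical strips. Since $L(s,\sigma)$ is a finite product of factors $\Gamma_{\mathbb R}(s+u)$ and $\Gamma_{\mathbb C}(s+u)$, the set $\mathcal{P}(L(s,\sigma))$ is a finite union of left-going arithmetic progressions of the form $s_0=-u-k$. Each such set is discrete, and every bounded region of $\mathbb C$ contains only finitely many poles. The function $E(s)=L(s,\sigma)^{-1}$ is entire and vanishes to order exactly $n_{s_0}$ at each $s_0$. Moreover, by Stirling's formula, $L(s,\sigma)$ decays like $|t|^{c}e^{-\pi d|t|/2}$ in every vertical strip, where $t=\operatorname{Im}(s)$ and $d$ is bounded by $\dim\sigma$.

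\textbf{Step 1 (reformulation).} I look for $f$ in the form $f(s)=e^{s^2}M(s)$, where $M$ is meromorphic with poles only at the points of $\mathcal{P}(L(s,\sigma))$, of order at most $n_{s_0}$. Then $h:=E\cdot e^{s^2}M$ is entire and $f=L(s,\sigma)h$. Since $|e^{s^2}|=e^{x^2-t^2}$ for $s=x+it$, rapid decrease of $f$ in a strip follows as soon as $|M(s)|\le C_{a,b}e^{\delta t^2}$ on the strip $a\le x\le b$ with $|t|\ge1$, for some fixed $\delta<1$. The principal part of $f$ at $s_0$ depends only on the principal part of $M$ at $s_0$ and on finitely many Taylor coefficients of $e^{s^2}$ there. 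The relation is triangular and invertible, because $e^{s_0^2}\neq0$. So the prescribed $\mathcal Q(s_0)$ determines a unique required principal part $R_{s_0}(s)=\sum_{j\le n_{s_0}}B_j(s_0)(s-s_0)^{-j}$ for $M$.

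\textbf{Step 2 (Mittag-Leffler with Gaussian convergence factors).} Enumerate the poles $s_0^{(k)}$ by increasing $|s_0|$ and set
\[
M(s)=\sum_{k}R_{s_0^{(k)}}(s)\,e^{-\delta_k(s-s_0^{(k)})^2}\,\bigl(1+\text{holomorphic correction at }s_0^{(k)}\bigr),
\]
with $0<\delta_k\le\delta<1$. The holomorphic correction is chosen so that the principal part of the $k$-th term at $s_0^{(k)}$ is still exactly $R_{s_0^{(k)}}$. This is a finite triangular adjustment of the coefficients, since the Gaussian factor is $1+O(s-s_0)$. All the other terms are holomorphic at $s_0^{(k)}$, so once $M$ is shown to converge locally uniformly away from the poles, its principal parts are correct. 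For $s$ in a strip, the $k$-th term is bounded by $|B(s_0^{(k)})|\,e^{\delta_k t^2}e^{-\delta_k(x-\operatorname{Re}s_0^{(k)})^2}$. Since $\operatorname{Re}s_0^{(k)}\to-\infty$ linearly in $k$, the factor $e^{-\delta_k(x+k)^2}$ decays like $e^{-\delta_k k^2}$ uniformly in the strip. The $t$-growth is at most $e^{\delta t^2}$ uniformly in $k$, which is exactly what Step 1 requires.

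\textbf{Main obstacle.} The prescribed coefficients $A_j$, and hence $B_j(s_0^{(k)})$, may grow arbitrarily fast in $k$. In that case a Gaussian factor $e^{-\delta k^2}$ with $\delta<1$ fixed does not by itself guarantee convergence. This tension between summability, which needs strong damping in $x$, and the vertical-strip bound, which forbids damping that grows in $t$, is the crux of the argument.

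I would first try to handle it by replacing the Gaussian with a factor that decays in $x$ as fast as desired but only mildly in $t$. One candidate is
\[
G_k(s)=\exp\!\bigl(-\delta(s-s_0^{(k)})^2\bigr)\prod_{\ell}\Gamma_{\mathbb R}\bigl(s+c_\ell\bigr)^{-1}\Gamma_{\mathbb R}\bigl(s_0^{(k)}+c_\ell\bigr),
\]
with shifts $c_\ell$ placed so that no new poles are created. The $1/\Gamma$ factors contribute growth of size $e^{O(|t|)}$, which is still dominated by $e^{-(1-\delta)t^2}$, while their decay as $\operatorname{Re}(s_0)\to-\infty$ is of order $e^{-ck\log k}$ per factor. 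By using a number of factors depending on $k$, one can beat any prescribed growth of $|B(s_0^{(k)})|$ on each fixed strip. The remaining concern is the uniformity of the resulting estimate over all strips; I would verify it strip by strip, which suffices, since rapid decrease is only required on each fixed strip. If this fails, the fallback is Jacquet's route. One first treats a single factor $\Gamma_{\mathbb R}(s+u)$ or $\Gamma_{\mathbb C}(s+u)$ via an explicit partial-fraction construction, and then assembles the general case from these factors using Lemma~\ref{lem:Jac2009-12.1}, with a finite partial-fraction argument at coinciding poles.
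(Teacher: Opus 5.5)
\textbf{There is a genuine gap: your construction does not handle arbitrary principal parts.}

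Your Step~1 reduction is fine, and you correctly see why fixed Gaussian factors fail. Writing $s_k=x_k+it_k$, one has $|e^{-\delta(s-s_k)^2}|=e^{-\delta(x-x_k)^2+\delta(t-t_k)^2}$, so damping in $x$ is paid for by growth in $t$. The statement allows \emph{arbitrary} principal parts, and your $\Gamma$-repair only relocates this tension.

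\emph{Why the repair fails.} Take shifts $c_\ell$ in a bounded set, as your sketch implies (decay $e^{-ck\log k}$ per factor). On a fixed strip each ratio $\Gamma_{\mathbb R}(s_k+c_\ell)/\Gamma_{\mathbb R}(s+c_\ell)$ has size roughly $\exp\bigl(-\tfrac{k}{2}\log k+O(k)+\tfrac{\pi}{4}|t|+O(\log(2+|t|))\bigr)$.
\begin{itemize}
\item A product of $N_k$ of them grows like $e^{N_k\pi|t|/4}$. So the constant in your ``$e^{O(|t|)}$'' is unbounded in $k$, while the Gaussian budget $e^{(1-\delta)t^2}$ is fixed.
\item Every factor, hence the product, is $\geq 1$ once $|t|\geq(\tfrac{2}{\pi}+o(1))k\log k$, however $N_k$ is chosen.
\item At such heights your estimate for the $k$-th term is, up to polynomial factors, no better than $|B(s_k)|e^{-O(k^2)}e^{\delta t^2}$.
\item Keeping this below an envelope $Ce^{\delta' t^2}$ with $\delta'<1$ forces $\log|B(s_k)|=O(k^2\log^2 k)$. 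Data such as $A_1(s_k)=e^{k^3}$ are therefore out of reach.
\end{itemize}
The obstruction is uniformity in $t$ inside a single strip, not uniformity across strips as you suggest.

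\emph{The fallback does not close the gap.} An ``explicit partial-fraction construction'' for one $\Gamma$-factor is the same problem for one arithmetic progression of poles. Moreover, higher-order poles of $L(s,\sigma)$ come from coinciding factors; when $\sigma=\chi\oplus\chi$, every one of the infinitely many poles is double. Each space $\mathcal{L}(\chi_i)$ has only simple poles, so neither Lemma~\ref{lem:Jac2009-12.1} nor a \emph{finite} partial-fraction argument can produce these.

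\textbf{The missing idea} is a convergence factor whose modulus does not depend on $\operatorname{Im}(s)$. For real $\mu_k>0$ one has $|e^{-\mu_k(s-s_k)}|=e^{-\mu_k(\operatorname{Re}s-x_k)}$.
\begin{itemize}
\item Let $P_k$ be the principal part at $s_k$ of $R_{s_k}(s)e^{\mu_k(s-s_k)}$. Its coefficients are polynomials in $\mu_k$ of degree less than $n_{s_k}$.
\item We have $x_k\le C_1-k/C_0$ for constants depending on $\sigma$. Let $\|P_k\|$ be the sum of the absolute values of the coefficients of $P_k$. Choose $\mu_k$ so large that $\|P_k\|e^{-\mu_k k/(2C_0)}\le 2^{-k}$.
\item Then $M=\sum_k P_k(s)e^{-\mu_k(s-s_k)}$ converges locally uniformly off the poles and has principal part $R_{s_k}$ at each $s_k$.
\item $M$ is bounded on $\{a\le\operatorname{Re}s\le b,\ |\operatorname{Im}s|\ge 1+\max_k|t_k|\}$. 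Your Step~1 then shows that $f=e^{s^2}M$ lies in $\mathcal{L}(\sigma)$ with the prescribed principal parts.
\end{itemize}

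This is exactly how Borel's lemma works in a Mellin-transform proof. Take $\phi(x)=\sum_k r_kx^k\chi(\lambda_k x)$ with $\chi=1$ near $0$. Its Mellin transform is $\sum_k r_k\lambda_k^{-(s+k)}\widetilde\chi(s+k)$, where $\widetilde\chi$ has a single simple pole, and the factors $\lambda_k^{-(s+k)}$ play the role of $e^{-\mu_k(s-s_k)}$.

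Your $\Gamma$-idea can be salvaged the same way: use \emph{one} ratio $\Gamma_{\mathbb R}(s_k+C_k)/\Gamma_{\mathbb R}(s+C_k)$ with $C_k\to+\infty$, not many ratios with bounded shifts. For $\operatorname{Re}s>x_k$, the Beta integral bounds its modulus by $e^{\pi|t|/4}$ (up to polynomial factors, independently of $C_k$) times a factor depending on $\operatorname{Re}s$ that tends to $0$ as $C_k\to\infty$. In effect it acts like $e^{-\mu(s-s_k)}$ with $\mu\approx\tfrac12\log C_k$.
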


\begin{lemma}[{\cite[Lemma~12.2]{Jac2009}}]
\label{lem:Jac2009-12.2}
For every admissible representation $\sigma$ of $W_F$, we have
\[
\mathcal{L}(\sigma)
=
\mathcal{L}(0)L(s,\sigma)+\mathcal{L}(0).
\]
\end{lemma}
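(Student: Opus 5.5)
The plan is to prove the two inclusions separately. The inclusion $\supseteq$ should be routine, and the real content is $\subseteq$, which I would reduce to matching principal parts.

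\emph{The inclusion $\supseteq$.} By Lemma~\ref{lem:Jac2009-12.1} we already have $\mathcal{L}(0)\subseteq\mathcal{L}(\sigma)$. For $h\in\mathcal{L}(0)$, the function $L(s,\sigma)h(s)$ is visibly of the form $L(s,\sigma)\cdot(\text{entire})$, so we only need rapid decrease in vertical strips. Write $L(s,\sigma)$ as a finite product of factors $\Gamma_F(s+u_i+a_i)$, up to constants and exponentials $A^s$. In a strip $a\le \operatorname{Re}(s)\le b$ the poles of $L(s,\sigma)$ lie on finitely many horizontal lines $\operatorname{Im}(s)=-\operatorname{Im}(u_i)$. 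Stirling's formula therefore shows that $L(s,\sigma)$ is bounded, and in fact exponentially decreasing, for $|\operatorname{Im}(s)|$ large in the strip. Multiplying by the rapidly decreasing $h$ then gives rapid decrease.

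\emph{The inclusion $\subseteq$.} Let $f=L(s,\sigma)h\in\mathcal{L}(\sigma)$. It suffices to find $h_0\in\mathcal{L}(0)$ such that $L(s,\sigma)h_0$ has the same principal part as $f$ at every $s_0\in\mathcal{P}(L(s,\sigma))$. Then $h_1:=f-L(s,\sigma)h_0$ is entire, and it is rapidly decreasing in vertical strips by the first step, so $h_1\in\mathcal{L}(0)$. Thus the key claim is a strengthening of Proposition~\ref{prop:Jac2009-12.1}: every admissible family of principal parts is realized by an element of $L(s,\sigma)\mathcal{L}(0)$, not merely of $\mathcal{L}(\sigma)$. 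The principal part of $L(s,\sigma)h_0$ at $s_0$ depends only on the jet of $h_0$ at $s_0$ of order $n_{s_0}-1$. So I would try to construct $h_0$ with prescribed jets at the pole set, which is contained in finitely many arithmetic progressions $-u_i-k$, $k\in\mathbb{N}$.

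\emph{Construction of $h_0$.} I would try to construct $h_0$ from the product structure. Reduce first, via the factorization of $L(s,\sigma)$ into Gamma factors, to matching principal parts progression by progression. For a fixed progression, use the identity $\Gamma(s)=\Gamma(s+k)/\bigl(s(s+1)\cdots(s+k-1)\bigr)$ to pass between poles. Concretely, in any right half-plane $\operatorname{Re}(s)>-A$ only finitely many poles occur, and there one can take $h_0=e^{s^2}P(s)$ with $P$ a polynomial solving a finite Hermite interpolation problem. This already gives $h_0\in\mathcal{L}(0)$.

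\emph{Main obstacle.} The difficulty is the infinitely many poles going off to the left: one must glue the local interpolations into a single entire $h_0$ that stays rapidly decreasing in every strip. I would first attempt a Mittag-Leffler-type series
\[
h_0(s)=\sum_{s_0}e^{(s-s_0)^2}P_{s_0}(s)\,\frac{\Phi(s)}{\Phi_{s_0}(s)},
\]
where $\Phi$ is an entire function vanishing to the required order on the pole set, for instance a power of $1/L(s,\sigma)$, and $\Phi_{s_0}$ is $\Phi$ with the local factor at $s_0$ removed. I would then check that the Gaussian factors $e^{(s-s_0)^2}$ force convergence uniformly on vertical strips. If the growth of $1/L(s,\sigma)$ spoils this estimate, the fallback is to inspect Jacquet's proof of Proposition~\ref{prop:Jac2009-12.1}. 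There the constructed $f$ is presumably already of the form $L(s,\sigma)\times(\text{entire rapidly decreasing})$, and the statement to prove is exactly that observation.
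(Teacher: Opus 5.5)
The paper does not prove this lemma; it quotes it from Jacquet. So the question is whether your argument stands on its own. Your inclusion $\supseteq$ is correct. So is your reduction of $\subseteq$ to finding $h_0\in\mathcal{L}(0)$ with the same $(n_{s_0}-1)$-jets as $h=f/L(s,\sigma)$ at every pole. But that construction is the entire content of the lemma, you do not carry it out, and the series you propose fails:
\begin{itemize}
\item As you define $\Phi_{s_0}$, the quotient $\Phi/\Phi_{s_0}$ is the local factor at $s_0$. So each term vanishes at its own pole rather than at the others.
\item More seriously, Gaussians centred at the poles work against convergence. For fixed $s$, $|e^{(s-s_0)^2}|=e^{(\operatorname{Re}(s-s_0))^2-(\operatorname{Im}(s-s_0))^2}$ blows up roughly like $e^{(\operatorname{Re}s_0)^2}$ as $\operatorname{Re}s_0\to-\infty$, since the imaginary parts of the poles take only finitely many values.
\item By Proposition~\ref{prop:Jac2009-12.1}, the jet data are completely arbitrary; they may grow like $e^{e^k}$ along a progression. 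A term-by-term Mittag-Leffler scheme would therefore need data-dependent convergence corrections, and the usual ones (subtracting Taylor polynomials) destroy the decay in vertical strips.
\item Your fallback is not reliable either. An element of $\mathcal{L}(\sigma)$ with prescribed principal parts need not lie in $L(s,\sigma)\mathcal{L}(0)$. Already $L(s,\sigma)$ itself does not, since its quotient by $L(s,\sigma)$ is $1$, which is not rapidly decreasing.
\end{itemize}

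The missing idea is that your ``strengthening'' follows directly from Proposition~\ref{prop:Jac2009-12.1}: divide by $L(s,\sigma)$ and multiply by $e^{s^2}$.
\begin{itemize}
\item Since $1/L(s,\sigma)$ has a zero of exact order $n_{s_0}$ at $s_0$, for $\tilde f=L(s,\sigma)g\in\mathcal{L}(\sigma)$ the principal part of $\tilde f$ at $s_0$ and the $(n_{s_0}-1)$-jet of $g$ at $s_0$ determine each other by a triangular linear bijection.
\item Hence Proposition~\ref{prop:Jac2009-12.1} gives $\tilde f\in\mathcal{L}(\sigma)$ such that $\tilde f/L(s,\sigma)$ has, at every pole, the same $(n_{s_0}-1)$-jet as $e^{-s^2}h$.
\item Set $h_0=e^{s^2}\tilde f/L(s,\sigma)$. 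It is entire and has the jets of $h$.
\item It is rapidly decreasing on each strip $a\le\operatorname{Re}s\le b$. Indeed, $\tilde f$ is rapidly decreasing, $|1/L(s,\sigma)|\le Ce^{c|\operatorname{Im}s|}|\operatorname{Im}s|^{c'}$ by Stirling, and $|e^{s^2}|\le e^{\max(a^2,b^2)}e^{-(\operatorname{Im}s)^2}$.
\item Thus $h_0\in\mathcal{L}(0)$, and $f-L(s,\sigma)h_0=f-e^{s^2}\tilde f$ is entire and rapidly decreasing. This completes your argument.
\end{itemize}
If you prefer to avoid Proposition~\ref{prop:Jac2009-12.1}, the same flexibility comes from Borel's lemma rather than from any Mittag-Leffler series. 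For $\phi\in\mathcal{S}(\mathbb{R})$, the entire function $\Gamma(s-a)^{-1}\int_0^\infty\phi(t)t^{s-a}\,d^\times t$ takes the value $(-1)^k\phi^{(k)}(0)$ at $s=a-k$. It also grows at most exponentially in $|\operatorname{Im}s|$ on strips.
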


The preceding results yield the following decomposition identity.

\begin{prop}[\(L\)-space decomposition] \label{prop:two-decomposition} Let $\theta,\rho_1,\rho_2,\eta_1,$ and $\eta_2$ be admissible representations of $W_F$ such that \[ \theta = \rho_1\oplus\rho_2 = \eta_1\oplus\eta_2. \] Assume that \begin{equation} \label{eq:pole-separation} \mathcal{P}\bigl(L(1-s,\rho_1^\vee)\bigr) \cap \mathcal{P}\bigl(L(s,\eta_2)\bigr) = \varnothing. \end{equation} Then \begin{equation} \label{eq:two-decomposition} \mathcal{L}(\theta) = \mathcal{L}(\eta_1) + \frac{L(s,\rho_1)}{L(1-s,\rho_1^\vee)} \mathcal{L}(\rho_2). \end{equation} \end{prop}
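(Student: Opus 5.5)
The plan is to prove the two inclusions in \eqref{eq:two-decomposition} separately. Throughout, write
\[
\Lambda(s)=\frac{1}{L(1-s,\rho_1^\vee)}.
\]
This is an entire function, being a product of reciprocals of Gamma factors. We also use the identities $L(s,\theta)=L(s,\rho_1)L(s,\rho_2)=L(s,\eta_1)L(s,\eta_2)$. The analytic input, besides Lemma~\ref{lem:Jac2009-12.1}, Proposition~\ref{prop:Jac2009-12.1} and Lemma~\ref{lem:Jac2009-12.2}, is Stirling's formula. It shows that a ratio such as $L(s,\rho_1)\Lambda(s)$, or a product such as $L(s,\eta_1)\Lambda(s)$, grows at most polynomially in $|\operatorname{Im}s|$ in every vertical strip, outside a compact set.

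\emph{Inclusion ``$\supseteq$''.} By Lemma~\ref{lem:Jac2009-12.1}, $\mathcal L(\eta_1)\subseteq\mathcal L(\theta)$. Now take $k=L(s,\rho_2)h\in\mathcal L(\rho_2)$ with $h$ entire. Then
\[
L(s,\rho_1)\Lambda(s)\,k(s)=L(s,\theta)\,\bigl(\Lambda(s) h(s)\bigr),
\]
and $\Lambda h$ is entire. This function is rapidly decreasing in vertical strips, because $k$ is rapidly decreasing and $L(s,\rho_1)\Lambda(s)$ has polynomial growth there. Hence it lies in $\mathcal L(\theta)$.

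\emph{Inclusion ``$\subseteq$''.} Let $f\in\mathcal L(\theta)$. By Lemma~\ref{lem:Jac2009-12.2}, we may write $f=aL(s,\theta)+b$ with $a,b\in\mathcal L(0)$. Since $b\in\mathcal L(0)\subseteq\mathcal L(\eta_1)$, it suffices to treat $aL(s,\theta)$. I look for $v\in\mathcal L(\eta_2)$ and then set
\[
k=\frac{L(s,\rho_2)\,v}{L(s,\eta_2)},\qquad g=aL(s,\theta)-L(s,\eta_1)\Lambda(s)\,v .
\]
With these choices $L(s,\rho_1)\Lambda k=L(s,\eta_1)\Lambda v$, so $aL(s,\theta)=g+L(s,\rho_1)\Lambda k$. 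Moreover, $v/L(s,\eta_2)$ is entire, so $k$ is $L(s,\rho_2)$ times an entire function.

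Rapid decrease of $g$ and of $L(s,\rho_1)\Lambda k=L(s,\eta_1)\Lambda v$ follows from Stirling's formula, since $v$ and $aL(s,\theta)$ are rapidly decreasing. Rapid decrease of $k$ itself should follow in the same way, because $L(s,\rho_2)/L(s,\eta_2)$ also has polynomial growth. Hence $k\in\mathcal L(\rho_2)$.

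It remains to arrange that $g\in\mathcal L(\eta_1)$, that is, that $g/L(s,\eta_1)=L(s,\eta_2)a-\Lambda v$ is entire. The only possible poles of this function are the poles of $L(s,\eta_2)$. This is where the hypothesis \eqref{eq:pole-separation} enters. Since $\mathcal P(L(1-s,\rho_1^\vee))$ is exactly the zero set of $\Lambda$, the function $\Lambda$ does not vanish at any pole $s_0$ of $L(s,\eta_2)$. Therefore $aL(s,\eta_2)/\Lambda$ is meromorphic near each such $s_0$, with a pole of order at most that of $L(s,\eta_2)$.

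By Proposition~\ref{prop:Jac2009-12.1}, applied to $\eta_2$, I will choose $v\in\mathcal L(\eta_2)$ whose principal part at every $s_0\in\mathcal P(L(s,\eta_2))$ equals that of $aL(s,\eta_2)/\Lambda$. Then
\[
L(s,\eta_2)a-\Lambda v=\Lambda\Bigl(\frac{aL(s,\eta_2)}{\Lambda}-v\Bigr)
\]
is holomorphic at each such $s_0$, and hence entire. This shows $g\in\mathcal L(\eta_1)$ and finishes the inclusion.

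The main obstacle is this interpolation step. One must see that the hypothesis is exactly what keeps $1/\Lambda$ from introducing extra singularities at the poles of $L(s,\eta_2)$. One must also check the orders carefully: the pole order of $aL(s,\eta_2)/\Lambda$ is bounded by $n_{s_0}$ for $L(s,\eta_2)$, as Proposition~\ref{prop:Jac2009-12.1} requires. The growth estimates are routine consequences of Stirling's formula, but they should be stated uniformly away from the poles.
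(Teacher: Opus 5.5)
\textbf{There is a gap in your growth estimates.} Your algebra is right: the identity $L(s,\rho_1)\Lambda k=L(s,\eta_1)\Lambda v$, the use of \eqref{eq:pole-separation} to get $\Lambda(s_0)\neq0$ at the poles of $L(s,\eta_2)$, and the pole-order bound. The growth claims, however, are false as stated.

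Stirling's formula gives, in a vertical strip and up to constants,
$|L(\sigma+it,\rho)|\asymp |t|^{A}e^{-c\dim(\rho)|t|}$. Here $c=\pi/4$ for $W_{\mathbb R}$ and $c=\pi/2$ for $W_{\mathbb C}$. So a quotient of $L$-factors has polynomial growth only when the exponential factors cancel. That is why $L(s,\rho_1)\Lambda(s)$ is harmless: both factors involve $\rho_1$.

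Your other two quotients behave differently. Both $L(s,\eta_1)\Lambda(s)$ and $L(s,\rho_2)/L(s,\eta_2)$ behave like $e^{c(\dim\eta_2-\dim\rho_2)|t|}$ times a power of $|t|$. This is exponentially large whenever $\dim\eta_2>\dim\rho_2$. That is the typical case; for the Asai decomposition with $n=2$ one has $\dim\rho_2=1$ and $\dim\eta_2=3$.

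Proposition~\ref{prop:Jac2009-12.1} only yields a rapidly decreasing $v$. Indeed, one can add to $v$ any element of $\mathcal{L}(0)$, and such elements need not decay exponentially. So the step ``$k=v\,L(s,\rho_2)/L(s,\eta_2)\in\mathcal{L}(\rho_2)$'' is unjustified. The same goes for your stated route to the rapid decrease of $L(s,\eta_1)\Lambda v$ and of $g$.

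\textbf{How to repair it.} Apply Proposition~\ref{prop:Jac2009-12.1} to the principal parts of $e^{-s^2}a(s)L(s,\eta_2)/\Lambda(s)$ instead. These still have order at most $n_{s_0}$. This gives $w\in\mathcal{L}(\eta_2)$; now set $v=e^{s^2}w$. Then:
\begin{itemize}
\item $v/L(s,\eta_2)$ is still entire;
\item $v$ has exactly the principal parts of $aL(s,\eta_2)/\Lambda$ that you need;
\item since $|e^{s^2}|=e^{\sigma^2-t^2}$, $v$ has Gaussian decay in vertical strips.
\end{itemize}
Gaussian decay absorbs the at most exponential growth of $L(s,\rho_2)/L(s,\eta_2)$, so $k\in\mathcal{L}(\rho_2)$. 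Then $g=aL(s,\theta)-L(s,\rho_1)\Lambda k$ lies in $\mathcal{L}(\theta)$ by your first inclusion. Together with the entireness of $g/L(s,\eta_1)$, this gives $g\in\mathcal{L}(\eta_1)$.

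\textbf{Comparison with the paper.} With this change your argument is a correct and shorter alternative. You use Lemma~\ref{lem:Jac2009-12.2} once and then a single interpolation in $\mathcal{L}(\eta_2)$. The paper interpolates twice. First it works in $\mathcal{L}(\rho_2)$ with $G=L(1-s,\rho_1^\vee)f/L(s,\rho_1)$ at poles outside $\Omega$. Then it works in $\mathcal{L}(\rho_1)$, followed by Lemma~\ref{lem:Jac2009-12.2}.

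The paper's detour avoids exactly the issue you ran into. It only ever multiplies by the ratio $L(s,\rho_1)/L(1-s,\rho_1^\vee)$, whose exponential factors cancel. It therefore gets rapid decay of the remainder $H=f-mg$ for free from $H\in\mathcal{L}(\theta)$. It uses \eqref{eq:pole-separation} only at the end, to see that $H/L(s,\eta_1)$ is entire.
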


\begin{proof}
We first prove that the right-hand side of
\eqref{eq:two-decomposition} is contained in $\mathcal{L}(\theta)$.
Since $\eta_1$ is a direct summand of $\theta$, Lemma
\ref{lem:Jac2009-12.1} gives
\[
\mathcal{L}(\eta_1)\subseteq\mathcal{L}(\theta).
\]
Let $f(s)=L(s,\rho_2)h(s)\in\mathcal{L}(\rho_2)$, where $h$ is entire. Then
\[
\frac{L(s,\rho_1)}{L(1-s,\rho_1^\vee)}f(s)
=
L(s,\theta)\frac{h(s)}{L(1-s,\rho_1^\vee)}.
\]
Since \(1/L(1-s,\rho_1^\vee)\) is entire, the function
$h(s)/L(1-s,\rho_1^\vee)$ is entire. Moreover, Stirling's formula
shows that
\[
\frac{L(s,\rho_1)}{L(1-s,\rho_1^\vee)}
\]
has at most polynomial growth at infinity in vertical strips. It
follows that
\[
\frac{L(s,\rho_1)}{L(1-s,\rho_1^\vee)}f(s)
\in\mathcal{L}(\theta).
\]

We now prove the reverse inclusion. Set
\[
d(s)=L(1-s,\rho_1^\vee),
\qquad
m(s)=\frac{L(s,\rho_1)}{d(s)},
\qquad
\Omega=\mathcal{P}(d).
\]
Let $f\in\mathcal{L}(\theta)$ and write
\[
f(s)=L(s,\rho_1)L(s,\rho_2)h(s),
\]
where $h$ is entire. Define
\[
G(s)=\frac{d(s)f(s)}{L(s,\rho_1)}
=
d(s)L(s,\rho_2)h(s).
\]
Outside $\Omega$, the function $d$ is holomorphic and nonvanishing.
Thus every pole of $G$ outside $\Omega$ is a pole of
$L(s,\rho_2)$, with order no greater than the corresponding pole
order of $L(s,\rho_2)$. By Proposition
\ref{prop:Jac2009-12.1}, there exists
$g_1\in\mathcal{L}(\rho_2)$ whose principal parts agree with those of
$G$ at the poles of $L(s,\rho_2)$ outside $\Omega$ and vanish at the
poles lying in $\Omega$.

Set
\[
H_1=f-mg_1.
\]
Then
\[
\frac{d(s)H_1(s)}{L(s,\rho_1)}
=
G(s)-g_1(s)
\]
is holomorphic outside $\Omega$. Hence every pole of $dH_1$ outside
$\Omega$ is a pole of $L(s,\rho_1)$, with order no greater than the
corresponding pole order of $L(s,\rho_1)$. Applying Proposition
\ref{prop:Jac2009-12.1} again, we obtain
$K\in\mathcal{L}(\rho_1)$ whose principal parts agree with those of
$dH_1$ at the poles of $L(s,\rho_1)$ outside $\Omega$ and vanish at
the poles lying in $\Omega$.

By Lemma \ref{lem:Jac2009-12.2}, we may write
\[
K(s)=L(s,\rho_1)g_0(s)+e_0(s),
\qquad
g_0,e_0\in\mathcal{L}(0).
\]
Since $e_0$ is entire, the functions $K$ and
$L(s,\rho_1)g_0$ have the same principal parts. By the choice of
$K$, the function
\[
d(s)H_1(s)-K(s)
\]
is holomorphic outside $\Omega$.

By Lemma \ref{lem:Jac2009-12.1}, we have
$\mathcal{L}(0)\subseteq\mathcal{L}(\rho_2)$. Therefore, $g=g_1+g_0\in\mathcal{L}(\rho_2)$. Set
\[
H=f-mg=H_1-mg_0.
\]
Using the expression for $K$, we obtain
\[
\begin{aligned}
d(s)H(s)
&=
d(s)H_1(s)-L(s,\rho_1)g_0(s)\\
&=
\bigl(d(s)H_1(s)-K(s)\bigr)+e_0(s).
\end{aligned}
\]
Thus $dH$ is holomorphic outside $\Omega$. Since $d$ is holomorphic
and nonvanishing there, $H$ is holomorphic outside $\Omega$.

By the first inclusion, $mg\in\mathcal{L}(\theta)$, and hence
$H=f-mg\in\mathcal{L}(\theta)$. We may therefore write
\[
H(s)=L(s,\eta_1)L(s,\eta_2)q(s)
\]
for some entire function $q$. Since \(1/L(s,\eta_1)\) is entire and \(H\) is holomorphic outside
\(\Omega\), the quotient \(H/L(s,\eta_1)\) is holomorphic outside
\(\Omega\). If $s_0\in\Omega$, then
\eqref{eq:pole-separation} implies that $L(s,\eta_2)$ is holomorphic
at $s_0$, and hence
\[
\frac{H(s)}{L(s,\eta_1)}
=
L(s,\eta_2)q(s)
\]
is holomorphic at $s_0$. Thus $H/L(s,\eta_1)$ is entire.

Finally, $H$ is rapidly decreasing in vertical strips, since both
$f$ and $mg$ belong to $\mathcal{L}(\theta)$. Consequently,
\[
H\in\mathcal{L}(\eta_1).
\]
Therefore,
\[
f
=
H+mg
\in
\mathcal{L}(\eta_1)
+
\frac{L(s,\rho_1)}{L(1-s,\rho_1^\vee)}
\mathcal{L}(\rho_2),
\]
which proves \eqref{eq:two-decomposition}.
\end{proof}
    
\end{section}

\begin{section}{Proof of Theorem 1.1}\label{s4}

Let $\sigma$ be an $n$-dimensional admissible representation of $W_{\mathbb{C}}$. Since $W_{\mathbb{C}}=\mathbb{C}^{\times}$ is abelian, every irreducible constituent of $\sigma$ is a character. Ordering these by their exponents $e_i$, we write
\[
  \sigma = \chi_1 \oplus \chi_2 \oplus \cdots \oplus \chi_n, \qquad e_1 \leq e_2 \leq \cdots \leq e_n.
\]
Define the truncated representations $\sigma^+ = \chi_2\oplus\cdots\oplus\chi_n$ and $\sigma^- = \chi_1\oplus\cdots\oplus\chi_{n-1}$.

Applying the direct-sum formula for the Asai representation by isolating $\chi_1$ gives $\As(\sigma) = \rho_1 \oplus \rho_2$, where
\[
  \rho_1 = \As(\chi_1) \oplus \operatorname{Ind}_{W_{\mathbb{C}}}^{W_{\mathbb{R}}} \bigl(\chi_1\otimes(\sigma^+)^c\bigr) \qquad\text{and}\qquad \rho_2 = \As(\sigma^+).
\]

Alternatively, applying the direct-sum formula by isolating $\chi_n$ yields $\As(\sigma) = \eta_1 \oplus \eta_2$, where
\[
  \eta_1 = \As(\sigma^-) \qquad\text{and}\qquad \eta_2 = \As(\chi_n) \oplus \operatorname{Ind}_{W_{\mathbb{C}}}^{W_{\mathbb{R}}} \bigl(\sigma^-\otimes\chi_n^c\bigr).
\]

Thus, we obtain the two decompositions:
\[
  \As(\sigma) = \rho_1 \oplus \rho_2 = \eta_1 \oplus \eta_2.
\]

\begin{lemma}
\label{lem:asai-pole-separation}
With the notation above,
\[
  \mathcal{P}\bigl(L(1-s,\rho_1^\vee)\bigr) \cap \mathcal{P}\bigl(L(s,\eta_2)\bigr) = \varnothing.
\]
\end{lemma}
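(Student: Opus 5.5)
The plan is to locate both pole sets in half-planes using the fact recalled in Section~\ref{s2}: if $s_0\in\mathcal{P}(L(s,\tau))$ for an admissible $\tau$, then there is an exponent $v$ of $\tau$ with $\mathrm{Re}(s_0)+v\le 0$. We will then show that the two half-planes are disjoint, using the ordering $e_1\le\cdots\le e_n$. Throughout we use the standard normalization $|z|_{\mathbb C}=z\bar z$ and the convention that exponents are measured against the normalized absolute value $|\cdot|_F$. Concretely, we first compute the exponents of each summand. $\As(\chi_i)$ restricted to $W_{\mathbb C}$ is $\chi_i\chi_i^c$, whose exponent relative to $|\cdot|_{\mathbb R}$ is $e_i$. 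Likewise $\operatorname{Ind}_{W_{\mathbb C}}^{W_{\mathbb R}}(\chi_i\otimes\chi_j^c)$, viewed as a representation of $W_{\mathbb R}$, has exponent $(e_i+e_j)/2$ with respect to $|\cdot|_{\mathbb R}$ (the restriction to $\mathbb R_{>0}^\times$ is $t\mapsto t^{e_i+e_j}$ with $|t|_{\mathbb C}=t^2$). We will check this bookkeeping carefully, since any rescaling applies uniformly to every summand and does not affect the argument.

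From this we obtain the exponents of the two relevant representations. Every exponent of $\rho_1$ has the form $(e_1+e_j)/2$ for some $j\ge 1$, and is therefore at most $(e_1+e_n)/2$. Every exponent of $\eta_2$ has the form $(e_i+e_n)/2$ for some $i\le n$, and is therefore at least $(e_1+e_n)/2$. So $\rho_1\preceq\eta_2$, which is the key structural input. We then treat the two pole sets separately. If $s_0$ is a pole of $L(1-s,\rho_1^\vee)$, then $1-s_0$ is a pole of $L(s,\rho_1^\vee)$. Since the exponents of $\rho_1^\vee$ are the negatives of those of $\rho_1$, this gives $1-\mathrm{Re}(s_0)-u\le 0$ for some exponent $u$ of $\rho_1$, i.e.\ $\mathrm{Re}(s_0)\ge 1-u\ge 1-(e_1+e_n)/2$. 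If instead $s_0$ is a pole of $L(s,\eta_2)$, then $\mathrm{Re}(s_0)\le -v\le -(e_1+e_n)/2$ for some exponent $v$ of $\eta_2$. Both conditions would force $1-c\le \mathrm{Re}(s_0)\le -c$ with $c=(e_1+e_n)/2$, which is impossible. This proves the lemma.

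The main obstacle is the exponent bookkeeping for the induced and Asai pieces. First, one must verify that $\operatorname{Ind}(\chi_i\otimes\chi_j^c)$ really contributes the averaged exponent $(e_i+e_j)/2$, and that the Asai character of $\chi_i$ contributes $e_i$, on the same scale in both cases. Second, one must confirm that the stated pole criterion applies to the two-dimensional irreducible representations of $W_{\mathbb R}$ as well as to characters. For this I would write out the $L$-factors explicitly as $\Gamma_{\mathbb R}$ or $\Gamma_{\mathbb C}$ factors via Tate's table. For the character $\As(\chi_i)=\operatorname{sgn}^\delta|\cdot|_{\mathbb R}^{e_i+it}$, poles occur at $s=-e_i-it-\delta-2k$. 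For the induced representation $\operatorname{Ind}(z^{a}\bar z^{b})$, where $\mathrm{Re}(a+b)=e_i+e_j$, the factor is $\Gamma_{\mathbb C}(s+\max(a,b))$ up to the $|z|_{\mathbb C}$ normalization. From these formulas I would check directly that all poles lie in $\mathrm{Re}(s)\le -(\text{exponent})$, which is exactly what the argument above requires.
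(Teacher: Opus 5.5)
Your proposal follows the paper's argument: the paper also combines the pole criterion from Section~\ref{s2} with the comparison $\rho_1\preceq\eta_2$ of exponents, and gets the contradiction by adding the two resulting inequalities. Your exponent bookkeeping, however, is off by a factor of $2$.

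\begin{itemize}
\item With exponents measured against $|\cdot|_F$ as in Section~\ref{s2}, the constituents of $\rho_1$ have exponents $e_1+e_j$ and those of $\eta_2$ have exponents $e_i+e_n$. For example, $\As(\chi_i)$ has exponent $2e_i$, as in the computation $\As(\sigma)=\omega|\cdot|_{\mathbb R}^{2t}$ in the proof of Proposition~\ref{prop:asai-rank-one}.
\item With your halved values, the individual half-plane bounds are false. For $\chi_1=\chi_n=|\cdot|_{\mathbb C}^{-1}$, the factor $L(s,\As(\chi_n))=L(s,|\cdot|_{\mathbb R}^{-2})$ has a pole at $s=2$, while your bound would confine the poles of $L(s,\eta_2)$ to $\mathrm{Re}(s)\le 1$.
\end{itemize}

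Since only the ordering $\rho_1\preceq\eta_2$ enters the argument, the contradiction goes through unchanged once you use the correct exponents.
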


\begin{proof}
Suppose that $s_0$ is a common pole. The irreducible constituents of $\rho_1$ have exponents $e_1+e_j$ for $1\leq j\leq n$ (where $j=1$ corresponds to $\As(\chi_1)$). Therefore, there exists some $j\in\{1,\ldots,n\}$ such that
\[
  1-\operatorname{Re}(s_0)-e_1-e_j \leq 0.
\]

Similarly, the irreducible constituents of $\eta_2$ have exponents $e_i+e_n$ for $1\leq i\leq n$ (where $i=n$ corresponds to $\As(\chi_n)$), meaning there exists some $i\in\{1,\ldots,n\}$ such that
\[
  \operatorname{Re}(s_0)+e_i+e_n \leq 0.
\]

Adding these two inequalities yields
\[
  1+(e_i-e_1)+(e_n-e_j) \leq 0.
\]
This is a contradiction since the ordering of the exponents implies $e_i\geq e_1$ and $e_n\geq e_j$.
\end{proof}

Using Lemma~\ref{lem:asai-pole-separation} and Proposition~\ref{prop:two-decomposition}, we obtain the following.

\begin{prop}[Asai \(L\)-space decomposition]
\label{prop:asai-L-space-decomposition}
With the notation above, one has
\[
\mathcal{L}\bigl(\As(\sigma)\bigr)
=
\mathcal{L}\bigl(\As(\sigma^-)\bigr)
+
\frac{L(s,\rho_1)}{L(1-s,\rho_1^\vee)}
\mathcal{L}\bigl(\As(\sigma^+)\bigr).
\]
Equivalently,
\[
\mathcal{L}\bigl(\As(\sigma)\bigr)
=
\mathcal{L}\bigl(\As(\sigma^-)\bigr)
+
\frac{
L\bigl(s,\As(\chi_1)\bigr)
L\left(
s,
\operatorname{Ind}_{W_{\mathbb{C}}}^{W_{\mathbb{R}}}
\bigl(\chi_1\otimes(\sigma^+)^c\bigr)
\right)
}{
L\bigl(1-s,\As(\chi_1^\vee)\bigr)
L\left(
1-s,
\operatorname{Ind}_{W_{\mathbb{C}}}^{W_{\mathbb{R}}}
\bigl(\chi_1^\vee\otimes((\sigma^+)^\vee)^c\bigr)
\right)
}
\mathcal{L}\bigl(\As(\sigma^+)\bigr).
\]
\end{prop}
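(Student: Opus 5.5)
This is a direct application of Proposition~\ref{prop:two-decomposition} with $F=\mathbb{R}$ and $\theta=\As(\sigma)$. We take the two decompositions constructed above: $\rho_1\oplus\rho_2$, with $\rho_2=\As(\sigma^+)$, and $\eta_1\oplus\eta_2$, with $\eta_1=\As(\sigma^-)$.

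\textbf{Step 1: the two decompositions.} Both come from the direct-sum formula for the Asai representation recalled in Section~\ref{s2}. Isolating $\chi_1$ from $\sigma=\chi_1\oplus\sigma^+$ gives
\[
\As(\sigma)\simeq\As(\chi_1)\oplus\As(\sigma^+)\oplus\operatorname{Ind}_{W_{\mathbb{C}}}^{W_{\mathbb{R}}}\bigl(\chi_1\otimes(\sigma^+)^c\bigr)=\rho_1\oplus\rho_2 .
\]
Isolating $\chi_n$ from $\sigma=\sigma^-\oplus\chi_n$ gives $\eta_1\oplus\eta_2$ in the same way. These are isomorphisms, not equalities, but $L$-spaces depend only on isomorphism classes. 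So we may treat them as equal, as Proposition~\ref{prop:two-decomposition} requires.

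\textbf{Step 2: the pole-separation hypothesis.} Hypothesis \eqref{eq:pole-separation} is exactly Lemma~\ref{lem:asai-pole-separation}. Proposition~\ref{prop:two-decomposition} then yields the first displayed identity at once.

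\textbf{Step 3: the equivalent form.} It remains to expand the factor $L(s,\rho_1)/L(1-s,\rho_1^\vee)$, using multiplicativity of $L$-factors over direct sums.

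For the numerator,
\[
L(s,\rho_1)=L\bigl(s,\As(\chi_1)\bigr)\,L\bigl(s,\operatorname{Ind}(\chi_1\otimes(\sigma^+)^c)\bigr).
\]

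For the denominator we need to identify $\rho_1^\vee$, which requires two facts.
\begin{itemize}
\item $\As(\chi_1)^\vee\simeq\As(\chi_1^\vee)$. This follows directly from the definition, since the action of $j$ is compatible with dualization.
\item Contragredient commutes with induction, and conjugation $(\cdot)^c$ commutes with dualization. Hence
\[
\operatorname{Ind}\bigl(\chi_1\otimes(\sigma^+)^c\bigr)^\vee\simeq\operatorname{Ind}\bigl(\chi_1^\vee\otimes((\sigma^+)^\vee)^c\bigr).
\]
\end{itemize}

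The only point needing any care is the first fact. Since $\As(\chi_1)$ is one-dimensional, it is the character extending $z\mapsto\chi_1(z\bar z)$ with $j\mapsto\chi_1(-1)$. Its dual is therefore the corresponding character attached to $\chi_1^{-1}$, which is $\As(\chi_1^\vee)$. Substituting these identifications gives the second display.
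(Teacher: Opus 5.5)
Your proposal is correct and follows the paper's argument: the paper obtains the first identity by applying Proposition~\ref{prop:two-decomposition} to $\theta=\As(\sigma)$, with the pole-separation hypothesis supplied by Lemma~\ref{lem:asai-pole-separation}, and the second display is the expansion of $L(s,\rho_1)$ and $L(1-s,\rho_1^\vee)$ that you carry out. Your additional checks are accurate details the paper leaves implicit: that $\As(\chi_1)^\vee\simeq\As(\chi_1^\vee)$, that duality commutes with induction and with $(\cdot)^c$, and that replacing isomorphisms by equalities is harmless because $L$-spaces depend only on isomorphism classes.
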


We next turn to the inductive reduction of Flicker integrals. Let $\xi$
and $\zeta$ be admissible representations of $W_{\mathbb{C}}$ such that  $\xi\preceq\zeta$.
Set
\[
n_\xi=\dim(\xi)
\qquad\text{and}\qquad
n_\zeta=\dim(\zeta).
\]
The following lemma is the Flicker analogue of
\cite[Proposition~14.1]{Jac2009}.

\begin{lemma}
\label{lem:whittaker-replacement}
Given
$W_\xi\in\mathcal{W}(\pi_\xi,\psi_{\mathbb{C}})$ and
$\phi\in\mathcal{S}(\mathbb{R}^{n_\xi})$, there exists
$W\in\mathcal{W}(\pi_{\xi\oplus\zeta},\psi_{\mathbb{C}})$ such that,
for every $g\in G_{n_\xi}(\mathbb{R})$,
\[
W
\begin{pmatrix}
g&0\\
0&I_{n_\zeta}
\end{pmatrix}
=
W_\xi(g)\phi(e_{n_\xi}g)
|\det g|_{\mathbb{R}}^{n_\zeta}.
\]
\end{lemma}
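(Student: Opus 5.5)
The plan is to follow Jacquet's argument for \cite[Proposition~14.1]{Jac2009}, replacing complex parameters by the restriction to \(G_{n_\xi}(\mathbb{R})\subset G_{n_\xi}(\mathbb{C})\). Throughout, realize \(\pi_{\xi\oplus\zeta}\) as the representation parabolically induced from \(\pi_\xi\otimes\pi_\zeta\) on the standard parabolic \(Q=MU\) of type \((n_\xi,n_\zeta)\). This realization is legitimate because \(\xi\preceq\zeta\) means exactly that the pair \((\xi,\zeta)\) is ordered as in the definition of the generic induced representation \(I_{\xi\oplus\zeta}\).

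The Whittaker functional on the induced space is given by the Jacquet integral
\[
\lambda(f)=\int_{U^-}\lambda_\xi\otimes\lambda_\zeta\bigl(f(w u)\bigr)\psi_{\mathbb{C}}^{-1}(u)\,du,
\]
where \(w=\begin{pmatrix}0&I_{n_\xi}\\ I_{n_\zeta}&0\end{pmatrix}\)-type is a suitable Weyl element. Possibly this needs to be defined by analytic continuation in an auxiliary parameter. As in Jacquet, one can first twist \(\zeta\) by \(|\cdot|^{u}\) with \(\mathrm{Re}(u)\gg0\), and at the end use that the construction is compatible with holomorphic families, or else use Jacquet's result that \(\lambda\) is given by the convergent integral on the relevant open cell.

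Step 1 is the key construction. Choose sections \(f\) supported in the big cell \(Q\,w\,N\) (open orbit). For \(g\in G_{n_\xi}(\mathbb{C})\) embedded as \(\operatorname{diag}(g,I)\), the Whittaker function becomes an integral over the last \(n_\zeta\times n_\xi\) block \(X\in M_{n_\zeta\times n_\xi}(\mathbb{C})\):
\[
W\begin{pmatrix}g&\\&I\end{pmatrix}=|\det g|^{n_\zeta/2\cdot(\ldots)}\int W_{\xi}(g)\,\Phi(Xg)\,\cdots\,dX.
\]
Concretely, take \(f(w\begin{pmatrix}I&0\\X&I\end{pmatrix}n)\) to be \(\Phi(X)\,v_\xi\otimes v_\zeta\) times \(\psi(n)\)-type data. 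Here \(\Phi\) is a Schwartz function on \(M_{n_\zeta\times n_\xi}(\mathbb{C})\), and \(v_\zeta\) is chosen so that its Whittaker function \(W_\zeta\) has \(W_\zeta(e)\neq0\) after a smoothing \(R(f_0)\). The integration over the unipotent variables then collapses to a Fourier transform of \(\Phi\) evaluated along the last row \(e_{n_\xi}g\) (this is the mechanism producing \(\phi(e_{n_\xi}g)\) in Jacquet's Rankin--Selberg case). The modulus character contributes \(|\det g|_{\mathbb{C}}^{n_\zeta/2}\)-type factors, which one normalizes. Note that \(|\det g|_{\mathbb{C}}^{n_\zeta/2}=|\det g|_{\mathbb{R}}^{n_\zeta}\) for real \(g\); this is the reason for the exponent \(n_\zeta\) in the statement.

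Step 2 is to restrict to real \(g\) and choose \(\Phi\). Choose \(\Phi=\Phi_1\otimes\Phi_0\), where \(\Phi_0\) constrains all rows of \(X\) except the one pairing with \(e_{n_\xi}g\). The Fourier transform of \(\Phi_1\) along \(\mathbb{C}^{n_\xi}\) must restrict on \(\mathbb{R}^{n_\xi}\) to the given \(\phi\). This is possible because every \(\phi\in\mathcal{S}(\mathbb{R}^{n_\xi})\) is the restriction of some Schwartz function on \(\mathbb{C}^{n_\xi}=\mathbb{R}^{n_\xi}\oplus i\mathbb{R}^{n_\xi}\); for instance take \(\phi(x)\beta(y)\) with \(\beta(0)=1\), which is then the Fourier transform of some \(\Phi_1\). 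Since \(\psi_{\mathbb{C}}\) is trivial on \(\mathbb{R}\), the pairing is effectively \(\psi_{\mathbb{R}}(\operatorname{Tr}(\tau\,\cdot))\), which only rescales the variables. The \(W_\xi\) factor is obtained by choosing \(v_\xi\) with \(\lambda_\xi(\pi_\xi(g)v_\xi)=W_\xi(g)\). Leftover constants are absorbed by scaling.

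The main obstacle is Step 1 in the archimedean setting: justifying convergence of the Jacquet integral and the exchange of integrals for sections supported on the open cell. I would try first to quote Jacquet's \cite[Sections~7--14]{Jac2009} treatment, which handles exactly \(\operatorname{diag}(g,I)\) restrictions for \(G_n(\mathbb{C})\) with \(\xi\preceq\zeta\). Indeed \cite[Proposition~14.1]{Jac2009} gives, for complex \(g\), \(W(\operatorname{diag}(g,I))=W_\xi(g)\Phi'(e_{n_\xi}g)|\det g|_{\mathbb{C}}^{n_\zeta/2}\) with arbitrary \(\Phi'\in\mathcal{S}(\mathbb{C}^{n_\xi})\). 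The lemma then follows by restriction to \(G_{n_\xi}(\mathbb{R})\) and the extension of \(\phi\) to a \(\Phi'\), as in Step 2.
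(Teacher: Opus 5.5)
Your final paragraph is exactly the paper's proof: apply \cite[Proposition~14.1]{Jac2009} over $\mathbb{C}$ to $W_\xi$ and a Schwartz extension $\Phi'(x+iy)=\phi(x)\beta(y)$ with $\beta(0)=1$ (the paper takes $\beta(y)=e^{-\lVert y\rVert^2}$), then restrict to $g\in G_{n_\xi}(\mathbb{R})$, where $\Phi'(e_{n_\xi}g)=\phi(e_{n_\xi}g)$ and $|\det g|_{\mathbb{C}}^{n_\zeta/2}=|\det g|_{\mathbb{R}}^{n_\zeta}$. The sketch of Jacquet's construction in Steps~1--2 (Jacquet integrals, open-cell sections, Fourier transforms) is unnecessary, because Proposition~14.1 already accepts an arbitrary Schwartz function on $\mathbb{C}^{n_\xi}$.
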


\begin{proof}
Choose $\phi_\mathbb{C}\in \mathcal{S}(\mathbb{C}^{n_\xi})$ whose restriction to
$\mathbb{R}^{n_\xi}$ is $\phi$; for instance, set
\[
\phi_\mathbb{C}(x+iy)=\phi(x)e^{-\lVert y\rVert^2}.
\]
Applying \cite[Proposition~14.1]{Jac2009} over $\mathbb{C}$ to $W_\xi$ and $\phi_\mathbb{C}$, we obtain
$W\in\mathcal{W}(\pi_{\xi\oplus\zeta},\psi_{\mathbb{C}})$ such that
\[
W\begin{pmatrix}
g&0\\
0&I_{n_\zeta}
\end{pmatrix}
=
W_\xi(g)\phi_\mathbb{C}(e_{n_\xi}g)
\lvert\det g\rvert_{\mathbb{C}}^{n_\zeta/2}
\]
for every $g\in G_{n_\xi}(\mathbb{C})$. Restricting to
$g\in G_{n_\xi}(\mathbb{R})$, we have
\[
\phi_\mathbb{C}(e_{n_\xi}g)=\phi(e_{n_\xi}g)
\quad\text{and}\quad
\lvert\det g\rvert_{\mathbb{C}}^{n_\zeta/2}
=
\lvert\det g\rvert_{\mathbb{R}}^{n_\zeta},
\]
which proves the lemma.
\end{proof}

\begin{lemma}
\label{lem:partial-flicker-integrals}
Let $\sigma$ be an admissible representation of $W_{\mathbb C}$, and set
$n=\dim(\sigma)$. For $1\leq r<n$ and
$W\in\mathcal{W}(\pi_\sigma,\psi_{\mathbb C})$, set
\[
I_r(s,W)
=
\int_{N_r(\mathbb R)\backslash G_r(\mathbb R)}
W
\begin{pmatrix}
g&0\\
0&I_{n-r}
\end{pmatrix}
|\det g|_{\mathbb R}^{s-(n-r)}\,dg.
\]
Then $I_r(s,W)$ belongs to $\mathcal{I}_{\As}(\sigma)$. 
\end{lemma}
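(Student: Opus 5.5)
We reduce to the case \(r=n-1\) and then peel off one row and column at a time, by induction on \(n-r\). The basic tool is the open-cell parametrization of \(N_n(\mathbb R)\backslash G_n(\mathbb R)\) adapted to the mirabolic \(P_n\). Let
\[
Q=\left\{q(x,t)=\begin{pmatrix} I_{n-1}&0\\ x&t\end{pmatrix}: x\in\mathbb R^{n-1},\ t\in\mathbb R^\times\right\}.
\]
Up to a null set, every coset in \(N_n\backslash G_n\) has a representative \(\begin{pmatrix}h&0\\0&1\end{pmatrix}q(x,t)\) with \(h\in N_{n-1}\backslash G_{n-1}\). In these coordinates \(e_ng=(x,t)\), and \(dg=|\det h|^{-1}_{\mathbb R}\,c(t)\,dh\,dx\,dt\) for an explicit character \(c\) of \(\mathbb R^\times\). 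The factor \(|\det h|^{-1}\) is exactly what turns \(|\det g|^s\) into the shifted exponent \(|\det h|^{s-1}\) of \(I_{n-1}\).

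Substituting into the Flicker integral gives, for \(\operatorname{Re}(s)\gg0\),
\[
I(s,W,\phi)=\int_Q \phi(x,t)\,|t|_{\mathbb R}^{s}c(t)\, I_{n-1}\bigl(s,R(q(x,t))W\bigr)\,dx\,dt .
\]
We then try to move the \(t\)-dependence of \(I_{n-1}(s,R(q(x,t))W)\) into the prefactor. Write \(q(x,t)=q(x',1)\,\mathrm{diag}(I_{n-1},t)\), and write \(\mathrm{diag}(I_{n-1},t)=t\cdot \mathrm{diag}(t^{-1}I_{n-1},1)\). Using the central character and the change of variables \(h\mapsto th\) in \(N_{n-1}\backslash G_{n-1}\), the factor \(|t|^s\) should cancel against \(|\det(t^{-1}I_{n-1})|^{s-1}\) up to an \(s\)-independent character of \(t\). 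This should give
\[
I(s,W,\phi)=I_{n-1}\bigl(s,R(f_\phi)W\bigr),
\]
where \(f_\phi\in C_c^\infty(Q)\) is built from \(\phi\) and does not depend on \(s\); we take \(\phi\) compactly supported in \(\mathbb R^{n-1}\times\mathbb R^\times\). The map \(\phi\mapsto f_\phi\) is then onto \(C_c^\infty(Q)\), possibly after twisting by a fixed character.

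By the Dixmier--Malliavin lemma applied to the action of the Lie group \(Q\) on \(\mathcal W(\pi_\sigma,\psi_{\mathbb C})\), every \(W\) is a finite sum \(\sum_i R(f_i)W_i\) with \(f_i\in C_c^\infty(Q)\). Hence \(I_{n-1}(s,W)=\sum_i I(s,W_i,\phi_i)\in\mathcal I_{\As}(\sigma)\). The identity, proved for \(\operatorname{Re}(s)\gg0\), extends by meromorphic continuation (Theorem~\ref{fe1}).

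For general \(r<n-1\) we induct downward on \(r\). We apply the same open-cell computation inside \(G_{r+1}\) to the function \(g\mapsto W\begin{pmatrix}g&0\\0&I_{n-r-1}\end{pmatrix}\), with \(Q\) replaced by the subgroup \(Q_{r+1}\) embedded in the upper-left \((r+1)\times(r+1)\) block. This expresses \(I_{r+1}(s,W')\) with \(W'=R(f)W\), \(f\in C_c^\infty(Q_{r+1})\), as the same kind of integral one step down, namely \(I_r\). Since Dixmier--Malliavin again writes every \(W\) as \(\sum R(f_i)W_i\), we get \(I_r(s,W)\in\operatorname{span}\{I_{r+1}(s,W')\}\subseteq\mathcal I_{\As}(\sigma)\).

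The main obstacle is the bookkeeping in the second paragraph. We must verify that the \(t\)-integration really produces an \(s\)-independent test function. If a residual factor \(|t|^{\alpha s}\) survives, the map \(\phi\mapsto f_\phi\) would depend on \(s\), and the argument would break. If that happens, the first thing to try is to choose a different section of \(N_n\backslash G_n\): put \(t\) as \(\mathrm{diag}(t^{-1}I_{n-1},1)\) on the left, where the \(|\det\|^{s}\) factors are absorbed by the \(h\)-integration. Alternatively, restrict \(\phi\) to the form \(\phi_0(x)\,\delta\)-approximations in \(t\) near \(1\), and use that the resulting Mellin factor is entire, nonvanishing near any given point, and can be absorbed by varying \(W\).
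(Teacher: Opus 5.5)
Your skeleton (open-cell coordinates adapted to the last row, Dixmier--Malliavin on a last-row subgroup, then induction on $r$) is the paper's, but two steps fail as written.

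First, the $s$-bookkeeping you flagged does break for $Q$. Your manipulation gives $\mathrm{diag}(h,1)\,q(x,t)=t\cdot\mathrm{diag}(ht^{-1},1)\,q(xt^{-1},1)$. The central character contributes only a character of $t$. The substitution $h\mapsto ht$ in $N_{n-1}(\mathbb R)\backslash G_{n-1}(\mathbb R)$, however, produces $|t|_{\mathbb R}^{(n-1)(s-1)}$, which compounds the prefactor $|t|_{\mathbb R}^{s}$ instead of cancelling it. The net power is $|t|_{\mathbb R}^{ns-(n-1)}$, so $f_\phi$ depends on $s$. To get cancellation you must move into $h$ an element of determinant $t$, not $t^{n-1}$:
\[
\begin{pmatrix}h&0\\x&t\end{pmatrix}
=\begin{pmatrix}h\,a(t)&0\\0&1\end{pmatrix}
\begin{pmatrix}a(t)^{-1}&0\\x&t\end{pmatrix},
\qquad a(t)=\mathrm{diag}(t,I_{n-2}).
\]
After the substitution $h\,a(t)\mapsto h$, the factor $|t|_{\mathbb R}^{s}|\det h|_{\mathbb R}^{s-1}$ becomes $|t|_{\mathbb R}\,|\det h|_{\mathbb R}^{s-1}$. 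You then apply Dixmier--Malliavin to the determinant-one group of matrices $\begin{psmallmatrix}a(t)^{-1}&0\\ x&t\end{psmallmatrix}$; this is exactly the paper's choice. Your first fallback points in this direction, but with $\mathrm{diag}(t^{-1}I_{n-1},1)$ the determinant is wrong for $n\geq 3$. Your second fallback, absorbing an $s$-dependent Mellin factor by varying $W$, cannot produce an identity of meromorphic functions.

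Second, the passage from $I_r$ to $I_{r+1}$ for $r<n-1$ lacks its key input. Apply Dixmier--Malliavin to the analogous determinant-one group in the upper-left $(r+1)\times(r+1)$ block. Then $I_r(s,W)$ becomes a finite sum of integrals
\[
\int_{N_{r+1}(\mathbb R)\backslash G_{r+1}(\mathbb R)}
W_i\begin{pmatrix}\gamma&0\\0&I_{n-r-1}\end{pmatrix}
\varphi_i(e_{r+1}\gamma)\,|\det\gamma|_{\mathbb R}^{s-(n-r-1)}\,d\gamma,
\]
with $\varphi_i\in C_c^\infty(\mathbb R^{r}\times\mathbb R^\times)$ extended by zero. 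Unlike the Flicker integral, $I_{r+1}$ has no Schwartz-function slot to absorb $\varphi_i$. You therefore need $W_i'\in\mathcal W(\pi_\sigma,\psi_{\mathbb C})$ whose restriction to $\mathrm{diag}(G_{r+1}(\mathbb R),I_{n-r-1})$ equals $W_i$ times $\varphi_i(e_{r+1}\gamma)$.

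Taking $W'=R(f)W$ does not achieve this. Since your group lies in $G_{r+1}(\mathbb R)$, right invariance of the quotient measure gives $I_{r+1}(s,R(f)W)=\bigl(\int f(q)\,|\det q|_{\mathbb R}^{-(s-(n-r-1))}\,dq\bigr)\,I_{r+1}(s,W)$, which is not an $I_r$-integral. The missing ingredient is Jacquet's Proposition~6.1. It is applied over $\mathbb C$ to a Schwartz extension such as $\varphi_i^0(u)e^{-\lVert v\rVert^2}$, where $u,v\in\mathbb R^{r+1}$ and $\varphi_i^0$ is the extension by zero, and the result is then restricted to real points. At $r=n-1$ the function $\phi$ in $I(s,W,\phi)$ absorbs the cutoff for free. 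That is why your top step closes once the determinant is fixed, while the lower steps do not close without this proposition.
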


\begin{proof}
Suppose first that $r<n-1$, and set
\[
a_r(h)=\diag(h,I_{r-1}),
\qquad h\in\mathbb R^\times.
\]
Applying the Dixmier--Malliavin lemma \cite[Lemma~6.1]{Jac2009} to the restriction of the
representation to the subgroup consisting of the matrices
\[
\begin{pmatrix}
a_r(h)^{-1}&0&0\\
x&h&0\\
0&0&I_{n-r-1}
\end{pmatrix},
\qquad
x\in\mathbb R^r,\quad h\in\mathbb R^\times,
\]
we may write
\[
W
=
\sum_{i=1}^q
\int_{\mathbb R^r\times\mathbb R^\times}
R\left(
\begin{pmatrix}
a_r(h)^{-1}&0&0\\
x&h&0\\
0&0&I_{n-r-1}
\end{pmatrix}
\right)
W_i\,
\varphi_i(x,h)\,dx\,|h|_{\mathbb R}\,d^\times h,
\]
where $W_i\in\mathcal{W}(\pi_\sigma,\psi_{\mathbb C})$ and 
$\varphi_i\in C_c^\infty(\mathbb R^r\times\mathbb R^\times).$
Substituting this expression into $I_r(s,W)$ and making the change of
variables $g a_r(h)^{-1}\mapsto g$, we obtain
\[
\begin{aligned}
I_r(s,W)
={}&
\sum_{i=1}^q
\int_{\mathbb R^\times}
\int_{\mathbb R^r}
\int_{N_r(\mathbb R)\backslash G_r(\mathbb R)}
W_i
\begin{pmatrix}
g&0&0\\
x&h&0\\
0&0&I_{n-r-1}
\end{pmatrix}
\varphi_i(x,h)                                     \\
&\hspace{25mm}\times
|\det g|_{\mathbb R}^{s-(n-r)}
|h|_{\mathbb R}^{s-(n-r-1)}
\,dg\,dx\,d^\times h.
\end{aligned}
\]

Extend $\varphi_i$ by zero to a function
$\varphi_i^0\in C_c^\infty(\mathbb R^{r+1})$, and define
\[
\varphi_{i,\mathbb C}(u+iv)
=
\varphi_i^0(u)e^{-\lVert v\rVert^2},
\qquad
u,v\in\mathbb R^{r+1}.
\]
Then $\varphi_{i,\mathbb C}\in\mathcal{S}(\mathbb C^{r+1})$.
By \cite[Proposition~6.1]{Jac2009}, applied over $\mathbb C$, there exists
$W_i'\in\mathcal{W}(\pi_\sigma,\psi_{\mathbb C})$ such that
\[
 W_i'
\begin{pmatrix}
\gamma&0\\
0&I_{n-r-1}
\end{pmatrix}
=
W_i
\begin{pmatrix}
\gamma&0\\
0&I_{n-r-1}
\end{pmatrix}
\varphi_{i,\mathbb C}(e_{r+1}\gamma)
\]
for every $\gamma\in G_{r+1}(\mathbb C)$. In particular, for
$\gamma=\begin{psmallmatrix}g&0\\ x&h\end{psmallmatrix}\in
G_{r+1}(\mathbb R)$, this gives
\[
W_i'
\begin{pmatrix}
g&0&0\\
x&h&0\\
0&0&I_{n-r-1}
\end{pmatrix}
=
W_i
\begin{pmatrix}
g&0&0\\
x&h&0\\
0&0&I_{n-r-1}
\end{pmatrix}
\varphi_i(x,h).
\]

With compatible Haar measures, one has
\[
\int_{N_{r+1}(\mathbb R)\backslash G_{r+1}(\mathbb R)}
F(\gamma)\,d\gamma
=
\int_{\mathbb R^\times}
\int_{\mathbb R^r}
\int_{N_r(\mathbb R)\backslash G_r(\mathbb R)}
F\begin{pmatrix}g&0\\x&h\end{pmatrix}
|\det g|_{\mathbb R}^{-1}\,dg\,dx\,d^\times h.
\]
Since
\[
\left|
\det
\begin{pmatrix}
g&0\\
x&h
\end{pmatrix}
\right|_{\mathbb R}^{s-(n-r-1)}
|\det g|_{\mathbb R}^{-1}
=
|\det g|_{\mathbb R}^{s-(n-r)}
|h|_{\mathbb R}^{s-(n-r-1)},
\]
it follows that
\begin{equation}\label{induct}
    I_r(s,W)=\sum_{i=1}^q I_{r+1}(s,W_i').
\end{equation}

It remains to treat the final step. Applying the same
Dixmier--Malliavin argument when $r=n-1$, we obtain
\[
\begin{aligned}
I_{n-1}(s,W)
={}&
\sum_{i=1}^q
\int_{\mathbb R^\times}
\int_{\mathbb R^{n-1}}
\int_{N_{n-1}(\mathbb R)\backslash G_{n-1}(\mathbb R)}
W_i
\begin{pmatrix}
g&0\\
x&h
\end{pmatrix}                                      \\
&\hspace{25mm}\times
\varphi_i(x,h)
|\det g|_{\mathbb R}^{s-1}
|h|_{\mathbb R}^{s}
\,dg\,dx\,d^\times h.
\end{aligned}
\]
Define $\phi_i\in C_c^\infty(\mathbb R^n)$ by
\[
\phi_i(x,h)
=
\begin{cases}
\varphi_i(x,h),&h\neq0,\\
0,&h=0.
\end{cases}
\]
This is smooth because $\varphi_i$ has compact support in
$\mathbb R^{n-1}\times\mathbb R^\times$. The preceding measure
decomposition now gives
\[
I_{n-1}(s,W)
=
\sum_{i=1}^q
\int_{N_n(\mathbb R)\backslash G_n(\mathbb R)}
W_i(g)\phi_i(e_ng)|\det g|_{\mathbb R}^{s}\,dg
=
\sum_{i=1}^q I(s,W_i,\phi_i).
\]
Iterating \eqref{induct} completes the proof.
\end{proof}

Using Lemma~\ref{lem:whittaker-replacement},  Lemma~\ref{lem:partial-flicker-integrals} and the functional equation
for the Flicker integrals (Theorem \ref{fe1}), we prove the following.

\begin{prop}[Flicker integral reduction]
\label{prop:asai-integral-inclusions}
Let $\xi$ and $\zeta$ be admissible representations of
$W_{\mathbb{C}}$ such that $\xi\preceq\zeta$. Then
\[
\mathcal{I}_\As\bigl(\xi\oplus\zeta\bigr)
\supseteq
\mathcal{I}_\As\bigl(\xi\bigr)
\]
and
\[
\mathcal{I}_\As\bigl(\xi\oplus\zeta\bigr)
\supseteq
R_{\xi,\zeta}(s)\,
\mathcal{I}_\As\bigl(\zeta\bigr),
\]
where
\[
R_{\xi,\zeta}(s)
=
\frac{
L\bigl(s,\As(\xi)\bigr)
L\left(
s,
\operatorname{Ind}_{W_{\mathbb{C}}}^{W_{\mathbb{R}}}
\bigl(\xi\otimes\zeta^c\bigr)
\right)
}{
L\bigl(1-s,\As(\xi^\vee)\bigr)
L\left(
1-s,
\operatorname{Ind}_{W_{\mathbb{C}}}^{W_{\mathbb{R}}}
\bigl(\xi^\vee\otimes(\zeta^\vee)^c\bigr)
\right)
}.
\]
\end{prop}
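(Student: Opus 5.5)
The first inclusion comes from Lemma~\ref{lem:whittaker-replacement} combined with Lemma~\ref{lem:partial-flicker-integrals}. The second comes from applying the first inclusion to the contragredient parameters and then transporting it back with the functional equation \eqref{flickerfe1}.

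\emph{First inclusion.} Put $n=n_\xi+n_\zeta$ and $r=n_\xi$. Take $W_\xi\in\mathcal{W}(\pi_\xi,\psi_{\mathbb C})$ and $\phi\in\mathcal S(\mathbb R^{n_\xi})$. Lemma~\ref{lem:whittaker-replacement} gives $W\in\mathcal W(\pi_{\xi\oplus\zeta},\psi_{\mathbb C})$ with
\[
W\begin{pmatrix} g&0\\0&I_{n_\zeta}\end{pmatrix}=W_\xi(g)\phi(e_{n_\xi}g)|\det g|_{\mathbb R}^{n_\zeta}.
\]
Substituting this into the definition of $I_r(s,W)$, the factor $|\det g|_{\mathbb R}^{n_\zeta}$ cancels the shift $-(n-r)=-n_\zeta$. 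Hence $I_r(s,W)=I(s,W_\xi,\phi)$ for $\mathrm{Re}(s)\gg0$, and by meromorphic continuation for all $s$. Lemma~\ref{lem:partial-flicker-integrals} gives $I_r(s,W)\in\mathcal I_{\As}(\xi\oplus\zeta)$, which proves $\mathcal I_{\As}(\xi)\subseteq \mathcal I_{\As}(\xi\oplus\zeta)$. This step uses only $\xi\preceq\zeta$ through Lemma~\ref{lem:whittaker-replacement}, which is Jacquet's Proposition~14.1 and needs that ordering to realize $\pi_{\xi\oplus\zeta}$ as induced from $\pi_\xi\otimes\pi_\zeta$ in the right order.

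\emph{Second inclusion.} Since $\xi\preceq\zeta$, we have $\zeta^\vee\preceq\xi^\vee$. Applying the first inclusion to the pair $(\zeta^\vee,\xi^\vee)$, with the character $\psi_{\mathbb C}^{-1}$, gives $\mathcal I_{\As}(\zeta^\vee)\subseteq\mathcal I_{\As}(\zeta^\vee\oplus\xi^\vee)$. The parameters $\zeta^\vee\oplus\xi^\vee$ and $(\xi\oplus\zeta)^\vee$ define the same generic induced representation, so $\mathcal I_{\As}(\zeta^\vee)\subseteq\mathcal I_{\As}((\xi\oplus\zeta)^\vee)$. Now apply \eqref{flickerfe1} twice: to $\xi\oplus\zeta$, and to $\zeta$. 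This gives
\[
\mathcal I_{\As}(\xi\oplus\zeta)\supseteq \gamma(s,\As(\xi\oplus\zeta),\psi_{\mathbb R})^{-1}\gamma(s,\As(\zeta),\psi_{\mathbb R})\,\mathcal I_{\As}(\zeta).
\]
By the direct-sum formula,
\[
\As(\xi\oplus\zeta)=\As(\xi)\oplus\As(\zeta)\oplus\operatorname{Ind}_{W_{\mathbb C}}^{W_{\mathbb R}}(\xi\otimes\zeta^c),
\]
so the ratio of $\gamma$-factors equals $\gamma(s,\rho,\psi_{\mathbb R})^{-1}$ with $\rho=\As(\xi)\oplus\operatorname{Ind}(\xi\otimes\zeta^c)$. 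This equals $\varepsilon(s,\rho,\psi_{\mathbb R})^{-1}R_{\xi,\zeta}(s)$, using $\rho^\vee=\As(\xi^\vee)\oplus\operatorname{Ind}(\xi^\vee\otimes(\zeta^\vee)^c)$.

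\emph{Main obstacle.} The remaining issue is the extra factor $\varepsilon(s,\rho,\psi_{\mathbb R})^{-1}$ and checking that the functional equation may be used in the required direction. Archimedean $\varepsilon$-factors have the form $cA^s$ with $c\in\mathbb C^\times$ and $A>0$, and $\mathcal I_{\As}(\cdot)$ is stable under multiplication by such functions: translating $W$ by a real scalar matrix multiplies $I(s,W,\phi)$ by $|a|_{\mathbb R}^{-ns}$ times a constant, and $\phi$ can be rescaled similarly. So the factor can be absorbed, which gives the second inclusion.

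The delicate bookkeeping is that \eqref{flickerfe1} is an equality of spaces. One must use it as $\mathcal I_{\As}(\zeta)=\gamma(s,\As(\zeta))^{-1}\{f(1-s):f\in\mathcal I_{\As}(\zeta^\vee)\}$, map each such $f$ into $\mathcal I_{\As}((\xi\oplus\zeta)^\vee)$, and then apply \eqref{flickerfe1} for $\xi\oplus\zeta$. One must also keep consistent Whittaker characters $\psi_{\mathbb C}^{\pm1}$ throughout, since the first inclusion has to be justified for $\psi_{\mathbb C}^{-1}$, which is likewise trivial on $\mathbb R$.
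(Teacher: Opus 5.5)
Your proposal is correct and follows the paper's proof step for step. The first inclusion comes from Lemma~\ref{lem:whittaker-replacement} followed by Lemma~\ref{lem:partial-flicker-integrals}; the second comes from applying the first inclusion to $\zeta^\vee\oplus\xi^\vee$ (with $\psi_{\mathbb C}^{-1}$), transporting it through \eqref{flickerfe1} for both $\xi\oplus\zeta$ and $\zeta$, and absorbing the $\varepsilon$-factor.

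One small correction concerns your justification of the $cA^s$-invariance. A real scalar matrix is central, so translating $W$ by it only multiplies $I(s,W,\phi)$ by a central-character value. The power of $|a|_{\mathbb R}$ actually comes from rescaling $\phi$: for $W\in\mathcal{W}(\pi_\zeta,\psi_{\mathbb C})$ one has $I(s,W,\phi(a\,\cdot))=\omega_{\pi_\zeta}(a)^{-1}|a|_{\mathbb R}^{-n_\zeta s}\,I(s,W,\phi)$.
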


\begin{proof}
Let \(I(s,W_\xi,\phi)\in\mathcal{I}_\As(\xi)\), where
\(W_\xi\in\mathcal{W}(\pi_\xi,\psi_{\mathbb C})\) and
\(\phi\in\mathcal{S}(\mathbb R^{n_\xi})\), be given by
\[
I(s,W_\xi,\phi)
=
\int_{N_{n_\xi}(\mathbb R)\backslash G_{n_\xi}(\mathbb R)}
W_\xi(g)\phi(e_{n_\xi}g)|\det g|_{\mathbb R}^s\,dg.
\]

By Lemma~\ref{lem:whittaker-replacement}, there exists
$W\in\mathcal{W}(\pi_{\xi\oplus\zeta},\psi_{\mathbb{C}})$ such that
\[
W
\begin{pmatrix}
g&0\\
0&I_{n_\zeta}
\end{pmatrix}
=
W_\xi(g)\phi(e_{n_\xi}g)
|\det g|_{\mathbb{R}}^{n_\zeta}
\]
for every $g\in G_{n_\xi}(\mathbb{R})$. Therefore,
\[
I(s,W_\xi,\phi)
=
\int_{N_{n_\xi}(\mathbb{R})\backslash
G_{n_\xi}(\mathbb{R})}
W
\begin{pmatrix}
g&0\\
0&I_{n_\zeta}
\end{pmatrix}
|\det g|_{\mathbb{R}}^{s-n_\zeta}\,dg.
\]

By Lemma \ref{lem:partial-flicker-integrals}, there exists $r\geq 1$,
$W_i\in\mathcal{W}(\pi_{\xi\oplus\zeta},\psi_{\mathbb{C}})$ and
$\phi_i\in\mathcal{S}(\mathbb{R}^{n_\xi+n_\zeta})$ such that
\[
I(s,W_\xi,\phi)
=
\sum_{i=1}^r I(s,W_i,\phi_i).
\]
Consequently,
\[
\mathcal{I}_\As\bigl(\xi\oplus\zeta\bigr)
\supseteq
\mathcal{I}_\As\bigl(\xi\bigr).
\]

For the second inclusion, note that
\[
\zeta^\vee\preceq\xi^\vee.
\]
Applying the first inclusion to
\[
(\xi\oplus\zeta)^\vee
=
\zeta^\vee\oplus\xi^\vee
\]
gives
\[
\mathcal{I}_\As\bigl((\xi\oplus\zeta)^\vee\bigr)
\supseteq
\mathcal{I}_\As\bigl(\zeta^\vee\bigr).
\]

The direct-sum formula for the Asai representation gives
\[
\As(\xi\oplus\zeta)
\simeq
\As(\xi)
\oplus
\As(\zeta)
\oplus
\operatorname{Ind}_{W_{\mathbb{C}}}^{W_{\mathbb{R}}}
\bigl(\xi\otimes\zeta^c\bigr).
\]
Hence, by \eqref{flickerfe1},
$\mathcal{I}_\As(\xi\oplus\zeta)$ contains
\[
\gamma\bigl(s,\As(\xi),\psi_{\mathbb{R}}\bigr)^{-1}
\gamma\left(
s,
\operatorname{Ind}_{W_{\mathbb{C}}}^{W_{\mathbb{R}}}
\bigl(\xi\otimes\zeta^c\bigr),
\psi_{\mathbb{R}}
\right)^{-1}
\mathcal{I}_\As\bigl(\zeta\bigr).
\]
Using
\[
\gamma(s,\rho,\psi_{\mathbb{R}})
=
\varepsilon(s,\rho,\psi_{\mathbb{R}})
\frac{L(1-s,\rho^\vee)}{L(s,\rho)}
\]
and the fact that $\mathcal{I}_\As\bigl(\zeta\bigr)$ is invariant under
multiplication by functions of the form \(cA^s\), where
\(c\in\mathbb{C}^{\times}\) and \(A>0\), we obtain
\[
\mathcal{I}_\As\bigl(\xi\oplus\zeta\bigr)
\supseteq
\frac{
L\bigl(s,\As(\xi)\bigr)
L\left(
s,
\operatorname{Ind}_{W_{\mathbb{C}}}^{W_{\mathbb{R}}}
\bigl(\xi\otimes\zeta^c\bigr)
\right)
}{
L\bigl(1-s,\As(\xi^\vee)\bigr)
L\left(
1-s,
\operatorname{Ind}_{W_{\mathbb{C}}}^{W_{\mathbb{R}}}
\bigl(\xi^\vee\otimes(\zeta^\vee)^c\bigr)
\right)
}
\mathcal{I}_\As\bigl(\zeta\bigr).
\]
This proves the second inclusion.
\end{proof}

To prove Theorem~\ref{main1}, it suffices to show that
\[
\mathcal{I}_\As\bigl(\sigma\bigr)
\supseteq
\mathcal{L}\bigl(\As(\sigma)\bigr).
\]
We first establish this inclusion in the case when $\sigma$ is one-dimensional.

\begin{prop}
\label{prop:asai-rank-one}
Let $\sigma$ be a one-dimensional admissible representation of
$W_{\mathbb C}$. Then
\[
\mathcal{I}_\As(\sigma)
\supseteq
\mathcal{L}(\As(\sigma)).
\]
\end{prop}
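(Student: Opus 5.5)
In the one-dimensional case, $\sigma=\chi$ is a character of $\mathbb{C}^\times$, $\pi_\sigma=\chi$ is a character of $G_1(\mathbb C)=\mathbb C^\times$, and $N_1$ is trivial. The Whittaker model is spanned by $\chi$ itself, so the Flicker integral becomes a Tate integral on $\mathbb R^\times$:
\[
I(s,\chi,\phi)=\int_{\mathbb R^\times}\chi(t)\phi(t)|t|_{\mathbb R}^s\,d^\times t,\qquad \phi\in\mathcal S(\mathbb R).
\]
The restriction $\chi|_{\mathbb R^\times}$ corresponds, via class field theory for $\mathbb R$, to $\As(\chi)$: the Asai lift of a character is its transfer to $W_{\mathbb R}$, which corresponds to the restriction to $\mathbb R^\times$. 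Hence $L(s,\As(\chi))=L(s,\chi|_{\mathbb R^\times})$. I will check this on the standard form $\chi(z)=z^{a}\bar z^{b}$, where the restriction is $t\mapsto \operatorname{sgn}(t)^{a-b}|t|^{a+b}$ (when $a-b\in\mathbb Z$). This reduces the proposition to Tate's thesis over $\mathbb R$. Write $\chi|_{\mathbb R^\times}=\operatorname{sgn}^{\epsilon}|\cdot|^{u}$ with $\epsilon\in\{0,1\}$, so that $L(s,\As(\chi))=\Gamma_{\mathbb R}(s+u+\epsilon)$.

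The key step is to show that $\mathcal I_\As(\chi)$ contains both $L(s,\As(\chi))\,\mathcal L(0)$ and $\mathcal L(0)$, and then to invoke Lemma~\ref{lem:Jac2009-12.2}. For the first, take $\phi_0(t)=t^{\epsilon}e^{-\pi t^2}$. Up to a constant $cA^s$, the integral gives exactly $\Gamma_{\mathbb R}(s+u+\epsilon)$. Next, for $\varphi\in C_c^\infty(\mathbb R^\times_{>0})$, the multiplicative convolution $\phi_0\ast\varphi(t)=\int\phi_0(ty^{-1})\varphi(y)\,d^\times y$ is again Schwartz with the same parity, and its Tate integral equals $L(s,\As(\chi))\,M\varphi(s)$, where $M\varphi$ denotes the Mellin transform. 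Jacquet's argument \cite[\S12--\S13]{Jac2009} (via Dixmier--Malliavin / Paley--Wiener) shows that finite sums of such Mellin transforms exhaust $\mathcal L(0)$. I plan to quote it in the form used for $\GL_1$ in Jacquet's Rankin--Selberg paper, namely that finite sums of Mellin transforms of $C_c^\infty(\mathbb R_{>0})$ functions give all of $\mathcal L(0)$. For the second containment, take $\phi\in C_c^\infty(\mathbb R^\times)$. The integral is then entire and rapidly decreasing, and the same Mellin-transform surjectivity shows that these integrals fill $\mathcal L(0)$. Since $\mathcal I_\As(\chi)$ is a linear span, it contains $L(s,\As(\chi))\mathcal L(0)+\mathcal L(0)=\mathcal L(\As(\chi))$ by Lemma~\ref{lem:Jac2009-12.2}.

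I expect the main obstacle to be the density/surjectivity claim that \emph{every} element of $\mathcal L(0)$, not merely a dense subset, is a finite sum of Mellin transforms of compactly supported smooth functions. Paley--Wiener alone characterises only transforms of compact support, which have exponential type. The fix I would try first is Jacquet's: given $h\in\mathcal L(0)$, the inverse Mellin transform $f$ of $h$ is smooth on $\mathbb R_{>0}$ and decays faster than any power of $t$ at both $0$ and $\infty$. Applying Dixmier--Malliavin to the regular representation of $\mathbb R_{>0}$ on this Fréchet space, and multiplying by the character $\operatorname{sgn}^\epsilon$ extended to $\mathbb R^\times$, realises $h$ as a finite sum of Tate integrals of Schwartz functions supported away from $0$. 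Since only finitely many terms are needed, the identification with $\mathcal I_\As(\chi)$ is then exact.
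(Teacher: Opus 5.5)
Your reduction is sound as far as it goes, and it matches the paper. For $n=1$ the Flicker integral is the Tate integral $\int_{\mathbb{R}^\times}\chi(t)\phi(t)|t|_{\mathbb{R}}^{s}\,d^\times t$, $\As(\chi)$ corresponds to $\chi|_{\mathbb{R}^\times}=\operatorname{sgn}^{\epsilon}|\cdot|^{u}$, and $\phi_0$ produces $\Gamma_{\mathbb{R}}(s+u+\epsilon)$.

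\textbf{The gap.} Both of your containments rest on the claim that finite sums of Mellin transforms of functions in $C_c^\infty(\mathbb{R}_{>0})$ exhaust $\mathcal{L}(0)$. This claim is false. If $\varphi$ is supported in $[e^{-R},e^{R}]$, then $|M\varphi(s)|\le Ce^{R|\operatorname{Re}(s)|}$, so every such finite sum has exponential type along the real axis. But $e^{s^2}$ lies in $\mathcal{L}(0)$, since $|e^{s^2}|=e^{\operatorname{Re}(s)^2-\operatorname{Im}(s)^2}$, and it grows like $e^{\operatorname{Re}(s)^2}$.

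\emph{Your fix.} The same obstruction, as $\operatorname{Re}(s)\to-\infty$, rules out Tate integrals of Schwartz functions vanishing near $0$. So the functions in your fix cannot be ``supported away from $0$'' in any literal sense; at best they are flat at $0$. Dixmier--Malliavin does not help either. Writing the inverse Mellin transform as $f=\sum_i\varphi_i\ast f_i$ gives $h=\sum_i M\varphi_i\cdot Mf_i$. This is a sum of products whose second factors are again Mellin transforms of non-compactly supported functions. In particular, the containment $L(s,\As(\chi))\,\mathcal{L}(0)\subseteq\mathcal{I}_{\As}(\chi)$ is not established, since you built it only from $\phi_0\ast\varphi$ with $\varphi$ compactly supported.

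\textbf{The repair.} The correct ingredient is already in your last paragraph, and it makes Dixmier--Malliavin unnecessary. For $h\in\mathcal{L}(0)$, let $f$ be the inverse Mellin transform of $h(s-u)$. Shifting contours gives $(y\partial_y)^k f(y)=O(y^{-c})$ for all $k\geq 0$ and all $c\in\mathbb{R}$, so $f$ is flat at $0$ and rapidly decreasing at $\infty$.
\begin{itemize}
\item Set $\phi(t)=\tfrac12\operatorname{sgn}(t)^{\epsilon}f(|t|)$, with $\phi(0)=0$. Then $\phi\in\mathcal{S}(\mathbb{R})$ and $I(s,\chi,\phi)=h(s)$, a single term.
\item Convolve $\phi_0$ with the full $f$, not with a compactly supported $\varphi$. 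Then $\phi_0\ast f$ is a Schwartz function on $\mathbb{R}$ of parity $\epsilon$, and its integral is $\Gamma_{\mathbb{R}}(s+u+\epsilon)h(s)$.
\end{itemize}
With these two facts, your appeal to Lemma~\ref{lem:Jac2009-12.2} goes through.

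\textbf{The paper's route.} The paper's argument is shorter still. It first shifts $F_0(s)=F(s-2t)$ to reduce to the normalized character $\operatorname{sgn}^{\epsilon}$. It then quotes Jacquet's Lemma~12.3: every element of $\mathcal{L}(\operatorname{sgn}^{\epsilon})$ is a single Tate integral $\int_{\mathbb{R}^\times}\phi(x)\operatorname{sgn}^{\epsilon}(x)|x|_{\mathbb{R}}^{s}\,d^\times x$ with $\phi\in\mathcal{S}(\mathbb{R})$. So no splitting via Lemma~\ref{lem:Jac2009-12.2} is needed.
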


\begin{proof}
Write
\[
\sigma(z)
=
|z|_{\mathbb C}^{t}
\left(
\frac{z}{|z|_{\mathbb C}^{1/2}}
\right)^{m},
\] where \(t\in\mathbb C\) and \(m\in\mathbb Z\). Let
\(\varepsilon\in\{0,1\}\) be determined by
\(\varepsilon\equiv m\pmod 2\), and set
\(\omega=\operatorname{sgn}^{\varepsilon}\). Then \(\omega\) is
normalized,
\[
\As(\sigma)=\omega|\cdot|_{\mathbb R}^{2t},
\qquad
L(s,\As(\sigma))=L(s+2t,\omega).
\]

Let \(F\in\mathcal{L}(\As(\sigma))\), and set
\(F_0(s)=F(s-2t)\). Translation in \(s\) preserves the defining
vertical-strip estimates, so \(F_0\in\mathcal{L}(\omega)\). By
\cite[Lemma~12.3]{Jac2009}, there exists
\(\phi\in\mathcal{S}(\mathbb R)\) such that
\[
F_0(s)
=
\int_{\mathbb R^\times}
\phi(x)\omega(x)|x|_{\mathbb R}^{s}\,d^\times x.
\]
Replacing \(s\) by \(s+2t\), and using
\(W_\sigma(x)=\sigma(x)=\omega(x)|x|_{\mathbb R}^{2t}\) for
\(x\in\mathbb R^\times\), gives
\[
F(s)
=
\int_{\mathbb R^\times}
W_\sigma(x)\phi(x)|x|_{\mathbb R}^{s}\,d^\times x
=
I(s,W_\sigma,\phi).
\]
Hence \(F\in\mathcal{I}_{\As}(\sigma)\).
\end{proof}

We now prove the general case by induction.

\begin{proof}[Proof of Theorem~\ref{main1}]
We argue by induction on $n=\dim(\sigma)$. The case $n=1$ follows from
Proposition~\ref{prop:asai-rank-one}. Assume that $n>1$ and that the
result holds for all admissible representations of $W_{\mathbb{C}}$ of
dimension strictly less than $n$.

Using the notation introduced at the beginning of this section, write
\[
\sigma
=
\sigma^-\oplus\chi_n
=
\chi_1\oplus\sigma^+.
\]
Since $\sigma^-\preceq\chi_n$, Proposition
\ref{prop:asai-integral-inclusions} gives
\[
\mathcal{I}_\As\bigl(\sigma\bigr)
\supseteq
\mathcal{I}_\As\bigl(\sigma^-\bigr).
\]
By the induction hypothesis,
\[
\mathcal{I}_\As\bigl(\sigma^-\bigr)
\supseteq
\mathcal{L}\bigl(\As(\sigma^-)\bigr),
\]
and hence
\[
\mathcal{I}_\As\bigl(\sigma\bigr)
\supseteq
\mathcal{L}\bigl(\As(\sigma^-)\bigr).
\]

Similarly, since $\chi_1\preceq\sigma^+$, Proposition
\ref{prop:asai-integral-inclusions} and the induction hypothesis give
\[
\mathcal{I}_\As\bigl(\sigma\bigr)
\supseteq
R_{\chi_1,\sigma^+}(s)\,
\mathcal{I}_\As\bigl(\sigma^+\bigr)
\supseteq
R_{\chi_1,\sigma^+}(s)\,
\mathcal{L}\bigl(\As(\sigma^+)\bigr).
\]
Combining these inclusions and applying Proposition
\ref{prop:asai-L-space-decomposition}, we obtain
\[
\begin{aligned}
\mathcal{I}_\As\bigl(\sigma\bigr)
&\supseteq
\mathcal{L}\bigl(\As(\sigma^-)\bigr)
+
R_{\chi_1,\sigma^+}(s)\,
\mathcal{L}\bigl(\As(\sigma^+)\bigr)
=
\mathcal{L}\bigl(\As(\sigma)\bigr).
\end{aligned}
\]
This completes the proof.
\end{proof}
    
\end{section}

\begin{section}{Proof of Theorem 1.2}\label{s5}

Let $\sigma$ be an $m$-dimensional admissible representation of $W_F$. Suppose that $\sigma$ decomposes into $\ell \geq 2$ irreducible constituents, which we order by their exponents $e_i$:
\[
  \sigma = \delta_1 \oplus \delta_2 \oplus \cdots \oplus \delta_\ell, \qquad e_1 \leq e_2 \leq \cdots \leq e_\ell.
\]
Define the truncated representations $\sigma^+ = \delta_2\oplus\cdots\oplus\delta_\ell$ and $\sigma^- = \delta_1\oplus\cdots\oplus\delta_{\ell-1}$.

Applying the direct-sum formula for the exterior square by isolating $\delta_1$ gives $\wedge^2\sigma = \Sigma_1 \oplus \Sigma_2$, where
\[
  \Sigma_1 = \wedge^2\delta_1 \oplus (\delta_1\otimes\sigma^+) \qquad\text{and}\qquad \Sigma_2 = \wedge^2\sigma^+.
\]

Alternatively, applying the direct-sum formula by isolating $\delta_\ell$ yields $\wedge^2\sigma = \Theta_1 \oplus \Theta_2$, where
\[
  \Theta_1 = \wedge^2\sigma^- \qquad\text{and}\qquad \Theta_2 = \wedge^2\delta_\ell \oplus (\sigma^-\otimes\delta_\ell).
\]

Thus, we obtain the two decompositions:
\[
  \wedge^2\sigma = \Sigma_1 \oplus \Sigma_2 = \Theta_1 \oplus \Theta_2.
\]

For admissible representations $\delta$ and $\kappa$ of $W_F$, define
\[
  Q_{\delta,\kappa}(s) = \frac{ L\bigl(s,\wedge^2\delta\bigr) L\bigl(s,\delta\otimes\kappa\bigr) }{ L\bigl(1-s,\wedge^2(\delta^\vee)\bigr) L\bigl(1-s,\delta^\vee\otimes\kappa^\vee\bigr) }.
\]

\begin{lemma}
\label{lem:exterior-square-pole-separation}
With the notation above,
\[
\mathcal{P}\bigl(L(1-s,\Sigma_1^\vee)\bigr)
\cap
\mathcal{P}\bigl(L(s,\Theta_2)\bigr)
=
\varnothing.
\]
\end{lemma}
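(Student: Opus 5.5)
The argument follows the proof of Lemma~\ref{lem:asai-pole-separation}, using exponents. Suppose $s_0$ lies in both pole sets. We will list the exponents of $\Sigma_1$ and $\Theta_2$ and then use the fact recalled in Section~\ref{s2}: if $s_0\in\mathcal{P}(L(s,\rho))$, then some exponent $v$ of $\rho$ satisfies $\mathrm{Re}(s_0)+v\leq 0$. We apply this once to $L(s,\Theta_2)$, and once to $L(1-s,\Sigma_1^\vee)$ at the point $1-s_0$.

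\emph{Step 1: exponents of the tensor pieces.} Each $\delta_i$ is irreducible, hence of the form $\delta_i^0\otimes|\cdot|_F^{u_i}$ with $\delta_i^0$ normalized and $e_i=\mathrm{Re}(u_i)$. The normalized parts $\delta_i^0$ have unitary image. So every irreducible constituent of $\delta_i\otimes\delta_j$ and of $\wedge^2\delta_i$ has exponent $e_i+e_j$, respectively $2e_i$.

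One technical point needs checking. If $\delta_i^0$ is unitary but $\delta_i^0\otimes\delta_j^0$ is not irreducible, its constituents are still unitary and normalized. Hence the exponent assertion holds, with the convention $|\cdot|_F$ versus $|\cdot|_{\mathbb C}$ handled uniformly. The exponents of $\delta_i$ are read in the same normalization as the paper, so a twist by $|\cdot|_F^{u}$ shifts exponents by $\mathrm{Re}(u)$.

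\emph{Step 2: collect the exponents.} From Step~1, the exponents of $\Sigma_1=\wedge^2\delta_1\oplus(\delta_1\otimes\sigma^+)$ are of the form $e_1+e_j$ with $1\leq j\leq \ell$. Those of $\Theta_2=\wedge^2\delta_\ell\oplus(\sigma^-\otimes\delta_\ell)$ are of the form $e_i+e_\ell$ with $1\leq i\leq\ell$. The exponents of $\Sigma_1^\vee$ are the negatives of those of $\Sigma_1$.

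\emph{Step 3: add the inequalities.} If $\wedge^2\delta_1=0$ (e.g.\ $\dim\delta_1=1$), the list only shrinks, which is harmless. A pole of $L(1-s,\Sigma_1^\vee)$ at $s_0$ means $1-s_0$ is a pole of $L(s,\Sigma_1^\vee)$, so some $j$ gives
\[
1-\mathrm{Re}(s_0)-e_1-e_j\leq 0 .
\]
A pole of $L(s,\Theta_2)$ at $s_0$ gives some $i$ with
\[
\mathrm{Re}(s_0)+e_i+e_\ell\leq 0 .
\]
Adding the two yields
\[
1+(e_i-e_1)+(e_\ell-e_j)\leq 0,
\]
which contradicts $e_1\leq e_i$ and $e_j\leq e_\ell$.

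The only real content is Step~1: the exponent of each irreducible constituent of $\delta_i\otimes\delta_j$ is $e_i+e_j$. This holds because tensoring normalized (unitary) representations stays normalized. No other obstacle is expected.
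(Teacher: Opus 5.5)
Your proposal is correct and follows essentially the same route as the paper. The paper also observes that the constituents of $\Sigma_1$ and $\Theta_2$ have exponents among $e_1+e_j$ and $e_i+e_\ell$, respectively, and then runs the inequality-adding argument of Lemma~\ref{lem:asai-pole-separation} verbatim. Your Step~1 spells out the exponent bookkeeping for $\delta_i\otimes\delta_j$ and $\wedge^2\delta_i$, using that the normalized parts are trivial on $\mathbb{R}_{>0}$; the paper states this without justification.
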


\begin{proof}
The irreducible constituents of \(\Sigma_1\) have exponents among
the numbers \(e_1+e_j\), \(1\leq j\leq\ell\), whereas those of
\(\Theta_2\) have exponents among the numbers \(e_i+e_\ell\),
\(1\leq i\leq\ell\); the argument of
Lemma~\ref{lem:asai-pole-separation} therefore applies verbatim.
\end{proof}

Using Lemma~\ref{lem:exterior-square-pole-separation} and
Proposition~\ref{prop:two-decomposition}, we obtain the following.

\begin{prop}[Exterior-square $L$-space decomposition]
\label{prop:exterior-square-L-space-decomposition}
With the notation above, one has
\[
\mathcal{L}\bigl(\wedge^2\sigma\bigr)
=
\mathcal{L}\bigl(\wedge^2\sigma^-\bigr)
+
Q_{\delta_1,\sigma^+}(s)\,
\mathcal{L}\bigl(\wedge^2\sigma^+\bigr).
\]
Equivalently,
\[
\mathcal{L}\bigl(\wedge^2\sigma\bigr)
=
\mathcal{L}\bigl(\wedge^2\sigma^-\bigr)
+
\frac{
L\bigl(s,\wedge^2\delta_1\bigr)
L\bigl(s,\delta_1\otimes\sigma^+\bigr)
}{
L\bigl(1-s,\wedge^2(\delta_1^\vee)\bigr)
L\bigl(1-s,\delta_1^\vee\otimes(\sigma^+)^\vee\bigr)
}
\mathcal{L}\bigl(\wedge^2\sigma^+\bigr).
\]
\end{prop}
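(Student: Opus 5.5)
The statement follows directly from Proposition~\ref{prop:two-decomposition}, applied with
\[
\theta=\wedge^2\sigma,\qquad \rho_1=\Sigma_1,\quad \rho_2=\Sigma_2,\qquad \eta_1=\Theta_1,\quad \eta_2=\Theta_2.
\]
Before invoking it, I will check that both decompositions are genuine direct-sum decompositions of the same admissible representation. This comes from the direct-sum formula for $\wedge^2$, applied to $\sigma=\delta_1\oplus\sigma^+$ and to $\sigma=\sigma^-\oplus\delta_\ell$. The pole-separation hypothesis \eqref{eq:pole-separation} is exactly Lemma~\ref{lem:exterior-square-pole-separation}.

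One point in that lemma deserves care, since I would make it explicit rather than rely on ``verbatim.''

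\emph{Exponents of $\Sigma_1$.} The constituents of $\wedge^2\delta_1$ have exponents $2e_1$. When $\delta_1$ is one-dimensional, $\wedge^2\delta_1=0$. When $F=\mathbb{R}$ and $\delta_1$ is two-dimensional, $\wedge^2\delta_1$ is a character with exponent $2e_1$. The constituents of $\delta_1\otimes\sigma^+$ have exponents $e_1+e_j$ for $j\ge2$; this uses that a normalized irreducible tensored with a normalized irreducible decomposes into normalized pieces, so the exponents simply add.

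\emph{Exponents of $\Theta_2$.} Symmetrically, these lie among $e_i+e_\ell$.

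\emph{Contradiction.} Suppose $s_0$ is a common pole. Then $1-\operatorname{Re}(s_0)-e_1-e_j\le0$ and $\operatorname{Re}(s_0)+e_i+e_\ell\le0$ for some indices $i,j$. Adding these gives $1+(e_i-e_1)+(e_\ell-e_j)\le0$, which is impossible by the ordering of the exponents.

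With the hypothesis verified, Proposition~\ref{prop:two-decomposition} gives
\[
\mathcal{L}(\wedge^2\sigma)=\mathcal{L}(\wedge^2\sigma^-)+\frac{L(s,\Sigma_1)}{L(1-s,\Sigma_1^\vee)}\,\mathcal{L}(\wedge^2\sigma^+).
\]
It remains to identify the multiplier with $Q_{\delta_1,\sigma^+}(s)$. The $L$-factor is multiplicative over direct sums, and taking contragredients commutes with $\wedge^2$ and with tensor products:
\[
\Sigma_1^\vee=\wedge^2(\delta_1^\vee)\oplus\bigl(\delta_1^\vee\otimes(\sigma^+)^\vee\bigr).
\]
Hence
\[
L(s,\Sigma_1)=L(s,\wedge^2\delta_1)\,L(s,\delta_1\otimes\sigma^+),
\qquad
L(1-s,\Sigma_1^\vee)=L\bigl(1-s,\wedge^2(\delta_1^\vee)\bigr)\,L\bigl(1-s,\delta_1^\vee\otimes(\sigma^+)^\vee\bigr),
\]
so the multiplier is $Q_{\delta_1,\sigma^+}(s)$, which yields both displayed forms of the statement. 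If $\wedge^2\delta_1=0$, its $L$-factor is $1$ and the formula is unchanged.

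There is no real obstacle beyond this bookkeeping. The only step needing attention is the exponent accounting for two-dimensional $\delta$ over $\mathbb{R}$, where $\wedge^2\delta_1$ and the tensor products need not be irreducible. Since the lemma only requires that every exponent lies among the listed values, this causes no difficulty.
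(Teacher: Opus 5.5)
Your proposal is correct and follows the paper's route: you apply Proposition~\ref{prop:two-decomposition} with $\theta=\wedge^2\sigma$, $(\rho_1,\rho_2)=(\Sigma_1,\Sigma_2)$ and $(\eta_1,\eta_2)=(\Theta_1,\Theta_2)$, take the pole separation from Lemma~\ref{lem:exterior-square-pole-separation}, and identify $L(s,\Sigma_1)/L(1-s,\Sigma_1^\vee)$ with $Q_{\delta_1,\sigma^+}(s)$. Your explicit exponent bookkeeping for two-dimensional $\delta_i$ over $\mathbb{R}$ simply spells out what the paper compresses into ``the argument applies verbatim.''
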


Next, we introduce the partial Jacquet--Shalika integrals. For $r\geq1$, put
\[
\begin{aligned}
h_{2r}(g,X)
&=
\sigma_{2r}
\begin{pmatrix}
I_r&X\\
0&I_r
\end{pmatrix}
\begin{pmatrix}
g&0\\
0&g
\end{pmatrix},\\[2mm]
h_{2r+1}(g,X)
&=
\sigma_{2r+1}
\begin{pmatrix}
I_r&X&0\\
0&I_r&0\\
0&0&1
\end{pmatrix}
\begin{pmatrix}
g&0&0\\
0&g&0\\
0&0&1
\end{pmatrix}.
\end{aligned}
\]
Let $\sigma$ be an admissible representation of $W_F$ with $m_\sigma = \dim(\sigma)$, and fix $W \in \mathcal{W}(\pi_\sigma, \psi_F)$ and $\phi \in \mathcal{S}(F^r)$. For $2r \le m_\sigma$, define
\[
\begin{aligned}
J_{2r}(s,W,\phi) = \int_{N_r(F)\backslash G_r(F)} &\int_{V_r(F)\backslash M_r(F)} W \begin{pmatrix} h_{2r}(g,X) & 0 \\ 0 & I_{m_\sigma-2r} \end{pmatrix} \\
&\quad \times \psi_F(-\operatorname{tr}X) \phi(e_r g) |\det g|_F^{s - (m_\sigma - 2r)} \, dX \, dg.
\end{aligned}
\]
Similarly, if $2r + 1 \le m_\sigma$, define
\[
\begin{aligned}
J_{2r+1}(s,W) = \int_{N_r(F)\backslash G_r(F)} &\int_{V_r(F)\backslash M_r(F)} W \begin{pmatrix} h_{2r+1}(g,X) & 0 \\ 0 & I_{m_\sigma-2r-1} \end{pmatrix} \\
&\quad \times \psi_F(-\operatorname{tr}X) |\det g|_F^{s - (m_\sigma - 2r - 1) - 1} \, dX \, dg.
\end{aligned}
\]
When $2r = m_\sigma$ or $2r + 1 = m_\sigma$, these are precisely the Jacquet--Shalika integrals defining $\mathcal{J}_{\wedge^2}(\sigma)$.

\begin{lemma}
\label{lem:partial-jacquet-shalika-integrals}
Every partial Jacquet--Shalika integral defined above belongs to $\mathcal{J}_{\wedge^2}(\sigma)$.
\end{lemma}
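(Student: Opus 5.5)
We will proceed by the same ascending induction as in Lemma~\ref{lem:partial-flicker-integrals}, now on the size of the partial integral. The chain is
\[
J_{2r}\to J_{2r+1}\to J_{2r+2}\to\cdots,
\]
and it terminates at \(J_{m_\sigma}\), which is by definition an element of \(\mathcal{J}_{\wedge^2}(\sigma)\). We will show two things:
\begin{enumerate}[label=(\roman*)]
\item each \(J_{2r}(s,W,\phi)\) with \(2r<m_\sigma\) is a finite sum of integrals \(J_{2r+1}(s,W_i)\);
\item each \(J_{2r+1}(s,W)\) with \(2r+1<m_\sigma\) is a finite sum of integrals \(J_{2r+2}(s,W_i,\phi_i)\).
\end{enumerate}
Iterating these two steps then proves the lemma.

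\textbf{Step (ii), from odd to even.} This is the analogue of the step carried out in Lemma~\ref{lem:partial-flicker-integrals}. Apply the Dixmier--Malliavin lemma \cite[Lemma~6.1]{Jac2009} to the subgroup of matrices that add a new last row \((x,h)\), with \(x\in F^{r}\) and \(h\in F^\times\), in the position of the \((2r+2)\)-nd row. This subgroup is acted on compatibly by the diagonal \(\diag(g,g)\)-embedding. We then:
\begin{itemize}
\item write \(W\) as a finite sum of convolutions \(\sum_i R(\varphi_i)W_i\) with \(\varphi_i\in C_c^\infty(F^r\times F^\times)\);
\item change variables to absorb the torus part \(a_r(h)\) into \(g\);
\item extend \(\varphi_i\) by zero to a Schwartz function \(\phi_i\in\mathcal S(F^{r+1})\);
\item use the Iwasawa-type measure decomposition of \(N_{r+1}\backslash G_{r+1}\) exactly as before.
\end{itemize}
The extra unipotent variables of \(M_{r+1}\) modulo \(V_{r+1}\) (a new column of \(X\)) are produced by integrating over the unipotent part of the subgroup, which converts the \(\psi_F(-\operatorname{tr}X)\) factor correctly. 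To realise the cutoff on the diagonal embedding without changing \(W\) elsewhere, we use \cite[Proposition~6.1]{Jac2009}, which multiplies \(W\) restricted to \(G_{r+1}\) by a Schwartz function of the last row. The exponent bookkeeping is
\[
|\det g|^{s-(m_\sigma-2r-1)-1}\,|h|^{\,\cdot}\ \longmapsto\ |\det g'|^{\,s-(m_\sigma-2r-2)},
\]
and it should match the modular-character computation, in parallel with the identity displayed in that proof.

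\textbf{Step (i), from even to odd.} Here no new Schwartz variable is created. Instead, the Schwartz function \(\phi(e_rg)\) in \(J_{2r}\) must be absorbed. We will use the odd-rank trick from Section~\ref{s2}: via the operator \(R(\phi)\), the unipotent matrices \(\begin{psmallmatrix}I_r&0&0\\0&I_r&0\\0&x&1\end{psmallmatrix}\) act on \(W\), and integrating against \(\phi\) produces \(\widehat\phi(e_rg)\) after conjugating past \(\diag(g,g,1)\) and using the character \(\psi_F\) on the superdiagonal entry linking row \(2r\) to row \(2r+1\) after the permutation \(\sigma_{2r+1}\). We can therefore write \(W\) restricted to the relevant subgroup as \(R(\widehat{\phi}')W'\) for a suitable \(\phi'\), using Dixmier--Malliavin on the subgroup \(\{x\}\cong F^r\) together with \(h\in F^\times\) in the \((2r+1,2r+1)\) slot, and absorbing the \(h\)-integral through the shift \(s\mapsto s-1\) in the exponent convention of \(J_{2r+1}\). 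This is essentially the argument of \cite{JLST2025} and \cite{Jo2020}, which identify the odd Jacquet--Shalika integral with \(J(s,R(\phi)W)\).

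\textbf{Main obstacle.} We expect the main difficulty to be the exact matrix bookkeeping in both steps: checking that conjugating the inserted unipotent and torus elements through \(\sigma_{2r+1}\) or \(\sigma_{2r+2}\) and through \(h_{2r}(g,X)\) lands in the correct entries, so that the character \(\psi_F\) produces a Fourier transform and not a spurious phase. A second point is the \(F=\mathbb C\) versus \(F=\mathbb R\) uniformity, which here is automatic because everything lives over \(F\) itself. Convergence is handled by working for \(\operatorname{Re}(s)\gg0\) and concluding by meromorphic continuation.
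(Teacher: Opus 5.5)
Your overall architecture, the alternating chain $J_{2r}\to J_{2r+1}\to J_{2r+2}\to\cdots$ ending at $J_{m_\sigma}$ with Dixmier--Malliavin, extension by zero and the measure decomposition of $N_{r+1}(F)\backslash G_{r+1}(F)$, matches the paper's. However, the mechanisms you give for the two steps do not work, and the key device sits in the wrong step.

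In step (i) no Fourier transform can arise. One has $\diag(g,g,1)u_r(x)=u_r(xg^{-1})\diag(g,g,1)$, and $u_r(\cdot)$ commutes with the $X$-unipotent. Hence $h_{2r+1}(g,X)u_r(x)=\bigl(\sigma_{2r+1}u_r(xg^{-1})\sigma_{2r+1}^{-1}\bigr)h_{2r+1}(g,X)$. The conjugated element is \emph{lower} triangular (its off-diagonal entries lie in row $2r+1$), so $\psi_F$ never enters. The torus variable and the shift $s\mapsto s-1$ are also unwanted, because the exponents already agree: $s-(m_\sigma-2r)=s-(m_\sigma-2r-1)-1$. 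Step (i) is just \cite[Proposition~6.1]{Jac2009}. Since $e_{2r}h_{2r}(g,X)=(0,e_rg)$, take $\phi^\sharp\in\mathcal{S}(F^{2r})$ with $\phi^\sharp(0,y)=\phi(y)$, and $W'$ with $W'\bigl(\diag(g_0,I_{m_\sigma-2r})\bigr)=W\bigl(\diag(g_0,I_{m_\sigma-2r})\bigr)\phi^\sharp(e_{2r}g_0)$ for $g_0\in G_{2r}(F)$. As $h_{2r+1}(g,X)=\diag(h_{2r}(g,X),1)$, this gives $J_{2r}(s,W,\phi)=J_{2r+1}(s,W')$ exactly.

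Step (ii) has the more serious gap. Passing from $J_{2r+1}$ to $J_{2r+2}$ adds $2r+1$ integration variables:
a new last \emph{row} of $Y\in V_{r+1}(F)\backslash M_{r+1}(F)$ ($r$ variables; not a column, since $V_{r+1}$ is upper triangular), and
a new row $(x,t)$ of $\gamma\in G_{r+1}(F)$, which must occur in \emph{both} diagonal copies of $\gamma$.
A single $(r+1)$-dimensional Dixmier--Malliavin group cannot produce both, and the exponents then cannot close. With $d=m_\sigma-2r-1$, the $(x,t)$-step only multiplies $|\det g|_F^{s-d-1}$ by powers of $|t|_F$, whereas the measure decomposition requires $|\det g|_F^{s-d}|t|_F^{s-d+1}$.

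The missing ingredient is precisely the $u_r(y)$-device you tried to use in step (i). First apply Dixmier--Malliavin to $\{\overline{u}_r(y)\}$. Then absorb the cutoff $\phi_i(y)$ by \cite[Proposition~6.1]{Jac2009}, using $e_{2r+1}h_{2r+1}(g,X)u_r(y)=(0,y,1)$. Next translate by $\overline{\omega}_{2r+2}^{-1}$, where $\omega_{2r+2}=\diag(\sigma_{2r}^{-1},I_2)\sigma_{2r+2}$, so that $\diag\bigl(h_{2r+1}(g,X)u_r(y),1\bigr)\omega_{2r+2}=h_{2r+2}\bigl(\diag(g,1),Y\bigr)$ with $Y=\begin{psmallmatrix}X&0\\ yg^{-1}&0\end{psmallmatrix}$ and $\operatorname{tr}Y=\operatorname{tr}X$. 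The substitution $y\mapsto yg^{-1}$ supplies the missing factor $|\det g|_F$. Only then apply Dixmier--Malliavin to the diagonally embedded group $\{\overline{b}_r(x,t)\}$, substitute $ga_r(t)^{-1}\mapsto g$, and extend by zero as you describe. The resulting Schwartz function simply becomes the $\phi$ of $J_{2r+2}$, so no cutoff ``on $G_{r+1}$'' is needed.
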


\begin{proof}
We show that every partial integral of rank $q<m_\sigma$ is a finite sum of partial integrals of rank $q+1$. Suppose first that $q=2r$ is even. Choose $\phi^\sharp\in\mathcal{S}(F^{2r})$ such that
\[
  \phi^\sharp(0,y)=\phi(y), \qquad y\in F^r.
\]
By \cite[Proposition~6.1]{Jac2009}, there exists $W'\in\mathcal{W}(\pi_\sigma,\psi_F)$ such that
\[
  W' \begin{pmatrix} g_0&0\\ 0&I_{m_\sigma-2r} \end{pmatrix} = W \begin{pmatrix} g_0&0\\ 0&I_{m_\sigma-2r} \end{pmatrix} \phi^\sharp(e_{2r}g_0)
\]
for every $g_0\in G_{2r}(F)$. Since $e_{2r}h_{2r}(g,X)=(0,e_rg)$ and
\[
  h_{2r+1}(g,X) = \begin{pmatrix} h_{2r}(g,X)&0\\ 0&1 \end{pmatrix},
\]
we obtain
\[
  J_{2r}(s,W,\phi) = J_{2r+1}(s,W').
\]

Suppose next that $q=2r+1$ is odd, and set $d=m_\sigma-2r-1$. Thus $d\geq1$. For $y\in F^r$, put
\[
  u_r(y) = \begin{pmatrix} I_r&0&0\\ 0&I_r&0\\ 0&y&1 \end{pmatrix} \qquad\text{and}\qquad \overline{u}_r(y) = \begin{pmatrix} u_r(y)&0\\ 0&I_{d} \end{pmatrix}.
\]
Applying the Dixmier--Malliavin lemma \cite[Lemma~6.1]{Jac2009} to the subgroup $\{\overline{u}_r(y) : y \in F^r\}$, we may write
\[
  W = \sum_{i=1}^a \int_{F^r} R\bigl(\overline{u}_r(y)\bigr) W_i \, \phi_i(y) \, dy,
\]
where $W_i \in \mathcal{W}(\pi_\sigma, \psi_F)$, and $\phi_i \in C_c^\infty(F^r)$. Consequently, we have
\[
  \begin{aligned}
    J_{2r+1}(s,W) ={}& \sum_{i=1}^a \int_{N_r(F)\backslash G_r(F)} \int_{V_r(F)\backslash M_r(F)} \int_{F^r} W_i \begin{pmatrix} h_{2r+1}(g,X) u_r(y) & 0 \\ 0 & I_{d} \end{pmatrix} \\
    &\quad \times \phi_i(y) \psi_F(-\operatorname{tr} X) |\det g|_F^{s-d-1} \, dy \, dX \, dg.
  \end{aligned}
\]

Choose $\phi_i^\sharp\in\mathcal{S}(F^{2r+1})$ such that $\phi_i^\sharp(0,y,1)=\phi_i(y)$ for $y\in F^r$. Since 
$$e_{2r+1}h_{2r+1}(g,X)u_r(y)=(0,y,1),$$ an application of \cite[Proposition~6.1]{Jac2009} gives $W_i'\in\mathcal{W}(\pi_\sigma,\psi_F)$ such that 
\[
  W_i'\begin{pmatrix} h_{2r+1}(g,X)u_r(y)&0\\ 0&I_{d} \end{pmatrix}= \phi_i(y)W_i\begin{pmatrix} h_{2r+1}(g,X)u_r(y)&0\\ 0&I_{d} \end{pmatrix}.
\]

Set
\[
  \omega_{2r+2} = \begin{pmatrix} \sigma_{2r}^{-1}&0\\ 0&I_2 \end{pmatrix} \sigma_{2r+2}, \qquad \overline{\omega}_{2r+2} = \begin{pmatrix} \omega_{2r+2}&0\\ 0&I_{d-1} \end{pmatrix},
\]
and put $W_i''=R\bigl(\overline{\omega}_{2r+2}^{-1}\bigr)W_i'$. For $Y = \left(\begin{smallmatrix} X&0\\ yg^{-1}&0 \end{smallmatrix}\right)$, a direct matrix calculation gives
\[
  \begin{pmatrix} h_{2r+1}(g,X)u_r(y)&0\\ 0&1 \end{pmatrix} \omega_{2r+2} = h_{2r+2} \left( \begin{pmatrix} g&0\\ 0&1 \end{pmatrix},Y \right).
\]
Moreover, $\operatorname{tr}Y=\operatorname{tr}X$ and $dy=|\det g|_F\,d(yg^{-1})$. It follows that the $i$-th summand is
\[
  \begin{aligned}
    K_{2r+2}(s,W_i'') ={}& \int_{N_r(F)\backslash G_r(F)} \int_{V_{r+1}(F)\backslash M_{r+1}(F)} W_i''\begin{pmatrix} h_{2r+2}\left( \begin{pmatrix}g&0\\0&1\end{pmatrix},Y \right)&0\\ 0&I_{d-1} \end{pmatrix}\\
    &\qquad\qquad\times \psi_F(-\operatorname{tr}Y) |\det g|_F^{s-d}\,dY\,dg.
  \end{aligned}
\]

For $t\in F^\times$, set $a_r(t)=\operatorname{diag}(t,I_{r-1})$, and for $x\in F^r$, put
\[
  b_r(x,t) = \begin{pmatrix} a_r(t)^{-1}&0&0&0\\ x&t&0&0\\ 0&0&a_r(t)^{-1}&0\\ 0&0&x&t \end{pmatrix} \in G_{2r+2}(F).
\]
Applying the Dixmier--Malliavin lemma to the subgroup consisting of
\[
  \overline{b}_r(x,t) = \begin{pmatrix} b_r(x,t)&0\\ 0&I_{d-1} \end{pmatrix}, \qquad x\in F^r, \quad t\in F^\times,
\]
we may write
\[
  W_i'' = \sum_{j=1}^{a_i} \int_{F^r\times F^\times} R\bigl(\overline{b}_r(x,t)\bigr)W_{ij}\, \phi_{ij}(x,t)\,dx\,|t|_F\,d^\times t,
\]
where $W_{ij}\in\mathcal{W}(\pi_\sigma,\psi_F)$ and $\phi_{ij}\in C_c^\infty(F^r\times F^\times)$. Substituting this expression into $K_{2r+2}(s,W_i'')$ and making the change of variables $ga_r(t)^{-1}\mapsto g$, we obtain
\[
  \begin{aligned}
    K_{2r+2}(s,W_i'') ={}& \sum_{j=1}^{a_i} \int_{F^\times} \int_{F^r} \int_{N_r(F)\backslash G_r(F)} \int_{V_{r+1}(F)\backslash M_{r+1}(F)} W_{ij}\begin{pmatrix} h_{2r+2}\left( \begin{pmatrix}g&0\\x&t\end{pmatrix},Y \right)&0\\ 0&I_{d-1} \end{pmatrix}\\
    &\quad\times \phi_{ij}(x,t)\psi_F(-\operatorname{tr}Y) |\det g|_F^{s-d}|t|_F^{s-d+1} \,dY\,dg\,dx\,d^\times t.
  \end{aligned}
\]

Define $\phi_{ij}^0\in\mathcal{S}(F^{r+1})$ by
\[
  \phi_{ij}^0(x,t) = \begin{cases} \phi_{ij}(x,t),&t\neq0,\\ 0,&t=0. \end{cases}
\]
This is a Schwartz function because $\phi_{ij}$ has compact support in $F^r\times F^\times$. With compatible Haar measures, we have the identity
\[
  \int_{N_{r+1}(F)\backslash G_{r+1}(F)}f(\gamma)\,d\gamma = \int_{F^\times} \int_{F^r} \int_{N_r(F)\backslash G_r(F)} f\begin{pmatrix}g&0\\x&t\end{pmatrix} |\det g|_F^{-1}\,dg\,dx\,d^\times t.
\]
Since
\[
  \left| \det\begin{pmatrix}g&0\\x&t\end{pmatrix} \right|_F^{s-(d-1)} |\det g|_F^{-1} = |\det g|_F^{s-d}|t|_F^{s-d+1},
\]
it follows that
\[
  K_{2r+2}(s,W_i'') = \sum_{j=1}^{a_i}J_{2r+2}(s,W_{ij},\phi_{ij}^0).
\]
Thus every partial integral of rank $q<m_\sigma$ is a finite sum of partial integrals of rank $q+1$. Iterating the two steps completes the proof.
\end{proof}

We now turn to the inductive reduction of Jacquet--Shalika integrals. Let $\kappa$ and $\delta$ be admissible representations of $W_F$, and assume $m_\kappa = \dim(\kappa) \ge 2$ and $m_\delta = \dim(\delta)$. 

\begin{prop}[Jacquet--Shalika integral reduction]
\label{prop:exterior-square-integral-inclusions}
The following inclusions hold:
\begin{enumerate}[label=(\arabic*)]
    \item If $\kappa \preceq \delta$, then $\mathcal{J}_{\wedge^2}(\kappa \oplus \delta) \supseteq \mathcal{J}_{\wedge^2}(\kappa)$.
    \item If $\delta \preceq \kappa$, then $\mathcal{J}_{\wedge^2}(\delta \oplus \kappa) \supseteq Q_{\delta,\kappa}(s)\mathcal{J}_{\wedge^2}(\kappa)$.
\end{enumerate}
\end{prop}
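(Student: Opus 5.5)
We mirror the proof of Proposition~\ref{prop:asai-integral-inclusions}: inclusion (1) comes from a Whittaker-replacement step combined with the partial integrals of Lemma~\ref{lem:partial-jacquet-shalika-integrals}, and inclusion (2) follows from (1) by duality through \eqref{shalikafe2}.

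For (1), write $m_\kappa=2r$ or $2r+1$. We start from a generator $J(s,W_\kappa,\phi)$ (even case) or $J(s,W_\kappa)$ (odd case) of $\mathcal{J}_{\wedge^2}(\kappa)$. Applying \cite[Proposition~14.1]{Jac2009} over $F$ to $W_\kappa$ and a Schwartz function $\Phi\in\mathcal{S}(F^{m_\kappa})$ gives $W\in\mathcal{W}(\pi_{\kappa\oplus\delta},\psi_F)$ with
\[
W\begin{pmatrix} g_0&0\\0&I_{m_\delta}\end{pmatrix}=W_\kappa(g_0)\Phi(e_{m_\kappa}g_0)|\det g_0|_F^{m_\delta/2}
\]
for $g_0\in G_{m_\kappa}(F)$. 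This uses the hypothesis $\kappa\preceq\delta$.

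In the even case we choose $\Phi$ with $\Phi(0,y)=\phi(y)$. Since $e_{2r}h_{2r}(g,X)=(0,e_rg)$, this factor reproduces $\phi(e_rg)$. In the odd case we choose $\Phi$ equal to $1$ near $e_{2r+1}h_{2r+1}(g,X)=e_{2r+1}$. In both cases $|\det h_{m_\kappa}(g,X)|_F^{m_\delta/2}=|\det g|_F^{m_\delta}$. Comparing with the exponent $s-(m_\sigma-2r)$, respectively $s-(m_\sigma-2r-1)-1$, in the partial integrals, the original integral equals the partial Jacquet--Shalika integral $J_{m_\kappa}(s,W,\cdot)$ for $\sigma=\kappa\oplus\delta$. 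Lemma~\ref{lem:partial-jacquet-shalika-integrals} then places it in $\mathcal{J}_{\wedge^2}(\kappa\oplus\delta)$.

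The step needing care is the bookkeeping in the odd case. We must check that the power of $|\det|$ produced by Proposition~14.1 matches the shift $s-d-1$ exactly. We must also check that the normalization in $\mathcal{J}_{\wedge^2}$ uses plain $J(s,W)$, which holds because $R(\phi)W$ spans the Whittaker model.

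For (2), note that $\delta\preceq\kappa$ gives $\kappa^\vee\preceq\delta^\vee$ and $(\delta\oplus\kappa)^\vee\cong\kappa^\vee\oplus\delta^\vee$. Applying (1) with the character $\psi_F^{-1}$ gives $\mathcal{J}_{\wedge^2}((\delta\oplus\kappa)^\vee)\supseteq\mathcal{J}_{\wedge^2}(\kappa^\vee)$. Here $m_{\kappa^\vee}\ge2$ still holds, and (1) does not depend on the choice of character.

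Next we substitute $s\mapsto1-s$ and apply \eqref{shalikafe2} to both $\delta\oplus\kappa$ and $\kappa$. Using
\[
\wedge^2(\delta\oplus\kappa)=\wedge^2\delta\oplus(\delta\otimes\kappa)\oplus\wedge^2\kappa,
\]
the gamma factors are multiplicative, so
\[
\mathcal{J}_{\wedge^2}(\delta\oplus\kappa)\supseteq\gamma(s,\wedge^2\delta,\psi_F)^{-1}\gamma(s,\delta\otimes\kappa,\psi_F)^{-1}\mathcal{J}_{\wedge^2}(\kappa).
\]
Finally we write each $\gamma$ as $\varepsilon\cdot L(1-s,\cdot^\vee)/L(s,\cdot)$. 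The archimedean $\varepsilon$-factors have the form $cA^s$, and $\mathcal{J}_{\wedge^2}(\kappa)$ is invariant under multiplication by such functions (substitute $g\mapsto ag$ with a scalar $a$ and use the central character, or simply note the $\varepsilon$ are constants for the standard $\psi_F$ up to $A^s$). This yields exactly $Q_{\delta,\kappa}(s)\mathcal{J}_{\wedge^2}(\kappa)$.
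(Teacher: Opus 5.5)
Your proposal follows the paper's proof. For (1) you use \cite[Proposition~14.1]{Jac2009} together with Lemma~\ref{lem:partial-jacquet-shalika-integrals}. For (2) you apply (1) to $\kappa^\vee\oplus\delta^\vee$ with $\psi_F^{-1}$, then use \eqref{shalikafe2} and multiplicativity of $\gamma$. The odd case of (1) and all of (2) are fine. One step in the even case of (1), however, does not hold as you state it.

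After you choose $\Phi$ with $\Phi(0,y)=\phi(y)$, the factor $\phi(e_rg)$ has been absorbed into $W$. What you actually obtain is
\[
J(s,W_\kappa,\phi)=\int_{N_r(F)\backslash G_r(F)}\int_{V_r(F)\backslash M_r(F)} W\begin{pmatrix} h_{2r}(g,X)&0\\ 0&I_{m_\delta}\end{pmatrix}\psi_F(-\operatorname{tr}X)\,|\det g|_F^{s-m_\delta}\,dX\,dg .
\]
This integral has no Schwartz factor. So it is not of the form $J_{2r}(s,W,\phi')$ with $\phi'\in\mathcal{S}(F^r)$, and the lemma cannot be invoked as you describe.

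The paper repairs this with a second application of \cite[Proposition~6.1]{Jac2009}. It writes $W\bigl(\operatorname{diag}(g_0,I_{m_\delta})\bigr)=\sum_i W_i\bigl(\operatorname{diag}(g_0,I_{m_\delta})\bigr)\phi_i(e_{2r}g_0)$ and sets $\phi_i^0(y)=\phi_i(0,y)$, which gives $\sum_i J_{2r}(s,W_i,\phi_i^0)$.

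A shorter repair is available inside your own argument. We have $h_{2r+1}(g,X)=\operatorname{diag}(h_{2r}(g,X),1)$, and for $\sigma=\kappa\oplus\delta$ the exponent $s-(m_\sigma-2r-1)-1$ equals $s-m_\delta$. Hence the displayed integral is literally $J_{2r+1}(s,W)$, and the lemma applies to it directly. This is the same identification the lemma itself makes in its even-to-odd step.

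A smaller point concerns invariance under $cA^s$. Your central-character argument works when $m_\kappa=2r$. When $m_\kappa=2r+1$, however, $\operatorname{diag}(aI_r,aI_r,1)$ is not central. In that case substitute $g\mapsto ag$ and right-translate $W$ by this element. This gives $J\bigl(s,R(\operatorname{diag}(aI_r,aI_r,1))W\bigr)=|a|_F^{-r(s-1)}J(s,W)$, which yields the required invariance.
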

\begin{proof}
Assume first that $\kappa\preceq\delta$. Suppose that $m_\kappa=2r$. Let
\[
J(s,W_\kappa,\phi)
\in
\mathcal{J}_{\wedge^2}(\kappa),
\]
where $W_\kappa\in\mathcal{W}(\pi_\kappa,\psi_F),\>
\phi\in\mathcal{S}(F^r).$
Choose $\phi_0\in\mathcal{S}(F^{2r})$ such that
\[
\phi_0(0,y)=\phi(y),
\qquad y\in F^r.
\]
By \cite[Proposition~14.1]{Jac2009}, there exists
$W^0\in\mathcal{W}(\pi_{\kappa\oplus\delta},\psi_F)$ such that
\[
W^0
\begin{pmatrix}
g_0&0\\
0&I_{m_\delta}
\end{pmatrix}
=
W_\kappa(g_0)\phi_0(e_{2r}g_0)
|\det g_0|_F^{m_\delta/2}
\]
for every $g_0\in G_{2r}(F)$. For $g_0=h_{2r}(g,X)$, one has
\[
e_{2r}g_0=(0,e_rg)
\qquad\text{and}\qquad
|\det g_0|_F^{m_\delta/2}=|\det g|_F^{m_\delta}.
\]
Therefore,
\[
\begin{aligned}
J(s,W_\kappa,\phi)
={}&
\int_{N_r(F)\backslash G_r(F)}
\int_{V_r(F)\backslash M_r(F)}
W^0\left(
\begin{array}{cc}
h_{2r}(g,X)&0\\
0&I_{m_\delta}
\end{array}
\right)\\
&\qquad\qquad\times
\psi_F(-\operatorname{tr}X)
|\det g|_F^{s-m_\delta}\,dX\,dg.
\end{aligned}
\]
By \cite[Proposition~6.1]{Jac2009}, there
exist finitely many $W_i\in\mathcal{W}(\pi_{\kappa\oplus\delta},\psi_F), \phi_i\in\mathcal{S}(F^{2r})\>
$
such that
\[
W^0
\begin{pmatrix}
g_0&0\\
0&I_{m_\delta}
\end{pmatrix}
=
\sum_i
W_i
\begin{pmatrix}
g_0&0\\
0&I_{m_\delta}
\end{pmatrix}
\phi_i(e_{2r}g_0).
\]
Set
\[
\phi_i^0(y)=\phi_i(0,y),
\qquad y\in F^r.
\]
Then
\[
J(s,W_\kappa,\phi)
=
\sum_iJ_{2r}(s,W_i,\phi_i^0).
\]
Lemma~\ref{lem:partial-jacquet-shalika-integrals} now gives
\[
J(s,W_\kappa,\phi)
\in
\mathcal{J}_{\wedge^2}(\kappa\oplus\delta).
\]

Suppose next that $m_\kappa=2r+1$. Let
\[
J(s,W_\kappa)
\in
\mathcal{J}_{\wedge^2}(\kappa).
\]
Choose $\phi_0\in\mathcal{S}(F^{2r+1})$ such that $\phi_0(e_{2r+1})=1.$ By \cite[Proposition~14.1]{Jac2009}, there exists
$W\in\mathcal{W}(\pi_{\kappa\oplus\delta},\psi_F)$ such that
\[
W
\begin{pmatrix}
g_0&0\\
0&I_{m_\delta}
\end{pmatrix}
=
W_\kappa(g_0)\phi_0(e_{2r+1}g_0)
|\det g_0|_F^{m_\delta/2}.
\]
For $g_0=h_{2r+1}(g,X)$, one has
\[
e_{2r+1}g_0=e_{2r+1}
\qquad\text{and}\qquad
|\det g_0|_F^{m_\delta/2}=|\det g|_F^{m_\delta}.
\]
Consequently,
\[
J(s,W_\kappa)=J_{2r+1}(s,W),
\]
and the first inclusion follows again from
Lemma~\ref{lem:partial-jacquet-shalika-integrals}.

For the second inclusion, assume that $\delta\preceq\kappa$. Then $\kappa^\vee\preceq\delta^\vee.$ Applying the first inclusion to
\[
(\delta\oplus\kappa)^\vee
=
\kappa^\vee\oplus\delta^\vee
\]
gives
\[
\mathcal{J}_{\wedge^2}
\bigl((\delta\oplus\kappa)^\vee\bigr)
\supseteq
\mathcal{J}_{\wedge^2}(\kappa^\vee).
\]
Moreover,
\[
\wedge^2(\delta\oplus\kappa)
\simeq
\wedge^2\delta
\oplus
(\delta\otimes\kappa)
\oplus
\wedge^2\kappa.
\]
Using \eqref{shalikafe2},  
$\mathcal{J}_{\wedge^2}(\delta\oplus\kappa)$ contains
\[
\gamma\bigl(s,\wedge^2\delta,\psi_F\bigr)^{-1}
\gamma\bigl(s,\delta\otimes\kappa,\psi_F\bigr)^{-1}
\mathcal{J}_{\wedge^2}(\kappa).
\]
Using
\[
\gamma(s,\rho,\psi_F)
=
\varepsilon(s,\rho,\psi_F)
\frac{L(1-s,\rho^\vee)}{L(s,\rho)},
\]
and the fact that $\mathcal{J}_{\wedge^2}(\kappa)$ is invariant under multiplication by functions of the form \(cA^s\), where
\(c\in\mathbb{C}^{\times}\) and \(A>0\), we obtain
\[
\mathcal{J}_{\wedge^2}(\delta\oplus\kappa)
\supseteq
\frac{
L\bigl(s,\wedge^2\delta\bigr)
L\bigl(s,\delta\otimes\kappa\bigr)
}{
L\bigl(1-s,\wedge^2(\delta^\vee)\bigr)
L\bigl(1-s,\delta^\vee\otimes\kappa^\vee\bigr)
}
\mathcal{J}_{\wedge^2}(\kappa).\qedhere
\]
\end{proof}

To prove Theorem~\ref{main2}, it suffices to show that
\[
\mathcal{J}_{\wedge^2}(\sigma)
\supseteq
\mathcal{L}\bigl(\wedge^2\sigma\bigr).
\]
We first prove this when \(\dim(\sigma)=2\) or \(3\).

\begin{prop}
\label{prop:exterior-square-rank-two}
Let $\sigma$ be a two-dimensional admissible representation of $W_F$.
Then
\[
\mathcal{J}_{\wedge^2}(\sigma)
\supseteq
\mathcal{L}\bigl(\wedge^2\sigma\bigr).
\]
\end{prop}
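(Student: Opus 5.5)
For $m=2$ we have $n=1$, so $\wedge^2\sigma=\det\sigma$ is a character of $W_F$, namely the central character $\omega_{\pi_\sigma}$ transported by the local Langlands correspondence. The plan is to compute the Jacquet--Shalika integral explicitly and reduce to Tate's thesis, exactly as in the proof of Proposition~\ref{prop:asai-rank-one}. For $n=1$ the groups $N_1$ and $V_1(F)\backslash M_1(F)$ are trivial, so the $X$-integration disappears, and $\sigma_2=I_2$. Hence
\[
J(s,W,\phi)=\int_{F^\times}W\begin{pmatrix}g&0\\0&g\end{pmatrix}\phi(g)|g|_F^s\,d^\times g
=W(I_2)\int_{F^\times}\omega_{\pi_\sigma}(g)\phi(g)|g|_F^s\,d^\times g .
\]
Here $\omega_{\pi_\sigma}$ is the central character of the generic induced representation $\pi_\sigma$. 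It acts by a scalar on $I_\sigma$ even when $\pi_\sigma$ is reducible, because it is induced from a character of the torus.

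Next I choose $W\in\mathcal{W}(\pi_\sigma,\psi_F)$ with $W(I_2)=1$. This is possible because the Whittaker functional $\lambda_\sigma$ is nonzero, so some $v$ has $W_v(I_2)=\lambda_\sigma(v)\neq0$. It then suffices to show that every $f\in\mathcal{L}(\wedge^2\sigma)$ has the form $\int_{F^\times}\omega(x)\phi(x)|x|_F^s\,d^\times x$ with $\phi\in\mathcal{S}(F)$, where $\omega=\omega_{\pi_\sigma}$ corresponds to $\det\sigma=\wedge^2\sigma$, so that $L(s,\wedge^2\sigma)=L(s,\omega)$.

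To do this, write $\omega=\omega_0|\cdot|_F^{t}$ with $\omega_0$ normalized, meaning unitary and trivial on $\mathbb{R}_{>0}$. Translating $s\mapsto s-t$ preserves the vertical-strip estimates. This reduces the claim to the statement that every element of $\mathcal{L}(\omega_0)$ is a Tate integral $\int\phi\,\omega_0|\cdot|^s\,d^\times x$. For $F=\mathbb{R}$ this is exactly \cite[Lemma~12.3]{Jac2009}. For $F=\mathbb{C}$ the same lemma of Jacquet covers characters of $\mathbb{C}^\times$; it is stated for archimedean $F$, so I expect it applies verbatim. Undoing the translation, as in Proposition~\ref{prop:asai-rank-one}, gives $f=J(s,W,\phi)$, and therefore $\mathcal{L}(\wedge^2\sigma)\subseteq\mathcal{J}_{\wedge^2}(\sigma)$.

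The main point requiring care is the identification of $\wedge^2\sigma$ with the central character $\omega_{\pi_\sigma}$ of the induced representation. This needs $\det\sigma\leftrightarrow\omega_\pi$ under the correspondence, which holds for the principal series and for the discrete series of $\GL_2(\mathbb{R})$ attached to two-dimensional irreducible $\sigma$. A second point is making sure the complex case of Jacquet's lemma is available. If it is not stated there, I would prove it directly by decomposing $\phi$ into $\mathbb{S}^1$-isotypic pieces, which reduces it to the real Mellin transform statement.
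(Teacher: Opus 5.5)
Your proposal is correct and follows essentially the same route as the paper. Both arguments collapse the $n=1$ Jacquet--Shalika integral to $W(I_2)\int_{F^\times}\omega_{\pi_\sigma}(a)\phi(a)|a|_F^s\,d^\times a$, normalize $W(I_2)=1$, and realize each element of $\mathcal{L}(\wedge^2\sigma)$ as such a Tate integral by translating $s$ by $t$ and invoking \cite[Lemma~12.3]{Jac2009}; the paper applies that lemma uniformly for both $F=\mathbb{R}$ and $F=\mathbb{C}$, so your fallback isotypic argument is not needed.
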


\begin{proof}
Let $\pi=\pi_\sigma$. Since $\wedge^2\sigma=\det\sigma$, the character corresponding to $\wedge^2\sigma$ under local class field theory is $\omega_\pi$. Write $\omega_\pi = \chi |\cdot|_F^{t}$ where $\chi$ is a normalized character and $t\in\mathbb{C}$. Then
\[
L(s,\wedge^2\sigma) = L(s+t,\chi).
\]

Let $F \in \mathcal{L}(\wedge^2\sigma)$, and set $F_0(s) = F(s-t)$. Translation in $s$ preserves the defining vertical-strip estimates, so $F_0 \in \mathcal{L}(\chi)$. Because $\chi$ is normalized, \cite[Lemma~12.3]{Jac2009} guarantees the existence of $\phi \in \mathcal{S}(F)$ such that
\[
F_0(s)
=
\int_{F^\times}
\phi(a)\chi(a)|a|_F^s\,d^\times a.
\]
Replacing $s$ by $s+t$ gives
\[
F(s)
=
\int_{F^\times}
\phi(a)\chi(a)|a|_F^{s+t}\,d^\times a
=
\int_{F^\times}
\phi(a)\omega_\pi(a)|a|_F^s\,d^\times a.
\]
Moreover, $\sigma_2=I_2$ and $V_1(F)=M_1(F)$. Choose $W \in \mathcal{W}(\pi,\psi_F)$ such that $W(I_2)=1$. The right-hand side is then exactly $J(s,W,\phi)$, which proves $F \in \mathcal{J}_{\wedge^2}(\sigma)$.
\end{proof}

\begin{prop}
\label{prop:exterior-square-rank-three}
Let \(\sigma\) be a three-dimensional admissible representation of
\(W_F\). Then
\[
\mathcal{J}_{\wedge^2}(\sigma)
\supseteq
\mathcal{L}\bigl(\wedge^2\sigma\bigr).
\]
\end{prop}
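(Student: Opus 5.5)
We separate cases according to how $\sigma$ splits into irreducible constituents $\delta_1\oplus\cdots\oplus\delta_\ell$, ordered by exponent as in the beginning of this section. Over $F=\mathbb C$, and over $F=\mathbb R$ when $\sigma$ is a sum of characters, we have $\ell=3$. Over $F=\mathbb R$ we may also have $\ell=2$, with one constituent a two-dimensional irreducible representation of $W_{\mathbb R}$ and the other a character.

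In the case $\ell=3$, both $\sigma^-=\delta_1\oplus\delta_2$ and $\sigma^+=\delta_2\oplus\delta_3$ are two-dimensional, so the inductive scheme of Section~\ref{s4} works verbatim.
\begin{itemize}
\item Since $\sigma^-\preceq\delta_3$ and $\dim\sigma^-=2$, Proposition~\ref{prop:exterior-square-integral-inclusions}(1) and Proposition~\ref{prop:exterior-square-rank-two} give $\mathcal J_{\wedge^2}(\sigma)\supseteq\mathcal J_{\wedge^2}(\sigma^-)\supseteq\mathcal L(\wedge^2\sigma^-)$.
\item Since $\delta_1\preceq\sigma^+$ and $\dim\sigma^+=2$, Proposition~\ref{prop:exterior-square-integral-inclusions}(2) and Proposition~\ref{prop:exterior-square-rank-two} give $\mathcal J_{\wedge^2}(\sigma)\supseteq Q_{\delta_1,\sigma^+}(s)\,\mathcal L(\wedge^2\sigma^+)$.
\item Adding these and applying Proposition~\ref{prop:exterior-square-L-space-decomposition} yields $\mathcal J_{\wedge^2}(\sigma)\supseteq\mathcal L(\wedge^2\sigma)$.
\end{itemize}

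In the case $\ell=2$, one of $\sigma^\pm$ is a character $\chi$. Then $\wedge^2\chi=0$, so the corresponding $L$-space is $\mathcal L(0)$, and Proposition~\ref{prop:exterior-square-integral-inclusions} cannot be applied to it, because it requires $\dim\kappa\geq2$. Proposition~\ref{prop:exterior-square-L-space-decomposition} still holds, since Lemma~\ref{lem:exterior-square-pole-separation} does not use dimensions.
\begin{itemize}
\item If $\sigma=\chi\oplus\delta$ with $\chi\preceq\delta$, the second summand $Q_{\chi,\delta}\mathcal L(\wedge^2\delta)$ is handled exactly as above. It then remains to show $\mathcal L(0)\subseteq\mathcal J_{\wedge^2}(\sigma)$.
\item If $\sigma=\delta\oplus\chi$ with $\delta\preceq\chi$, the summand $\mathcal L(\wedge^2\delta)$ is handled by part (1). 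The other summand is $\gamma(s,\wedge^2\sigma,\psi_F)^{-1}\mathcal L(0)$, up to factors $cA^s$. By \eqref{shalikafe2} it suffices to show $\mathcal L(0)\subseteq\mathcal J_{\wedge^2}(\sigma^\vee)$, using that $\mathcal L(0)$ is stable under $s\mapsto1-s$.
\end{itemize}
Hence everything reduces to the following claim, which we expect to be the main obstacle.

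\emph{Claim.} For every three-dimensional $\tau$, one has $\mathcal L(0)\subseteq\mathcal J_{\wedge^2}(\tau)$.

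To prove the claim, we would first show that $\mathcal J_{\wedge^2}(\tau)$ is a module over Mellin transforms of $C_c^\infty(F^\times)$. For $t\in F^\times$, put $d(t)=\diag(t,t,1)$. This element commutes with the unipotent factor in $h_3(g,X)$ (with $X$ rescaled harmlessly), and $h_3(g,X)d(t)=h_3(gt,X)$. Hence for $f\in C_c^\infty(F^\times)$ we expect
\[
J\Bigl(s,\int_{F^\times} f(t)R(d(t))W\,d^\times t\Bigr)=\Bigl(\int_{F^\times} f(t)|t|_F^{-(s-1)}\,d^\times t\Bigr)J(s,W).
\]

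Next, we would produce one element of $\mathcal J_{\wedge^2}(\tau)$ equal to a Mellin transform of a $C_c^\infty(F^\times)$ function. For this we use the even partial integral $J_2(s,W,\phi)=\int_{F^\times}W(\diag(g,g,1))\phi(g)|g|_F^{s-1}\,d^\times g$. It lies in $\mathcal J_{\wedge^2}(\tau)$ by Lemma~\ref{lem:partial-jacquet-shalika-integrals}. We take $\phi\in C_c^\infty(F^\times)$ supported where $g\mapsto W(\diag(g,g,1))$ does not vanish, and divide by that function. This produces Mellin transforms of arbitrary $C_c^\infty(F^\times)$ functions, including all the $K$-types needed when $F=\mathbb C$.

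Finally, we would pass from Mellin transforms of $C_c^\infty(F^\times)$ to all of $\mathcal L(0)$. The plan is to follow the proof of \cite[Lemma~12.3]{Jac2009}. Write $h\in\mathcal L(0)$ as a product of such a Mellin transform with an element realized from a Schwartz function $\phi$ vanishing to high order at $0$. Then absorb that factor through Jacquet's Proposition~6.1 applied to the $\mathrm{GL}_2$-block, which multiplies $W(\diag(g,g,1))$ by $\phi(g)$. If this last step resists, the fallback is to borrow $\mathcal L(0)\subseteq\mathcal L(\chi)$ for a normalized $\chi$, together with Jacquet's Lemma~12.3, applied on the torus $\{d(t)\}$.
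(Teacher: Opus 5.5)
Your case $\ell=3$ and your reduction of the case $\ell=2$ to the Claim are correct: Lemma~\ref{lem:exterior-square-pole-separation} and Proposition~\ref{prop:exterior-square-L-space-decomposition} do not care that $\wedge^2\chi=0$. Steps~1 and~2 are also fine. In both sub-cases, however, the Claim is needed for $\tau=\chi\oplus\delta$ with $\chi\preceq\delta$ and $\delta$ irreducible of dimension two. This is exactly the configuration where Proposition~\ref{prop:exterior-square-integral-inclusions} yields only $Q_{\chi,\delta}(s)\,\mathcal L(\wedge^2\delta)$, and every element of that space vanishes at the poles of $L(1-s,\chi^\vee\otimes\delta^\vee)$. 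So the Claim carries the whole difficulty, and your Step~3 does not prove it.

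Write $M(f)(s)=\int_{F^\times}f(a)|a|_F^{s}\,d^\times a$. The factorization $h=M(f)\,h'$ with $f\in C_c^\infty(F^\times)$ and $h'$ entire is not available in general. For $h(s)=e^{s^2}\in\mathcal L(0)$, $M(f)$ would be a zero-free entire function of exponential type, hence of the form $e^{as+b}$. Such a function is not rapidly decreasing on vertical lines, so $f=0$.

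More seriously, Jacquet's Proposition~6.1 multiplies $w_W(a)=W(\diag(a,a,1))$ \emph{pointwise} by $\phi(a)$. On the Mellin side this is a convolution along vertical lines, not multiplication by $M(\phi)$. Everything you can build from Steps~1--2 and Proposition~6.1 is a finite sum of products $M(f)(1-s)\,M(w_W\phi)(s-1)$. It therefore always carries the factor $w_W$, whose behaviour at $0$ is dictated by the exponents of $\tau$ and which you cannot invert at infinity.

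The fallback has the same defect. Realizing $\int\phi(a)\chi(a)|a|_F^{s}\,d^\times a$ as some $J(s,W)$ would require $W(\diag(a,a,1))=\phi(a)\chi(a)|a|_F$. That is a description of the full restriction of Whittaker functions to the torus $\{\diag(a,a,1)\}$, which nothing in your argument provides.

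The missing input is a handle on the functions $a\mapsto W(\diag(a,a,1))$ themselves. The paper gets it from $\wedge^2\sigma\simeq\sigma^\vee\otimes\det\sigma$. The map $W\mapsto W^\sharp$, $W^\sharp(g)=\omega_\pi(\det g)\,(R(w_3)\widetilde W)(g)$, is a bijection $\mathcal W(\pi_\sigma,\psi_F)\to\mathcal W(\pi_{\wedge^2\sigma},\psi_F^{-1})$ satisfying $W^\sharp(\diag(a,1,1))=W(\diag(a,a,1))$. Hence $\mathcal J_{\wedge^2}(\sigma)$ is exactly the space of $\GL_3\times\GL_1$ Rankin--Selberg integrals $\int_{F^\times}W^\sharp(\diag(a,1,1))|a|_F^{s-1}\,d^\times a$ for the pair $(\wedge^2\sigma,\mathbf 1)$. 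Jacquet's Theorem~2.6 then gives all of $\mathcal L(\wedge^2\sigma)$ at once, with no case distinction, and in particular includes your Claim.
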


\begin{proof}
Let \(\pi=\pi_\sigma\). Since
\[
\wedge^2\sigma
\simeq
\sigma^\vee\otimes\det\sigma
\]
and \(\omega_\pi\) corresponds to \(\det\sigma\), the generic induced
representation attached to \(\wedge^2\sigma\) is
\[
\pi_{\wedge^2\sigma}
\simeq
\pi_{\sigma^\vee}\otimes\omega_\pi.
\]

For \(W\in\mathcal{W}(\pi_\sigma,\psi_F)\), define
\[
W^\sharp(g)
=
\omega_\pi(\det g)\,
\bigl(R(w_3)\widetilde W\bigr)(g),
\qquad g\in G_3(F).
\]
The map
\[
W\longmapsto\widetilde W
\]
is a bijection from
\(\mathcal{W}(\pi_\sigma,\psi_F)\) onto
\(\mathcal{W}(\pi_{\sigma^\vee},\psi_F^{-1})\).
Right translation by \(w_3\), followed by twisting by
\(\omega_\pi\circ\det\), therefore gives a bijection
\[
\mathcal{W}(\pi_\sigma,\psi_F)
\longrightarrow
\mathcal{W}(\pi_{\wedge^2\sigma},\psi_F^{-1}),
\qquad
W\longmapsto W^\sharp.
\]

Since \(\sigma_3=I_3\) and \(V_1(F)=M_1(F)\), we have
\[
J(s,W)
=
\int_{F^\times}
W\begin{pmatrix}
a&0&0\\
0&a&0\\
0&0&1
\end{pmatrix}
|a|_F^{s-1}\,d^\times a.
\]
Moreover,
\[
\begin{aligned}
W^\sharp
\begin{pmatrix}
a&0&0\\
0&1&0\\
0&0&1
\end{pmatrix}
&=
\omega_\pi(a)\,
W\begin{pmatrix}
1&0&0\\
0&1&0\\
0&0&a^{-1}
\end{pmatrix}\\
&=
W\begin{pmatrix}
a&0&0\\
0&a&0\\
0&0&1
\end{pmatrix}.
\end{aligned}
\]
Consequently,
\[
J(s,W)
=
\int_{F^\times}
W^\sharp
\begin{pmatrix}
a&0&0\\
0&1&0\\
0&0&1
\end{pmatrix}
|a|_F^{s-1}\,d^\times a.
\]

Let \(f\in\mathcal{L}(\wedge^2\sigma)\). Applying
\cite[Theorem~2.6(i)]{Jac2009}, with additive character
\(\psi_F^{-1}\), to the pair
$(\wedge^2\sigma,\mathbf{1})$,
we obtain
\[
W^\sharp
\in
\mathcal{W}(\pi_{\wedge^2\sigma},\psi_F^{-1})
\]
such that
\[
f(s)
=
\int_{F^\times}
W^\sharp
\begin{pmatrix}
a&0&0\\
0&1&0\\
0&0&1
\end{pmatrix}
|a|_F^{s-1}\,d^\times a.
\]
Choose \(W\in\mathcal{W}(\pi_\sigma,\psi_F)\) mapping to
\(W^\sharp\). Then
\[
f(s)=J(s,W),
\]
and therefore
$f\in\mathcal{J}_{\wedge^2}(\sigma).$
\end{proof}
We now prove the general case by induction.

\begin{proof}[Proof of Theorem~\ref{main2}]
We argue by induction on $m=\dim(\sigma)$. The cases $m=2$ and $m=3$ follow from Propositions~\ref{prop:exterior-square-rank-two} and~\ref{prop:exterior-square-rank-three}, respectively. Assume that $m>3$, and that the result holds for every admissible representation of $W_F$ of dimension $d$ with $2 \leq d < m$.

Using the notation introduced at the beginning of this section, we write $\sigma = \sigma^-\oplus\delta_\ell = \delta_1\oplus\sigma^+$. Since $F$ is archimedean, every irreducible admissible representation of $W_F$ has dimension at most two \cite[p.~403]{Kna1994}. Consequently, $\dim(\sigma^-) \geq m-2 \geq 2$ and $\dim(\sigma^+) \geq m-2 \geq 2$, so the induction hypothesis applies to both subrepresentations.

Since $\sigma^-\preceq\delta_\ell$, Proposition~\ref{prop:exterior-square-integral-inclusions} yields 
$$\mathcal{J}_{\wedge^2}(\sigma) \supseteq \mathcal{J}_{\wedge^2}(\sigma^-).$$
Applying the induction hypothesis to $\sigma^-$ gives
\[
\mathcal{J}_{\wedge^2}(\sigma)
\supseteq
\mathcal{L}\bigl(\wedge^2\sigma^-\bigr).
\]
Similarly, since $\delta_1\preceq\sigma^+$, Proposition~\ref{prop:exterior-square-integral-inclusions} and the induction hypothesis imply
\[
\mathcal{J}_{\wedge^2}(\sigma)
\supseteq
Q_{\delta_1,\sigma^+}(s)\,
\mathcal{J}_{\wedge^2}(\sigma^+)
\supseteq
Q_{\delta_1,\sigma^+}(s)\,
\mathcal{L}\bigl(\wedge^2\sigma^+\bigr).
\]

Combining these two inclusions and applying Proposition~\ref{prop:exterior-square-L-space-decomposition}, we conclude that
\[
\mathcal{J}_{\wedge^2}(\sigma)
\supseteq
\mathcal{L}\bigl(\wedge^2\sigma^-\bigr)
+
Q_{\delta_1,\sigma^+}(s)\,
\mathcal{L}\bigl(\wedge^2\sigma^+\bigr)
=
\mathcal{L}\bigl(\wedge^2\sigma\bigr).
\]
This completes the proof.
\end{proof}
\end{section}

\begin{section}{Local Applications}\label{s6}

In this section, we give some local applications of
Theorems~\ref{main1} and~\ref{main2}. We study the normalized Flicker
and Jacquet--Shalika functionals associated with the local zeta
integrals. In particular, we characterize
\(G_n(\mathbb{R})\)-distinction by the occurrence of an exceptional
pole.

\begin{thm}\label{continuity}
Let
$
\pi\in\operatorname{Irr}_{\mathrm{gen}}
\bigl(G_n(\mathbb{C})\bigr).
$
For \(s\in\mathbb{C}\) and
\(\phi\in\mathcal{S}(\mathbb{R}^n)\), let
\(\Lambda_{s,\phi}\) be the linear functional on
\(\mathcal{W}(\pi,\psi_{\mathbb{C}})\) defined by
\[
\Lambda_{s,\phi}(W)
=
\frac{I(s,W,\phi)}{L(s,\pi,\As)}.
\]
Then, for every fixed \(\phi\in\mathcal{S}(\mathbb{R}^n)\), the
functional \(\Lambda_{s,\phi}\) is continuous on
\(\mathcal{W}(\pi,\psi_{\mathbb{C}})\), uniformly for \(s\) in compact
subsets of \(\mathbb{C}\). Moreover, for every \(s_0\in\mathbb{C}\), there exist
\(W_0\in\mathcal{W}(\pi,\psi_{\mathbb{C}})\) and
\(\phi_0\in\mathcal{S}(\mathbb{R}^n)\) such that
\[
\Lambda_{s_0,\phi_0}(W_0)\neq 0.
\]
\end{thm}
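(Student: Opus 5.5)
The proof has two parts: continuity, which I take from the existing analytic theory, and non-vanishing, which follows from Theorem~\ref{main1}.

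\emph{Continuity.} I would not reprove the holomorphy of $I(s,W,\phi)/L(s,\pi,\As)$. Instead I would quote its strengthened form from \cite{BP2021}. There the holomorphy is obtained by showing that, for $\operatorname{Re}(s)\gg0$, the map $W\mapsto I(s,W,\phi)$ is a continuous functional, locally uniformly in $s$. The continuation to all of $\mathbb{C}$ is then built by shifting contours or using the functional equation, and the normalized functional is expressed through holomorphic families of continuous seminorm estimates. So I plan to use the following form of that result: for each compact $K\subset\mathbb{C}$ there is a continuous seminorm $\nu_{K,\phi}$ on $\mathcal{W}(\pi,\psi_{\mathbb C})$ with
\[
|\Lambda_{s,\phi}(W)|\le \nu_{K,\phi}(W),\qquad s\in K .
\]
If this bound is only available as separate continuity for each $s$, I would upgrade it to uniformity in two steps.
\begin{enumerate}
\item For each fixed $W$, the map $s\mapsto\Lambda_{s,\phi}(W)$ is entire, hence bounded on $K$.
\item The family $\{\Lambda_{s,\phi}\}_{s\in K}$ is therefore pointwise bounded on the Fréchet space $\mathcal{W}(\pi,\psi_{\mathbb C})$, and the Banach--Steinhaus theorem makes it equicontinuous.
\end{enumerate}
Equicontinuity is exactly the required uniformity on compact sets. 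I expect this step to be the main technical point, specifically checking that \cite{BP2021} really gives continuity of each individual $\Lambda_{s,\phi}$ for every $s\in\mathbb{C}$, and not only in the half-plane of convergence. If it does not, I would extend it as follows. In the region of convergence, continuity follows from the majorization of Whittaker functions by a continuous seminorm. At a general $s$, write $\Lambda_{s,\phi}(W)$ as a Cauchy integral of $I(z,W,\phi)/L(z,\pi,\As)$ over a circle around $s$. Finally, use the functional equation of Theorem~\ref{fe1} to move to $\operatorname{Re}(1-s)\gg0$, where the right-hand side is again continuous in $W$; the transformations $W\mapsto\widetilde W$ and $\phi\mapsto\widehat\phi$ are topological isomorphisms, so continuity transfers.

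\emph{Non-vanishing.} By Theorem~\ref{main1} there are $W_i\in\mathcal W(\pi,\psi_{\mathbb C})$ and $\phi_i\in\mathcal S(\mathbb R^n)$ with $\sum_{i=1}^r I(s,W_i,\phi_i)=L(s,\pi,\As)$. Dividing by $L(s,\pi,\As)$ gives
\[
\sum_{i=1}^r\Lambda_{s,\phi_i}(W_i)=1
\]
as entire functions of $s$. Evaluating at $s=s_0$, at least one term $\Lambda_{s_0,\phi_i}(W_i)$ is nonzero, and I take $W_0=W_i$, $\phi_0=\phi_i$ for that index. No argument beyond Theorem~\ref{main1} is needed for this part.
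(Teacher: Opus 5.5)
Your non-vanishing argument is exactly the paper's: divide the identity of Theorem~\ref{main1} by $L(s,\pi,\As)$ and evaluate at $s_0$. The gap is in the continuity part, at the step you flagged yourself.

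Your primary route attributes to \cite{BP2021} a seminorm bound for $\Lambda_{s,\phi}$ at every $s$. The paper does not take this from \cite{BP2021}; it uses only that $s\mapsto\Lambda_{s,\phi}(W)$ is entire and of finite order in vertical strips. As written, that route therefore rests on an unverified citation.

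Your fallback does not close the gap. The half-plane of absolute convergence $\operatorname{Re}(s)>B$ and its mirror image $\operatorname{Re}(s)<A$, obtained from Theorem~\ref{fe1}, in general leave a strip $A\le\operatorname{Re}(s)\le B$. For $s$ in that strip, any closed curve winding around $s$ must cross the vertical rays through $s$ above and below it. So your circle always passes through points $z$ of the strip, where continuity of $\Lambda_{z,\phi}$ is exactly what is unknown, and the Cauchy-integral step is circular. Banach--Steinhaus cannot help there either: it only upgrades pointwise continuity, which you do not yet have in the strip, to uniformity.

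The paper bridges the strip quantitatively.
\begin{enumerate}
\item It replaces $\Lambda_{s,\phi}$ by $e^{s^2}\Lambda_{s,\phi}$.
\item Stirling's formula bounds this on $\operatorname{Re}(s)=B$ by $c_B I(B;|W|,|\phi|)$. On $\operatorname{Re}(s)=A$, Theorem~\ref{fe1} applied to $\pi^\vee$ and $\widehat\phi$ bounds it by $c_A I(1-A;|\widetilde W|,|\widehat\phi|)$.
\item It applies Phragm\'en--Lindel\"of in the strip. This is legitimate because $\Lambda_{s,\phi}(W)$ has finite order there, and that finite-order input is what your fallback is missing.
\end{enumerate}

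Alternatively, a soft repair with your own tools also works, without the functional equation or Phragm\'en--Lindel\"of. Fix $s_1$ with $\operatorname{Re}(s_1)>B$ and expand the entire function $s\mapsto\Lambda_{s,\phi}(W)$ in its Taylor series at $s_1$. Each coefficient $c_k(W)$ is continuous in $W$, by the Cauchy formula on a small circle inside the half-plane of convergence, where you already have locally uniform seminorm bounds. The partial sums $\sum_{k\le N}c_k(W)(s-s_1)^k$ are then continuous functionals converging for every $W$ and every $s\in\mathbb{C}$. Banach--Steinhaus on the Fr\'echet space $\mathcal{W}(\pi,\psi_{\mathbb{C}})$ makes the limit $\Lambda_{s,\phi}$ continuous for every $s$. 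Your equicontinuity step then gives uniformity on compact sets.
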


\begin{proof}
The proof follows the argument of \cite[Theorem~1.1]{CPS2004}; we include the details for completeness. Fix \(\phi\in\mathcal{S}(\mathbb{R}^n)\) and put
\(\mathcal{W}_{\pi}=\mathcal{W}(\pi,\psi_{\mathbb{C}})\).
By \cite[Theorem~1(iii)]{BP2021}, the function
\(s\mapsto\Lambda_{s,\phi}(W)\) is entire for every
\(W\in\mathcal{W}_{\pi}\).

We first prove continuity in the half-plane of absolute convergence.
Let \(\Omega\) be a compact subset of this half-plane and suppose that
\(W_k\to0\) in \(\mathcal{W}_{\pi}\). By the extended
Dixmier--Malliavin lemma \cite[Proposition~1.1]{CPS2004}, there exist
\(f_1,\ldots,f_r\in C_c^\infty(G_n(\mathbb{C}))\) and
\(W_{k,j}\to0\) in \(\mathcal{W}_{\pi}\) such that
\[
W_k=\sum_{j=1}^rR(f_j)W_{k,j}.
\]
For each \(j\), the standard gauge estimate gives a continuous
seminorm \(\beta_j\) on \(\mathcal{W}_{\pi}\) and a gauge \(\xi_j\)
such that
\[
|R(f_j)W(g)|\leq\beta_j(W)\xi_j(g),
\qquad g\in G_n(\mathbb{R}).
\]
It follows that
\[
\sup_{s\in\Omega}|I(s,W_k,\phi)|
\leq
\sum_{j=1}^r\beta_j(W_{k,j})
\sup_{s\in\Omega}
\int_{N_n(\mathbb{R})\backslash G_n(\mathbb{R})}
\xi_j(g)|\phi(e_ng)|
|\det g|_{\mathbb{R}}^{\Re(s)}\,dg.
\]
The integrals on the right are finite, uniformly for
\(s\in\Omega\), while \(\beta_j(W_{k,j})\to0\). Therefore
\(I(s,W_k,\phi)\to0\) uniformly on \(\Omega\). Since
\(L(s,\pi,\As)^{-1}\) is bounded on \(\Omega\), the same is true of
\(\Lambda_{s,\phi}(W_k)\). Thus \(\Lambda_{s,\phi}\) is continuous in
the half-plane of absolute convergence, uniformly for \(s\) in
compact subsets.

Set
\[
\Lambda'_{s,\phi}(W)=e^{s^2}\Lambda_{s,\phi}(W).
\]
For a real number \(\sigma\), write
\(I(\sigma;|W|,|\phi|)\) for the corresponding absolute-value
integral at \(s=\sigma\). Choose \(B\) in the half-plane of absolute
convergence. On the line \(\Re(s)=B\), Stirling's formula gives a
constant \(c_B>0\) such that
\[
|\Lambda'_{s,\phi}(W)|
\leq c_B I(B;|W|,|\phi|).
\tag{1}
\]

Using Theorem \ref{fe1} and applying the preceding argument to \(\pi^{\vee}\) and
\(\widehat\phi\), we obtain continuity in a left half-plane. Choose
\(A\) in this half-plane. Stirling's formula and the explicit form of
the gamma factor give a constant \(c_A>0\) such that, on
\(\Re(s)=A\),
\[
|\Lambda'_{s,\phi}(W)|
\leq
c_A I(1-A;|\widetilde W|,|\widehat\phi|).
\tag{2}
\]

Consider now the strip \(A\leq\Re(s)\leq B\). As \(\Lambda_{s,\phi}(W)\) is of finite order in vertical strips \cite[Theorem~1(iii)]{BP2021}, \(\Lambda'_{s,\phi}(W)\) satisfies the growth condition required for
the Phragmén--Lindelöf principle. From \((1)\) and \((2)\), we obtain
\[
|\Lambda'_{s,\phi}(W)|
\leq
\max\left\{
c_B I(B;|W|,|\phi|),
c_A I(1-A;|\widetilde W|,|\widehat\phi|)
\right\}
\]
throughout the strip.

Now let \(W_k\to0\) in \(\mathcal{W}_{\pi}\). Since
\(W\mapsto\widetilde W\) is continuous, \(\widetilde W_k\to0\).
The same argument as in the half-plane of absolute convergence gives
\[
I(B;|W_k|,|\phi|)\longrightarrow0,
\qquad
I(1-A;|\widetilde W_k|,|\widehat\phi|)\longrightarrow0.
\]
Consequently, \(\Lambda'_{s,\phi}(W_k)\to0\) uniformly in the strip.
Since \(e^{-s^2}\) is bounded on compact subsets, it follows that
\(\Lambda_{s,\phi}(W_k)\to0\) uniformly for \(s\) in compact subsets
of \(\mathbb{C}\). This proves the continuity assertion.

Finally, let \(s_0\in\mathbb{C}\). By Theorem~\ref{main1}, there exist
\(W_j\in\mathcal{W}(\pi,\psi_{\mathbb{C}})\) and
\(\phi_j\in\mathcal{S}(\mathbb{R}^n)\), \(1\leq j\leq r\), such that
\[
L(s,\pi,\As)=\sum_{j=1}^r I(s,W_j,\phi_j).
\]
Dividing by \(L(s,\pi,\As)\) and using holomorphic continuation gives
\(1=\sum_{j=1}^r\Lambda_{s,\phi_j}(W_j)\). Hence, for some \(j\),
\(\Lambda_{s_0,\phi_j}(W_j)\neq0\).
\end{proof}

\begin{remark}\label{poleorder}
By Theorem~\ref{continuity}, if \(s_0\) is a pole of
\(L(s,\pi,\As)\), then its order is the maximal pole order for the
family of Flicker integrals $\mathcal{I}_\As(\sigma_\pi)$ at \(s_0\).
\end{remark}

\begin{lemma}\label{mirabolic-integral}
The map
\[
W \longmapsto
\frac{\mathcal{Z}(s,W)}{L(s,\pi,\As)}
=
\frac{1}{L(s,\pi,\As)}
\int_{N_n(\mathbb{R})\backslash P_n(\mathbb{R})}
W(p)\,|\det p|_{\mathbb{R}}^{s-1}\,dp,
\]
is a well-defined continuous linear functional on
\(\mathcal{W}(\pi,\psi_{\mathbb{C}})\).
\end{lemma}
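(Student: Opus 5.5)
Our plan is to reduce the mirabolic integral $\mathcal{Z}(s,W)$ to a finite sum of full Flicker integrals $I(s,W_i,\phi_i)$ with $\phi_i$ chosen independently of $W$. Then, for each $s$, the functional $W\mapsto\mathcal{Z}(s,W)/L(s,\pi,\As)$ is a finite sum of the functionals $\Lambda_{s,\phi_i}$ precomposed with continuous operators on $\mathcal{W}(\pi,\psi_{\mathbb C})$, and Theorem~\ref{continuity} gives both well-definedness (entire in $s$) and continuity.

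\textbf{Step 1: Iwasawa decomposition.} Write $N_n(\mathbb R)\backslash G_n(\mathbb R)$ as $N_n(\mathbb R)\backslash P_n(\mathbb R)\times Z_n(\mathbb R)\times K_n$-type coordinates, or more simply use the decomposition $g=p\,z\,k$ with $p\in P_n(\mathbb R)$, $z=tI_n$ central, $k\in K_n$. For $\phi\in\mathcal S(\mathbb R^n)$ that is $K_n$-invariant we have $\phi(e_n g)=\phi(t e_n)$. This gives, formally,
\[
I(s,W,\phi)=\int_{K_n}\int_{\mathbb R^\times}\omega_\pi(t)\phi(te_n)|t|_{\mathbb R}^{ns}\,\mathcal{Z}(s,R(k)W)\,d^\times t\,dk .
\]
Writing $\mathcal{Z}(s,W)$ directly this way is awkward because of the $K_n$- and $t$-averaging. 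We would therefore instead invert the relation using the Dixmier--Malliavin lemma, as in Lemma~\ref{lem:partial-flicker-integrals}.

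\textbf{Step 2: Inverting the relation.} Apply the Dixmier--Malliavin lemma \cite[Lemma~6.1]{Jac2009} to the one-parameter subgroup $\{\diag(I_{n-1},h)\}$, or to the centre. This writes $W=\sum_i\int R(\mathrm{diag}(I_{n-1},h))W_i\,\varphi_i(h)\,d^\times h$ with $\varphi_i\in C_c^\infty(\mathbb R^\times)$. Combining the $P_n$-integral with the $h$-integral, exactly as in the final step of Lemma~\ref{lem:partial-flicker-integrals}, yields
\[
\mathcal{Z}(s,W)=\sum_i I(s,W_i',\phi_i),
\]
where $\phi_i$ is supported near $e_n$-direction vectors. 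More precisely, the lower-left row $x$ has to be absorbed as well. For this we use \cite[Proposition~6.1]{Jac2009} over $\mathbb C$, extended from $\mathbb R^n$ as in Lemma~\ref{lem:whittaker-replacement}, which produces $W_i'$ with $W_i'(g)=W_i(g)\phi_i(e_ng)$. The map $W\mapsto (W_i,W_i')$ is not canonical, so continuity cannot be read off directly. Instead we use the continuous version: the extended Dixmier--Malliavin lemma \cite[Proposition~1.1]{CPS2004} gives $W=\sum_j R(f_j)W_j$ with $W_j$ depending continuously on $W$. Here $f_j\in C_c^\infty(\mathbb R^\times)$ is pushed into the $\diag(I_{n-1},h)$ factor. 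Consequently
\[
\mathcal{Z}(s,R(f_j)W_j)=I(s,W_j,\Phi_j),
\]
where $\Phi_j(x,h)=f_j(h)\,|h|^{c}$ lies in $C_c^\infty(\mathbb R^{n-1}\times\mathbb R^\times)$ only after a further cutoff in $x$. The $x$-cutoff is handled by the fact that $W(p)$ for $p$ in the mirabolic already corresponds to $\phi(e_ng)$ concentrated at $e_ng=e_n$.

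\textbf{Main obstacle.} The main difficulty is that the mirabolic integral corresponds to a ``delta function'' at $e_ng=e_n$ rather than a Schwartz $\phi$. A cleaner route is to average over $h$ near $1$ via $R(f)$ for $f$ on the subgroup $\{\diag(I_{n-1},h)\}\ltimes\{$last-row unipotents$\}$. Then $\mathcal{Z}(s,R(f)W)=I(s,W,\phi_f)$, where $\phi_f$ is essentially $f$ transported to $\mathbb R^n$ and is a Schwartz function supported away from $h=0$. Writing $W=\sum_j R(f_j)W_j$ continuously and invoking Theorem~\ref{continuity} for each fixed $\phi_{f_j}$ then finishes the proof. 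We expect the verification of the identity $\mathcal{Z}(s,R(f)W)=I(s,W,\phi_f)$ to be the key computation: it is the measure decomposition used in Lemma~\ref{lem:partial-flicker-integrals} with the shift $|\det|^{s-1}$ versus $|\det|^{s}$.
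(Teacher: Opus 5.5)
Your overall plan is viable: write the mirabolic integral as a finite sum of Flicker integrals whose Schwartz data do not depend on $W$, then pass a null sequence through the extended Dixmier--Malliavin lemma and Theorem~\ref{continuity}. However, the identity $\mathcal{Z}(s,R(f)W)=I(s,W,\phi_f)$, which you correctly single out as the key computation, fails for the subgroup you chose. For $u(x,h)=\begin{pmatrix} I_{n-1}&0\\ x&h\end{pmatrix}$ one has $\diag(g,1)\,u(x,h)=\begin{pmatrix} g&0\\ x&h\end{pmatrix}$. So in the range of convergence
\[
\mathcal{Z}(s,R(f)W)=\int_{\mathbb{R}^\times}\int_{\mathbb{R}^{n-1}}\int_{N_{n-1}(\mathbb R)\backslash G_{n-1}(\mathbb R)} W\begin{pmatrix} g&0\\ x&h\end{pmatrix} f(x,h)\,|\det g|_{\mathbb R}^{s-1}\,dg\,dx\,d\mu(h).
\]
The measure decomposition in Lemma~\ref{lem:partial-flicker-integrals} expresses $I(s,W,\phi)$ as the same integral with $f(x,h)\,d\mu(h)$ replaced by $\phi(x,h)\,|h|_{\mathbb R}^{s}\,d^\times h$. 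The reason is that the Flicker integrand carries $|\det\gamma|_{\mathbb R}^{s}=|\det g|_{\mathbb R}^{s}|h|_{\mathbb R}^{s}$, while the mirabolic one carries only $|\det g|_{\mathbb R}^{s-1}$. Hence $\phi_f(x,h)=f(x,h)|h|_{\mathbb R}^{-s}$, up to a fixed power of $|h|_{\mathbb R}$ coming from $d\mu$. No normalisation of measures removes this dependence on $s$.

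This matters for two reasons. Theorem~\ref{continuity} concerns $\Lambda_{s,\phi}$ for a \emph{fixed} $\phi$. And for $s_0$ outside the half-plane of convergence, $\mathcal{Z}(s_0,W)$ is defined by continuation in $s$. You cannot carry out that continuation through $s\mapsto I(s,W,\phi_{f,s})$ with $\phi$ moving with $s$ without some joint holomorphy in $(s,\phi)$, which nothing in the paper provides. The earlier variants in Step~2 have the same defect. Since they average only over $\diag(I_{n-1},h)$, they also give no cutoff in $x$, and the appeal to a ``delta function at $e_ng=e_n$'' does not repair this.

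The missing idea is the twist used in the last step of Lemma~\ref{lem:partial-flicker-integrals}. Average instead over the subgroup $H$ of matrices $\begin{pmatrix} a_{n-1}(h)^{-1}&0\\ x&h\end{pmatrix}$, with $a_{n-1}(h)=\diag(h,I_{n-2})$ and measure $dx\,|h|_{\mathbb R}\,d^\times h$. After the substitution $g\,a_{n-1}(h)^{-1}\mapsto g$, the factor $|\det g|_{\mathbb R}^{s-1}$ produces $|h|_{\mathbb R}^{s-1}$. Together with the modulus $|h|_{\mathbb R}$ in the measure, this gives exactly $|h|_{\mathbb R}^{s}$. Thus $\mathcal{Z}(s,R(f)W)=I(s,W,f^0)$, where $f^0$ is the extension of $f$ by zero and does not depend on $s$.

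With this change your argument goes through. For $W_k\to0$, the extended Dixmier--Malliavin lemma applied to $H$ gives fixed $f_1,\dots,f_q\in C_c^\infty(H)$ and $W_{k,j}\to0$ with $W_k=\sum_jR(f_j)W_{k,j}$. Note that \cite[Proposition~1.1]{CPS2004} is a statement about null sequences, not a continuous choice $W\mapsto W_j$; sequential continuity suffices on a Fr\'echet space. Then $\mathcal{Z}(s,W_k)/L(s,\pi,\As)=\sum_j\Lambda_{s,f_j^0}(W_{k,j})\to0$ uniformly on compacta by Theorem~\ref{continuity}.

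Well-definedness via this reduction is exactly the paper's argument, since $\mathcal{Z}(s,W)=I_{n-1}(s,W)$ in the notation of Lemma~\ref{lem:partial-flicker-integrals}. For continuity the paper does not argue this way; it defers to the proof of \cite[Proposition~3.2]{Chai2015}. Your repaired route instead derives continuity directly from Theorem~\ref{continuity}, so it avoids importing a separate argument.
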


\begin{proof}
By Lemma~\ref{lem:partial-flicker-integrals}, the above quotient is 
well-defined. The remaining part of the proof is identical to that of \cite[Proposition~3.2]{Chai2015}, so we omit the details.
\end{proof}
   
Using Theorem~\ref{continuity}, we characterize
\(G_n(\mathbb{R})\)-distinguished representations in terms of exceptional
poles of the Asai \(L\)-function. A complementary characterization in
terms of Langlands parameters is given in \cite[Theorem 1.1]{PWZ2025}. We recall the relevant definitions.

\begin{defn}
A representation \(\pi\) of \(G_n(\mathbb{C})\) is said to be
\(G_n(\mathbb{R})\)-\emph{distinguished} (or simply distinguished) if
\[
\operatorname{Hom}_{G_n(\mathbb{R})}(\pi,\mathbf{1})\neq0.
\]
\end{defn}

We record the following result due to Kemarsky \cite{Kemarsky2015}.

\begin{prop}\label{mirabolic-invariance}
Let $\pi \in \operatorname{Irr}\bigl(G_n(\mathbb{C})\bigr)
$ be a $G_n(\mathbb{R})$-distinguished representation. Then every \(P_n(\mathbb{R})\)-invariant
linear form on the space of \(\pi\) is \(G_n(\mathbb{R})\)-invariant.
\end{prop}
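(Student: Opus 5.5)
The plan is to deduce the statement from a \emph{multiplicity-at-most-one} result for mirabolic-invariant functionals. Let \(\pi\in\operatorname{Irr}(G_n(\mathbb{C}))\) be \(G_n(\mathbb{R})\)-distinguished, and fix a nonzero \(\lambda\in\operatorname{Hom}_{G_n(\mathbb{R})}(\pi,\mathbf{1})\). Since \(P_n(\mathbb{R})\subset G_n(\mathbb{R})\), \(\lambda\) is in particular \(P_n(\mathbb{R})\)-invariant. So if I show
\[
\dim\operatorname{Hom}_{P_n(\mathbb{R})}(\pi,\mathbf{1})\leq 1,
\]
every \(P_n(\mathbb{R})\)-invariant continuous functional \(\ell\) is a scalar multiple of \(\lambda\), hence \(G_n(\mathbb{R})\)-invariant. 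Everything therefore reduces to this multiplicity bound.

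To prove the bound I would follow the Gelfand--Kazhdan / Aizenbud--Gourevitch method of invariant distributions. Given \(\ell,\lambda\in\operatorname{Hom}_{P_n(\mathbb{R})}(\pi,\mathbf{1})\) and the corresponding objects \(\ell^\vee,\lambda^\vee\) for \(\pi^\vee\), form the bi-invariant generalized matrix coefficients. These are tempered distributions on \(G_n(\mathbb{C})\), invariant under \(P_n(\mathbb{R})\times P_n(\mathbb{R})\) acting by left and right translation. Using the anti-involution \(\theta(g)=w_n\,{}^t\overline{g}\,w_n^{-1}\) (or a close variant), which stabilizes \(P_n(\mathbb{R})\) up to conjugation and realizes the contragredient, i.e.\ \(\pi\circ\theta\simeq\pi^\vee\) for generic \(\pi\) of \(G_n(\mathbb{C})\), the standard criterion says it suffices to show:
\begin{center}
every \(P_n(\mathbb{R})\times P_n(\mathbb{R})\)-invariant tempered distribution on \(G_n(\mathbb{C})\) is \(\theta\)-invariant.
\end{center}
From that, the usual argument gives \(\dim\operatorname{Hom}_{P_n(\mathbb{R})}(\pi,\mathbf{1})\cdot\dim\operatorname{Hom}_{P_n(\mathbb{R})}(\pi^\vee,\mathbf{1})\leq1\). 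Both dimensions are at least \(1\): the first because \(\pi\) is distinguished, the second because \(\pi^\vee\simeq\pi\circ\theta\) is then distinguished too. Hence the first dimension equals \(1\).

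The distributional statement is the main obstacle. I would first try to reduce it to \(G_n(\mathbb{R})\)-conjugation-type actions using the Frobenius descent of Aizenbud--Gourevitch, passing from \(P_n(\mathbb{R})\) to the \(G_{n-1}(\mathbb{R})\ltimes\) unipotent structure. Then I would apply Harish-Chandra descent along \(\theta\)-semisimple orbits, and an induction on \(n\) using the Fourier-transform and homogeneity argument on the nilpotent cone, as in the proof of multiplicity one for \((\GL_{n+1},\GL_n)\).

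If this descent becomes too delicate, the fallback is an analytic route in the spirit of Kemarsky. Realize \(\ell\) on Whittaker functions via the mirabolic integral \(\mathcal Z(s,W)\) of Lemma~\ref{mirabolic-integral}, and use the Bernstein--Zelevinsky filtration of \(\pi|_{P_n(\mathbb{C})}\) restricted to \(P_n(\mathbb{R})\) to show that every \(P_n(\mathbb{R})\)-invariant functional factors through the bottom piece. The bottom piece is the Schwartz induction from \(N_n(\mathbb{C})\) with character \(\psi_{\mathbb{C}}\). Because \(\psi_{\mathbb{C}}\) is trivial on \(\mathbb{R}\), that piece carries a unique \(P_n(\mathbb{R})\)-invariant functional up to scalar, namely \(W\mapsto\int_{N_n(\mathbb{R})\backslash P_n(\mathbb{R})}W(p)\,dp\). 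The higher pieces would be excluded using the vanishing of \(P_n(\mathbb{R})\)-coinvariants on the derivatives.
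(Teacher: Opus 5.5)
The paper gives no proof of this statement; it simply records Kemarsky's theorem. Your reduction to $\dim\Hom_{P_n(\mathbb R)}(\pi,\mathbf 1)\le 1$ is logically fine, but the route you propose to that bound cannot work.

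\textbf{The main route.} The Gelfand--Kazhdan criterion applied to bi-$P_n(\mathbb R)$-invariant distributions would give $\dim\Hom_{P_n(\mathbb R)}(\pi,\mathbf 1)\cdot\dim\Hom_{P_n(\mathbb R)}(\pi^\vee,\mathbf 1)\le 1$ for \emph{every} irreducible $\pi$. That argument never uses distinction, and the bound is false already for $n=2$.

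Take $\pi=\Ind_{B_2(\mathbb C)}^{G_2(\mathbb C)}(\chi_1\otimes\chi_2)$ (normalized induction) with $\chi_1(z)=|z|_{\mathbb C}^{-1/2}$ and $\chi_2(z)=|z|_{\mathbb C}^{-1/2}z/\bar z$. Since $\chi_1\chi_2^{-1}(z)=z^{-1}\bar z$, and reducibility would require $z^p\bar z^q$ with $pq>0$, the representation $\pi$ is irreducible and generic.

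\begin{itemize}
\item For $p=\begin{pmatrix}a&x\\0&1\end{pmatrix}$, the functional $\Lambda_1(f)=f(1)$ satisfies $\Lambda_1(R(p)f)=\chi_1(a)|a|_{\mathbb C}^{1/2}\Lambda_1(f)=\Lambda_1(f)$.
\item If $T\colon\pi\to\Ind(\chi_2\otimes\chi_1)$ is an isomorphism, then $\Lambda_2(f)=(Tf)(1)$ satisfies $\Lambda_2(R(p)f)=(a/\bar a)\Lambda_2(f)$.
\end{itemize}

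Both functionals are $P_2(\mathbb R)$-invariant. They are linearly independent, because $\Lambda_1$ is even $P_2(\mathbb C)$-invariant while $\Lambda_2\neq 0$ is not.

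On the other hand, $\pi^\vee$, like every generic representation, carries a nonzero $P_2(\mathbb R)$-invariant functional. One such is the leading Laurent coefficient at $s=1$ of the mirabolic integral $\mathcal Z(s,W)$ of Lemma~\ref{mirabolic-integral}; here it is just the convergent integral $\int_{\mathbb R^\times}W(\operatorname{diag}(a,1))\,d^\times a$. So the product of dimensions is at least $2$.

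Consequently, the statement that all bi-$P_n(\mathbb R)$-invariant distributions are $\theta$-invariant is false for any anti-involution $\theta$ preserving $P_n(\mathbb R)$. No Frobenius or Harish-Chandra descent can establish it. This $\pi$ is of course not distinguished, so there is no contradiction with the statement itself. The point is that distinction must be used in an essential way.

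\textbf{The fallback.} It has the same defect. Functionals restrict to the bottom piece $\mathcal S\Ind_{N_2(\mathbb C)}^{P_2(\mathbb C)}\psi_{\mathbb C}\cong\mathcal S(\mathbb C^\times)$; they do not factor through it. So what you would actually need is $\Hom_{P_n(\mathbb R)}(\pi/\mathrm{bottom},\mathbf 1)=0$.

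In the example above, both $\Lambda_1$ and $\Lambda_2$ vanish on the bottom piece. An $N_2(\mathbb R)$-invariant functional on $\mathcal S(\mathbb C^\times)$ is supported on $\mathbb R^\times$. However, the equivariance of $\Lambda_1$ and $\Lambda_2$ under all $\operatorname{diag}(b,1)$ with $b\in\mathbb C^\times$ forces their support to be $\mathbb C^\times$-stable, hence empty. Thus the ``vanishing on the derivatives'' fails in general. It is exactly the point where distinction must enter, and your sketch leaves it open.

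\textbf{What the cited argument does instead.} The argument behind the cited result is one-sided, modelled on Bernstein's and Baruch's work on $P$-invariant distributions:
\begin{enumerate}
\item Pair $\ell$ with a nonzero $G_n(\mathbb R)$-invariant functional on $\pi^\vee$, which exists because $\pi$ is distinguished.
\item This produces a $P_n(\mathbb R)$-invariant, $\mathfrak z$-finite distribution on $G_n(\mathbb C)/G_n(\mathbb R)$.
\item Prove that such distributions are $G_n(\mathbb R)$-invariant.
\item Conclude by density that $\ell$ is $G_n(\mathbb R)$-invariant.
\end{enumerate}
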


For \(m \geq 0\), define
\[
\mathcal{S}^{m}(\mathbb{R}^{n})
=
\left\{
\phi \in \mathcal{S}(\mathbb{R}^{n})
:
\partial^{\alpha}\phi(0)=0
\text{ for all multi-indices } \alpha \text{ with } |\alpha|<m
\right\}.
\]
Thus,
\[
\mathcal{S}^{0}(\mathbb{R}^{n})=\mathcal{S}(\mathbb{R}^{n})\quad\text{and}\quad 
\mathcal{S}^{1}(\mathbb{R}^{n})
=
\left\{
\phi \in \mathcal{S}(\mathbb{R}^{n})
:
\phi(0)=0
\right\}.
\]

Let \(s_0\) be a pole of \(L(s,\pi,\As)\) of order \(d\). Then we have a
Laurent expansion
\[
I(s,W,\phi)
=
\frac{B_{s_0}(W,\phi)}{(s-s_0)^d}
+\cdots,
\]
where \(B_{s_0}\) is a bilinear form on
$\mathcal{W}(\pi,\psi_{\mathbb{C}})
\times
\mathcal{S}(\mathbb{R}^{n}).$ Observe that
\[ B_{s_0}(W,\phi) = \lim_{s\to s_0} (s-s_0)^d L(s,\pi,\As)\Lambda_{s,\phi}(W), \] and hence  \(B_{s_0}\) is continuous.

\begin{defn}
We say that \(s=s_0\) is an \emph{exceptional pole with level \(m\)} of
\(L(s,\pi,\As)\) if the corresponding bilinear form \(B_{s_0}\) vanishes
on $\mathcal{S}^{m+1}(\mathbb{R}^{n})$
but is not identically zero on $\mathcal{S}^{m}(\mathbb{R}^{n})$.
\end{defn}

\begin{thm}[Distinction and exceptional poles]\label{thm:distinguished-exceptional}
Let $
\pi\in\operatorname{Irr}_{\mathrm{gen}}
\bigl(G_n(\mathbb{C})\bigr).
$
Then \(\pi\) is \(G_n(\mathbb{R})\)-distinguished if and only if
\(s=0\) is an exceptional pole with level $0$ for \(L(s,\pi,\As)\).
\end{thm}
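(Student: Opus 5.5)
The plan has two halves. The direction ``exceptional pole $\Rightarrow$ distinguished'' is essentially formal. The converse direction carries the analytic content: there we produce the pole and identify the leading coefficient with a $G_n(\mathbb{R})$-invariant form.

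\emph{Step 1 (invariance of the leading coefficient).} For $h\in G_n(\mathbb{R})$ write $(h\cdot\phi)(x)=\phi(xh)$. The change of variables $g\mapsto gh^{-1}$ in the Flicker integral gives
\[
I\bigl(s,R(h)W,h\cdot\phi\bigr)=|\det h|_{\mathbb{R}}^{-s}\,I(s,W,\phi)
\]
in the range of convergence, and then everywhere by meromorphic continuation. Let $s_0=0$ be a pole of $L(s,\pi,\As)$ of order $d$. By Remark~\ref{poleorder}, $d$ is the maximal pole order of the family. Expanding $|\det h|_{\mathbb{R}}^{-s}=1+O(s)$ and comparing coefficients of $s^{-d}$ shows that $B_0(R(h)W,h\cdot\phi)=B_0(W,\phi)$. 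So $B_0$ is a continuous $G_n(\mathbb{R})$-invariant bilinear form.

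\emph{Step 2 (level $0$ $\Rightarrow$ distinguished).} Suppose $0$ is an exceptional pole of level $0$. Then $B_0$ vanishes on $\mathcal{W}(\pi,\psi_{\mathbb C})\times\mathcal{S}^1(\mathbb{R}^n)$. Since $\mathcal{S}(\mathbb{R}^n)/\mathcal{S}^1(\mathbb{R}^n)\cong\mathbb{C}$ via $\phi\mapsto\phi(0)$, we get $B_0(W,\phi)=\phi(0)\lambda(W)$, where $\lambda(W)=B_0(W,\phi_1)$ for any fixed $\phi_1$ with $\phi_1(0)=1$. Because $B_0\not\equiv0$, the form $\lambda$ is nonzero. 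Because $(h\cdot\phi)(0)=\phi(0)$, Step 1 gives $\lambda(R(h)W)=\lambda(W)$. Continuity of $\lambda$ follows from the continuity of $B_0$ noted before the definition. Hence $\lambda\in\operatorname{Hom}_{G_n(\mathbb{R})}(\pi,\mathbf 1)\setminus\{0\}$.

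\emph{Step 3 (distinguished $\Rightarrow$ pole of level $0$).} Assume $\pi$ is distinguished. Then $\omega_\pi|_{\mathbb{R}^\times}=\mathbf 1$.
\begin{enumerate}[label=(\roman*)]
\item Use the Iwasawa-type decomposition $N_n\backslash G_n=(N_n\backslash P_n)\cdot Z\cdot K_n$ to write
\[
I(s,W,\phi)=\int_{K_n}\int_{N_n(\mathbb R)\backslash P_n(\mathbb R)}W(pk)\,|\det p|_{\mathbb R}^{s-1}\int_{\mathbb R^\times}\phi(a e_nk)\,|a|_{\mathbb R}^{ns}\,d^\times a\,dp\,dk .
\]
\item The inner Tate integral equals $\frac{c\,\phi(0)}{s}$ plus a function holomorphic for $\operatorname{Re}(s)>-1/n$. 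For $\phi\in\mathcal{S}^1(\mathbb{R}^n)$ it is holomorphic near $0$.
\item Hence, modulo integrals of the mirabolic type $\mathcal{Z}(s,R(k)W)$ twisted by holomorphic factors, $I(s,W,\phi)$ is $\frac{c\,\phi(0)}{s}\int_{K_n}\mathcal{Z}(s,R(k)W)\,dk$ plus a similar expression with holomorphic weights.
\item By Lemma~\ref{mirabolic-integral}, $\mathcal{Z}(s,W)/L(s,\pi,\As)$ is entire and continuous in $W$. For $\phi\in\mathcal{S}^1$ we want the pole order of $I(s,W,\phi)$ at $0$ to be at most the order of $L(s,\pi,\As)$ at $0$ minus one. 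For general $\phi$ we want the extra $1/s$ to be exactly what produces the top-order term.
\end{enumerate}

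\emph{Step 4 (nonvanishing).} It remains to show $B_0\not\equiv0$ and that $0$ is indeed a pole. Define, for $\operatorname{Re}(s)$ near $0$, the continuous functional $\mu_s(W)=\mathcal{Z}(s,W)/L(s,\pi,\As)$. Its leading Laurent coefficient at the relevant point gives a $P_n(\mathbb{R})$-invariant form. By Kemarsky's result (Proposition~\ref{mirabolic-invariance}) that form is $G_n(\mathbb{R})$-invariant, hence it is a multiple of the given invariant form $\ell$. Comparing with $\ell\neq0$ then forces the leading coefficient $\phi(0)\int_{K_n}(\cdots)\,dk$ to be nonzero for suitable $W$ and $\phi$. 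I expect this step to be the main obstacle. It requires showing that the $P_n(\mathbb{R})$-invariant functional coming from $\mathcal{Z}$ is not identically zero for a distinguished $\pi$, and then matching the order $d$ of $L(s,\pi,\As)$ at $0$ with the order of the family.

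My first attempt at Step 4 would be to use Theorem~\ref{continuity}: pick $W_0,\phi_0$ with $\Lambda_{0,\phi_0}(W_0)\neq0$, so that $I(s,W_0,\phi_0)$ has the full order $d$ at $0$. Then use Step 3(iii) to show that the top coefficient must come from the $\phi(0)/s$ term, since the remaining contribution has strictly lower order. That would give both vanishing on $\mathcal{S}^1$ and non-vanishing on $\mathcal{S}^0$. If this matching fails, I would instead pass through the functional equation (Theorem~\ref{fe1}) to the point $1-s=1$, where the $P_n(\mathbb{R})$-invariance of $\mathcal{Z}$ is manifest.
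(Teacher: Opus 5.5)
Your Steps~1 and~2 are correct, and they are the paper's argument for ``exceptional pole of level $0$ $\Rightarrow$ distinguished''. The converse has a genuine gap, and your primary route, which works at $s=0$, fails for three reasons.

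First, at $s=0$ the mirabolic functional is not $P_n(\mathbb{R})$-invariant. The change of variables $p\mapsto pp_0^{-1}$ gives $\mathcal{Z}(s,R(p_0)W)=|\det p_0|_{\mathbb{R}}^{1-s}\mathcal{Z}(s,W)$. So every Laurent coefficient of $\mathcal{Z}(s,\cdot)/L(s,\pi,\As)$ at $s=0$ transforms by $|\det|_{\mathbb{R}}$, and Proposition~\ref{mirabolic-invariance} says nothing about such forms.

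Second, your ``first attempt'' is circular. In your decomposition, the term $\int_{K_n}\mathcal{Z}(s,R(k)W)H(s,k,\phi)\,dk$ (with $H$ the holomorphic part of the Tate integral) has pole order at most $d$ by Lemma~\ref{mirabolic-integral}, not at most $d-1$. Its $s^{-d}$-coefficient is $\int_{K_n}\mu(R(k)W)H(0,k,\phi)\,dk$, where $\mu$ is the $s^{-d}$-coefficient of $\mathcal{Z}$. Requiring this to vanish for all $\phi\in\mathcal{S}^1(\mathbb{R}^n)$ (for instance all $\phi$ supported away from $0$) is equivalent to $\mu\equiv0$, which is precisely the vanishing of $B_0$ on $\mathcal{S}^1(\mathbb{R}^n)$ that you are trying to prove. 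Cancellation of the $s^{-d-1}$ term only yields $\mu\bigl(\int_{K_n}R(k)W\,dk\bigr)=0$.

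Third, nothing in your plan shows that $0$ is a pole of $L(s,\pi,\As)$ at all: you use $d$ without establishing $d\geq1$.

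Your fallback is the right idea and is exactly the paper's proof, but it has to be carried out, and it needs one extra input.
\begin{enumerate}
\item Apply the Iwasawa decomposition to $I(1-s,\widetilde W,\widehat\phi)$ with $\widetilde W$ $K_n$-finite. This reduces the $K_n$-integral to a finite sum, which also settles the interchange with meromorphic continuation that your Step~3(iii) glosses over.
\item Near $s=0$, the Tate factor $\int_{\mathbb{R}^\times}\widehat\phi(e_nak)|a|_{\mathbb{R}}^{n(1-s)}\,d^\times a$ is holomorphic. Let $\ell$ be the value at $s=1$ of $\mathcal{Z}(s,\cdot)/L(s,\pi^\vee,\As)$; this form is $P_n(\mathbb{R})$-invariant.
\item Kemarsky's result, applied to the distinguished representation $\pi^\vee$, makes $\ell$ $G_n(\mathbb{R})$-invariant. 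Hence $\ell$ comes out of the $K_n$-integral, and Fourier inversion gives $\Lambda^{\pi^\vee}_{1,\widehat\phi}(\widetilde W)=\ell(\widetilde W)\phi(0)$.
\item Density of $K_n$-finite vectors and continuity extend this identity to all $\widetilde W$. Theorem~\ref{fe1} then gives $\Lambda^{\pi}_{0,\phi}(W)=\alpha^{-1}\ell(\widetilde W)\phi(0)$.
\item To see that $0$ really is a pole, you need some $W_1$ and $\phi_1\in\mathcal{S}^1(\mathbb{R}^n)$ with $I(0,W_1,\phi_1)\neq0$; the paper imports this nonvanishing from Yadav. Since $\Lambda^{\pi}_{0,\phi_1}(W_1)=0$, this forces a pole of $L(s,\pi,\As)$ at $0$.
\item Finally, $B_0=c_0\Lambda^{\pi}_{0,\cdot}$ with $c_0=\lim_{s\to0}s^dL(s,\pi,\As)$. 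This form vanishes on $\mathcal{S}^1(\mathbb{R}^n)$ by the formula in step~4, and it is nonzero by Theorem~\ref{continuity}.
\end{enumerate}
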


\begin{proof}
Since
\[
\mathcal{S}^{1}(\mathbb{R}^{n})
=
\ker\!\left(
\operatorname{ev}_{0}:
\mathcal{S}(\mathbb{R}^{n})\longrightarrow\mathbb{C}
\right),
\qquad
\operatorname{ev}_{0}(\phi)=\phi(0),
\]
the space \(\mathcal{S}^{1}(\mathbb{R}^{n})\) has codimension one in
\(\mathcal{S}(\mathbb{R}^{n})\). Consequently, if \(s=s_{0}\) is an
exceptional pole with level \(0\) of \(L(s,\pi,\As)\), then there exists a
nonzero continuous linear form
\[
\lambda_{s_{0}}:
\mathcal{W}(\pi,\psi_{\mathbb{C}})
\longrightarrow
\mathbb{C}
\]
such that
\[
B_{s_{0}}(W,\phi)
=
\lambda_{s_{0}}(W)\phi(0).
\]
The equivariance property of \(B_{s_{0}}\) gives
\[
\lambda_{s_{0}}(g\cdot W)
=
|\det g|_{\mathbb{R}}^{-s_{0}}
\lambda_{s_{0}}(W),
\qquad
g\in G_n(\mathbb{R}).
\]
In particular, if \(s_{0}=0\), then \(\lambda_{0}\) is
\(G_n(\mathbb{R})\)-invariant, and hence \(\pi\) is
\(G_n(\mathbb{R})\)-distinguished.

Conversely, suppose that \(\pi\) is \(G_n(\mathbb{R})\)-distinguished.
We prove that \(s=0\) is an exceptional pole with level \(0\) of
\(L(s,\pi,\As)\). When we wish to emphasize the underlying representation \(\pi\), we write
\(\Lambda_{s,\phi}^{\pi}\) instead of \(\Lambda_{s,\phi}\).

For \(\operatorname{Re}(s)\ll 0\), consider
\[
I(1-s,\widetilde{W},\widehat{\phi})
=
\int_{N_n(\mathbb{R})\backslash G_n(\mathbb{R})}
\widetilde{W}(g)\widehat{\phi}(e_ng)
|\det g|_{\mathbb{R}}^{1-s}\,dg,
\]
where
$\widetilde{W}
\in
\mathcal{W}({\pi}^\vee,\psi_{\mathbb{C}}^{-1})
\>\text{and}\>
\phi\in\mathcal{S}(\mathbb{R}^{n}).$ Since \(\pi\) is distinguished, $\omega_\pi$ is trivial on
\(\mathbb{R}^{\times}\).

We first assume that \(\widetilde{W}\) is
\(K_n\)-finite. Then there exist finitely many Whittaker functions
\(\widetilde{W}_i\) and continuous functions \(f_i\) on \(K_n\) such
that
\[
\widetilde{W}(gk)
=
\sum_i f_i(k)\widetilde{W}_i(g),
\qquad
g\in G_n(\mathbb{R}),\quad k\in K_n.
\]
Using the Iwasawa decomposition, we obtain
\[
I(1-s,\widetilde{W},\widehat{\phi})
=
\sum_i
\mathcal{Z}(1-s,\widetilde{W}_i)
\int_{K_n}\int_{\mathbb{R}^{\times}}
f_i(k)\widehat{\phi}(e_nak)
|a|_{\mathbb{R}}^{n(1-s)}
\,d^{\times}a\,dk.
\]
Dividing by the Asai \(L\)-function gives
\[
\Lambda_{1-s,\widehat{\phi}}^{{\pi}^\vee}
(\widetilde{W})
=
\sum_i
\frac{\mathcal{Z}(1-s,\widetilde{W}_i)}
     {L(1-s,{\pi}^\vee,\As)}
\int_{K_n}\int_{\mathbb{R}^{\times}}
f_i(k)\widehat{\phi}(e_nak)
|a|_{\mathbb{R}}^{n(1-s)}
\,d^{\times}a\,dk.
\]
There exists \(\varepsilon>0\) such that the integrals on the
right-hand side are absolutely convergent and holomorphic for
\(\operatorname{Re}(s)<\varepsilon\). Evaluating at \(s=0\), we get
\[
\Lambda_{1,\widehat{\phi}}^{{\pi}^\vee}
(\widetilde{W})
=
\int_{K_n}
\ell\bigl({\pi}^\vee(k)\widetilde{W}\bigr)
\int_{\mathbb{R}^{\times}}
\widehat{\phi}(e_nak)|a|_{\mathbb{R}}^{n}
\,d^{\times}a\,dk,
\]
where
\[
\ell(\widetilde{W})
=
\left.
\frac{\mathcal{Z}(s,\widetilde{W})}
     {L(s,{\pi}^\vee,\As)}
\right|_{s=1}.
\]
The linear form \(\ell\) is \(P_n(\mathbb{R})\)-invariant.
Since \({\pi}^\vee\) is also \(G_n(\mathbb{R})\)-distinguished,
by Proposition \ref{mirabolic-invariance}, \(\ell\) is in fact
\(G_n(\mathbb{R})\)-invariant. Hence
\[
\Lambda_{1,\widehat{\phi}}^{{\pi}^\vee}
(\widetilde{W})
=
\ell(\widetilde{W})
\int_{K_n}\int_{\mathbb{R}^{\times}}
\widehat{\phi}(e_nak)|a|_{\mathbb{R}}^{n}
\,d^{\times}a\,dk.
\]
Fourier inversion with suitably normalized Haar measures gives
\[
\int_{K_n}\int_{\mathbb{R}^{\times}}
\widehat{\phi}(e_nak)|a|_{\mathbb{R}}^{n}
\,d^{\times}a\,dk
=
\phi(0).
\]
Therefore,
\[
\Lambda_{1,\widehat{\phi}}^{{\pi}^\vee}
(\widetilde{W})
=
\ell(\widetilde{W})\phi(0).
\]
Since the \(K_n\)-finite vectors are dense and both sides are
continuous (Theorem \ref{continuity} and Lemma \ref{mirabolic-integral}), this equality holds for every
\(\widetilde{W}\in
\mathcal{W}({\pi}^\vee,\psi_{\mathbb{C}}^{-1})\).

By Theorem \ref{fe1}, there exists
\(\alpha\in\mathbb{C}^{\times}\) such that
\[
\Lambda_{1,\widehat{\phi}}^{{\pi}^\vee}
(\widetilde{W})
=
\alpha\Lambda_{0,\phi}^{\pi}(W).
\]
It follows that
\[
\Lambda_{0,\phi}^{\pi}(W)
=
\alpha^{-1}\ell(\widetilde{W})\phi(0).
\]
In particular, $
\Lambda_{0,\phi}^{\pi}(W)=0
\>
\text{for every }
W\in\mathcal{W}(\pi,\psi_{\mathbb{C}})
\text{ and }
\phi\in\mathcal{S}^{1}(\mathbb{R}^{n}).
$

By \cite[Proposition~3.8]{Yad2024}, one can choose $W_1\in\mathcal{W}(\pi,\psi_{\mathbb{C}})
\>\text{and}\> \phi_1\in\mathcal{S}^{1}(\mathbb{R}^{n})$ such that \(I(s,W_1,\phi_1)\) does not vanish at \(s=0\). However as $\Lambda_{0,\phi_1}^{\pi}(W_1)=0$, \(s=0\) is a pole of
\(L(s,\pi,\As)\) of order say $d$.

Put
\(c_0=\lim_{s\to0}s^dL(s,\pi,\As)\in\mathbb{C}^{\times}\). Then
\[
B_0(W,\phi)
=
c_0\Lambda_{0,\phi}^{\pi}(W)
=
c_0\alpha^{-1}\ell(\widetilde W)\phi(0).
\]
By Remark~\ref{poleorder}, \(d\) is the
maximal pole order among the Flicker integrals at \(s=0\), and hence
\(B_0\) is nonzero. Since \(\phi(0)=0\) for every
\(\phi\in\mathcal{S}^{1}(\mathbb{R}^{n})\), the bilinear form \(B_0\)
vanishes on
\(\mathcal{W}(\pi,\psi_{\mathbb{C}})
\times\mathcal{S}^{1}(\mathbb{R}^{n})\). Thus \(s=0\) is an
exceptional pole with level \(0\) of \(L(s,\pi,\As)\).
\end{proof}

The following continuity result is proved in the same way as
Theorem~\ref{continuity}, with Flicker integrals replaced by
Jacquet--Shalika integrals. We therefore omit the proof.

\begin{thm}\label{continuity-exterior-square}
Let
\[
\pi\in\operatorname{Irr}_{\mathrm{gen}}\bigl(G_m(F)\bigr).
\]
Define the normalized Jacquet--Shalika functionals by
\[
\Sigma_{s,\phi}(W)
=
\frac{J(s,W,\phi)}{L(s,\pi,\wedge^2)}
\quad\text{if }m=2n,
\qquad
\Sigma_s(W)
=
\frac{J(s,W)}{L(s,\pi,\wedge^2)}
\quad\text{if }m=2n+1.
\]
These are continuous linear functionals on
\(\mathcal{W}(\pi,\psi_F)\), uniformly for \(s\) in compact subsets of
\(\mathbb{C}\); in the even case, this holds for every fixed
\(\phi\in\mathcal{S}(F^n)\).

Moreover, for every \(s_0\in\mathbb{C}\), there exists
\(W_0\in\mathcal{W}(\pi,\psi_F)\), together with
\(\phi_0\in\mathcal{S}(F^n)\) when \(m=2n\), such that
\[
\Sigma_{s_0,\phi_0}(W_0)\neq 0
\quad\text{if }m=2n,
\qquad
\Sigma_{s_0}(W_0)\neq 0
\quad\text{if }m=2n+1.
\]
\end{thm}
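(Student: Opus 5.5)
The plan is to follow the proof of Theorem~\ref{continuity} step by step, with Flicker integrals replaced by Jacquet--Shalika integrals. The analytic input comes from \cite[Theorem~2.2]{JLST2025}: for every $W$ (and $\phi$ in the even case), the quotient $J/L(s,\pi,\wedge^2)$ is entire and of finite order in vertical strips. The other ingredients are the functional equation of Theorem~\ref{fe2} and Theorem~\ref{main2}. In the odd case we first use the Dixmier--Malliavin remark preceding Theorem~\ref{fe2}. It lets us work with $J(s,R(\phi)W)$, so that the functional equation takes the same shape as in the even case, with $R(\mathcal F_{\psi_F^{-1}}(\phi))R(\tau_{2n+1})\widetilde W$ on the dual side.

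The first step is continuity in a right half-plane of absolute convergence. Given $W_k\to0$, write $W_k=\sum_j R(f_j)W_{k,j}$ with $W_{k,j}\to0$, using \cite[Proposition~1.1]{CPS2004}. We then bound $|R(f_j)W(g)|\le\beta_j(W)\xi_j(g)$ by a gauge. The gauge is evaluated at the arguments $h_m(g,X)$, where $X$ runs over $V_n\backslash M_n$. For the resulting integral in $(g,X)$ of $\xi_j$ times the weights, we need finiteness locally uniformly in $s$. This is exactly the estimate used in \cite{JLST2025} (going back to \cite{JS1990}) to prove absolute convergence, so we quote it. The $X$-integration is the only genuinely new feature compared with the Flicker case, and we expect the main obstacle to be locating or re-deriving a gauge bound on the combined $(g,X)$ integral with a dominating function independent of $W$.

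Next, the functional equation in Theorem~\ref{fe2} transfers this continuity to a left half-plane. The map $W\mapsto R(\tau_m)\widetilde W$ is continuous, and in the odd case so is $W\mapsto R(\mathcal F(\phi))R(\tau_{2n+1})\widetilde W$. We then set $\Sigma'=e^{s^2}\Sigma$. On the two boundary lines $\operatorname{Re}s=B$ and $\operatorname{Re}s=A$, Stirling's formula for the gamma and $\varepsilon$ factors bounds $\Sigma'$ by absolute-value integrals. Phragmén--Lindelöf, which applies because of the finite-order property, extends the bound across the strip and gives uniform convergence to zero on compact sets.

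Nonvanishing follows from Theorem~\ref{main2}. We have $L(s,\pi,\wedge^2)=\sum_i J(s,W_i,\phi_i)$, or $\sum_i J(s,W_i)$ in the odd case. Dividing by $L$ gives $1=\sum_i\Sigma_{s,\phi_i}(W_i)$ identically, so at $s_0$ some summand is nonzero.
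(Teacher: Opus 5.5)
Your proposal is correct and matches the paper's approach: the paper omits this proof, saying only that it is the proof of Theorem~\ref{continuity} with Flicker integrals replaced by Jacquet--Shalika integrals. That is exactly the scheme you outline: gauge estimates in a right half-plane, transfer by Theorem~\ref{fe2}, Phragm\'en--Lindel\"of for $e^{s^2}$ times the normalized integral, and nonvanishing from Theorem~\ref{main2}. One point should be made explicit in the odd case: to pass from continuity of $W\mapsto\Sigma_s(R(\phi)W)$ to continuity of $\Sigma_s$, you need the continuous form of Dixmier--Malliavin, \cite[Proposition~1.1]{CPS2004}, applied to the subgroup of matrices $\left(\begin{smallmatrix} I_n&0&0\\ 0&I_n&0\\ 0&x&1\end{smallmatrix}\right)$ with $x\in F^n$, so that $W_k=\sum_j R(\phi_j)W_{k,j}$ with the $\phi_j$ independent of $k$ and $W_{k,j}\to0$; the spanning statement preceding Theorem~\ref{fe2} is not enough.
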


Ideally, one would like to use Theorem~\ref{continuity-exterior-square} to classify irreducible generic representations distinguished by the Shalika subgroup. However, an analogue of Proposition~\ref{mirabolic-invariance} does not appear to be available in the literature. We believe that Theorem~\ref{continuity-exterior-square} provides a
necessary starting point for the study of exceptional poles of
archimedean exterior-square \(L\)-factors \cite{CPS1994}.
\end{section}

\begin{section}{Global Applications}\label{s7}

In this section, we combine Theorems~\ref{main1} and~\ref{main2} with
the corresponding non-archimedean results to express the global Asai
and exterior-square \(L\)-functions as finite sums of global Flicker
and Jacquet--Shalika integrals, respectively. We then deduce the
analytic properties of these \(L\)-functions from the global integral
representations, giving proofs independent of the Langlands--Shahidi
method \cite{Shahidi2010}.

We begin by setting up the necessary global notation. Let \(L/K\) be a quadratic extension of number fields. For every place \(v\) of \(K\), we denote by \(K_v\) the corresponding completion of \(K\) and set
\(L_v=K_v\otimes_K L\). Let
\(\mathbb{A}_K=\prod_v' K_v\) and
\(\mathbb{A}_L=\mathbb{A}_K\otimes_K L=\prod_v' L_v\) be the ad\`ele rings of
\(K\) and \(L\), respectively, and let \(\lvert\>\cdot\>\rvert_{\mathbb{A}_K}\)
be the normalized absolute value on \(\mathbb{A}_K\). Define
\(\mathbb{I}_K^1\) as the subgroup of the idele group \(\mathbb{A}_K^\times\)
given by
\[
  \mathbb{I}_K^1
  =
  \bigl\{x\in\mathbb{A}_K^\times:
  \lvert x\rvert_{\mathbb{A}_K}=1\bigr\}.
\]
For the sake of notational brevity, we write
\[
  [G]:=Z(\mathbb{A}_K)G(K)\backslash G(\mathbb{A}_K)
\]
for any reductive group \(G\) with center \(Z\). Let \(\Psi'\) and \(\Psi\)
be nontrivial additive characters of \(K\backslash\mathbb{A}_K\) and
\(L\backslash\mathbb{A}_L\), respectively, with \(\Psi\) being trivial on
\(K\backslash\mathbb{A}_K\). For every place \(v\) of \(K\), let \(\Psi_v'\)
and \(\Psi_v\) be the local components of \(\Psi'\) and \(\Psi\) at \(v\),
respectively. To each, we associate a generic character
\[
  \Psi_{n,v}:N_n(L_v)\longrightarrow\mathbb{S}^1
  \qquad\text{and}\qquad
  \Psi'_{n,v}:N_n(K_v)\longrightarrow\mathbb{S}^1,
\]
as before. Then \(\Psi_n=\prod_v\Psi_{n,v}\) defines a character of
\(N_n(\mathbb{A}_L)\), which is trivial on both \(N_n(L)\) and
\(N_n(\mathbb{A}_K)\), and we define
\(\Psi_n'=\prod_v\Psi'_{n,v}\). For each place \(v\), let \(\tau_v\in L_v^\times\) be the unique element
such that
\[
  \Psi_v(z)
  =
  \Psi'_v\!\left(\operatorname{Tr}_{L_v/K_v}(\tau_v z)\right),
  \qquad z\in L_v.
\]
If \(v\) splits in \(L\), we
identify \(L_v\) with \(K_v\times K_v\); then
$ \tau_v=(\beta_v,-\beta_v) $
for a uniquely determined \(\beta_v\in K_v^\times\) such that
\[
  \Psi_v(x,y)
  =
  \Psi'_v(\beta_vx)\Psi'_v(-\beta_vy),
  \qquad x,y\in K_v.
\]
Let \(\lambda_{L_v/K_v}(\Psi'_v)\) denote the Langlands constant
associated with \(L_v/K_v\) and \(\Psi'_v\), with the convention that
\(\lambda_{L_v/K_v}(\Psi'_v)=1\) when \(v\) splits in \(L\). When \(v\)
is inert, it is a fourth root of unity.

Let \(\mathcal{S}(\mathbb{A}_K^n)\) denote
the Schwartz--Bruhat space on \(\mathbb{A}_K^n\). The Fourier transform on
\(\mathbb{A}_K^n\), denoted by \(\Phi\mapsto\widehat{\Phi}\), is defined as
follows: for every \(\Phi\in\mathcal{S}(\mathbb{A}_K^n)\), we have
\[
  \widehat{\Phi}(x_1,\ldots,x_n)
  =
  \int_{\mathbb{A}_K^n}
  \Phi(y_1,\ldots,y_n)
  \Psi'(x_1y_1+\cdots+x_ny_n)\,dy_1\cdots dy_n,
\]
for all \((x_1,\ldots,x_n)\in\mathbb{A}_K^n\), where the measure of
integration is chosen so that
\(\widehat{\widehat{\Phi}}(v)=\Phi(-v)\). Given a Schwartz--Bruhat function
\(\Phi\in\mathcal{S}(\mathbb{A}_K^n)\), we form the \(\Theta\)-series
\[
  \Theta_\Phi(a,g)
  :=
  \sum_{\xi\in K^n}\Phi(a\xi g),
  \qquad
  a\in\mathbb{A}_K^\times,
  \quad
  g\in G_n(\mathbb{A}_K).
\]

Associated with this \(\Theta\)-series is an Eisenstein series, which is
essentially the Mellin transform of \(\Theta\). To be precise, for a unitary
Hecke character $\eta:K^\times\backslash\mathbb{A}_K^\times\longrightarrow\mathbb{C}^\times,$
we set
\[
  E(g,s;\Phi,\eta)
  :=
  \lvert\det g\rvert_{\mathbb{A}_K}^{s}
  \int_{K^\times\backslash\mathbb{A}_K^\times}
  \Theta_\Phi'(a,g)\eta(a)
  \lvert a\rvert_{\mathbb{A}_K}^{ns}\,d^\times a,
\]
where $\Theta_\Phi'(a,g):=\Theta_\Phi(a,g)-\Phi(0).
$
In \cite{JS1981}, Jacquet and Shalika established the analytic properties of the
Eisenstein series.

\begin{thm}
The Eisenstein series \(E(g,s;\Phi,\eta)\) has a meromorphic continuation to
all of \(\mathbb{C}\). It is entire unless \(\eta\) is trivial on
\(\mathbb{I}_K^1\) of the form
\(\eta(a)=\lvert a\rvert^{in\delta}\), with \(\delta\in\mathbb{R}\), in
which case it has at most simple poles at \(s=-i\delta\) and
\(s=1-i\delta\). As a function of \(g\), it is smooth of moderate growth,
and as a function of \(s\), it is bounded in vertical strips (away from
possible poles), uniformly for \(g\) in compact sets. Moreover, it satisfies
the functional equation
\[
  E(g,s;\Phi,\eta)
  =
  E({}^{t}g^{-1},1-s;\widehat{\Phi},\eta^{-1}).
\]
\end{thm}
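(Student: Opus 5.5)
The theorem just stated is the Jacquet--Shalika theorem on the mirabolic Eisenstein series. I would prove it with the standard Tate--Riemann argument: Poisson summation on \(K^n\) applied to the theta series. After that, the remaining properties follow from the absolutely convergent parts.

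First I split the integral over \(K^\times\backslash\mathbb{A}_K^\times\) using \(\mathbb{A}_K^\times = \mathbb{I}_K^1\times\mathbb{R}_{>0}\). The part with \(|a|_{\mathbb{A}_K}\geq 1\) converges absolutely for every \(s\), since \(\Theta'_\Phi(a,g)\) decays rapidly as \(|a|\to\infty\). This decay is uniform for \(g\) in compact sets, because \(\Phi(\cdot\, g)\) ranges over a bounded set of Schwartz--Bruhat functions. So this part is entire in \(s\), smooth of moderate growth in \(g\), and bounded in vertical strips.

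For the part with \(|a|\leq 1\), I apply Poisson summation to \(\xi\mapsto\Phi(a\xi g)\). This gives
\[
\Theta_\Phi(a,g)=|a|_{\mathbb{A}_K}^{-n}|\det g|_{\mathbb{A}_K}^{-1}\,\Theta_{\widehat\Phi}(a^{-1},{}^tg^{-1}).
\]
I then substitute \(a\mapsto a^{-1}\). The part with \(|a|\leq 1\) becomes an integral over \(|a|\geq 1\) of \(\Theta'_{\widehat\Phi}(a,{}^tg^{-1})\eta^{-1}(a)|a|^{n(1-s)}\), times \(|\det g|^{s-1}\). This is again entire. In addition there are two boundary terms:
\[
\widehat\Phi(0)|\det g|^{s-1}\int_{|a|\leq1}\eta(a)|a|^{n(s-1)}\,d^\times a,\qquad -\Phi(0)|\det g|^{s}\int_{|a|\leq1}\eta(a)|a|^{ns}\,d^\times a .
\]
The \(\mathbb{I}_K^1\)-integral of \(\eta\) vanishes unless \(\eta\) is trivial on \(\mathbb{I}_K^1\), i.e. \(\eta=|\cdot|^{in\delta}\). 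In that case each boundary term is an explicit \(\mathbb{R}_{>0}\)-integral. These give simple poles \(\frac{c\,\widehat\Phi(0)}{n(s-1+i\delta)}\) and \(\frac{-c\,\Phi(0)}{n(s+i\delta)}\), with \(c=\mathrm{vol}(K^\times\backslash\mathbb{I}_K^1)\), each multiplied by a power of \(|\det g|\).

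The resulting expression is symmetric under \((g,s,\Phi,\eta)\mapsto({}^tg^{-1},1-s,\widehat\Phi,\eta^{-1})\). This uses \(\widehat{\widehat\Phi}(x)=\Phi(-x)\) together with the invariance of the sum over \(\xi\) under \(\xi\mapsto-\xi\). That symmetry is exactly the functional equation, and the meromorphic continuation follows at the same time.

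Boundedness in vertical strips away from the poles is immediate. The two entire integrals are bounded by their values at real \(s\). The polar terms are rational in \(s\) times \(|\det g|^{s}\) or \(|\det g|^{s-1}\), both uniformly bounded for \(g\) compact.

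The main obstacle is the moderate growth of \(E\) as a function of \(g\) on all of \(G_n(\mathbb{A}_K)\), since the above estimates are only uniform on compact sets. To handle it, I would use Iwasawa coordinates \(g=nak\) on a Siegel set. I would bound \(\sum_{\xi\neq0}|\Phi(a\xi g)|\) by majorizing \(\Phi\) by a product of Gaussians and Bruhat cutoffs, which gives polynomial bounds in the torus coordinates. Smoothness in \(g\) then follows by differentiating under the absolutely convergent integrals, since \(\Phi\) is smooth at the archimedean places.
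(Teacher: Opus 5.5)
The paper gives no proof of this statement; it simply quotes it from Jacquet--Shalika \cite{JS1981}. Your argument is the standard Tate-style proof from that source, and it is correct: you split at $|a|_{\mathbb{A}_K}=1$, apply Poisson summation on $K^n$, obtain the two explicit boundary terms that give the simple poles at $s=-i\delta$ and $s=1-i\delta$, and note the visible symmetry under $(g,s,\Phi,\eta)\mapsto({}^tg^{-1},1-s,\widehat\Phi,\eta^{-1})$. The parts left as sketches are the moderate-growth estimate on Siegel sets, where your indicated route is the usual one, and local constancy at the finite places, which is immediate from the invariance of $\Phi_f$ under an open compact subgroup.
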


\noindent\textbf{Non-archimedean local notation.}
Let \(v\) be a non-archimedean place of \(K\). Let \(\mathcal{O}_{K_v}\) and \(\mathcal{O}_{L_v}\) be the rings of integers of \(K_v\) and \(L_v\), respectively. We use the same representation-theoretic
notation as in the archimedean setting. Thus,
\(\Irr_{\mathrm{gen}}(G_n(K_v))\) denotes the set of isomorphism classes
of irreducible generic representations of \(G_n(K_v)\). For
\(\pi\in\Irr_{\mathrm{gen}}(G_n(K_v))\), we write
\(\mathcal{W}(\pi,\Psi'_{n,v})\), \({\pi}^\vee\), and
\(\omega_\pi\) for its Whittaker model, contragredient, and central
character, respectively.

The global results established in the remainder of this paper extend to arbitrary cuspidal
automorphic representations. Indeed, every cuspidal automorphic
representation $\Pi$ can be written as
$\Pi=\Pi^u\otimes|\det|^r,$
where $\Pi^u$ is a unitary cuspidal automorphic representation and
$r\in\mathbb{R}$.

\subsection{Global Eulerian Flicker Integrals}

Let $(\Pi, V_\Pi)$ be a unitary cuspidal automorphic representation of $G_n(\mathbb{A}_L)$ with central character $\omega_\Pi$. Then $\Pi$ is isomorphic to a restricted tensor product, $\Pi \cong \bigotimes'_v \Pi_v$, taken over the places $v$ of $K$. Here, each $\Pi_v$ belongs to $\mathrm{Irr}_{\mathrm{gen}}(G_n(L_v))$ and is unramified for all but finitely many places $v$. For $\Phi \in \mathcal{S}(\mathbb{A}_K^n)$ and $\varphi \in V_\Pi$, Flicker \cite{Fli1988} defined the global integral
\[
Z(s, \Phi, \varphi) = \int_{[G_n]} E(g, s; \Phi, \omega_\Pi|_{\mathbb{A}_K^\times}) \varphi(g) \, dg,
\]
If $\omega_\Pi|_{\mathbb{A}_K^\times}$ is trivial on $\mathbb{I}_K^1$, let $\delta$ be the real number such that $\omega_\Pi|_{\mathbb{A}_K^\times}(\cdot) = |\cdot|^{in\delta}$. For ease of reference, we collect certain properties of these integrals (\cite{Fli1988}, \cite{Kab2004}).

\begin{prop}\label{global-integral-fe1}
The integral $Z(s, \Phi, \varphi)$ is convergent whenever the Eisenstein series is holomorphic at $s$. It has a meromorphic continuation to the entire complex plane and satisfies the functional equation
\[
Z(s, \Phi, \varphi) = Z(1 - s, \widehat{\Phi}, \widetilde{\varphi}),
\]
where $\widetilde{\varphi}(g) = \varphi(w_n\,{}^tg^{-1})$.
\end{prop}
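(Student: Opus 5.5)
The plan is to follow the classical Rankin--Selberg unfolding argument for global Flicker integrals, which the paper cites from \cite{Fli1988} and \cite{Kab2004}. Everything is driven by the analytic properties of the Eisenstein series $E(g,s;\Phi,\eta)$ recorded in the preceding theorem, with $\eta=\omega_\Pi|_{\mathbb{A}_K^\times}$.

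\textbf{Convergence and meromorphic continuation.} Since $\varphi$ is a cusp form, it is rapidly decreasing on a Siegel domain for $[G_n]$. The Eisenstein series is of moderate growth in $g$, uniformly for $s$ in compact sets avoiding its poles. Hence the integral over $[G_n]=Z_n(\mathbb{A}_K)G_n(K)\backslash G_n(\mathbb{A}_K)$ converges absolutely and locally uniformly at every $s$ where $E$ is holomorphic. The integrand is invariant under $Z_n(\mathbb{A}_K)$ because the central character of $E$ in $g$ is $\eta^{-1}$, so that $E(zg)=\eta(z)^{-1}E(g)$, while $\varphi(zg)=\omega_\Pi(z)\varphi(g)$. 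Differentiation under the integral then shows that $Z(s,\Phi,\varphi)$ is holomorphic away from the at most simple poles at $s=-i\delta$ and $s=1-i\delta$. It is meromorphic on $\mathbb{C}$ because near those points $E$ is a Laurent series whose coefficients are again of moderate growth; equivalently, one multiplies by $(s+i\delta)(s-1+i\delta)$ to remove the poles.

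\textbf{Functional equation.} Substitute the Eisenstein functional equation $E(g,s;\Phi,\eta)=E({}^tg^{-1},1-s;\widehat\Phi,\eta^{-1})$ into the integral and change variables $g\mapsto w_n\,{}^tg^{-1}$. This map is an automorphism of $G_n(\mathbb{A}_K)$ preserving $G_n(K)$, $Z_n(\mathbb{A}_K)$ and Haar measure, so it induces a measure-preserving map of $[G_n]$. The new integrand is
\[
E(w_n g,1-s;\widehat\Phi,\eta^{-1})\,\varphi(w_n\,{}^tg^{-1})=E(g,1-s;\widehat\Phi,\eta^{-1})\,\widetilde\varphi(g),
\]
where we used left $G_n(K)$-invariance of $E$ and $w_n\in G_n(K)$. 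Here $\widetilde\varphi$ is a cusp form in $\widetilde\Pi$, which has central character $\omega_\Pi^{-1}$, so $\eta^{-1}$ is precisely the restriction of the central character of $\widetilde\Pi$ to $\mathbb{A}_K^\times$. The right-hand side is therefore $Z(1-s,\widehat\Phi,\widetilde\varphi)$, first on the region where both sides converge and then everywhere by meromorphic continuation.

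\textbf{Main obstacle.} The only delicate point is the uniform moderate-growth bound for $E$ in $g$ near a Siegel set together with the rapid decay of $\varphi$. This justifies absolute convergence and the exchange of integration with the continuation in $s$. I would cite \cite{JS1981} for the Eisenstein bounds and the standard estimates for cusp forms, rather than reprove them.
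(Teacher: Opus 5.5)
Your proposal is correct and is the standard argument from \cite{Fli1988}. The paper does not reprove this proposition; it simply quotes it from \cite{Fli1988} and \cite{Kab2004}. That argument has two parts: absolute convergence comes from the moderate growth of $E$ against the rapid decay of $\varphi$, and the functional equation comes from inserting the Eisenstein functional equation and substituting $g\mapsto w_n\,{}^tg^{-1}$. The one step you pass over quickly is that $\varphi$ is a cusp form on $G_n(\mathbb{A}_L)$, so you need its restriction to decay rapidly on Siegel sets of $Z(\mathbb{A}_K)G_n(K)\backslash G_n(\mathbb{A}_K)$. This holds because such Siegel sets lie inside Siegel sets of $G_n(\mathbb{A}_L)$, using $|x|_{\mathbb{A}_L}=|x|_{\mathbb{A}_K}^2$ for $x\in\mathbb{A}_K^\times$.
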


In \cite{Fli1988}, Flicker showed that the poles of $Z(s, \Phi, \varphi)$ are closely related to those of the Eisenstein series.

\begin{prop}\label{poles-global}
The integral $Z(s, \Phi, \varphi)$ is entire if $\omega_\Pi$ is nontrivial on $\mathbb{I}_K^1$. Otherwise, it has at most simple poles at $s = -i\delta$ and $s = 1 - i\delta$.
\end{prop}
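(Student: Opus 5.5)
The statement is Proposition~\ref{poles-global}: $Z(s,\Phi,\varphi)$ is entire when $\omega_\Pi$ is nontrivial on $\mathbb{I}_K^1$, and otherwise has at most simple poles at $s=-i\delta$ and $s=1-i\delta$. The plan is to transfer the pole structure of the mirabolic Eisenstein series to the integral, using the cuspidality of $\varphi$ to justify every interchange.

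First, restrict to the unitary cuspidal case and set $\eta=\omega_\Pi|_{\mathbb{A}_K^\times}$. Since $\Psi$ is trivial on $\mathbb{A}_K$, the central character of $\Pi$ restricted to $Z(\mathbb{A}_K)$ is $\eta$. The factor $\eta(a)$ in $E(g,s;\Phi,\eta)$ compensates for this, so the integrand $E(g,s;\Phi,\eta)\varphi(g)$ is left $Z(\mathbb{A}_K)G_n(K)$-invariant and the integral over $[G_n]$ makes sense.

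Next, use that a cusp form restricted to $G_n(\mathbb{A}_K)$ is rapidly decreasing on a Siegel domain of $[G_n]$. By the theorem of Jacquet--Shalika recalled above, $E(g,s;\Phi,\eta)$ is of moderate growth in $g$, locally uniformly in $s$ away from its poles. The product is therefore integrable, and $Z(s,\Phi,\varphi)$ is holomorphic at every $s$ where $E$ is holomorphic. The uniform moderate-growth bound in $g$, for $s$ ranging over compact sets avoiding the poles, lets us differentiate under the integral sign, or alternatively apply Morera's theorem, to obtain holomorphy. Combined with the description of the poles of $E$, this gives:
\begin{itemize}
\item if $\eta$ is nontrivial on $\mathbb{I}_K^1$, then $E$ is entire, so $Z$ is entire;
\item if $\eta=|\cdot|^{in\delta}$, then near $s_0\in\{-i\delta,1-i\delta\}$ we can write $E=(s-s_0)^{-1}R_{s_0}(g)+E_0(g,s)$ with $E_0$ holomorphic of uniformly moderate growth.
\end{itemize}

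For the residual case, integrate this decomposition against $\varphi$. This shows that $Z$ has at most a simple pole at $s_0$, with residue $\int_{[G_n]}R_{s_0}(g)\varphi(g)\,dg$. Meromorphic continuation and the functional equation $Z(s,\Phi,\varphi)=Z(1-s,\widehat\Phi,\widetilde\varphi)$ then follow directly from the corresponding properties of $E$, after the change of variables $g\mapsto {}^tg^{-1}$, which preserves $[G_n]$ and its measure.

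The main obstacle is the uniform control of $E_0$. We need a moderate-growth bound in $g$ for the holomorphic part of $E$ near a pole, uniform in $s$ on a small circle. My first approach is to write $E_0$ via the Cauchy integral of $(s-s_0)E$ over that circle. The circle lies in a region where the Jacquet--Shalika bounds hold uniformly for $g$ in Siegel sets, which gives the required estimate. Finally, the general (non-unitary) case reduces to this one by the twist $\Pi=\Pi^u\otimes|\det|^r$, which shifts $s$ by a real constant.
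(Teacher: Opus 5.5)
Your proposal is correct and follows essentially the route the paper relies on (it cites Flicker rather than giving a proof): the restriction of the cusp form to $G_n(\mathbb{A}_K)$ is rapidly decreasing, while $E(g,s;\Phi,\eta)$ has moderate growth locally uniformly in $s$, so $Z$ is holomorphic wherever $E$ is and has at most simple poles at $s=-i\delta$ and $s=1-i\delta$; your Cauchy-integral argument on a small circle supplies the needed uniformity near the poles. Two cosmetic points: the triviality of $\Psi$ on $\mathbb{A}_K$ plays no role in identifying the central character on $Z(\mathbb{A}_K)$, which is just the definition of $\eta$, and the non-unitary reduction is unnecessary because the proposition is stated for unitary $\Pi$.
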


We recall the factorization of the integral $Z(s, \Phi, \varphi)$ (as in Sections 2 and 3 of \cite{Fli1988}).

\begin{prop}\label{global-duo1}
If $\varphi \in V_\Pi$ is a cusp form, let
\[
W_\varphi(g) = \int_{N_n(L)\backslash N_n(\mathbb{A}_L)} \varphi(ng)\overline{\Psi}(n) \, dn
\]
be the associated Whittaker function. For $\Phi \in \mathcal{S}(\mathbb{A}_K^n)$, the integral
\[
I(s, W_\varphi, \Phi) = \int_{N_n(\mathbb{A}_K)\backslash G_n(\mathbb{A}_K)} W_\varphi(g)\Phi(e_n g)|\det(g)|_{\mathbb{A}_K}^s \, dg
\]
converges absolutely and uniformly on compact sets when $\operatorname{Re}(s)$ is sufficiently large. When this is the case, we have
\[
Z(s, \Phi, \varphi) = I(s, W_\varphi, \Phi).
\]
\end{prop}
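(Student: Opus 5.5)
We plan to prove the factorization by the standard unfolding argument. Throughout, $\Re(s)$ is taken large enough that the Eisenstein series is holomorphic and every integral below converges absolutely. We then insert the definition of $E(g,s;\Phi,\omega_\Pi|_{\mathbb{A}_K^\times})$ into $Z(s,\Phi,\varphi)$.

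\textbf{Step 1: unfold the Eisenstein series.} The nonzero vectors $\xi\in K^n\smallsetminus\{0\}$ form a single orbit $e_nG_n(K)$ with stabilizer $P_n(K)$. Combining the integral over $K^\times\backslash\mathbb{A}_K^\times$ with $Z(\mathbb{A}_K)G_n(K)\backslash G_n(\mathbb{A}_K)$, and using that $\omega_\Pi$ cancels the central character of $\varphi$ restricted to $\mathbb{A}_K^\times$, we obtain
\[
Z(s,\Phi,\varphi)=\int_{P_n(K)\backslash G_n(\mathbb{A}_K)}\varphi(g)\Phi(e_ng)|\det g|_{\mathbb{A}_K}^s\,dg .
\]
The interchange of sum and integral is justified by absolute convergence. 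Here we use the rapid decay of the cusp form $\varphi$ on a Siegel set together with the usual majorization of $\sum_{\xi\neq0}|\Phi(a\xi g)|$ for $\Re(s)\gg0$.

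\textbf{Step 2: expand along the mirabolic.} We restrict $\varphi$ to $P_n(\mathbb{A}_L)$ and use the Fourier expansion of cusp forms (Piatetski-Shapiro--Shalika):
\[
\varphi(g)=\sum_{\gamma\in N_{n}(L)\backslash P_n(L)}W_\varphi(\gamma g).
\]
The essential point is that the $N_n(L)$-cosets in $P_n(L)$ must be reorganized into $P_n(K)$-orbits. Since $\Psi$ is trivial on $K\backslash\mathbb{A}_K$, the character $\Psi_n$ is trivial on $N_n(\mathbb{A}_K)$. Following Flicker \cite{Fli1988}, we proceed by induction along the chain of mirabolic subgroups, alternating two moves: a Fourier expansion in the last row over $L^{k}$, and an unfolding of the $K$-rational part. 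At each stage the terms indexed by nontrivial characters of $(L/K)^{k}$ must be collapsed to a single $P_k(K)$-orbit, and the degenerate (non-generic) terms must be shown to vanish by cuspidality. Once this is done, we arrive at
\[
Z(s,\Phi,\varphi)=\int_{N_n(K)\backslash G_n(\mathbb{A}_K)}W'_\varphi(g)\Phi(e_ng)|\det g|^s\,dg,
\]
where $W'_\varphi$ is a partial Whittaker integral over the part of $N_n(L)\backslash N_n(\mathbb{A}_L)$ transversal to $N_n(\mathbb{A}_K)$.

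\textbf{Step 3: fold $N_n(K)\backslash N_n(\mathbb{A}_K)$.} We integrate over $N_n(K)\backslash N_n(\mathbb{A}_K)$. Since the character is trivial there, this completes $W'_\varphi$ to $W_\varphi$, yielding $I(s,W_\varphi,\Phi)$.

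The main obstacle is Step 2: the bookkeeping of $P_n(L)$ versus $P_n(K)$ orbits and the matching of characters. This works precisely because $\Psi|_{\mathbb{A}_K}$ is trivial, which makes $L/K$ play the role of the dual of $K$ in the expansion at each stage. Absolute convergence of $I(s,W_\varphi,\Phi)$ itself follows from the standard gauge bounds on $W_\varphi$ restricted to the torus together with the Iwasawa decomposition. Uniformity on compact sets then follows from dominated convergence.
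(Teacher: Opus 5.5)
\textbf{Gap in Step 2.} Your outline follows the route the paper relies on; the paper gives no argument beyond citing Flicker, \S\S2--3. That route is: unfold the Eisenstein series, then evaluate the resulting period over $P_n(K)\backslash G_n(\mathbb{A}_K)$ by Flicker's inductive Fourier expansion. Step~1 is fine. Step~2 carries all of the content, and it misstates the key mechanism.

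At each stage you expand along $U(L)\backslash U(\mathbb{A}_L)$, where $U$ is the unipotent radical of the current mirabolic (a column, not a row, for the $P_n$ used here). The terms are therefore indexed by $\xi\in L^k$, and you then integrate over $U(K)\backslash U(\mathbb{A}_K)$. If $\xi\notin K^k$, then $x\mapsto\Psi(\xi\cdot x)$ is a nontrivial character of $K^k\backslash\mathbb{A}_K^k$. Indeed, for $\lambda\in L\setminus K$ one has $\mathbb{A}_L=\mathbb{A}_K+\lambda\mathbb{A}_K$, so $\Psi(\lambda\mathbb{A}_K)=1$ together with $\Psi|_{\mathbb{A}_K}=1$ would force $\Psi\equiv 1$.

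Hence every term whose class in $(L/K)^k$ is nonzero integrates to zero, by orthogonality of characters, not by cuspidality. Cuspidality kills only $\xi=0$. The survivors $\xi\in K^k\setminus\{0\}$ form one $G_k(K)$-orbit with stabilizer $P_k(K)$, and that orbit is what gets unfolded.

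Your text says the opposite: that the terms indexed by nontrivial classes of $(L/K)^k$ are collapsed into one orbit, and the degenerate terms vanish by cuspidality. Carried out literally, this unfolds terms that contribute zero and discards the $K$-rational terms, which carry the whole answer. Already for $n=2$, the $P_2(K)$-orbits on $N_2(L)\backslash P_2(L)\cong L^\times$ are indexed by $L^\times/K^\times$, and only the trivial one survives integration over $N_2(K)\backslash N_2(\mathbb{A}_K)$.

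\textbf{The intermediate $W'_\varphi$ and Step~3 should go.} A Whittaker integral over ``the part of $N_n(L)\backslash N_n(\mathbb{A}_L)$ transversal to $N_n(\mathbb{A}_K)$'' is not well defined. The integrand $u\mapsto\varphi(ug)\overline{\Psi_n(u)}$ is not right $N_n(\mathbb{A}_K)$-invariant, so the result depends on the chosen transversal. For $n\ge 3$, $N_n(L)N_n(\mathbb{A}_K)$ is not even a subgroup.

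Done correctly, the induction integrates over $U(K)\backslash U(\mathbb{A}_K)$ at every stage. The quotients by $U(\mathbb{A}_K)$ that appear are legitimate because each partial Fourier coefficient transforms under the $\mathbb{A}_L$-points of the unipotent groups already integrated by a character that is trivial on their $\mathbb{A}_K$-points; this again uses $\Psi|_{\mathbb{A}_K}=1$. The induction then lands directly on
\[
\int_{P_n(K)\backslash G_n(\mathbb{A}_K)}\varphi(g)\Phi(e_ng)|\det g|_{\mathbb{A}_K}^s\,dg
=\int_{N_n(\mathbb{A}_K)\backslash G_n(\mathbb{A}_K)}W_\varphi(g)\Phi(e_ng)|\det g|_{\mathbb{A}_K}^s\,dg .
\]
Since $W_\varphi$ is already left $N_n(\mathbb{A}_K)$-invariant ($\Psi_n$ is trivial there), there is nothing left for Step~3 to complete.
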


For decomposable data, the global integrals factor as products of local
integrals. Let \(W_\varphi=\prod_v W_v\), where \(v\) runs over all
places of \(K\) and \(W_v\in\mathcal{W}(\Pi_v,\Psi_{n,v})\). For almost
all unramified places \(v\), we take \(W_v\) to be the normalized
spherical Whittaker function, that is, the unique
\(G_n(\mathcal{O}_{L_v})\)-invariant Whittaker function satisfying
\(W_v(1)=1\).

Similarly, let $\Phi = \prod_v \Phi_v$, where each $\Phi_v$ is a Schwartz function in $\mathcal{S}(K_v^n)$, and for almost all unramified places $v$, $\Phi_v$ is the characteristic function of $\mathcal{O}_{K_v}^n$. When $\operatorname{Re}(s)$ is sufficiently large,
\[
I(s, W_\varphi, \Phi) = \prod_v I(s, W_v, \Phi_v),
\]
where
\[
I(s, W_v, \Phi_v) = \int_{N_n(K_v)\backslash G_n(K_v)} W_v(g)\Phi_v(e_n g)|\det(g)|_{K_v}^s \, dg.
\]

When a place $v$ of $K$ splits in $L$, this local integral coincides with the Rankin-Selberg integral \cite{JPSS1983}. 

\subsection{Global Asai L-function}

Let \(S_\infty\) denote the set of archimedean places of \(K\), and let
\(S\) be a finite set of places of \(K\) containing \(S_\infty\), all
finite places ramified in \(L/K\), and all finite places \(v\) at which
\(\Pi_v\), \(\Psi_v\), or \(\Psi'_v\) is ramified.

For finite places $v$, the local factors $L(s,\Pi_v, \As)$ (denoting the Rankin--Selberg $L$-function if $v$ splits over $L$) and $\varepsilon(s,\Pi_v, \As,\Psi'_{v})$ are defined via Flicker integrals \cite{Fli1993,Kab2004,JPSS1983}. For archimedean places, they are defined via the local Langlands correspondence (Section~\ref{s2}). We formally define the global Asai $L$- and $\varepsilon$-factors as
\[
\begin{aligned}
L(s,\Pi,\As)
&=\prod_v L(s,\Pi_v,\As),\\
\varepsilon(s,\Pi,\As)
&=\prod_v\varepsilon(s,\Pi_v,\As,\Psi'_{v}).
\end{aligned}
\]
We recall the unramified computation of the local Flicker integrals. \cite{JPSS1983} addresses the split case, while \cite{Fli1988} covers the inert case. For a more recent exposition, see \cite{BP2021}.

\begin{prop}\label{unramified1}
Let \(v\notin S\). Let
\(W_v^\circ\in\mathcal W(\Pi_v,\Psi_{n,v})\)
be the normalized spherical Whittaker function, and let
\(\Phi_v^\circ\) be the characteristic function of
\(\mathcal O_{K_v}^n\). Then
\[
I(s,W_v^\circ,\Phi_v^\circ)=L(s,\Pi_v,\As).
\]
\end{prop}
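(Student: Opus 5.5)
The plan is to compute $I(s,W_v^\circ,\Phi_v^\circ)$ directly: reduce it to the diagonal torus with the Iwasawa decomposition, substitute the Shintani formula for $W_v^\circ$, and sum the resulting series of Schur polynomials by a classical symmetric-function identity. I treat the split and inert cases separately; both go through the same steps.

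\emph{Reduction to the torus.} Write $G_n(K_v)=N_n(K_v)A_n(K_v)G_n(\mathcal O_{K_v})$, where $A_n$ is the diagonal torus. Both $W_v^\circ$ and $\Phi_v^\circ(e_n\,\cdot)$ are right $G_n(\mathcal O_{K_v})$-invariant, since $G_n(\mathcal O_{K_v})\subset G_n(\mathcal O_{L_v})$ and $\mathcal O_{K_v}^n$ is stable under $G_n(\mathcal O_{K_v})$. With $G_n(\mathcal O_{K_v})$ given volume one, this yields
\[
I(s,W_v^\circ,\Phi_v^\circ)=\sum_{\lambda\in\mathbb Z^n}W_v^\circ(\varpi^\lambda)\,\Phi_v^\circ(e_n\varpi^\lambda)\,\delta_{B_n(K_v)}^{-1}(\varpi^\lambda)\,q_v^{-|\lambda|s},
\]
where $\varpi$ is a uniformizer of $K_v$ and $q_v$ is the residue cardinality of $K_v$. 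The factor $\Phi_v^\circ(e_n\varpi^\lambda)$ equals $1$ if $\lambda_n\ge 0$ and $0$ otherwise.

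\emph{Inert case.} Since $v\notin S$, $\varpi$ is also a uniformizer of $L_v$. Let $\alpha_1,\dots,\alpha_n$ be the Satake parameters of $\Pi_v$. Shintani's formula gives
\[
W_v^\circ(\varpi^\lambda)=\delta_{B_n(L_v)}^{1/2}(\varpi^\lambda)\,s_\lambda(\alpha_1,\dots,\alpha_n)
\]
for dominant $\lambda$, and $W_v^\circ(\varpi^\lambda)=0$ otherwise. Because $|\cdot|_{L_v}=|\cdot|_{K_v}^2$ on $K_v$, we have $\delta_{B_n(L_v)}^{1/2}=\delta_{B_n(K_v)}$ on $A_n(K_v)$, so the modulus factors cancel. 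With $X=q_v^{-s}$ the integral becomes
\[
\sum_{\lambda_1\ge\cdots\ge\lambda_n\ge0}s_\lambda(\alpha_1X,\dots,\alpha_nX).
\]
Littlewood's identity $\sum_\lambda s_\lambda(x)=\prod_i(1-x_i)^{-1}\prod_{i<j}(1-x_ix_j)^{-1}$ turns this into
\[
\prod_i(1-\alpha_iX)^{-1}\prod_{i<j}(1-\alpha_i\alpha_jX^2)^{-1}.
\]
Finally I compare with the Galois side. Let $\chi_i$ be the unramified characters of $L_v^\times$ with $\chi_i(\varpi)=\alpha_i$. Then $\As(\chi_i)$ is the unramified character $\chi_i|_{K_v^\times}$, which contributes $(1-\alpha_iX)^{-1}$. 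The representation $\operatorname{Ind}(\chi_i\otimes\chi_j^c)$ has Frobenius eigenvalues $\pm\sqrt{\alpha_i\alpha_j}$, because $\mathrm{Frob}_{K_v}^2=\mathrm{Frob}_{L_v}$; it therefore contributes $(1-\alpha_i\alpha_jX^2)^{-1}$. The two expressions agree, and for $v$ inert the factor $L(s,\Pi_v,\As)$ defined by Flicker integrals coincides with this unramified Galois factor.

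\emph{Split case.} Here $\Pi_v=\Pi_{v,1}\otimes\Pi_{v,2}$ and $W_v^\circ=W_1^\circ\otimes W_2^\circ$. The integral is the Jacquet--Piatetski-Shapiro--Shalika Rankin--Selberg integral for $\Pi_{v,1}\times\Pi_{v,2}$, twisted by $\Psi_v(x,y)=\Psi_v'(\beta_vx)\Psi_v'(-\beta_vy)$; since $v\notin S$, $\beta_v$ is a unit. The same torus reduction applies, and the two Shintani formulas contribute $\delta^{1/2}\cdot\delta^{1/2}=\delta$, which again cancels. This leaves $\sum_\lambda s_\lambda(\alpha X)s_\lambda(\beta)$, and the Cauchy identity gives $\prod_{i,j}(1-\alpha_i\beta_jX)^{-1}=L(s,\Pi_{v,1}\times\Pi_{v,2})$.

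\emph{Main obstacle.} The delicate points are bookkeeping rather than conceptual. In the inert case the Shintani formula must be normalized over $L_v$ while the torus is summed over $K_v$-points, so the powers of $q$ have to be checked carefully for the cancellation $\delta_{B_n(L_v)}^{1/2}=\delta_{B_n(K_v)}$. The Haar measure on $N_n\backslash G_n$ must be normalized so that the Iwasawa measure carries no stray constant. In the split case one must also verify that the conductor of the character $\Psi_v$ is trivial, so that Shintani's formula applies without shifting $\lambda$.
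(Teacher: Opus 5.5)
Your proposal is correct and follows the approach the paper relies on. The paper gives no proof of its own and cites \cite{JPSS1983} for the split case and \cite{Fli1988} for the inert case, where exactly this computation is carried out: Iwasawa reduction to the torus, the Shintani formula (with the cancellation of $\delta_{B_n(L_v)}^{1/2}$ against $\delta_{B_n(K_v)}$ in the inert case), and summation by Littlewood's and Cauchy's identities. The one input you assume without proof is that, at a finite place, the factor $L(s,\Pi_v,\As)$ defined as the gcd of the Flicker (respectively Rankin--Selberg) integrals equals the Satake product you compute. This is a genuine theorem (due to Jacquet--Piatetski-Shapiro--Shalika in the split case, and following from Matringe's computation of Asai factors of generic representations in the inert case) and should be cited, but the paper uses it implicitly in the same way.
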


Therefore, the partial Asai $L$-function
\[L^{S}(s,\Pi,\As)=\prod_{v\notin S} L(s,\Pi_v,\As)=\prod_{v\notin S} I(s, W_v, \Phi_v)\]
is absolutely convergent for $\operatorname{Re}(s)\gg0$. Thus, the same is true for $L(s,\Pi,\As)$.

Also from local calculations in \cite{Fli1993,Kab2004,JPSS1983, BP2021} and local functional equation, $$\varepsilon(s,\Pi_v,\As,\Psi'_{v})=1\>\> \text{for}\>\>v\notin S.$$
Therefore, \(\epsilon(s,\Pi,\As)\) is a finite product. We now express the global Asai \(L\)-function as a finite sum of global
Flicker integrals.

\begin{thm}\label{sum-global}
There exist an integer $r \geq 1$ and finite collections $\{\varphi_i\}_{i=1}^r \subset V_\Pi$ and $\{\Phi_i\}_{i=1}^r \subset \mathcal{S}(\mathbb{A}_K^n)$ such that
\[
  L(s, \Pi, \As) = \sum_{i=1}^r Z(s, \Phi_i, \varphi_i).
\]
\end{thm}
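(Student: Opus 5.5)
The plan is to factor the global $L$-function into local pieces and realize each local piece as a finite sum of local Flicker integrals. Write
\[
L(s,\Pi,\As)=\prod_{v\in S}L(s,\Pi_v,\As)\cdot L^S(s,\Pi,\As).
\]
For $v\notin S$, Proposition~\ref{unramified1} gives $L(s,\Pi_v,\As)=I(s,W_v^\circ,\Phi_v^\circ)$. For each $v\in S$ we find finitely many $W_{v,j}\in\mathcal W(\Pi_v,\Psi_{n,v})$ and $\Phi_{v,j}\in\mathcal S(K_v^n)$ with
\[
L(s,\Pi_v,\As)=\sum_j I(s,W_{v,j},\Phi_{v,j}),
\]
distinguishing three kinds of place:
\begin{itemize}[label={}]
\item \emph{Archimedean inert places} ($L_v=\mathbb C$, $K_v=\mathbb R$). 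This is Theorem~\ref{main1}, after adjusting for the additive characters. The local character $\Psi_v$ is trivial on $\mathbb R$, so it is of the type allowed in Section~\ref{s2}. If $\Psi'_v$ differs from the fixed $\psi_{\mathbb R}$, the change only alters the Flicker integral by conjugating $W$ with a diagonal element. That multiplies by a factor $cA^s$, which is absorbed by the invariance of $\mathcal I_{\As}$ noted after Theorem~\ref{fe1}.
\item \emph{Archimedean split places} ($L_v=K_v\times K_v$). Here the local integral is the Rankin--Selberg integral for $\Pi_{v,1}\times\Pi_{v,2}$, and the statement is Jacquet's theorem \cite[Theorem~2.6]{Jac2009}.
\item \emph{Finite places in $S$.} Here $L(s,\Pi_v,\As)$ is by definition the gcd of the local Flicker (or Rankin--Selberg) integrals. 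The finite-sum property then holds because the span of these integrals is a $\mathbb C[q^{\pm s}]$-fractional ideal generated by the $L$-factor, which contains $1$ \cite{Mat2011,JPSS1983}.
\end{itemize}

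Multiplying the expansions over $v\in S$ and distributing gives a finite sum over tuples $\mathbf j=(j_v)_{v\in S}$. For each tuple set
\[
W_{\mathbf j}=\bigotimes_{v\in S}W_{v,j_v}\otimes\bigotimes_{v\notin S}W_v^\circ,\qquad \Phi_{\mathbf j}=\bigotimes_{v\in S}\Phi_{v,j_v}\otimes\bigotimes_{v\notin S}\Phi_v^\circ.
\]
Since $\Pi$ is cuspidal and generic, each factorizable Whittaker function $W_{\mathbf j}$ equals $W_{\varphi_{\mathbf j}}$ for some $\varphi_{\mathbf j}\in V_\Pi$, by the uniqueness of Whittaker models and the isomorphism $V_\Pi\to\mathcal W(\Pi,\Psi_n)$. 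For $\operatorname{Re}(s)\gg0$, the factorization preceding Proposition~\ref{unramified1} together with Proposition~\ref{global-duo1} gives
\[
Z(s,\Phi_{\mathbf j},\varphi_{\mathbf j})=\prod_v I(s,W_{\mathbf j,v},\Phi_{\mathbf j,v}).
\]
Summing over $\mathbf j$ yields $L(s,\Pi,\As)=\sum_{\mathbf j}Z(s,\Phi_{\mathbf j},\varphi_{\mathbf j})$ in a right half-plane. The identity extends to all $s$ by meromorphic continuation (Proposition~\ref{global-integral-fe1}), and this continuation in turn defines $L(s,\Pi,\As)$ globally.

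I expect the main obstacle to be the normalization at the places in $S$: matching the local additive characters and Haar measures with those used in Theorem~\ref{main1} and in the non-archimedean results. I would handle each mismatch by showing it changes the local integrals only by a factor of the form $cA^s$ (respectively $c\,q^{ks}$), and then absorbing that factor into the choice of $W_{v,j}$ or $\Phi_{v,j}$ through right translation by a central or diagonal element or through dilation of $\Phi_{v,j}$. A secondary point is the non-archimedean split places, where the Rankin--Selberg gcd is attained by the Flicker-type integral $\int W_1W_2\Phi$; this is exactly the JPSS statement.
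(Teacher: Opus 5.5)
Your proposal is correct and follows the paper's proof. Both arguments establish local finite-sum identities at the places of $S$: Theorem~\ref{main1} at inert archimedean places, Jacquet's Rankin--Selberg result at split archimedean places, and \cite{Mat2011} and \cite{JPSS1983} at finite places. Both then take the unramified data of Proposition~\ref{unramified1} outside $S$, expand the product over $S$, and conclude with Proposition~\ref{global-duo1} and the Euler factorization.

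Two small remarks. At split archimedean places the relevant statement is the $n\times n$ case with a Schwartz function, which the paper cites as \cite[Theorem~2.7]{Jac2009} rather than Theorem~2.6. Also, no adjustment of additive characters is needed at inert archimedean places: the Flicker integral does not involve $\Psi'_v$, and Theorem~\ref{main1} holds for every nontrivial $\psi_{\mathbb C}$ trivial on $\mathbb R$.
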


\begin{proof}
For every finite place $v \in S$, \cite[Theorem~5.3]{Mat2011} and \cite{JPSS1983} ensure the existence of finite collections $\{W_{v,i}\}$ and $\{\Phi_{v,i}\}$ such that
\[
  L(s, \Pi_v, \As) = \sum_i I(s, W_{v,i}, \Phi_{v,i}).
\]
By Theorem~\ref{main1} and \cite[Theorem~2.7]{Jac2009}, this holds for every $v \in S_\infty$ as well. At every \(v\notin S\), choose the unramified data from
Proposition~\ref{unramified1}. Multiplying the local finite-sum identities over
\(v\in S\) and expanding the resulting finite product, we obtain
finitely many decomposable global data. Proposition~\ref{global-duo1} then gives
the desired identity.
\end{proof}

We now deduce the following
analytic properties of the global Asai \(L\)-function.

\begin{corollarySubsec}
\label{global-asai-properties}
The global Asai \(L\)-function \(L(s,\Pi,\As)\) has the following
analytic properties:
\begin{enumerate}
    \item It admits meromorphic continuation to \(\mathbb{C}\).

    \item Its meromorphic continuation is bounded at infinity in
    vertical strips.

    \item It satisfies the functional equation
    \[
    L(s,\Pi,\As)
    =
    \varepsilon(s,\Pi,\As)
    L(1-s,\Pi^\vee,\As).
    \]

    \item If
    \(\left.\omega_\Pi\right|_{\mathbb{A}_K^\times}\)
    is nontrivial on \(\mathbb{I}_K^1\), then
    \(L(s,\Pi,\As)\) is entire. Otherwise, it has at most simple poles
    at \(s=-i\delta\) and \(s=1-i\delta\).
\end{enumerate}
\end{corollarySubsec}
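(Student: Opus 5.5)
The plan is to deduce all four properties from the finite-sum identity of Theorem~\ref{sum-global},
\[
L(s,\Pi,\As)=\sum_{i=1}^r Z(s,\Phi_i,\varphi_i),
\]
combined with the analytic theory of the global Flicker integrals (Propositions~\ref{global-integral-fe1} and~\ref{poles-global}) and the properties of the Eisenstein series.

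\textbf{Meromorphic continuation and poles.} By Proposition~\ref{global-integral-fe1}, each $Z(s,\Phi_i,\varphi_i)$ continues meromorphically to $\mathbb{C}$. Hence so does the right-hand side, which proves (1). Proposition~\ref{poles-global} says each summand is entire when $\omega_\Pi|_{\mathbb{A}_K^\times}$ is nontrivial on $\mathbb{I}_K^1$. Otherwise each summand has at most simple poles at $s=-i\delta$ and $s=1-i\delta$. A finite sum inherits these properties, which gives (4).

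\textbf{Boundedness in vertical strips.} Write $Z(s,\Phi,\varphi)=\int_{[G_n]}E(g,s;\Phi,\eta)\varphi(g)\,dg$. The cusp form $\varphi$ is rapidly decreasing on a Siegel domain. The Eisenstein series has moderate growth in $g$ and is bounded in vertical strips, uniformly for $g$ in compact sets.

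The main obstacle is that this uniformity is only on compact sets in $g$. I would therefore need a moderate-growth bound of the form
\[
|E(g,s)|\le C\,\|g\|^N \quad \text{uniformly for } s \text{ in a strip away from the poles}.
\]
Such a bound follows from the standard Jacquet--Shalika analysis via the Poisson summation splitting of the $\Theta$-integral into $|a|\ge1$ and $|a|\le1$ pieces. Granting it, dominated convergence against the rapid decay of $\varphi$ gives boundedness of each $Z$ in the strip away from the poles. Near the finitely many possible poles, I would multiply by $s(s-1)$ (suitably shifted by $i\delta$) and argue in the same way. This proves (2).

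\textbf{Functional equation.} Apply Proposition~\ref{global-integral-fe1}:
\[
L(s,\Pi,\As)=\sum_i Z(1-s,\widehat\Phi_i,\widetilde\varphi_i).
\]
Then factor each term by Proposition~\ref{global-duo1} for $\operatorname{Re}(1-s)\gg0$, i.e.\ $\operatorname{Re}(s)\ll0$, as an Euler product of local integrals $I(1-s,\widetilde W_{v},\widehat\Phi_{v})$.

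For $v\notin S$, the data stay unramified, so the local factor is $L(1-s,\Pi_v^\vee,\As)$ by Proposition~\ref{unramified1}, with $\varepsilon_v=1$.

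For $v\in S$, I would use the local functional equations: Theorem~\ref{fe1} at archimedean places, and Matringe, Kable or JPSS at finite places. These give
\[
I(1-s,\widetilde W_v,\widehat\Phi_v)=\varepsilon(s,\Pi_v,\As,\Psi_v')\,\frac{L(1-s,\Pi_v^\vee,\As)}{L(s,\Pi_v,\As)}\,I(s,W_v,\Phi_v),
\]
up to the constants $\omega(\tau_v)^{n-1}|\tau_v|^{\cdots}\lambda^{-n(n-1)/2}$. The global product of these constants is $1$. This uses $\tau_v$ coming from the global $\Psi$ (product formula), reciprocity for the Hecke character, and $\prod_v\lambda_{L_v/K_v}=1$.

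Summing over $i$ and using the finite-sum identity again at each $v\in S$ yields
\[
L(1-s,\Pi^\vee,\As)\,\varepsilon(s,\Pi,\As)^{-1}\,\frac{L(s,\Pi,\As)}{L(s,\Pi,\As)}\cdot(\text{local sums}).
\]
Alternatively, and more cleanly, I would run the identity of Theorem~\ref{sum-global} for $\Pi^\vee$ with the dual data. Either way one obtains (3) by meromorphic continuation. Tracking these global constants, together with the step in (2), is where I expect care to be needed.
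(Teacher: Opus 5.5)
Your proposal is correct. For (1), (2) and (4) you argue as the paper does, from the finite-sum identity of Theorem~\ref{sum-global} together with Propositions~\ref{global-integral-fe1} and~\ref{poles-global}. In (2) you are more careful than the paper, which simply cites the vertical-strip estimate for the Eisenstein series and the rapid decay of cusp forms, even though that estimate is only stated uniformly for $g$ in compact sets. Your uniform moderate-growth bound, obtained from the Poisson-summation splitting of the $\Theta$-integral, is the right way to make that step rigorous.

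For (3) your route genuinely differs from the paper's. The paper fixes one decomposable datum whose normalized local integrals $\Lambda^{\Pi_v}_{s,\Phi_v}(W_v)$, $v\in S$, are not identically zero. It writes $\bigl(\prod_{v\in S}\Lambda^{\Pi_v}_{s,\Phi_v}(W_v)\bigr)L(s,\Pi,\As)=Z(s,\Phi,\varphi)=Z(1-s,\widehat\Phi,\widetilde\varphi)$, applies the normalized local functional equations on the dual side, and cancels the nonzero meromorphic factor $\prod_{v\in S}\Lambda^{\Pi_v}_{s,\Phi_v}(W_v)$.

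You instead apply the global functional equation to the entire finite sum. The global data of Theorem~\ref{sum-global} arise by expanding $\prod_{v\in S}\sum_{i_v}I(s,W_{v,i_v},\Phi_{v,i_v})$. Hence, for $\operatorname{Re}(s)\ll0$, the dual side factors as $\prod_{v\in S}\sum_{i_v}I(1-s,\widetilde W_{v,i_v},\widehat\Phi_{v,i_v})\cdot L^S(1-s,\Pi^\vee,\As)$. Since $\sum_{i_v}I(s,W_{v,i_v},\Phi_{v,i_v})=L(s,\Pi_v,\As)$, linearity of the local functional equation makes each local sum equal to $c_v(s)\,\varepsilon(s,\Pi_v,\As,\Psi'_v)\,L(1-s,\Pi_v^\vee,\As)$.

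Your version needs no division and uses the main theorem fully. The paper's version, in exchange, only needs one datum with non-vanishing normalized local integrals, which is a much weaker input than the finite-sum identity.

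Some cleanup is needed in your write-up of (3):
\begin{enumerate}
\item Your intermediate display with $\varepsilon(s,\Pi,\As)^{-1}$ and $L(s,\Pi,\As)/L(s,\Pi,\As)$ is garbled; replace it with the bookkeeping above.
\item The alternative of rerunning Theorem~\ref{sum-global} for $\Pi^\vee$ is unnecessary.
\item At archimedean places of $K$ that split in $L$, the relevant local functional equation is the Rankin--Selberg one, not Theorem~\ref{fe1}.
\end{enumerate}

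Both routes finish with the same identity $\prod_v c_v(s)=1$, which the paper takes from Beuzart-Plessis. Your list of ingredients for it is the right one: the global trace-zero $\tau\in L^\times$, automorphy of $\omega_\Pi$, the product formula, and $\prod_v\lambda_{L_v/K_v}(\Psi'_v)=1$.
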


\begin{proof}
By Theorem~\ref{sum-global}, \(L(s,\Pi,\As)\) is a finite sum of
global Flicker integrals. Its meromorphic continuation therefore
follows from that of \(Z(s,\Phi,\varphi)\). The vertical-strip estimate
for the Eisenstein series, together with the rapid decay of cusp forms,
shows that every global Flicker integral is bounded at infinity in
vertical strips away from its possible poles. The same is consequently
true of \(L(s,\Pi,\As)\).

If
\(\left.\omega_\Pi\right|_{\mathbb{A}_K^\times}\)
is nontrivial on \(\mathbb{I}_K^1\), then every summand in
Theorem~\ref{sum-global} is entire by
Proposition~\ref{poles-global}, and hence \(L(s,\Pi,\As)\) is entire.
Otherwise, each summand has at most simple poles at
\(s=-i\delta\) and \(s=1-i\delta\). The same pole bound therefore holds
for their finite sum. This proves assertions~(1), (2), and~(4).

It remains to prove the functional equation. For every place \(v\), we
use the notation
\[
\Lambda_{s,\Phi_v}^{\Pi_v}(W_v)
=
\frac{I(s,W_v,\Phi_v)}
     {L(s,\Pi_v,\As)}.
\]
For each \(v\in S\), choose
$W_v\in\mathcal{W}(\Pi_v,\Psi_{n,v})$
 and 
$\Phi_v\in\mathcal{S}(K_v^n)$
such that
\(\Lambda_{s,\Phi_v}^{\Pi_v}(W_v)\) is not identically zero. Such data
exist by the local results used in
Proposition~\ref{sum-global}. At every \(v\notin S\), take the
normalized unramified data.

Choose decomposable global data \(\Phi=\prod_v\Phi_v\) and
\(\varphi\in V_\Pi\) such that
\(W_\varphi=\prod_vW_v\). The global functional equation (Proposition \ref{global-integral-fe1}) and the
Euler factorizations give, initially in a right half-plane and hence
meromorphically on \(\mathbb{C}\),
\[
\begin{aligned}
\left(
\prod_{v\in S}
\Lambda_{s,\Phi_v}^{\Pi_v}(W_v)
\right)
L(s,\Pi,\As)
&=
Z(s,\Phi,\varphi)\\
&=
Z(1-s,\widehat{\Phi},\widetilde{\varphi})\\
&=
\left(
\prod_{v\in S}
\Lambda_{1-s,\widehat{\Phi}_v}^{\Pi_v^\vee}
(\widetilde W_v)
\right)
L(1-s,\Pi^\vee,\As).
\end{aligned}
\]

Put \(N=n(n-1)/2\) and, for every place \(v\), set
\[
c_v(s)
=
\omega_{\Pi_v}(\tau_v)^{n-1}
|\tau_v|_{L_v}^{N(s-\frac12)}
\lambda_{L_v/K_v}(\Psi'_v)^{-N}.
\]
The normalized local functional equation gives
\[
\Lambda_{1-s,\widehat{\Phi}_v}^{\Pi_v^\vee}
(\widetilde W_v)
=
c_v(s)\,
\varepsilon(s,\Pi_v,\As,\Psi'_v)\,
\Lambda_{s,\Phi_v}^{\Pi_v}(W_v).
\]
Since
$\prod\limits_{v\in S}
\Lambda_{s,\Phi_v}^{\Pi_v}(W_v)
$
is not identically zero, we may cancel it from the preceding
meromorphic identity. We obtain
\[
L(s,\Pi,\As)
=
\left(
\prod_{v\in S}
c_v(s)\,
\varepsilon(s,\Pi_v,\As,\Psi'_v)
\right)
L(1-s,\Pi^\vee,\As).
\]

For \(v\notin S\), the normalized local integrals on both sides of the
local functional equation are equal to \(1\). Hence
\[
c_v(s)\,
\varepsilon(s,\Pi_v,\As,\Psi'_v)
=
1.
\]
It follows that the product over \(v\in S\) may be replaced by the
product over all places. Moreover, the global product formula
\cite[Proof of Theorem~3.9.1, p.~40]{BP2021} gives
\[
\prod_v c_v(s)=1.
\]
Consequently,
\[
L(s,\Pi,\As)
=
\left(
\prod_v
\varepsilon(s,\Pi_v,\As,\Psi'_v)
\right)
L(1-s,\Pi^\vee,\As)
=
\varepsilon(s,\Pi,\As)
L(1-s,\Pi^\vee,\As),
\]
which is the desired functional equation.
\end{proof}

\subsection{Global Eulerian Jacquet--Shalika Integrals}

Let $m \geq 2$ be an integer, which we write as $m = 2n$ or $m = 2n+1$ depending on its parity. Let \((\Pi,V_\Pi)\) be a unitary cuspidal automorphic
representation of \(G_m(\mathbb{A}_K)\), with central character \(\omega_\Pi\). Then $\Pi\cong\bigotimes_v'\Pi_v,$
where $
  \Pi_v\in\operatorname{Irr}_{\mathrm{gen}}(G_m(K_v))
$ and \(\Pi_v\) is unramified for all but finitely many places \(v\). For \(\varphi\in V_\Pi\), let
\[
  W_\varphi(g)
  =
  \int_{N_m(K)\backslash N_m(\mathbb{A}_K)}
  \varphi(ug)\overline{\Psi_m'(u)}\,du
\]
be the associated Whittaker function. If $\omega_\Pi|_{\mathbb{A}_K^\times}$ is trivial on $\mathbb{I}_K^1$, let $\delta$ be the real number such that $\omega_\Pi|_{\mathbb{A}_K^\times}(\cdot) = |\cdot|^{in\delta}$.

\medskip
\noindent\textbf{The even case.}
Suppose that \(m=2n\). For \(\Phi\in\mathcal{S}(\mathbb{A}_K^n)\) and
\(\varphi\in V_\Pi\), Jacquet and Shalika defined
\cite{JS1990}
\[
\begin{split}
  I_{\mathrm{JS}}^{\mathrm{even}}(s,\Phi,\varphi)
  :=
  \int_{[G_n]}
  &E(g,s;\Phi,\omega_\Pi)                                    \\
  &\times
  \int_{M_n(K)\backslash M_n(\mathbb{A}_K)}
  \varphi\left(
    \begin{pmatrix}
      I_n & X\\
      0   & I_n
    \end{pmatrix}
    \begin{pmatrix}
      g & 0\\
      0 & g
    \end{pmatrix}
  \right)
  \Psi'\bigl(\operatorname{tr}X\bigr)\,dX\,dg.
\end{split}
\]

\begin{prop}\label{prop:global-js-even-analytic}
The integral
\(I_{\mathrm{JS}}^{\mathrm{even}}(s,\Phi,\varphi)\)
converges absolutely and locally uniformly at every point at which
\(E(g,s;\Phi,\omega_\Pi)\) is holomorphic. It admits a meromorphic
continuation to \(\mathbb{C}\).
\end{prop}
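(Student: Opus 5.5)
The plan is to follow the standard Rankin--Selberg unfolding strategy. Convergence is controlled by the cuspidal rapid decay of the inner integral, and meromorphy is inherited from the Eisenstein series.

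\emph{Step 1: the inner integral.} Set
\[
\varphi^{\flat}(g)=\int_{M_n(K)\backslash M_n(\mathbb{A}_K)}\varphi\left(\begin{pmatrix}I_n&X\\0&I_n\end{pmatrix}\begin{pmatrix}g&0\\0&g\end{pmatrix}\right)\Psi'(\operatorname{tr}X)\,dX .
\]
The domain $M_n(K)\backslash M_n(\mathbb{A}_K)$ is compact, so $\varphi^\flat$ is a smooth function on $G_n(\mathbb{A}_K)$. It is left $G_n(K)$-invariant, since conjugating $X$ by $\gamma\in G_n(K)$ preserves both $M_n(K)$ and the trace. It transforms under the center $Z_n(\mathbb{A}_K)$ by $\omega_\Pi$, because $\operatorname{diag}(z,z)$ is central in $G_{2n}$. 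Hence the integrand $E(g,s;\Phi,\omega_\Pi)\varphi^\flat(g)$ is $Z_n(\mathbb{A}_K)G_n(K)$-invariant. Note that $E$ transforms by $\omega_\Pi^{-1}$ under the center; this matches the sign convention for $\eta$ in the Mellin transform.

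\emph{Step 2: rapid decay.} I will show that $\varphi^\flat$ is rapidly decreasing on a Siegel domain $\mathfrak{S}$ for $[G_n]$. The map $g\mapsto\operatorname{diag}(g,g)$ sends a Siegel domain of $G_n$ into a finite union of Siegel sets of $G_{2n}$, modulo the center. On that image $\varphi$ is rapidly decreasing, being a cusp form of $G_{2n}(\mathbb{A}_K)$, and the bound is uniform in the compact variable $X$. I expect this to be the main obstacle. The difficulty is that the diagonally embedded torus of $G_n$ does not move in all directions of the torus of $G_{2n}$. What must be checked is that every simple root of $G_{2n}$, evaluated on $\operatorname{diag}(a,a)$ with $a$ in the $G_n$-Siegel set, is either a simple root of $G_n$ or equal to $1$, so that it is bounded below. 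I would argue this via the standard statement that a cusp form is bounded by any negative power of the height on $\mathfrak{S}_{2n}$, restricted along the embedding. Growth of $\varphi^\flat$ toward $Z_n(\mathbb{A}_K)$ is irrelevant because we quotient by the center.

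\emph{Step 3: convergence and continuation.} By the Jacquet--Shalika theorem on Eisenstein series recalled above, at any $s$ where $E$ is holomorphic, $E(g,s;\Phi,\omega_\Pi)$ has moderate growth on $\mathfrak{S}$, uniformly for $s$ in a compact neighbourhood. Combined with Step~2, the integrand is dominated by an integrable function on $\mathfrak{S}$, which gives absolute and locally uniform convergence. Dominated convergence together with Morera's theorem then shows that $I^{\mathrm{even}}_{\mathrm{JS}}$ is holomorphic wherever $E$ is.

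Near a possible pole $s_0\in\{-i\delta,1-i\delta\}$, the function $(s-s_0)E(g,s;\Phi,\omega_\Pi)$ is holomorphic and still of uniformly moderate growth. This follows because the pole comes from the constant term $\Phi(0)$, respectively $\widehat\Phi(0)$, times an explicit power of $|\det g|$, via the Poisson summation splitting of the Mellin integral. Applying the same argument to $(s-s_0)E$ shows that $I^{\mathrm{even}}_{\mathrm{JS}}$ has at most a simple pole at $s_0$. This yields meromorphic continuation to $\mathbb{C}$.
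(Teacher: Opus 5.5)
\paragraph{Where the proposal breaks.}
The paper gives no proof of this statement; it quotes it from Jacquet--Shalika. Your overall architecture is the standard one, and Steps~1 and~3 are fine. Step~2, however, carries all the analytic content, and it does not work as argued.

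You evaluate $\varphi$ at $u(X)\operatorname{diag}(g,g)$, where $u(X)=\begin{pmatrix}I_n&X\\0&I_n\end{pmatrix}$ sits on the \emph{left}. Rapid decay of a cusp form on a Siegel set $\mathfrak{S}_{2n}$ gives no information on $C\cdot\mathfrak{S}_{2n}$ for a compact $C$, because $\varphi$ is only left $G_{2n}(K)$-invariant. For instance, if $c=\begin{pmatrix}1&0\\x&1\end{pmatrix}$ with $x\in\mathbb{R}$ badly approximable, then $c\,\operatorname{diag}(y^{1/2},y^{-1/2})$ stays in a compact subset of $\mathrm{SL}_2(\mathbb{Z})\backslash\mathrm{SL}_2(\mathbb{R})$ as $y\to\infty$.

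The same mechanism is present in your setting. For the standard ordering, the root $t_n/t_{n+1}$ of $\operatorname{diag}(a,a)$ equals $a_n/a_1$, which can be arbitrarily small. To reach a Siegel set you must therefore conjugate by $\sigma_{2n}$, which brings $\operatorname{diag}(a,a)$ to $\operatorname{diag}(a_1,a_1,\ldots,a_n,a_n)$. After this conjugation, the entries $X_{ij}$ with $i>j$ sit below the diagonal, at position $(2i-1,2j)$. Moving them past the torus multiplies them by $a_j/a_i$ with $j<i$, and these ratios are unbounded on the Siegel set. So $u(X)\operatorname{diag}(g,g)$ does not lie in any fixed Siegel set, and the phrase ``uniform in the compact variable $X$'' is unjustified. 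The root check you single out as the main obstacle is not the real difficulty.

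\paragraph{How to repair it.}
The conclusion of Step~2 is true, but it needs an extra idea that exploits the shape of $u(X)$.

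One route uses reduction theory. For $h\in G_{2n}(\mathbb{A}_K)^1$ one has $|\varphi(h)|\ll_N\lambda_1(h)^N$, where $\lambda_1(h)=\min_{0\neq\xi\in K^{2n}}\|\xi h\|$; this holds because on a Siegel set $\lambda_1$ is comparable to the smallest torus entry. Since $(0,\xi)\,u(X)=(0,\xi)$, take $g=nak$ in a Siegel set of $G_n$ with $|\det g|_{\mathbb{A}_K}=1$. You may assume this normalization, since the integrand is $Z_n(\mathbb{A}_K)$-invariant. Then
\[
\lambda_1\bigl(u(X)\operatorname{diag}(g,g)\bigr)\le\bigl\|(0,e_ng)\bigr\|=|a_n|_{\mathbb{A}_K},
\]
and this bound is uniform in \emph{all} $X\in M_n(\mathbb{A}_K)$. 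Moreover, $|a_n|_{\mathbb{A}_K}$ decays polynomially in $\|a\|$ on the Siegel set, which gives the uniform rapid decay of $\varphi^\flat$.

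Alternatively, you can expand $\varphi^\flat$ in Whittaker functions, as Jacquet--Shalika do. With either input replacing your Siegel-set argument, Step~3 goes through as you wrote it.
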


\begin{prop}\label{prop:global-js-even-poles}
If \(\omega_\Pi\) is nontrivial on \(\mathbb{I}_K^1\), then
\(I_{\mathrm{JS}}^{\mathrm{even}}(s,\Phi,\varphi)\) is entire.
If \(\omega_\Pi\) is trivial on \(\mathbb{I}_K^1\), it has at most
simple poles at $s=-i\delta
  \>\text{and}\>
  s=1-i\delta.$
\end{prop}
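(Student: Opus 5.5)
This is the standard Rankin--Selberg unfolding argument for the even Jacquet--Shalika integral, modelled on Flicker's treatment recalled in Proposition~\ref{poles-global}; the details are in \cite{JS1990}. Write the inner integral as
\[
\varphi_{\mathrm{Sh}}(g)=\int_{M_n(K)\backslash M_n(\mathbb{A}_K)}\varphi\left(\begin{pmatrix}I_n&X\\0&I_n\end{pmatrix}\begin{pmatrix}g&0\\0&g\end{pmatrix}\right)\Psi'(\operatorname{tr}X)\,dX.
\]
This is a function on $G_n(K)\backslash G_n(\mathbb{A}_K)$, and it transforms under the center $Z_n(\mathbb{A}_K)$ by $\omega_\Pi$. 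It is of rapid decay on any Siegel domain, because $\varphi$ is a cusp form and the compact integration over $M_n(K)\backslash M_n(\mathbb{A}_K)$ preserves the uniform rapid-decay estimates of $\varphi$ on Siegel sets of $G_{2n}$. The center of $G_n$ embeds diagonally into the center of $G_{2n}$, so the integrand $E(g,s;\Phi,\omega_\Pi)\varphi_{\mathrm{Sh}}(g)$ is $Z_n(\mathbb{A}_K)G_n(K)$-invariant, provided the character in the Eisenstein series is chosen inversely to the central character as in the paper's convention. The integral over $[G_n]$ therefore makes sense.

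For convergence, I would use the Jacquet--Shalika theorem recalled above. Away from its poles, $E(g,s;\Phi,\omega_\Pi)$ is of moderate growth in $g$, uniformly for $s$ in compact sets. Pair this with the rapid decay of $\varphi_{\mathrm{Sh}}$ on a Siegel domain for $[G_n]$. Then for $s$ in a compact set $C$ avoiding the poles of $E$, the integrand is dominated by a function that is integrable on $[G_n]$ and independent of $s\in C$. Dominated convergence then gives absolute, locally uniform convergence and holomorphy on $C$.

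Meromorphic continuation comes from how the integral is defined. The continuation of $E$ is smooth in $g$, and near a possible pole $s_0$ we have $(s-s_0)E(g,s;\Phi,\omega_\Pi)$ holomorphic. Those poles occur only at $s=-i\delta$ and $s=1-i\delta$ when $\omega_\Pi$ is trivial on $\mathbb{I}_K^1$. The moderate-growth bounds hold uniformly for $s$ in a small punctured disc after multiplying by $(s-s_0)$; this uses the explicit formula for the residue of $E$ as a multiple of $\widehat\Phi(0)$ or $\Phi(0)$ times a character of $\det g$. So $(s-s_0)I^{\mathrm{even}}_{\mathrm{JS}}$ extends holomorphically across $s_0$, and the integral is meromorphic on $\mathbb{C}$.

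The main obstacle is the uniform rapid decay of $\varphi_{\mathrm{Sh}}$ along the Shalika embedding $g\mapsto\operatorname{diag}(g,g)$. I would obtain it from the standard estimate
\[
|\varphi(h)|\ll_N \|h\|^{-N},
\]
valid on a Siegel set of $G_{2n}(\mathbb{A}_K)$ modulo the center. I would check that $\operatorname{diag}(g,g)$ times the unipotent element stays, up to a compact set and rational elements, inside a Siegel set whenever $g$ lies in a Siegel set of $G_n$; this follows because the torus part $\operatorname{diag}(a,a)$ with $a$ in the positive chamber for $G_n$ lies in the closure of the positive chamber of $G_{2n}$ after conjugation by $\sigma_{2n}$. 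As a fallback, this estimate is exactly \cite[Section~2]{JS1990}, which I would cite.
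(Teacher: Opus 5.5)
Your strategy is the standard one, and it is the argument behind the statement. The paper gives no proof here and simply quotes \cite{JS1990}. The point is that $I_{\mathrm{JS}}^{\mathrm{even}}$ pairs $E(g,s;\Phi,\omega_\Pi)$ with a fixed rapidly decreasing function on $[G_n]$. Hence it is holomorphic wherever $E$ is, and $(s-s_0)I_{\mathrm{JS}}^{\mathrm{even}}$ is holomorphic wherever $(s-s_0)E$ is. Your use of the explicit residues, which are constant multiples of $\Phi(0)|\det g|^{-i\delta}$ and $\widehat\Phi(0)|\det g|^{-i\delta}$, to get moderate growth of $(s-s_0)E$ uniformly near $s_0$ is correct. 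Note also that Proposition~\ref{prop:global-js-even-analytic}, stated just before, already gives absolute and locally uniform convergence wherever $E$ is holomorphic. So you need not re-derive convergence; only the simple-pole bound is new.

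The one step that fails as written is your verification of the rapid decay of $\varphi_{\mathrm{Sh}}$. Conjugation by $\sigma_{2n}$ does make $\operatorname{diag}(a,a)$ dominant. However, it moves the entry $X_{ij}$ of $\left(\begin{smallmatrix}I_n&X\\0&I_n\end{smallmatrix}\right)$ to position $(2i-1,2j)$, which lies below the diagonal when $i>j$. Every permutation that makes $\operatorname{diag}(a,a)$ dominant has the same defect. Since $X$ ranges over all of $M_n(K)\backslash M_n(\mathbb{A}_K)$, the element $h=\left(\begin{smallmatrix}I_n&X\\0&I_n\end{smallmatrix}\right)\operatorname{diag}(g,g)$ is in general not in a Siegel set with torus part $\sigma_{2n}\operatorname{diag}(a,a)\sigma_{2n}^{-1}$.

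For example, take $n=2$, $K=\mathbb{Q}$, $g=\operatorname{diag}(a_1,a_2)$ with $\lambda=a_1/a_2\to\infty$, and $X=\left(\begin{smallmatrix}0&0\\x&0\end{smallmatrix}\right)$ with $x$ badly approximable. Then the normalized successive minima of $\mathbb{Z}^4h$ are of size $\lambda^{-1/2},1,1,\lambda^{1/2}$, rather than $\lambda^{-1/2},\lambda^{-1/2},\lambda^{1/2},\lambda^{1/2}$.

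The estimate you need is still true, for a different reason. The height of $h$ in $G_{2n}$ is bounded below by a positive power of the height of $g$ in $G_n$, uniformly in $X$. Over $\mathbb{Q}$, the lattice $\mathbb{Z}^{2n}h$ contains $\{0\}\times\mathbb{Z}^ng$, and its dual lattice contains $\mathbb{Z}^n\,{}^tg^{-1}\times\{0\}$, with the same normalizing factors. So the rapid decay of the cusp form $\varphi$ in terms of the height transfers to $\varphi_{\mathrm{Sh}}$. Either replace your Siegel-set argument by this one, or rely on \cite{JS1990} as in your fallback.
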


The global integral unfolds to an Eulerian integral. Define
\[
\begin{split}
  J_{\mathrm{JS}}^{\mathrm{even}}
  (s,W_\varphi,\Phi)
  :=
  \int_{N_n(\mathbb{A}_K)\backslash G_n(\mathbb{A}_K)}
  \int_{V_n(\mathbb{A}_K)\backslash M_n(\mathbb{A}_K)}
  &W_\varphi\left(
    \sigma_{2n}
    \begin{pmatrix}
      I_n & X\\
      0   & I_n
    \end{pmatrix}
    \begin{pmatrix}
      g & 0\\
      0 & g
    \end{pmatrix}
  \right)                                                     \\
  &\times
  \Psi'\bigl(-\operatorname{tr}X\bigr)
  \Phi(e_ng)
  |\det g|_{\mathbb{A}_K}^{s}\,dX\,dg.
\end{split}
\]

\begin{prop}\label{prop:global-js-even-unfolding}
The integral
\(J_{\mathrm{JS}}^{\mathrm{even}}(s,W_\varphi,\Phi)\)
converges absolutely and uniformly on compact subsets when
\(\operatorname{Re}(s)\) is sufficiently large. In this region,
\[
  I_{\mathrm{JS}}^{\mathrm{even}}(s,\Phi,\varphi)
  =
  J_{\mathrm{JS}}^{\mathrm{even}}(s,W_\varphi,\Phi).
\]
\end{prop}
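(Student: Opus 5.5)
The plan is the standard Jacquet--Shalika unfolding. I first treat convergence, then unfold the Eisenstein series and collapse the sum, and finally pass from $\varphi$ to $W_\varphi$ by a Fourier expansion along the unipotent radical.

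\emph{Convergence.} Write $W_\varphi$ as a finite sum of decomposable vectors, so that $J_{\mathrm{JS}}^{\mathrm{even}}(s,W_\varphi,\Phi)$ becomes a finite sum of products of local Jacquet--Shalika integrals. At almost all places the integral is the unramified computation; there the local integrand is majorized by the standard gauge of Jacquet--Shalika, which gives an Euler product converging for $\operatorname{Re}(s)\gg0$. At the finitely many remaining places I use the local convergence for $\operatorname{Re}(s)\gg0$ (\cite{JS1990}, and \cite{JLST2025} archimedean). Uniformity on compact sets comes from the same majorants.

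\emph{Unfolding the Eisenstein series.} For $\operatorname{Re}(s)\gg0$ I write
\[
E(g,s;\Phi,\omega_\Pi)=|\det g|^s\int_{K^\times\backslash\mathbb{A}_K^\times}\sum_{\xi\in K^n\setminus\{0\}}\Phi(a\xi g)\,\omega_\Pi(a)|a|^{ns}\,d^\times a.
\]
Since $G_n(K)$ acts transitively on $K^n\setminus\{0\}$ with stabilizer $P_n(K)$ of $e_n$, this sum equals $\sum_{\gamma\in P_n(K)\backslash G_n(K)}\Phi(ae_n\gamma g)$. The inner $X$-integral $\varphi^{\flat}(g)$ is left $G_n(K)$-invariant in $g$. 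Moreover, since $\varphi$ has central character $\omega_\Pi$, the element $a$ can be absorbed via $\varphi(\operatorname{diag}(ag,ag))=\omega_\Pi(a)^{\dots}\varphi(\ldots)$, which combines with the $Z(\mathbb{A}_K)$ quotient in $[G_n]$. The integral therefore collapses to
\[
\int_{P_n(K)\backslash G_n(\mathbb{A}_K)}\varphi^{\flat}(g)\Phi(e_ng)|\det g|^s\,dg.
\]

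\emph{Fourier expansion.} This is the main obstacle. I must expand $\varphi^{\flat}$ along $P_n(K)$ so as to reach $W_\varphi$ with the Shalika-type character. Following \cite{JS1990}, I would conjugate by $\sigma_{2n}$ so that the pair $\bigl(\begin{smallmatrix}I&X\\0&I\end{smallmatrix}\bigr)$, $\operatorname{diag}(g,g)$ sits inside the standard unipotent and Levi structure. I then proceed by induction along the mirabolic chain $P_n\supset P_{n-1}\supset\cdots$, as in the proof of Lemma~\ref{lem:partial-jacquet-shalika-integrals} but over $K\backslash\mathbb{A}_K$. At each step I expand in a row variable of $X$ over $K^r$, then use cuspidality of $\varphi$ to kill the constant terms. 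The remaining nontrivial characters are permuted transitively by the rational points of $P_r$, which converts the sum over $P_n(K)\backslash$ into $N_n(K)\backslash$, while the $X$-integration over $M_n(K)\backslash M_n(\mathbb{A}_K)$ reduces to $V_n(\mathbb{A}_K)\backslash M_n(\mathbb{A}_K)$ with $V_n(K)\backslash V_n(\mathbb{A}_K)$ absorbed into the Whittaker integral. The care needed is bookkeeping of which entries of $X$ are absorbed at each stage and checking that the character $\Psi'(\operatorname{tr}X)$ matches the generic character $\Psi'_{2n}$ after conjugation by $\sigma_{2n}$; the sign convention yields $\Psi'(-\operatorname{tr}X)$.

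\emph{Conclusion.} Once the expansion gives
\[
\varphi^{\flat}\text{-sum}=\sum_{\gamma\in N_n(K)\backslash P_n(K)}\int_{V_n(\mathbb{A}_K)\backslash M_n(\mathbb{A}_K)}W_\varphi\bigl(\sigma_{2n}u(X)\operatorname{diag}(\gamma g,\gamma g)\bigr)\Psi'(-\operatorname{tr}X)\,dX,
\]
I fold the sum over $\gamma$ with $P_n(K)\backslash G_n(\mathbb{A}_K)$ to obtain $N_n(K)\backslash G_n(\mathbb{A}_K)$. I then integrate over $N_n(K)\backslash N_n(\mathbb{A}_K)$; the integrand is invariant there by the Whittaker equivariance, with the characters cancelling. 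This yields $J_{\mathrm{JS}}^{\mathrm{even}}(s,W_\varphi,\Phi)$. All interchanges of sums and integrals are justified by the absolute convergence from the first step together with the rapid decay of cusp forms.
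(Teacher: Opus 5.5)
The paper does not prove this proposition; it quotes it from \cite{JS1990}. Measured against the Jacquet--Shalika argument, two of your steps are correct: the unfolding of the Eisenstein series to $\int_{P_n(K)\backslash G_n(\mathbb{A}_K)}\varphi^{\flat}(g)\Phi(e_ng)|\det g|^s\,dg$, and the final folding from $P_n(K)$ down to $N_n(\mathbb{A}_K)$. But the expansion of $\varphi^{\flat}$ as a sum over $N_n(K)\backslash P_n(K)$ of adelic integrals of $W_\varphi$ is the whole content of the proposition, and the mechanism you sketch for it does not work.

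Conjugation by $\sigma_{2n}$ does \emph{not} put $u(X)=\bigl(\begin{smallmatrix}I_n&X\\0&I_n\end{smallmatrix}\bigr)$ and $\operatorname{diag}(v,v)$, $v\in N_n$, inside $N_{2n}$. The entry $X_{ij}$ goes to position $(2i-1,2j)$, which lies below the diagonal when $i>j$. The entry $v_{ij}$ occupies the two positions $(2i-1,2j-1)$ and $(2i,2j)$ at once. On the left, the strictly lower entries of $X$ are integrated over the compact quotient $K\backslash\mathbb{A}_K$ against the trivial character; on the right, they are integrated over $\mathbb{A}_K$.

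Expanding, discarding constant terms by cuspidality, and letting $P_r(K)$ permute the nonzero characters can only produce the sum over $N_n(K)\backslash P_n(K)$. It cannot turn a compact integral into a noncompact one. There is also nothing to expand ``in a row variable of $X$'', since $X$ is already integrated out.

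The missing idea is an exchange of roots. Write $e_{pq}(x)$ for the elementary unipotent with $x$ at $(p,q)$. One expands along a root subgroup outside the original domain, for instance $e_{2j-1,2i-1}$ (one copy of $v_{ji}$). Since $[e_{2j-1,2i-1}(x),e_{2i-1,2j}(y)]=e_{2j-1,2j}(xy)$ and $\Psi'(\operatorname{tr}X)$ is nontrivial at $(2j-1,2j)$, the rational values of $X_{ij}$ translate the characters of $e_{2j-1,2i-1}$ simply transitively. The character sum therefore unfolds $\int_{K\backslash\mathbb{A}_K}dX_{ij}$ into $\int_{\mathbb{A}_K}dX_{ij}$ and at the same time decouples the two copies of $N_n$, with no use of cuspidality. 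Cuspidality and the $P_n(K)$-action enter only through genuine Fourier expansion along the remaining positions $(2i,2j-1)$, $i<j$. Arranging these steps so that each intermediate domain is a group normalised by the next root subgroup, with a compatible character, is exactly the inductive work of \cite{JS1990}, and your proposal does not supply it.

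Separately, the sign bookkeeping you assert does not come out as claimed. The conventions as written put $\Psi'(\operatorname{tr}X)$ in $I^{\mathrm{even}}_{\mathrm{JS}}$, while $W_\varphi$ is defined by integrating against $\overline{\Psi'_{2n}}$. So the diagonal of $X$ extracts the $\overline{\Psi'}$-coefficient at the positions $(2i-1,2i)$. The unfolding then produces $J^{\mathrm{even}}_{\mathrm{JS}}$ evaluated at the right translate $R(\operatorname{diag}(I_n,-I_n))W_\varphi$, not at $W_\varphi$.

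This is already visible for $n=1$, where $[G_1]$ is a point. The left side is $E(1,s;\Phi,\omega_\Pi)\int_{K\backslash\mathbb{A}_K}\varphi(u(x))\Psi'(x)\,dx=E(1,s;\Phi,\omega_\Pi)\,W_\varphi(\operatorname{diag}(-1,1))$. The right side is $E(1,s;\Phi,\omega_\Pi)\,W_\varphi(1)$.

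So the displayed identity needs $\Psi'(-\operatorname{tr}X)$ in the global integral, or a right translate of $\varphi$. This is harmless for the paper's applications, but a proof has to track it rather than assert it.
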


Suppose that the data are decomposable:
\[
  W_\varphi=\prod_v W_v,
  \qquad
  \Phi=\prod_v\Phi_v,
\]
where $ W_v\in\mathcal{W}(\Pi_v,\Psi_{2n,v}')
  \>\text{and}\>
  \Phi_v\in\mathcal{S}(K_v^n).
$
At almost all non-archimedean places \(v\), let \(W_v=W_v^\circ\)
be the normalized spherical Whittaker function, and let  $\Phi_v=\Phi_v^\circ$
be the characteristic function on $\mathcal{O}_{K_v}^n$.
Then, when \(\operatorname{Re}(s)\) is sufficiently large,
\[
  J_{\mathrm{JS}}^{\mathrm{even}}(s,W_\varphi,\Phi)
  =
  \prod_v
  J(s,W_v,\Phi_v),
\]
where
\[
\begin{split}
  J(s,W_v,\Phi_v)
  :=
  \int_{N_n(K_v)\backslash G_n(K_v)}
  \int_{V_n(K_v)\backslash M_n(K_v)}
  &W_v\left(
    \sigma_{2n}
    \begin{pmatrix}
      I_n & X\\
      0   & I_n
    \end{pmatrix}
    \begin{pmatrix}
      g & 0\\
      0 & g
    \end{pmatrix}
  \right)                                                     \\
  &\times
  \Psi_v'\bigl(-\operatorname{tr}X\bigr)
  \Phi_v(e_ng)
  |\det g|_{K_v}^{s}\,dX\,dg.
\end{split}
\]

\medskip
\noindent\textbf{The odd case.}
Suppose that \(m=2n+1\). For \(\varphi\in V_\Pi\), Jacquet and
Shalika defined
\cite{JS1990}; see also \cite[Section~6]{Bel2011},
\[
\begin{split}
  I_{\mathrm{JS}}^{\mathrm{odd}}(s,\varphi)
  :=
  \int_{G_n(K)\backslash G_n(\mathbb{A}_K)}
  \int_{M_n(K)\backslash M_n(\mathbb{A}_K)}
  \int_{K^n\backslash\mathbb{A}_K^n}
  &\varphi\left(
    \begin{pmatrix}
      I_n & X   & Y\\
      0   & I_n & 0\\
      0   & 0   & 1
    \end{pmatrix}
    \begin{pmatrix}
      g & 0 & 0\\
      0 & g & 0\\
      0 & 0 & 1
    \end{pmatrix}
  \right)                                                     \\
  &\times
  \Psi'\bigl(\operatorname{tr}X\bigr)
  |\det g|_{\mathbb{A}_K}^{s-1}\,dY\,dX\,dg,
\end{split}
\]
where \(Y\) is regarded as a column vector.

\begin{prop}\label{prop:global-js-odd-analytic}
The integral \(I_{\mathrm{JS}}^{\mathrm{odd}}(s,\varphi)\)
converges absolutely and locally uniformly for every
\(s\in\mathbb{C}\). In particular, it defines an entire function
of \(s\).
\end{prop}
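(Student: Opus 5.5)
The plan is to follow the standard argument for global Rankin--Selberg-type integrals against cusp forms, here in the form used by Jacquet--Shalika \cite{JS1990} and Bellaïche \cite{Bel2011}. The key point is that no Eisenstein series appears in the odd case, so convergence for every $s$ should follow from rapid decay of cusp forms alone. The integrand is invariant under $G_n(K)$, $M_n(K)$ and $K^n$; the character $\Psi'(\operatorname{tr}X)$ is trivial on $M_n(K)$ and is preserved under the conjugation by $\operatorname{diag}(g,g,1)$. So the integral is over a domain on which it is well defined.

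The first step is to reduce to integrals over compact sets. The domains $M_n(K)\backslash M_n(\mathbb{A}_K)$ and $K^n\backslash\mathbb{A}_K^n$ are compact, so it suffices to control the outer integral over $G_n(K)\backslash G_n(\mathbb{A}_K)$. Put
\[
f(g)=\int\!\!\int\Bigl|\varphi\Bigl(\begin{pmatrix} I_n&X&Y\\0&I_n&0\\0&0&1\end{pmatrix}\begin{pmatrix} g&0&0\\0&g&0\\0&0&1\end{pmatrix}\Bigr)\Bigr|\,dY\,dX .
\]
Then it suffices to show $\int_{G_n(K)\backslash G_n(\mathbb{A}_K)} f(g)\,|\det g|^{\sigma}\,dg<\infty$ for every real $\sigma$, locally uniformly in $\sigma$.

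The second step uses a Siegel set. I would take $\mathfrak{S}=\omega A^+ K_{\mathbb{A}}$ for $G_n$, with $A^+$ the diagonal elements $a=\operatorname{diag}(a_1,\dots,a_n)$ satisfying $|a_i/a_{i+1}|\ge c$, and with the full center included, since we integrate over $G_n(K)\backslash G_n(\mathbb{A}_K)$ and not modulo the center. The embedding $g\mapsto\operatorname{diag}(g,g,1)$ sends $A^+$ into the diagonal torus of $G_{2n+1}$. After conjugating the unipotent part into a compact set and using the compactness of $\omega$, $K_{\mathbb{A}}$ and the $X,Y$ domains, one gets a bound
\[
f(ak)\ll \sup_{u\in C}|\varphi(u\operatorname{diag}(a,a,1))|
\]
for a compact set $C$. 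The rapid decay of cusp forms is uniform on Siegel sets of $G_{2n+1}$ modulo the center, together with moderate growth in the central direction, and it gives $|\varphi(u\,t)|\ll_N \|t\|^{-N}$ in terms of the simple root coordinates of $t$ modulo the center.

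The main obstacle is that $\operatorname{diag}(a,a,1)$ need not lie in a Siegel set of $G_{2n+1}$, because the entries are interleaved. To handle this I would use the following fact: for a cusp form, $\varphi(t)$ decays rapidly in \emph{every} direction of $Z_{2n+1}\backslash T$, whatever the ordering. This follows from the Whittaker expansion over $P_{2n+1}$ combined with the rapid decay of $W_\varphi$. Concretely, the bound $|W(t)|\ll\prod_i(1+|t_i/t_{i+1}|)^{-N}\,\cdot$ (a polynomial in the $t$'s) gives control when the ratios are large. When the ratios are small, one instead uses the Weyl group invariance $\varphi(wt)=\varphi(t)$ for $w\in G_{2n+1}(K)$, which reorders the entries into a Siegel set. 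With the central direction handled separately, the torus $\operatorname{diag}(a,a,1)$ has no nontrivial central component, because its last entry is $1$. Hence every direction of $A^+$, including $\det a\to 0$ or $\infty$, is a noncentral direction of $G_{2n+1}$, and $\varphi$ decays faster than any power there. This dominates $|\det a|^{\sigma}\delta^{-1}(a)$ for all $\sigma$, locally uniformly, and gives absolute convergence everywhere. Entireness then follows from Morera's theorem together with the locally uniform convergence.
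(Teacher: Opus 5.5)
The paper gives no argument of its own here; it quotes \cite{JS1990} and \cite{Bel2011}. So your sketch has to stand by itself, and it breaks at exactly the step you identify as the main obstacle.

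Your reduction to bounding $\sup_{u\in C}|\varphi(u\,\mathrm{diag}(a,a,1))|$ is fine. Here $C$ is a compact subset of the unipotent group generated by the elements $u(X,Y)$ and $\mathrm{diag}(n_0,n_0,1)$. However, your remedy only treats the bare torus value $\varphi(t)$, while the decay must be uniform in $u\in C$. A Weyl element $w$ that makes $w\,\mathrm{diag}(a,a,1)\,w^{-1}$ dominant does not keep $wuw^{-1}$ upper triangular.

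\begin{itemize}
\item For $n=1$ and $|a|\to 0$, you must move the index $3$ in front of $1$ and $2$. This sends the $(1,3)$-entry $y$ of the unipotent element to the lower-triangular slot $(2,1)$. Commuting it past the dominant torus $\mathrm{diag}(1,a,a)$ multiplies it by $a^{-1}$.
\item For general $n$ the same problem occurs even when $|\det a|$ stays bounded. The interleaving permutation that sorts $(a_1,\dots,a_n,a_1,\dots,a_n)$ sends $X_{ij}$ with $i>j$ to the position $(2i-1,2j)$. There it is rescaled by $a_j/a_i$, which is unbounded on $A^+$.
\end{itemize}

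Consequently $w\,u\,\mathrm{diag}(a,a,1)$ does not lie in a fixed Siegel set of $G_{2n+1}$, and Siegel-set decay says nothing about these points. The appeal to the Whittaker expansion does not close the gap either. A bound on $W_\varphi$ does not bound $\varphi(h)=\sum_\gamma W_\varphi(\mathrm{diag}(\gamma,1)h)$, because the sum over $N_{2n}(K)\backslash G_{2n}(K)$ is infinite and is not controlled termwise.

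What is missing is rapid decay in its intrinsic form, together with a reduction-theoretic lower bound that is uniform in $u$.

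First, finitely many $G_{2n+1}(K)$-translates of a Siegel set cover $G_{2n+1}(\mathbb{A}_K)$. Hence the Siegel-set estimate gives $|\varphi(h)|\ll_N \mathrm{ht}(h)^{-N}$ for \emph{all} $h$, where $\mathrm{ht}$ is the height on $Z_{2n+1}(\mathbb{A}_K)G_{2n+1}(K)\backslash G_{2n+1}(\mathbb{A}_K)$. Moreover, $\mathrm{ht}(h)$ is large as soon as some $K$-rational subspace $V$ has small normalized covolume $|\det h|^{-\dim V/(2n+1)}\,\mathrm{covol}(Vh)$.

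Second, there are rational subspaces fixed by all of $C$. The line $Ke_{2n+1}$ and the spaces $V_j=\mathrm{span}(e_{n+j},\dots,e_{2n})$ (row vectors, $1\le j\le n$) are stable under right multiplication by every $u(X,Y)$ and every $\mathrm{diag}(n_0,n_0,1)$. After translation by $\mathrm{diag}(a,a,1)$, their normalized covolumes are $|\det a|^{-2/(2n+1)}$ and $|a_j\cdots a_n|\,|\det a|^{-2(n-j+1)/(2n+1)}$, respectively. For $a\in A^+$, one of these tends to $0$ polynomially in the size of $a$, where size measures both $a_1/a_n$ and $|\det a|^{\pm1}$. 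Roughly, use $Ke_{2n+1}$ when $|\det a|$ is large, $V_1$ when it is small, and $V_n$ in the $\mathrm{SL}_n$-directions.

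This yields $\sup_{u\in C}|\varphi(u\,\mathrm{diag}(a,a,1))|\ll_N \|a\|^{-N}$, including the central direction of $G_n$. After that, your Siegel-set bookkeeping on $G_n$ and the Morera argument go through.
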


Define the Eulerian integral
\[
\begin{split}
  J_{\mathrm{JS}}^{\mathrm{odd}}(s,W_\varphi)
  :=
  \int_{N_n(\mathbb{A}_K)\backslash G_n(\mathbb{A}_K)}
  \int_{V_n(\mathbb{A}_K)\backslash M_n(\mathbb{A}_K)}
  &W_\varphi\left(
    \sigma_{2n+1}
    \begin{pmatrix}
      I_n & X   & 0\\
      0   & I_n & 0\\
      0   & 0   & 1
    \end{pmatrix}
    \begin{pmatrix}
      g & 0 & 0\\
      0 & g & 0\\
      0 & 0 & 1
    \end{pmatrix}
  \right)                                                     \\
  &\times
  \Psi'\bigl(-\operatorname{tr}X\bigr)
  |\det g|_{\mathbb{A}_K}^{s-1}\,dX\,dg.
\end{split}
\]

\begin{prop}\label{prop:global-js-odd-unfolding}
The integral
\(J_{\mathrm{JS}}^{\mathrm{odd}}(s,W_\varphi)\)
converges absolutely and uniformly on compact subsets when
\(\operatorname{Re}(s)\) is sufficiently large. In this region,
\[
  I_{\mathrm{JS}}^{\mathrm{odd}}(s,\varphi)
  =
  J_{\mathrm{JS}}^{\mathrm{odd}}(s,W_\varphi).
\]
\end{prop}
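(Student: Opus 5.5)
The plan is to run the standard unfolding argument of Jacquet--Shalika \cite{JS1990}, in the form used for Proposition~\ref{prop:global-js-even-unfolding}, adapted to the odd case; \cite[Section~6]{Bel2011} can serve as a reference for the details. The structure has three steps: first integrate out $Y$ and expand $\varphi$ along a unipotent group, then collapse the rational sums against the $G_n(K)$-quotient, and finally establish absolute convergence so that every interchange is justified.

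\emph{Step 1: the $Y$-integral.} The inner integral over $Y\in K^n\backslash\mathbb{A}_K^n$ is the constant term of $\varphi$ along the abelian unipotent radical of column vectors in the last column, rows $1,\ldots,n$. Combined with the $X$-integral, which carries the character $\Psi'(\operatorname{tr}X)$, this gives a Fourier coefficient of $\varphi$ along a unipotent subgroup $U$ of $G_{2n+1}$.

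\emph{Step 2: unfolding.} Conjugating by $\sigma_{2n+1}$ rearranges $U$ into a subgroup lying in $N_{2n+1}$. We then expand $\varphi$ in Fourier series along the remaining root subgroups. The main tool is the row-by-row expansion used for Whittaker coefficients, namely the mirabolic expansion $\varphi(g)=\sum_{\gamma\in N_k(K)\backslash P_k(K)}W(\gamma g)$, applied in stages. Cuspidality of $\varphi$ kills every degenerate term, so only the generic terms survive. These generic terms are indexed by $N_n(K)\backslash G_n(K)$, embedded diagonally as $\diag(\gamma,\gamma,1)$. Collapsing this sum against the integral over $G_n(K)\backslash G_n(\mathbb{A}_K)$ turns it into an integral over $N_n(K)\backslash G_n(\mathbb{A}_K)$. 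Factoring the integral over $N_n(K)\backslash N_n(\mathbb{A}_K)$, which acts diagonally, together with the upper triangular part $V_n$ of $X$, produces $W_\varphi$. The leftover piece of $X$ lies in $V_n(\mathbb{A}_K)\backslash M_n(\mathbb{A}_K)$. The sign change $\Psi'(\operatorname{tr}X)\to\Psi'(-\operatorname{tr}X)$ and the ordering $\sigma_{2n+1}$ come from conjugating the character through the permutation. I expect this step to be the main obstacle: one has to track exactly which root subgroups are expanded, and check that the resulting character matches $\Psi'_{2n+1}$ after conjugation by $\sigma_{2n+1}$. I would organize this bookkeeping by induction on $n$, peeling off one pair of rows at a time, as Jacquet--Shalika do.

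\emph{Step 3: convergence.} Absolute convergence of $J_{\mathrm{JS}}^{\mathrm{odd}}(s,W_\varphi)$ for $\operatorname{Re}(s)\gg0$ is what justifies interchanging sums and integrals in Step~2. It suffices to majorize by a product of local integrals of $|W_v|$ and verify convergence locally. At archimedean places, $W_v$ is bounded by a gauge, and the $X$-integral converges by the standard estimate from \cite{JS1990,JLST2025}, giving convergence for $\operatorname{Re}(s)$ large. At unramified places, the local integral of $|W_v^\circ|$ equals the unramified exterior-square computation evaluated at $\operatorname{Re}(s)$, and the product of these converges for large real part. The same majorization gives uniformity on compact subsets.
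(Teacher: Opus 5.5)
The paper does not prove this proposition. It is quoted from the unfolding in \cite{JS1990} and \cite[Section~6]{Bel2011}, so following Jacquet--Shalika is the intended route. However, your Step~2, which you rightly call the heart of the matter, is wrong as described, and its one non-formal ingredient is missing.

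First, conjugation by $\sigma_{2n+1}$ does not carry $U=\{u(X,Y)\}$ into $N_{2n+1}$. The entry $X_{ij}$ sits at position $(i,n+j)$ and moves to $(2i-1,2j)$, which is below the diagonal when $i>j$. Only the $V_n$-part of $X$ and the column $Y$ land in $N_{2n+1}$; this is exactly why $V_n\backslash M_n$ survives.

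More seriously, let $\overline{X}$ be the strictly lower triangular part of $X$, and $\overline{V}_n$ the strictly lower triangular matrices. In $I_{\mathrm{JS}}^{\mathrm{odd}}(s,\varphi)$, $\overline{X}$ runs over the compact quotient $\overline{V}_n(K)\backslash\overline{V}_n(\mathbb{A}_K)$. In $J_{\mathrm{JS}}^{\mathrm{odd}}(s,W_\varphi)$ it runs over all of $\overline{V}_n(\mathbb{A}_K)\cong V_n(\mathbb{A}_K)\backslash M_n(\mathbb{A}_K)$. A compact integration becomes noncompact only by collapsing a sum over rational points against it. If the surviving terms were indexed only by $N_n(K)\backslash G_n(K)$ via $\diag(\gamma,\gamma,1)$, as you assert, the collapse would only turn $G_n(K)\backslash G_n(\mathbb{A}_K)$ into $N_n(K)\backslash G_n(\mathbb{A}_K)$. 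The variable $\overline{X}$ would stay on the compact quotient. Relatedly, your available integrations cannot assemble into $W_\varphi$. The group $\sigma_{2n+1}^{-1}N_{2n+1}\sigma_{2n+1}$ contains the full $N_n\times N_n$ in the two diagonal blocks, but the $g$-integral supplies only the diagonal copy $\diag(u,u,1)$.

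The missing idea is an exchange of root subgroups, or an equivalent manipulation. For $i>j$, the commutator of the root groups at $(i,n+j)$ and $(n+j,n+i)$ lies in the root group at $(i,n+i)$. There $\Psi'(\operatorname{tr}X)$ restricts to the nontrivial character $\Psi'(X_{ii})$. This nondegenerate pairing lets you replace the compact integral over $X_{ij}$ by an adelic integral over $X_{ij}$, together with a compact integral over the $(n+j,n+i)$ root group. Doing this for all $i>j$ does two things: it produces the $\overline{V}_n(\mathbb{A}_K)$-integration, and it supplies the missing copy $\{\diag(I_n,u,1)\}$ of $N_n$.

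The sum over $N_n(K)\backslash G_n(K)$ comes separately, from Fourier expansion along the remaining root groups of $\sigma_{2n+1}^{-1}N_{2n+1}\sigma_{2n+1}$. These are the last-column entries in rows $n+1,\dots,2n$ and the strictly upper triangular entries of the lower-left $n\times n$ block. Cuspidality enters through the expansion along that last column. Together with $Y$, the trivial character there yields the constant term along the unipotent radical of the maximal parabolic of type $(2n,1)$. The $Y$-integral of your Step~1 alone is not a constant term along any parabolic.

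A smaller point on Step~3: $|W_v^\circ|$ is not itself a spherical Whittaker function. At unramified places you must bound it, e.g.\ via the Shintani formula and bounds on the Satake parameters, rather than identify its integral with an $L$-factor.
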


Suppose that
\[
  W_\varphi=\prod_v W_v,
  \qquad
  W_v\in\mathcal{W}(\Pi_v,\Psi_{2n+1,v}').
\]
At almost all non-archimedean places \(v\), let \(W_v=W_v^\circ\)
be the normalized spherical Whittaker function. Then, when
\(\operatorname{Re}(s)\) is sufficiently large,
\[
  J_{\mathrm{JS}}^{\mathrm{odd}}(s,W_\varphi)
  =
  \prod_v
  J(s,W_v),
\]
where
\[
\begin{split}
  J(s,W_v)
  :=
  \int_{N_n(K_v)\backslash G_n(K_v)}
  \int_{V_n(K_v)\backslash M_n(K_v)}
  &W_v\left(
    \sigma_{2n+1}
    \begin{pmatrix}
      I_n & X   & 0\\
      0   & I_n & 0\\
      0   & 0   & 1
    \end{pmatrix}
    \begin{pmatrix}
      g & 0 & 0\\
      0 & g & 0\\
      0 & 0 & 1
    \end{pmatrix}
  \right)                                                     \\
  &\times
  \Psi_v'\bigl(-\operatorname{tr}X\bigr)
  |\det g|_{K_v}^{s-1}\,dX\,dg.
\end{split}
\]

\subsection{Global Exterior-Square \(L\)-function}

Let \(S\) be a finite set of places of \(K\), containing \(S_\infty\),
such that, for every \(v\notin S\), the representation \(\Pi_v\) and
the additive character \(\Psi'_v\) are unramified.

For each non-archimedean place \(v\), the local factors
\[
  L(s,\Pi_v,\wedge^2)
  \qquad\text{and}\qquad
  \varepsilon(s,\Pi_v,\wedge^2,\Psi_v')
\]
are defined using the local theory of Jacquet--Shalika integrals
\cite{JS1990,KR2012,Mat2014,CM2015}. For each archimedean place, the local factors
\(L(s,\Pi_v,\wedge^2)\) and
\(\varepsilon(s,\Pi_v,\wedge^2,\Psi_v')\) are as in Section~\ref{s2}. We formally define
\[
  L(s,\Pi,\wedge^2)
  =
  \prod_v L(s,\Pi_v,\wedge^2)
\]
\[
  \varepsilon(s,\Pi,\wedge^2)
  =
  \prod_v
  \varepsilon(s,\Pi_v,\wedge^2,\Psi_v').
\]

We recall the unramified computation of the local
Jacquet--Shalika integrals.

\begin{prop}\label{unramified-exterior-square}
Let \(v\notin S\). Then,
\[L(s,\Pi_v,\wedge^2)=
\begin{cases}
 J(s,W_v^\circ,\Phi_v^\circ) & \text{if } m=2n,\\[6pt]
 J(s,W_v^\circ) & \text{if } m=2n+1.
\end{cases}
\]
\end{prop}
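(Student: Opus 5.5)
This is the unramified computation of the Jacquet--Shalika integral at a place $v\notin S$. At such a place $K_v$ is non-archimedean, $\Pi_v$ is unramified generic, and $\Psi'_v$ is unramified. The local factor $L(s,\Pi_v,\wedge^2)$ is defined through the local Jacquet--Shalika theory of \cite{JS1990,KR2012,Mat2014,CM2015}, and in the unramified case it agrees with the Langlands $L$-factor $\prod_{i<j}(1-\alpha_i\alpha_j q_v^{-s})^{-1}$, where $\alpha_1,\dots,\alpha_m$ are the Satake parameters of $\Pi_v$. The plan is to evaluate the integral for the normalized spherical data and check that the result is this product.

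The steps, in order:

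\begin{enumerate}
\item Use the Iwasawa decomposition $G_n(K_v)=N_n(K_v)A_n(K_v)G_n(\mathcal O_{K_v})$. In the even case, $\Phi_v^\circ(e_n a k)=\Phi_v^\circ(e_n a)$, so $k$ drops out of both the Schwartz factor and the spherical $W_v^\circ$.
\item In the $X$-integral over $V_n\backslash M_n$, use that $W_v^\circ$ is supported on dominant torus elements. As in \cite[\S7]{JS1990}, the $X$-integral then collapses to $X\in M_n(\mathcal O_{K_v})$ modulo $V_n$, which has volume $1$. This is the delicate point: one needs conductor arguments showing that $W_v^\circ(\sigma_{2n}u(X)\,\diag(a,a))$ vanishes unless $X$ is integral. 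I would follow Jacquet--Shalika's reduction through the partial Iwasawa decomposition of the lower-triangular part of $X$.
\item What remains is a sum over $\lambda=(\lambda_1\ge\dots\ge\lambda_n)$, with $\lambda_n\ge0$ forced by $\Phi_v^\circ$, of
\[
W_v^\circ\bigl(\sigma_{2n}\,\diag(\varpi^\lambda,\varpi^\lambda)\,\sigma_{2n}^{-1}\bigr)\,\delta^{-1}(\varpi^\lambda)\,q_v^{-s|\lambda|}.
\]
\item Evaluate each term with the Shintani--Casselman--Shalika formula. The term is the Schur polynomial $s_{(\lambda_1,\lambda_1,\dots,\lambda_n,\lambda_n)}(\alpha)$ times $q_v^{-s|\lambda|}$.
\item Apply the Littlewood identity $\sum_{\mu\text{ even columns}}s_\mu(\alpha)t^{|\mu|/2}=\prod_{i<j}(1-\alpha_i\alpha_jt)^{-1}$. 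This gives exactly $L(s,\Pi_v,\wedge^2)$. The shift exponents in the definition cancel the modulus characters, so no extra normalization is needed.
\item The odd case $m=2n+1$ follows the same steps. There is no Schwartz function, the torus element is $\diag(\varpi^\lambda,\varpi^\lambda,1)$, and Littlewood's identity is applied in $2n+1$ variables, with the summation over partitions with at most $2n$ rows whose conjugate has even columns.
\end{enumerate}

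Alternatively, one can simply cite \cite[Section~7]{JS1990} for both cases, since this computation appears there.

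I expect step 2, the reduction of the $X$-integral to integral matrices, to be the main obstacle. I would take it directly from Jacquet--Shalika rather than reprove it. Everything else is bookkeeping with modulus characters.
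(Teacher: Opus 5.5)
Your proposal is correct and agrees with the paper. The paper gives no proof here: it simply recalls the Jacquet--Shalika unramified computation \cite{JS1990}, together with the identification, from the cited local theory, of the gcd factor with $\prod_{i<j}(1-\alpha_i\alpha_jq_v^{-s})^{-1}$. Your outline (Iwasawa decomposition, support of the $X$-integral, Shintani--Casselman--Shalika, Littlewood's identity) is exactly that computation.

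One bookkeeping correction, which matters because it is the modulus-character accounting in your steps 2--3:
\begin{itemize}
\item In the original variable, the $X$-support is $aM_n(\mathcal{O}_{K_v})a^{-1}$ modulo $V_n$, which has volume $\delta_{B_n}(a)^{-1}$. It becomes $M_n(\mathcal{O}_{K_v})$ of volume $1$ only after the substitution $X\mapsto aXa^{-1}$, whose Jacobian supplies the same factor.
\item Together with the Iwasawa measure, the $\delta^{-1}$ in your step~3 must therefore be $\delta_{B_n}(a)^{-2}$. This cancels $\delta_{B_{2n}}^{1/2}\bigl(\sigma_{2n}\diag(a,a)\sigma_{2n}^{-1}\bigr)=\delta_{B_n}(a)^{2}$, so the even case needs no shift exponent.
\item In the odd case, $\delta_{B_{2n+1}}^{1/2}$ contributes an extra factor $|\det a|$, and this is exactly what the shift $s-1$ absorbs.
\end{itemize}
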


Therefore, the partial exterior-square \(L\)-function
\[
  L^S(s,\Pi,\wedge^2)
  =
  \prod_{v\notin S}L(s,\Pi_v,\wedge^2)
\]
is absolutely convergent for \(\operatorname{Re}(s)\gg0\). Thus,
the same is true for \(L(s,\Pi,\wedge^2)\).

Also, the unramified local calculations and the local functional
equations \cite{JS1990, KR2012, CM2015,Mat2014} give
\[
  \varepsilon(s,\Pi_v,\wedge^2,\Psi_v')
  =
  1
  \qquad\text{for every }v\notin S.
\]
Therefore, \(\varepsilon(s,\Pi,\wedge^2)\) is a finite product. We now express the global exterior-square \(L\)-function as a finite sum
of global Jacquet--Shalika integrals.

\begin{thm}\label{sum-global-exterior-square}
There exist an integer \(r\geq 1\) and a finite family
\(\{\varphi_i\}_{i=1}^r\subset V_\Pi\) (along with Schwartz functions
\(\{\Phi_i\}_{i=1}^r\subset\mathcal{S}(\mathbb{A}_K^n)\) when \(m=2n\))
such that
\[
L(s,\Pi,\wedge^2)
=
\begin{cases}
\displaystyle
\sum_{i=1}^r
I_{\mathrm{JS}}^{\mathrm{even}}(s,\Phi_i,\varphi_i)
& \text{if } m=2n,\\[6pt]
\displaystyle
\sum_{i=1}^r
I_{\mathrm{JS}}^{\mathrm{odd}}(s,\varphi_i)
& \text{if } m=2n+1.
\end{cases}
\]
\end{thm}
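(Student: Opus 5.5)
The plan follows the proof of Theorem~\ref{sum-global}. The strategy is to produce, place by place, finite-sum expressions for the local factors at the finitely many places in $S$, to use the unramified computation outside $S$, and then to assemble decomposable global data whose Eulerian integrals sum to $L(s,\Pi,\wedge^2)$.

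First, the local inputs.

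\begin{itemize}
\item For each archimedean place $v\in S_\infty$, Theorem~\ref{main2} applied to $\Pi_v\in\operatorname{Irr}_{\mathrm{gen}}(G_m(K_v))$, with character $\Psi'_v$, gives finitely many $W_{v,i}\in\mathcal{W}(\Pi_v,\Psi'_{m,v})$, together with $\Phi_{v,i}\in\mathcal{S}(K_v^n)$ when $m=2n$. These satisfy $L(s,\Pi_v,\wedge^2)=\sum_i J(s,W_{v,i},\Phi_{v,i})$, respectively $\sum_i J(s,W_{v,i})$.
\item For each finite place $v\in S$, the non-archimedean results \cite{KR2012,Jo2020} give the same kind of finite sum. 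This holds because $L(s,\Pi_v,\wedge^2)$ is defined there as the generator of the fractional ideal spanned by the Jacquet--Shalika integrals; for the odd case and for the normalisation of the local factor, see also \cite{CM2015,Mat2014}.
\item For $v\notin S$, take $W_v=W_v^\circ$ and $\Phi_v=\Phi_v^\circ$. Proposition~\ref{unramified-exterior-square} then gives $J(s,W_v^\circ,\Phi_v^\circ)=L(s,\Pi_v,\wedge^2)$ in the even case and $J(s,W_v^\circ)=L(s,\Pi_v,\wedge^2)$ in the odd case.
\end{itemize}

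Next, assemble the global data. Expanding the product $\prod_{v\in S}\sum_{i}(\cdots)$ yields finitely many tuples $(W_{v,i_v})_{v\in S}$. For each tuple, form $W=\prod_{v\in S}W_{v,i_v}\prod_{v\notin S}W_v^\circ$. Such a factorizable Whittaker function is $W_\varphi$ for a unique $\varphi\in V_\Pi$: this follows from the uniqueness of Whittaker models and the factorization $\Pi\cong\bigotimes'_v\Pi_v$, since the map $\varphi\mapsto W_\varphi$ identifies $V_\Pi$ with the restricted tensor product of local Whittaker models. In the even case, also set $\Phi=\prod_{v\in S}\Phi_{v,i_v}\prod_{v\notin S}\Phi_v^\circ$. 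For $\operatorname{Re}(s)\gg0$, the Euler factorization after Proposition~\ref{prop:global-js-even-unfolding} (respectively Proposition~\ref{prop:global-js-odd-unfolding}) gives
\[
J_{\mathrm{JS}}^{\mathrm{even}}(s,W_\varphi,\Phi)=\prod_v J(s,W_v,\Phi_v),
\]
and likewise in the odd case. Summing over the tuples gives
\[
\sum \prod_{v\in S}J(s,W_{v,i_v},\Phi_{v,i_v})\cdot L^S(s,\Pi,\wedge^2)=L(s,\Pi,\wedge^2)
\]
in a right half-plane. Propositions~\ref{prop:global-js-even-unfolding} and~\ref{prop:global-js-odd-unfolding} identify each summand with $I_{\mathrm{JS}}^{\mathrm{even}}(s,\Phi_i,\varphi_i)$, respectively $I_{\mathrm{JS}}^{\mathrm{odd}}(s,\varphi_i)$, in that half-plane.

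Finally, extend the identity to all of $\mathbb{C}$. Both sides are meromorphic: the right side by Propositions~\ref{prop:global-js-even-analytic} and~\ref{prop:global-js-odd-analytic}, and the left side because it equals a finite sum of meromorphic functions on a half-plane. The identity therefore holds on $\mathbb{C}$, with the left side interpreted as its continuation; this is the statement.

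I expect the main obstacle to be the bookkeeping at the finite places of $S$, not the argument's structure. Two points need care:
\begin{itemize}
\item the cited non-archimedean results must use exactly the integrals $J(s,W_v,\Phi_v)$ and $J(s,W_v)$ with the same normalisations, namely $\sigma_{2n}$, the sign convention $\Psi'(-\operatorname{tr}X)$, and the shift $|\det g|^{s-1}$;
\item the local factor at ramified finite places must be the one defined by these integrals.
\end{itemize}
If a convention differs, for instance the odd integral in \cite{KR2012} carrying an extra Schwartz function in the last row, I would absorb the difference using the Dixmier--Malliavin reduction $J(s,R(\phi)W)$ from Section~\ref{s2}, in its $p$-adic form. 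Another point to check is that the Euler product identity holds with compatible measures on $V_n\backslash M_n$, so that no global constant appears; any such constant is nonzero and can be absorbed into $\varphi_1$.
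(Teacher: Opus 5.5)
Your proposal is correct and follows the paper's proof essentially step for step: finite-sum identities at the places of $S$ (Theorem~\ref{main2} at archimedean places, the non-archimedean theory at finite ones), unramified data outside $S$, expansion of the finite product into decomposable global data, and then the unfolding Propositions~\ref{prop:global-js-even-unfolding} and~\ref{prop:global-js-odd-unfolding}. Your additional remarks make explicit points the paper leaves implicit: realizing factorizable Whittaker functions by cusp forms, interpreting the identity via meromorphic continuation, and checking the normalizations of the odd integral at ramified finite places.
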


\begin{proof}
We treat the case \(m=2n\); the odd case is similar. For each finite
place \(v\in S\), the non-archimedean local theory
\cite{KR2012,Jo2020} provides finite families
\(\{W_{v,i}\}_i\subset\mathcal{W}(\Pi_v,\Psi'_{m,v})\) and
\(\{\Phi_{v,i}\}_i\subset\mathcal{S}(K_v^n)\) such that
\[
L(s,\Pi_v,\wedge^2)
=
\sum_i J(s,W_{v,i},\Phi_{v,i}).
\]
For each archimedean place \(v\in S_\infty\), the same conclusion follows
from Theorem~\ref{main2}. At every \(v\notin S\), choose the unramified data from
Proposition~\ref{unramified-exterior-square}. Multiplying the local finite-sum identities over \(v\in S\) and expanding the resulting finite product, we obtain
finitely many decomposable global data. The result now follows from
Proposition~\ref{prop:global-js-even-unfolding} in the even case and Proposition~\ref{prop:global-js-odd-unfolding} in the odd
case.
\end{proof}

The following result is the exterior-square analogue of
Corollary~\ref{global-asai-properties}. It follows from
Theorem~\ref{sum-global-exterior-square} by the same argument, and we
therefore omit the proof.

\begin{corollarySubsec} 
\label{global-exterior-square-properties}
The global exterior-square \(L\)-function
\(L(s,\Pi,\wedge^2)\) has the following analytic properties:
\begin{enumerate}
  \item
  \(L(s,\Pi,\wedge^2)\) admits meromorphic continuation to
  \(\mathbb{C}\).

  \item
  Its meromorphic continuation is bounded at infinity in vertical
  strips.

  \item
  It satisfies the functional equation
  \[
    L(s,\Pi,\wedge^2)
    =
    \varepsilon(s,\Pi,\wedge^2)
    L(1-s,{\Pi}^\vee,\wedge^2).
  \]

  \item
  If \(m=2n+1\), then \(L(s,\Pi,\wedge^2)\) is entire. If \(m=2n\), then \(L(s,\Pi,\wedge^2)\) is entire whenever
  \(\omega_\Pi\) is nontrivial on \(\mathbb{I}_K^1\). Otherwise, it has at most simple poles at $s=-i\delta\>\text{and}\>
    s=1-i\delta.$
\end{enumerate}
\end{corollarySubsec}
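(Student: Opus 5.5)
The plan is to run the argument of Corollary~\ref{global-asai-properties} again, with Flicker integrals replaced by Jacquet--Shalika integrals and Theorem~\ref{sum-global} replaced by Theorem~\ref{sum-global-exterior-square}. I treat $m=2n$ first and then note the changes for $m=2n+1$.

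\textbf{Assertions (1), (2), (4).} Theorem~\ref{sum-global-exterior-square} writes $L(s,\Pi,\wedge^2)$ as a finite sum of $I_{\mathrm{JS}}^{\mathrm{even}}(s,\Phi_i,\varphi_i)$ or of $I_{\mathrm{JS}}^{\mathrm{odd}}(s,\varphi_i)$. Meromorphic continuation then follows from Proposition~\ref{prop:global-js-even-analytic} in the even case and from Proposition~\ref{prop:global-js-odd-analytic} in the odd case. In the odd case the continuation is in fact entire, which gives the odd half of (4). In the even case the pole statement in (4) follows termwise from Proposition~\ref{prop:global-js-even-poles}. For (2), I will bound each global integral in a vertical strip, away from poles. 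In the even case, the Eisenstein series is bounded in vertical strips uniformly for $g$ in compact sets and has moderate growth in $g$. The inner unipotent period of the cusp form $\varphi$ is rapidly decreasing on a Siegel domain for $[G_n]$. Together these give boundedness of $I_{\mathrm{JS}}^{\mathrm{even}}$. In the odd case, the integrand is independent of $s$ except through $|\det g|^{s-1}$, and rapid decay of the Shalika-type period handles every strip uniformly. I take the convergence estimates of \cite{JS1990,Bel2011} for granted here, as the paper does.

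\textbf{Assertion (3).} For each $v\in S$, I choose local data whose normalized functional $J(s,W_v,\Phi_v)/L(s,\Pi_v,\wedge^2)$, or $J(s,W_v)/L(s,\Pi_v,\wedge^2)$ in the odd case, is not identically zero. At archimedean places such data exist by Theorem~\ref{main2} or Theorem~\ref{continuity-exterior-square}, and at finite places by \cite{KR2012,Jo2020}. Outside $S$ I take unramified data. I then need a global functional equation for $I_{\mathrm{JS}}$. In the even case, the Eisenstein functional equation $E(g,s;\Phi,\omega)=E({}^tg^{-1},1-s;\widehat\Phi,\omega^{-1})$ and the change of variables $g\mapsto {}^tg^{-1}$ in the inner period give
\[
I_{\mathrm{JS}}^{\mathrm{even}}(s,\Phi,\varphi)=I_{\mathrm{JS}}^{\mathrm{even}}(1-s,\widehat\Phi,\widetilde\varphi),
\]
where $\widetilde\varphi$ is, up to a right translation by $\tau_{2n}$, the cusp form $g\mapsto\varphi(w\,{}^tg^{-1})$ in $\Pi^\vee$. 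In the odd case, the analogous identity is the Poisson-summation functional equation of \cite{JS1990,Bel2011}. Next I unfold both sides into Euler products, using Propositions~\ref{prop:global-js-even-unfolding} and~\ref{prop:global-js-odd-unfolding}. On each side I factor out $L(s,\Pi,\wedge^2)$, respectively $L(1-s,\Pi^\vee,\wedge^2)$. At each $v\in S$ I apply the local functional equation: Theorem~\ref{fe2} at archimedean places and \cite{JS1990,KR2012,CM2015,Mat2014} at finite ones. Cancelling the nonzero product of normalized functionals over $v\in S$ then gives
\[
L(s,\Pi,\wedge^2)=\Bigl(\prod_v\varepsilon(s,\Pi_v,\wedge^2,\Psi'_v)\Bigr)L(1-s,\Pi^\vee,\wedge^2).
\]
The factors at $v\notin S$ equal $1$ by the unramified computation.

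\textbf{Main obstacle.} I expect the hardest step to be matching the global and local normalizations. The global functional equation involves $\widetilde\varphi$ with a specific right translate, which must correspond to $R(\tau_m)\widetilde W_v$ at every place. The Fourier transforms must also match: $\mathcal{F}_{\psi_F}$ in the even case and $\mathcal{F}_{\psi_F^{-1}}$ in the odd case. Any stray constants must then cancel through a product formula, as $\prod_v c_v=1$ does in the Asai case. Unlike the Asai case, Theorem~\ref{fe2} has no extra $cA^s$ factor, so I expect the constants to be exactly $1$. I would still check that the measure normalizations are self-dual with respect to $\Psi'$ so that no global volume constant remains.
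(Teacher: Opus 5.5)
Your proposal is correct and takes the route the paper intends. The paper omits this proof and says only that it is the argument of Corollary~\ref{global-asai-properties} applied to Theorem~\ref{sum-global-exterior-square}. That is what you carry out: the finite-sum identity transfers (1), (2), and (4) termwise, and for (3) you cancel nonvanishing normalized local functionals at $v\in S$ against the global functional equation. Your explicit derivation of the even global functional equation from the Eisenstein series, and your observation that Theorem~\ref{fe2} has no extra $cA^s$ constant to cancel, fill in details the paper leaves implicit.
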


\end{section}

\begin{acknowledgements}
The authors express their sincere gratitude to U. K. Anandavardhanan, T. Ishii, N. Matringe, A. K. Mondal, S. Nadimpalli, D. Prasad, and R. Raghunathan for their insightful comments and questions. Y. Jo 
 is also indebted to J. Cogdell for bringing to his attention an unfinished project concerning the archimedean exterior-square $L$-factors \cite{CPS1994} many years ago, 
and J. Girsch and R. Kurinczuk for their encouragement and their interest in this work. Y. Jo would like to thank the LMS-Sheffield Symposia for support and hospitality during the symposium ``The Langlands Programme:
Recent Trends, New Developments, and Applications" where work on this paper was undertaken.
\end{acknowledgements}

\begin{funding}
Y. Jo and A. Yadav were partially supported by Global-Learning \& Academic research institution for Master’s·PhD students, and Postdocs (G-LAMP) Program of the National Research Foundation of Korea (NRF) grant funded by the Ministry of Education (No. RS-2025-25442252).
Y. Jo was also partially supported by the NRF grant 
funded by the Korea government (No. RS-2023-00209992). 
\end{funding}

\end{document}